\documentclass[11pt]{amsart}
\usepackage{palatino}
\usepackage{amsfonts}
\usepackage{amssymb}
\usepackage{amscd}
\input xy
\xyoption{all}
\usepackage[english]{babel}
\usepackage[all]{xy}
\usepackage[breaklinks,bookmarksopen,bookmarksnumbered]{hyperref}

\pagestyle{plain} \setlength{\parindent}{.4 in}
\setlength{\textwidth}{5.8 in} \setlength{\topmargin} {-.3 in}
\setlength{\evensidemargin}{0 in}

\newtheorem{theorem}{Theorem}[section]
\newtheorem{proposition}[theorem]{Proposition}

\newtheorem{lemma}[theorem]{Lemma}
\theoremstyle{definition}
\newtheorem{definition}[theorem]{Definition}
\newtheorem{remark}[theorem]{Remark}
\newtheorem{example}[theorem]{Example}

\newtheorem{conjecture}[theorem]{Conjecture}
\newtheorem{conjecture/question}[theorem]{Conjecture/Question}

\newtheorem{remark/definition}[theorem]{Remark/Definition}
\newtheorem{terminology/notation}[theorem]{Terminology/Notation}

\setlength{\oddsidemargin}{0 in} \setlength{\footskip}{.3 in}
\setlength{\headheight}{.3 in} \setlength{\textheight}{8.5 in}
\setlength{\parskip}{.1 in}

\newcommand{\marginlabel}[1]%
  {\mbox{}\marginpar{\raggedleft\hspace{0pt}\bfseries\sf#1}}

\def\PP{{\textbf P}}
\def\OO{\mathcal{O}}
\def\cN{\mathcal{N}}
\def\cD{\mathcal{D}}
\def\cB{\mathcal{B}}
\def\cA{\mathcal{A}}
\def\F{\mathcal{F}}
\def\P{\mathcal{P}}

\def\E{\mathcal{E}}
\def\G{\mathcal{G}}

\def\L{\mathcal{L}}

\def\cM{\mathcal{M}}
\def\cR{\mathcal{R}}
\def\rr{\overline{\mathcal{R}}}
\def\cZ{\mathcal{Z}}
\def\cU{\mathcal{U}}

\def\H{\mathcal{H}}
\def\Pic0{{\rm Pic}^0(X)}

\def\mm{\overline{\mathcal{M}}}

\def\pr{\widetilde{\mathcal{R}}}
\def\AUX{\mathfrak G^r_d(\rer_g^0/\rem_g^0)}
\def\rem{\overline{\textbf{M}}}
\def\rer{\overline{\textbf{R}}}
\def\per{\widetilde{\textbf{R}}}
\def\pem{\widetilde{\textbf{M}}}

\newcommand{\CC}[1][]{\mathbb{C}^{#1}}

\DeclareMathOperator{\aut}{Aut} \DeclareMathOperator{\auto}{Aut_0}

\DeclareMathOperator{\ord}{ord} 

\newcommand{\comp}{C_j}
\newcommand{\compi}{C_{j'}}
\newcommand{\compii}{C_{j''}}
\newcommand{\compnu}{C_j^\nu}
\newcommand{\compinu}{C_{j'}^\nu}
\newcommand{\iso}{\sigma}
\newcommand{\lud}[1]{\CC[3g-3]_{#1}}
\newcommand{\nonex}{\widetilde{\prymi}}
\newcommand{\nonexcomp}{\widetilde{\prymi}_j}

\newcommand{\prym}{(\prymi,\prymii,\prymiii)}
\newcommand{\prymi}{X}
\newcommand{\prymii}{\eta}
\newcommand{\prymiii}{\beta}
\newcommand{\rsbt}{Reid--Shepherd-Barron--Tai}
\newcommand{\stab}{C}

\newcounter{mylist}
{\begin{list}%
    {(\roman{mylist})}%
    {\usecounter{mylist}%
     \setlength{\rightmargin}{0pt}%
     \setlength{\leftmargin}{0pt}%
     \setlength{\itemindent}{0.5em}%
     \setlength{\itemsep}{0pt}%
     \setlength{\parsep}{0ex plus0.1ex}
     \setlength{\topsep}{0ex}}}%
{\end{list}}

\begin{document}
\title{The Kodaira dimension of the moduli space of Prym varieties}

\author[G. Farkas]{Gavril Farkas}

\address{Humboldt-Universit\"at zu Berlin, Institut F\"ur Mathematik,
10099 Berlin}
\email{{\tt farkas@math.hu-berlin.de}}
\thanks{Research  of the first author partially supported by an Alfred P. Sloan Fellowship, the NSF Grant DMS-0500747
and a Texas Research Assignment}

\author[K. Ludwig]{Katharina Ludwig}
\address{Leibniz Universit\"at Hannover, Institut f\"ur Algebraische
Geometrie
\newline \indent D-30167 Hannover, Germany}
 \email{{\tt
ludwig@math.uni-hannover.de}}

\markboth{G. FARKAS and K. LUDWIG}
{The Kodaira dimension of $\rr_g$}
\maketitle

Prym varieties provide a correspondence between the moduli spaces of
curves and abelian varieties $\cM_g$ and $\cA_{g-1}$, via the
\emph{Prym map} $\mathcal{P}_g: \cR_g\rightarrow \cA_{g-1}$ from the
moduli space $\cR_g$ parameterizing pairs $[C, \eta]$, where $[C]\in
\cM_g$ is a smooth curve and $\eta \in \mbox{Pic}^0(C)[2]$ is a
torsion point of order $2$. When $g\leq 6$ the Prym map is dominant
and $\cR_g$ can be used directly to determine the birational type of
$\cA_{g-1}$. It is known that $\cR_g$ is rational for $g=2, 3, 4$
(see \cite{Dol} and references therein and \cite{Ca} for the case of genus $4$) and unirational for $g=5$
(cf. \cite{IGS} and \cite{V2}). The situation in genus $6$ is strikingly beautiful
because $\mathcal{P}_6:\cR_6\rightarrow \cA_5$ is equidimensional
(precisely $\mbox{dim}(\cR_6)=\mbox{dim}(\cA_5)=15$). Donagi and
Smith showed that $\P_6$ is generically finite of degree $27$ (cf.
\cite{DS}) and the monodromy group equals the Weyl group $WE_6$
describing the incidence correspondence of the $27$ lines on a cubic
surface (cf. \cite{D1}). There are three different proofs that
$\cR_6$ is unirational (cf. \cite{D1}, \cite{MM}, \cite{V}). Verra
has very recently announced a proof of the unirationality of
$\cR_7$ (see also Theorem \ref{r7} for a weaker result).
The Prym map $\mathcal{P}_g$ is generically injective for
$g\geq 7$ (cf. \cite{FS}), although never injective. In this range,
we may regard $\cR_g$ as a partial desingularization of the moduli
space $\mathcal{P}_g(\cR_g)\subset \cA_{g-1}$ of Prym varieties of
dimension $g-1$.

The scheme $\cR_g$ admits a suitable modular compactification
$\rr_g$, which is isomorphic to (1) the coarse moduli space of the
stack $\rer_g=\rem_g(\mathcal{B}\mathbb Z_2)$ of \emph{Beauville admissible} double covers (cf. \cite{B}, \cite{ACV})
and (2) the coarse moduli space of the stack of \emph{Prym curves}
(cf. \cite{BCF}). The forgetful map $\pi:\cR_g \rightarrow \cM_g$
extends to a finite map $\pi: \rr_g \rightarrow \mm_g$.  The aim of
this paper is to initiate a study of the enumerative and global
geometry of $\rr_g$, in particular to determine its Kodaira
dimension. The main result of the paper is the following:
\begin{theorem}\label{gentype}
The moduli space of Prym varieties $\rr_g$ is of general type for
$g>13$ and $g\neq 15$. The Kodaira dimension of $\rr_{15}$ is at
least $1$.
\end{theorem}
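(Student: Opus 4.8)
\emph{Strategy and the canonical class.} I would follow the Harris--Mumford--Eisenbud--Harris template: compute $K_{\rr_g}$ in the Picard group, prove that pluricanonical forms on the smooth locus extend to a desingularization (so that $\kappa(\rr_g)$ is read off from the class of $K_{\rr_g}$), and then exhibit an effective divisor of small enough slope to force $K_{\rr_g}$ to be big. Since $\pi:\rr_g\to\mm_g$ is finite, the boundary of $\rr_g$ is a decomposition of the pullbacks of the $\delta_i\subset\mm_g$: over $\Delta_0$ one obtains $\delta_0'$, $\delta_0''$ and the Wirtinger/ramification divisor $\delta_0^{ram}$, with $\pi^\ast\delta_0=\delta_0'+\delta_0''+2\delta_0^{ram}$; over $\Delta_i$ ($1\le i\le g/2$) one obtains $\delta_i$, $\delta_{g-i}$, $\delta_{i:g-i}$ according to how $\eta$ restricts to the two components; and $\mathrm{Pic}_{\mathbb{Q}}(\rr_g)$ is freely generated by $\lambda$ and these classes. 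As $\pi$ is \'etale over $\cM_g$ and simply ramified exactly along $\delta_0^{ram}$, Riemann--Hurwitz gives $K_{\rr_g}=\pi^\ast K_{\mm_g}+\delta_0^{ram}$, so from $K_{\mm_g}=13\lambda-2\delta_0-3\delta_1-2\delta_2-\cdots$ (with the usual coarse-space correction at the elliptic-tail divisor) one gets
\[
K_{\rr_g}=13\lambda-2(\delta_0'+\delta_0'')-3\,\delta_0^{ram}-\big(\text{a positive combination of the }\delta_i,\ \delta_{g-i},\ \delta_{i:g-i}\big).
\]

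\emph{Singularities.} To pass from this class to $\kappa$ of the coarse space I must show that $\rr_g$ has canonical singularities outside a locus of codimension $\ge 2$, or at least that pluricanonical forms extend across the bad locus. The singularities are quotient singularities coming from automorphisms of Prym curves together with the boundary, and I would run the \rsbt{} criterion: an automorphism of $[C,\eta]$ acts on the tangent space, which carries, besides the curve deformations $H^1(C,T_C)$, contributions from the deformations of $\eta$ and the boundary normal directions, and the age inequality must be checked case by case, the dangerous cases being quasi-reflections and automorphisms of small order. By analogy with $\mm_g$ I expect the criterion to fail only along one exceptional locus (of Prym curves whose underlying curve carries a special automorphism of an elliptic or hyperelliptic tail); that locus is then excised and the extension of forms across it proved directly, by pulling back to an explicit partial resolution.

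\emph{An effective divisor of small slope.} This is the decisive and genuinely new point. It suffices to write $K_{\rr_g}=\alpha\,[D]+\beta\,\pi^\ast\lambda+(\text{effective boundary})$ with $\alpha,\beta>0$, since $\pi^\ast\lambda$ is nef and big on $\rr_g$; so I need an effective divisor $D\subset\rr_g$ of class $a\lambda-b_0'\delta_0'-b_0''\delta_0''-b_0^{ram}\delta_0^{ram}-\cdots$ with the required coefficient inequalities. Pulling back Brill--Noether, Gieseker--Petri or Koszul divisors from $\mm_g$ does not suffice: those carry $\delta_0'$ and $\delta_0''$ with the same coefficient, in ratio to $\lambda$ too small to also absorb the $-3\,\delta_0^{ram}$ term --- one genuinely needs a divisor on $\cR_g$ separating the three components over $\Delta_0$. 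The natural candidates are Prym-theoretic degeneracy loci: in the sporadic genera with $g-1=\binom{r+1}{2}$ (so $g=16$ for $r=5$) a Prym--Brill--Noether divisor where Welters' locus $V^r(C,\eta)$ is nonempty, and for the remaining $g$ a Koszul divisor detecting the failure of a predicted syzygy of the Prym-canonical curve $C\xrightarrow{|K_C\otimes\eta|}\pp[g-2]$. I would compute $[D]$ by a test-curve argument (respectively by a Koszul-cohomology computation), check that the slope and the individual boundary coefficients come out favourably --- in particular that $b_0''$ is large enough --- and conclude that $\rr_g$ is of general type in the stated range.

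\emph{The case $g=15$ and the main obstacle.} For $g=15$ the best available divisor $D$ is borderline: the resulting expression realizes $K_{\rr_{15}}$ as an effective class but leaves no positive multiple of $\pi^\ast\lambda$ to spare, so the method yields only that $K_{\rr_{15}}$ is effective, and a supplementary geometric input is needed to upgrade this to $\kappa(\rr_{15})\ge 1$; proving general type here would require an effective divisor of strictly smaller slope, which is not available. I expect the construction and class computation of the small-slope divisor on $\cR_g$ in the previous step to be the main obstacle of the whole proof --- it is the one genuinely new ingredient, and $g=15$ is precisely where it is not quite strong enough --- with the singularity analysis a close second.
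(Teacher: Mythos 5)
Your skeleton matches the paper's: compute $K_{\rr_g}$ via Hurwitz from $K_{\mm_g}$, show pluricanonical forms extend across the non-canonical locus (which is indeed governed by elliptic tails with extra automorphisms and trivial $\eta$, via the \rsbt{} criterion), and then exhibit effective divisors satisfying explicit coefficient inequalities against $13\lambda-2(\delta_0'+\delta_0^{''})-3\delta_0^{\mathrm{ram}}-\cdots$. You are also right that the decisive new ingredient is a divisor that separates the three components over $\Delta_0$, and your diagnosis of $g=15$ as the borderline case is accurate.

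However, the divisors you propose would not carry the proof through the stated range, and this is where the gap lies. Prym--Brill--Noether (Welters) loci exist as divisors only in sporadic genera, and the Prym--Koszul divisors $\cU_{2i+6,i}$ are only known to be honest (non-virtual) divisors for $i\leq 4$, $i\neq 3$, so among the relevant genera they settle only $g=14$. The paper instead uses three distinct sources: (a) for odd $g=2i+1\geq 15$, the Raynaud-type theta divisor $\mathcal{D}_{2i+1:2}=\{\eta\in\Theta_{\wedge^i Q_{K_C}}=C_i-C_i\}$, whose class is computed by extending a determinantal structure over a partial compactification --- and, contrary to your assertion that pullbacks from $\mm_g$ cannot be used, the actual argument takes a \emph{positive linear combination} of this divisor with $\pi^*(\mm_{2i+1,i+1}^1)$: the Harris--Mumford divisor supplies the good $\lambda$-to-$\delta_0$ ratio while the Raynaud divisor redistributes weight onto $\delta_0^{\mathrm{ram}}$ and $\delta_0^{''}$; (b) for $g=16$, the analogous $k=3$ divisor $\mathcal{D}_{16:3}$; (c) for even $g\geq 18$, pullbacks $\chi^*\phi^*(A)$ of ample classes under the composite $\rr_g\stackrel{\chi}\rightarrow\mm_{2g-1}\dashrightarrow\mm_{g'}$ sending $[C,\eta]$ to the double cover $\tilde C$ and then to $W^1_{g+1}(\tilde C)$ --- a construction absent from your proposal. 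Finally, the upgrade from $\kappa(\rr_{15})\geq 0$ to $\kappa(\rr_{15})\geq 1$ is not an unspecified ``supplementary geometric input'': it follows from the existence of two Brill--Noether divisors $\mm_{15,8}^1$ and $\mm_{15,14}^3$ of equal slope $27/4$ but distinct supports, which yield two distinct members of $|mK_{\rr_{15}}|$.
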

We point out in Remark \ref{sl15} that the existence of an effective
divisor $D\in \mbox{Eff}(\mm_{15})$ of slope $s(D)<6+12/(g+1)=27/4$
(that is, violating the Harris-Morrison Slope Conjecture on
$\mm_{15}$), would imply that $\rr_{15}$ is of general type. There
are known examples of divisors $D\in \mathrm{Eff}(\mm_g)$ satisfying
$s(D)<6+\frac{12}{g+1}$ for every genus of the form $g=s(2s+si+i+1)$ with $s\geq 2$ and
$i\geq 0$ (cf. \cite{F1}, \cite{F2}). No such examples have been
found yet on $\mm_{15}$, though they are certainly expected to exist.

The normal variety $\rr_g$ has finite quotient singularities and an
important part of the proof is concerned with showing that  pluricanonical forms defined on the smooth
part $\rr_g^{\mathrm{reg}}\subset \rr_g$ can be lifted to any
resolution of singularities $\widehat{\mathcal{R}}_g\rightarrow
\rr_g$, that is, we have isomorphisms $$H^0\bigl(
\rr_g^{\mathrm{reg}}, K_{\rr_g}^{\otimes l}\bigr)\cong
H^0\bigl(\widehat{\mathcal{R}}_g,
K_{\widehat{\mathcal{R}}_g}^{\otimes l}\bigr)$$ for $l\geq 0$. This is achieved in the last section of the paper.  The locus of non-canonical singularities in $\rr_g$ is also
explicitly described: A Prym curve $[X, \eta, \beta]\in \rr_g$ is a
non-canonical singularity if and only if $X$ has an elliptic tail
$C$ with $\mbox{Aut}(C)=\mathbb Z_6$, such that the line bundle
$\eta_C\in \mbox{Pic}^0(C)[2]$ is trivial (cf. Theorem
\ref{thm:cansing}).

We outline the strategy to prove that $\rr_g$ is of general type for
given $g$. If $\lambda=\pi^*(\lambda)\in \mbox{Pic}(\rr_g)$ is the
pull-back of the Hodge class and $\delta_0', \delta_0^{''},
\delta_0^{\mathrm{ram}}\in \mbox{Pic}(\rr_g)$ and  $\delta_{i},
\delta_{g-i}, \delta_{i: g-i}\in \mbox{Pic}(\rr_g)$ for $1\leq i\leq
[g/2]$ are boundary divisor classes such that
$$\pi^*(\delta_0)=\delta_0'+\delta_0^{''}+2\delta_{0}^{\mathrm{ram}}
\\ \mbox{ and } \
\pi^*(\delta_i)=\delta_i+\delta_{g-i}+\delta_{i: g-i}\ \mbox{ for }
1\leq i\leq [g/2]$$ (see Section 2 for a precise definition of these
classes), then one has the formula
$$K_{\rr_g}\equiv 13\lambda-2(\delta_0^{'}+\delta_0^{''})-3\delta_0^{\mathrm{ram}}-2\sum_{i=1}^{[g/2]}
(\delta_i+\delta_{g-i}+\delta_{i:
g-i})-(\delta_1+\delta_{g-1}+\delta_{1: g-1}).$$  We show that this
class is big by explicitly constructing effective divisors $D$ on
$\rr_g$ such that one can write $K_{\rr_g}\equiv \alpha\cdot
\lambda+\beta \cdot D+\{\mbox{effective combination of boundary
classes}\}$, for certain $\alpha, \beta\in \mathbb Q_{>0}$ (see
(\ref{inequ}) for the  inequalities the coefficients of such $D$
must satisfy).

We carry out an enumerative study of divisors on $\rr_g$ defined in
terms of pairs $[C, \eta]$ such that the $2$-torsion point $\eta\in
\mbox{Pic}^0(C)$ is transversal with respect to the theta divisors
associated to certain stable vector bundles on $C$. We fix integers
$k\geq 2$ and $b\geq 0$ and then define the integers $$i:=kb+k-b-2,\
r:=kb+k-2,\ g:=ik+1\ \mbox{ and } d:=rk.$$ The Brill-Noether number
$\rho(g, r, d):=g-(r+1)(g-d+r)=0$ and a general $[C]\in \cM_g$
carries a finite number of line bundles $L\in W^r_d(C)$. For each
such line bundle $L$, if $Q_L$ denotes the dual of the
\emph{Lazarsfeld bundle} defined by the exact sequence (see
\cite{L})
$$0\longrightarrow Q_L^{\vee}\longrightarrow H^0(C, L)\otimes \OO_C
\longrightarrow L\longrightarrow 0,$$ we compute that
$\mu(Q_L)=d/r=k$ and then $\mu(\wedge^i Q_L)=ik=g-1$. In these
circumstances we define the \emph{Raynaud divisor} (degeneration
locus of virtual codimension $1$)
$$\Theta_{\wedge^i Q_L}:=\{\eta \in \mathrm{Pic}^0(C): H^0(C,
\wedge^i Q_L\otimes \eta)\neq 0\}.$$ This is a virtual divisor
inside $\mbox{Pic}^0(C)$, in the sense that  either
$\Theta_{\wedge^i Q_L}=\mbox{Pic}^0(C)$ or else $\Theta_{\wedge^i
Q_L}$ is a divisor on $\mbox{Pic}^0(C)$ belonging to the linear
system $|{r\choose i}\theta|$, cf. \cite{R}. We study the relative
position of $\eta$ with respect to $\Theta_{\wedge^i Q_L}$ and
introduce the following locus on $\rr_g$:
$$\mathcal{D}_{g: k}:=\{[C, \eta]\in \cR_g: \exists L\in W^r_{d}(C)\
\mbox{ such that } \ \eta \in \Theta_{\wedge^i Q_L}\}.$$

When $k=2, i=b$, then $g=2i+1, d=2g-2$ and  $\mathcal{D}_{2i+1: 2}$
has a new incarnation using the proof of the \emph{Minimal
Resolution Conjecture} \cite{FMP}. In this case, $L=K_C$ (a genus
$g$ curve has only one $\mathfrak g^{g-1}_{2g-2}$!) and \cite{FMP}
gives an identification of cycles
$$\Theta_{\wedge^i Q_{K_C}}=C_i-C_i\subset \mbox{Pic}^0(C),$$ where the right-hand-side
stands for the $i$-th difference variety of $C$.

We prove in Section 2 that $\mathcal{D}_{g: k}$ is an effective
divisor on $\mathcal{R}_g$. By specialization to the $k$-gonal locus
$\cM_{g, k}^1\subset \cM_g$, we show that for a generic $[C,
\eta]\in \mathcal{R}_g$  the vanishing $H^0(C, \wedge^i Q_L\otimes
\eta)=0$ holds for every $L\in W^r_d(C)$ (Theorem
\ref{transversality}). Then we extend the determinantal structure of
$\mathcal{D}_{g: k}$ to a partial compactification of
$\mathcal{R}_g$ which enables us to compute the class of the
compactification $\overline{\mathcal{D}}_{g: k}$. Precisely we
construct two vector bundles $\E$ and $\F$ over a stack $\rer_g^0$ which is a partial compactification
of $\textbf{R}_g$, such that
$\mbox{rank}(\E)=\mbox{rank}(\F)$, together with a vector bundle
homomorphism $\phi:\E\rightarrow \F$ such that $Z_1(\phi)\cap
\cR_g=\mathcal{D}_{g: k}$. Then we explicitly determine the class
$c_1(\F-\E)\in A^1(\rer_g^0)$ (Theorem \ref{det}). The cases of
interest for determining the Kodaira dimension of $\rr_g$ are when
$k=2, 3$ when we obtain the following results:

\begin{theorem}\label{hyper}
The closure of the divisor $\mathcal{D}_{2i+1: 2}=\{[C, \eta]\in
\cR_{2i+1}:h^0(C, \wedge^i Q_{K_C}\otimes \eta)\geq 1\}$ inside
$\rr_{2i+1}$ has class given by the following formula in
$\mathrm{Pic}(\rr_{2i+1})$:
$$\overline{\cD}_{2i+1: 2}\equiv \frac{1}{2i-1}{2i\choose
i}\Bigl((3i+1)\lambda-\frac{i}{2}(\delta_0'+\delta_0^{''})-\frac{2i+1}{4}\delta_0^{\mathrm{ram}}
-(3i-1)\delta_{g-1}-i(\delta_{1: g-1}+\delta_1)-\cdots\Bigr).$$
\end{theorem}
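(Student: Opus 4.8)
The plan is to realize $\overline{\cD}_{2i+1:2}$ as the degeneracy locus $Z_1(\phi)$ of a morphism $\phi:\E\to\F$ between vector bundles of the same rank over a partial compactification $\rer_g^0$ of $\textbf{R}_g$, and then to compute $c_1(\F-\E)$ by Grothendieck--Riemann--Roch. Concretely, I would first set up, over $\rer_g^0$, the universal curve $\pi:\mathcal{X}\to\rer_g^0$ together with the line bundle whose restriction to each fiber is the canonical bundle $K_C$ (using that a genus $g$ curve has a \emph{unique} $\mathfrak g^{g-1}_{2g-2}$, so there is no need to introduce a Hilbert-scheme-type parameter space of linear series here --- this is exactly what makes the $k=2$, $L=K_C$ case cleaner than general $k$). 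From the universal Lazarsfeld sequence $0\to Q_L^\vee\to H^0(C,L)\otimes\OO_C\to L\to 0$ one forms the relative bundle $\wedge^i Q_L$ on $\mathcal{X}$, twists by the universal $2$-torsion line bundle $\eta$, and takes pushforwards: after a suitable auxiliary twist by a high power of a relatively ample line bundle (or by passing to an appropriate two-term complex $R^\bullet\pi_*$), one gets $\E$ and $\F$ with $\phi$ the natural map and $Z_1(\phi)\cap\cR_g = \mathcal{D}_{2i+1:2}$, as asserted in the discussion preceding Theorem~\ref{det}. The transversality statement (Theorem~\ref{transversality}), proved by degeneration to the hyperelliptic/$k$-gonal locus, guarantees that $\mathcal{D}_{2i+1:2}$ is a genuine divisor and that $\phi$ has the expected generic corank, so that the Thom--Porteous/Giambelli formula gives $[\overline{\cD}_{2i+1:2}] = c_1(\F-\E)$.

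Next I would carry out the Chern class computation. Using the FMP identification $\Theta_{\wedge^i Q_{K_C}}=C_i - C_i$ in $\mathrm{Pic}^0(C)$, the fiberwise cohomology class of the virtual divisor is ${2i\choose i}\theta$ up to the combinatorial factor coming from $\dim\wedge^i Q_{K_C}={r\choose i}$ with $r=2i-1$; this explains the overall normalizing constant $\frac{1}{2i-1}\binom{2i}{i}$ in the statement. The $\lambda$-coefficient and the boundary coefficients then come from applying GRR to $R\pi_*(\wedge^i Q_{K_C}\otimes\eta)$ over $\rer_g^0$: the $\lambda$ term records $\mathrm{ch}_2$ contributions (Hodge class, via $\kappa_1$ and $\pi_*(c_1^2)$ of the relevant bundles), while the boundary terms $\delta_0',\delta_0'',\delta_0^{\mathrm{ram}},\delta_{g-1},\delta_1,\delta_{1:g-1}$ arise from the behavior of $K_C$, $Q_{K_C}$, and $\eta$ over the various boundary strata --- in particular over $\Delta_0^{\mathrm{ram}}$ one must track the node and the ramification of the admissible cover, which produces the characteristic $\frac{2i+1}{4}$ coefficient. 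I would organize this by first doing the computation on $\mm_g$ for the analogous $\mathbf{M}$-stack divisor, then pulling back along $\pi:\rr_g\to\mm_g$ and correcting using the formulas $\pi^*(\delta_0)=\delta_0'+\delta_0''+2\delta_0^{\mathrm{ram}}$ and $\pi^*(\delta_i)=\delta_i+\delta_{g-i}+\delta_{i:g-i}$ recalled in the introduction.

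The main obstacle I anticipate is \emph{not} the bulk GRR bookkeeping but rather the correct identification of the test-curve/boundary coefficients near $\delta_0^{\mathrm{ram}}$ and $\delta_1,\delta_{g-1},\delta_{1:g-1}$: at these strata the Lazarsfeld bundle $Q_{K_C}$ for the limiting admissible cover must be defined via a limit linear series (or a torsion-free sheaf) argument, and one has to verify both that the determinantal locus extends across the partial compactification and that the map $\phi$ remains generically of the right rank along it, so that no spurious boundary components get absorbed into $\overline{\cD}_{2i+1:2}$. A clean way to pin down these coefficients is to evaluate the class against explicit test curves: a pencil of plane curves (or a family obtained by attaching a fixed elliptic or $1$-nodal tail with varying $2$-torsion) sweeping out each boundary divisor, computing both sides independently. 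Once the normalization $\frac{1}{2i-1}\binom{2i}{i}$ and the $\lambda$-coefficient $3i+1$ are fixed by a generic computation, the remaining coefficients $\frac{i}{2}$, $\frac{2i+1}{4}$, $3i-1$, $i$ are then forced, and consistency across several independent test curves provides the check. The $``\cdots"$ in the stated formula signals that only the coefficients relevant to the Kodaira-dimension estimate (those appearing in the slope-type inequality~(\ref{inequ})) are recorded, so I would compute exactly those and remark that the higher $\delta_j$-coefficients, being negative, only help.
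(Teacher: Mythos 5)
Your overall strategy -- realize $\overline{\cD}_{2i+1:2}$ as the corank-one locus of a map of equal-rank vector bundles over a partial compactification, compute $c_1(\F-\E)$ by Grothendieck--Riemann--Roch, and use Theorem \ref{transversality} for genuine divisoriality -- is the strategy of the paper (Theorem \ref{det} specialized to $k=2$). But as written there are genuine gaps in the execution. First, the bundles $\E,\F$ cannot be obtained by ``pushing forward $\wedge^i Q_{K_C}\otimes\eta$'': since $\mu(\wedge^i Q_{K_C}\otimes\eta)=g-1$, both $h^0$ and $h^1$ vanish at a general point, so the direct images are not vector bundles, and an ``auxiliary twist by a high power of a relatively ample bundle'' changes the sheaf (hence the locus) unless you set up the two-term complex carefully; even then the GRR input is $\mathrm{ch}_2$ of $\wedge^i$ of the universal Lazarsfeld bundle twisted by the Prym bundle over the compactified universal Prym curve, and you give no way to compute it, in particular no way to see the contribution $\langle\P,\P\rangle=-\tfrac{1}{2}\delta_0^{\mathrm{ram}}$ of Proposition \ref{blowup} that produces the $\delta_0^{\mathrm{ram}}$-coefficients. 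The paper's actual mechanism is the reformulation (\ref{divstr}): after twisting the Lazarsfeld--Koszul sequence by $K_C\otimes\eta$ one gets the map $\phi:\wedge^i\H\otimes\cA_{0,0}\rightarrow\cA_{i-1,1}$ between honest vector bundles (vanishing of $R^1$ is Propositions \ref{lazvanish} and \ref{vanish}, which is exactly what lets the determinantal structure extend over $\Delta_0',\Delta_0^{''},\Delta_0^{\mathrm{ram}}$), and the recursion (\ref{recursion}) reduces $c_1(\F-\E)$ to the line-bundle pushforwards $c_1(\cA_{0,j})$ of Proposition \ref{rr} plus the classes $\mathfrak a,\mathfrak b$ computed via limit linear series and test curves from \cite{F2}. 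This reduction is the substance of the proof and is missing from your proposal.

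Two further points. Your explanation of the prefactor is incorrect: $Q_{K_C}$ has rank $g-1=2i$, so $\wedge^i Q_{K_C}$ has rank ${2i\choose i}$ and $\Theta_{\wedge^i Q_{K_C}}\equiv{2i\choose i}\theta$; the constant $\frac{1}{2i-1}{2i\choose i}$ in the statement is not the rank and is not ``explained'' by it -- it emerges only from the full Chern-class computation. And the claim that once the normalization and the $\lambda$-coefficient are fixed the coefficients $\tfrac i2,\tfrac{2i+1}{4},3i-1,i$ are ``forced'' is unjustified: each boundary coefficient is an independent computation (in the paper, $\delta_0',\delta_0^{''},\delta_0^{\mathrm{ram}}$ come from the determinantal calculation over $\rer_g^0$, while the $\delta_1,\delta_{g-1},\delta_{1:g-1}$ coefficients require the extension over $\Delta_1$ via limit linear series, with Proposition \ref{ineq} giving only lower bounds in general). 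Your worry about spurious boundary degeneracy is well placed -- for $i=1$ the map is identically degenerate along $\Delta_0^{''}$ and Theorem \ref{genu3} subtracts $\delta_0^{''}$ -- but flagging it is not the same as verifying generic nondegeneracy along each boundary divisor, which your argument would still need to do.
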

To illustrate Theorem \ref{hyper} in the simplest case, $i=1$ hence
$g=3$, we write  $\mathcal{D}_{3: 2}=\{[C, \eta]\in
\cR_3:\eta=\OO_C(x-y), \mbox{ } x, y\in C\}$. The analysis carried
out in Section 5 shows that the vector bundle morphism
$\phi:\E\rightarrow \F$ is generically non-degenerate along the
boundary divisors $\Delta_0', \ \Delta_0^{\mathrm{ram}}\subset
\rr_3$ and degenerate (with multiplicity $1$) along the divisor
$\Delta_0^{''}\subset \rr_3$ of Wirtinger covers. Theorem
\ref{hyper} reads like
$$\overline{\mathcal{D}}_{3: 2}\equiv c_1(\F-\E)-\delta_0^{''}\equiv
8\lambda-\delta_0'-2\delta_0^{''}-\frac{3}{2}\delta_0^{\mathrm{ram}}-6\delta_1-4\delta_2-2\delta_{1:
2}
 \in \mathrm{Pic}(\rr_3),$$
and then $\pi_*(\overline{\mathcal{D}}_{3:
2})\equiv 56(9\lambda-\delta_0-3\delta_1)\equiv 56\cdot \mm_{3, 2}^1\in
\mathrm{Pic}(\mm_3)$ (see Theorem \ref{genu3}). Theorem \ref{hyper}
is consistent with the following elementary fact, see e.g.
\cite{HF}: If $[\tilde{C}\rightarrow C]\in \cR_3$ is an \'etale
double cover, then $[\widetilde{C}]\in \cM_5$ is hyperelliptic if
and only if $[C]\in \cM_3$ is hyperelliptic and $\eta=\OO_C(x-y)$,
with $x, y\in C$ being Weierstrass points.

\begin{theorem}\label{trig}
For $b\geq 1$ and $r=3b+1$ the class of the divisor
$\overline{\cD}_{6b+4: 3}$ on $\rr_{6b+4}$ is given by:
$$\overline{\cD}_{g: 3}\equiv \frac{4}{r}{6b+3\choose
b, 2b,
3b+3}\Bigl((3b+2)(b+2)\lambda-\frac{3b^2+7b+3}{6}(\delta_0'+\delta_0^{''})-\frac{24b^2+47b+21}{24}\delta_0^{\mathrm{ram}}
-\cdots \Bigr).$$
\end{theorem}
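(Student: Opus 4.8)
The plan is to deduce Theorem \ref{trig} from the general determinantal formula of Theorem \ref{det} by specializing to $k=3$, and then to perform the boundary bookkeeping relating the degeneracy locus on the partial compactification to the honest closure $\overline{\cD}_{g:3}\subset\rr_g$. First I fix the numerology: for $k=3$ one has $i=2b+1$, $r=3b+1$, $d=9b+3$ and $g=6b+4$, with $\rho(g,r,d)=0$, so a general Prym curve carries finitely many $L\in W^r_d(C)$; moreover $\chi(C,\wedge^i Q_L\otimes\eta)=0$ for every $\eta$, which is exactly what makes $\cD_{g:3}$ a virtual divisor. I would work over the stack $\AUX$, finite over $\rer_g^0$, and invoke Theorem \ref{transversality} to ensure $\cD_{g:3}$ is a genuine divisor; over $\AUX$ the bundles $\E,\F$ of Theorem \ref{det} have equal rank, the two-term complex $[\E\to\F]$ computes $R\pi_*$ of $\wedge^i Q_L\otimes\eta$ relatively over $\rer_g^0$, and $\phi:\E\to\F$ is the map whose degeneracy locus $Z_1(\phi)$ meets $\cR_g$ in $\cD_{g:3}$. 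Pushing $[Z_1(\phi)]=c_1(\F-\E)$ forward along the finite projection to $\rer_g^0$ and then along $\pi$ yields the class of $\overline{\cD}_{g:3}$ up to corrections supported on the boundary.

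The enumerative heart is the evaluation of $c_1(\F-\E)$ via Grothendieck--Riemann--Roch on the universal curve, which reduces to computing $\mathrm{ch}_0,\mathrm{ch}_1,\mathrm{ch}_2$ of $\wedge^i Q_L\otimes\eta$, hence $c_1(Q_L),c_2(Q_L)$ together with the combinatorics of the $i$-th exterior power. The Chern character of $\wedge^i Q_L$ is a universal polynomial in $c_1(Q_L)$ and $c_2(Q_L)$ whose coefficients are the binomials $\binom{r-1}{i-1}$, $\binom{r-2}{i-1}$ and $\binom{r-2}{i-2}$; substituting $r=3b+1$, $i=2b+1$ and simplifying --- the product of the two relevant binomials collapsing to the trinomial ${6b+3\choose b,\,2b,\,3b+3}$ and the proportionality constant picking up the factor $\frac{4}{r}$ --- produces the stated prefactor. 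I would then read off the coefficients of $\lambda$, of $\delta_0'+\delta_0''$ and of $\delta_0^{\mathrm{ram}}$ and check that they come out to $(3b+2)(b+2)$, $-\frac{3b^2+7b+3}{6}$ and $-\frac{24b^2+47b+21}{24}$; the $\delta_0^{\mathrm{ram}}$ coefficient in particular must be extracted with care, keeping the $\frac14$-type denominators that encode the quotient structure of $\rer_g$ along the ramification boundary.

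The main obstacle is the passage from $Z_1(\phi)$ on $\rer_g^0$ to $\overline{\cD}_{g:3}$: one must decide along which boundary divisors $\phi$ becomes identically degenerate, and with what multiplicity, in order to subtract those components. As in the genus $3$ case of Theorem \ref{hyper}, where $\phi$ drops rank by one along the Wirtinger locus $\Delta_0''$, here I would study the limits of $\wedge^i Q_L\otimes\eta$ --- together with the exterior-power aspect of the limit $\mathfrak g^r_d$ --- along $\Delta_0'$, $\Delta_0''$, $\Delta_0^{\mathrm{ram}}$ and along the elliptic-tail divisors: when $\eta$ restricts trivially to the normalization, or when $C$ acquires an elliptic tail carrying a trivial $\eta$, extra sections of the limiting sheaf are forced, so $Z_1(\phi)$ picks up those loci. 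Determining the exact rank drop of $\phi$ at a general point of each such divisor --- via the explicit description of $Q_L$ and of limit linear series on the relevant nodal and reducible curves --- is the delicate step, and I expect the Wirtinger-type and elliptic-tail contributions to be the nonzero ones. Assembling the Riemann--Roch computation with these corrections gives the displayed coefficients; the terms absorbed into ``$\cdots$'' are the remaining boundary coefficients, which for the application to the Kodaira dimension of $\rr_g$ only need to be controlled rather than computed in closed form.
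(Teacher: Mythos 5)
Your overall route is the paper's route: Theorem \ref{trig} is obtained by substituting $k=3$ (so $i=2b+1$, $r=3b+1$, $g=6b+4$, $d=9b+3$) into the general formula of Theorem \ref{det} and simplifying the polynomials $\mathfrak{A}$, $\mathfrak{B}_0$, $\mathfrak{B}_0^{\mathrm{ram}}$ together with the prefactor ${r\choose b}\frac{N}{r+k}(kr+k-r-3)$; your numerology and the observation that $\chi(C,\wedge^i Q_L\otimes \eta)=0$ are correct, and your Chern-class computation of $c_1(\F-\E)$, while phrased via $c_1(Q_L), c_2(Q_L)$ rather than via the recursion $0\to \cA_{a,i-a}\to \wedge^a\H\otimes\cA_{0,i-a}\to\cA_{a-1,i-a+1}\to 0$ and the tautological classes $\mathfrak{a},\mathfrak{b}$ pushed forward from the stack of limit linear series, amounts to the same bookkeeping.

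The genuine gap is in your third paragraph. First, the elliptic-tail loci are irrelevant here: the determinantal structure only lives over the partial compactification $\rer_g^0$, which sees no $\Delta_j$ for $j\geq 1$, so those coefficients are exactly the ones hidden in the ``$\cdots$'' and are controlled separately (Proposition \ref{ineq}), not computed from $\phi$. Second, and more seriously, you ``expect the Wirtinger-type contribution to be nonzero,'' i.e.\ that $\phi$ drops rank identically along $\Delta_0^{''}$ and a multiple of $\delta_0^{''}$ must be subtracted, as happens for $g=3$ and $g=4$. But the formula you are asked to prove has \emph{equal} coefficients $\frac{3b^2+7b+3}{6}$ for $\delta_0'$ and $\delta_0^{''}$, which is precisely the symmetry of $c_1(\cA_{i-1,1}-\wedge^i\H\otimes\cA_{0,0})$ in Theorem \ref{det}; any nonzero Wirtinger correction would destroy it. The low-genus degenerations in Section 5 occur for the map $\H\otimes\cA_{0,0}\to\cA_{0,1}$ (the case $i=1$, via the base-point-free pencil trick), and do not propagate to $i=2b+1\geq 3$. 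So to complete your argument you must actually verify generic nondegeneracy of $\phi$ at a general point of each of $\Delta_0'$, $\Delta_0^{''}$, $\Delta_0^{\mathrm{ram}}$ (or, as the paper implicitly does, accept the class of Theorem \ref{det} as the answer); as written, your heuristic points to a formula different from the one stated.
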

Theorems \ref{det}, \ref{hyper} and \ref{trig} specify precisely the
$\lambda, \delta_0', \delta_0^{'}$ and $\delta_0^{\mathrm{ram}}$
coefficients in the expansion of $[\overline{\mathcal{D}}_{g: k}]$.
Good lower bounds for the remaining boundary coefficients of
$[\overline{\mathcal{D}}_{g: k}]$ can be obtained using Proposition
\ref{ineq}. The information contained in Theorems \ref{hyper} and
\ref{trig} is sufficient to finish the proof of Theorem
\ref{gentype} for odd genus $g=2i+1\geq 15$.

When $b=0$, hence $i=r=k-2$, Theorem \ref{det} has the following interpretation:
\begin{theorem}\label{be0}
We fix integers $k\geq 3, r=k-2$ and $g=(k-1)^2$. The following
locus $$\mathcal{D}_{g:k}:=\{[C, \eta]\in \cR_{g}:\exists L\in
W^{k-2}_{k(k-2)}(C)\  \mbox{ such that } H^0(C, L\otimes \eta)\neq
0\}$$ is a divisor on $\cR_g$. The class of its compactification
inside $\rr_g$ is given by the formula
$$\overline{\mathcal{D}}_{g:k}\equiv g!\frac{1!\ 2!\ \cdots (k-2)!}{(k-1)!\
\cdots (2k-3)!\ (k^2-2k-1)} \Bigl(\frac{1}{2}(k^4-4k^3+11k^2-14k+2)\lambda-$$
$$-\frac{k(k-2)(k^2-2k+5)}{12}(\delta_0'+\delta_0^{''})-
\frac{(k^2-2k+3)(2k^2-4k+1)}{12}\delta_0^{\mathrm{ram}}-\cdots\Bigr)\in
\mathrm{Pic}(\rr_g).
$$
\end{theorem}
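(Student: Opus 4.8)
The plan is to read the statement off from Theorem \ref{det}. The key observation is that setting $b=0$ forces $i=r=k-2$, so the Lazarsfeld dual bundle $Q_L$ has rank exactly $r=k-2$ and hence $\wedge^iQ_L=\wedge^rQ_L=\det Q_L$. Taking determinants in the defining sequence $0\to Q_L^\vee\to H^0(C,L)\otimes\OO_C\to L\to 0$ gives $\det Q_L\cong L$, so that the virtual divisor $\Theta_{\wedge^iQ_L}$ is just the translate of the theta divisor attached to $L$ and the locus $\mathcal{D}_{g:k}$ of Theorem \ref{det} becomes precisely $\{[C,\eta]\in\cR_g:\exists\,L\in W^{k-2}_{k(k-2)}(C)\text{ with }h^0(C,L\otimes\eta)\geq 1\}$. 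One checks $g-d+r=(k-1)^2-k(k-2)+(k-2)=k-1$, so that $\rho(g,k-2,k(k-2))=0$, a general curve of genus $g=(k-1)^2$ carries finitely many such $L$, and $\deg(L\otimes\eta)=k^2-2k=g-1$; Theorem \ref{transversality}, applied with these numerical values by degeneration to the $k$-gonal locus $\cM_{g,k}^1\subset\cM_g$, then shows that $H^0(C,L\otimes\eta)=0$ for a generic $[C,\eta]$ and every such $L$. Therefore $\mathcal{D}_{g:k}$ is a genuine effective divisor on $\cR_g$.

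With this identification I would invoke the determinantal description of Theorem \ref{det}: over the partial compactification $\rer_g^0$ of $\textbf{R}_g$ there is a morphism $\phi\colon\E\to\F$ between vector bundles of equal rank with $Z_1(\phi)\cap\textbf{R}_g=\mathcal{D}_{g:k}$, whence in $\mathrm{Pic}(\rr_g)$ one has $[\overline{\mathcal{D}}_{g:k}]=c_1(\F-\E)-(\text{boundary correction})$, the correction accounting for components of $Z_1(\phi)$ supported over $\rer_g^0\setminus\textbf{R}_g$, exactly as the term $-\delta_0^{''}$ appears along the Wirtinger locus in the discussion following Theorem \ref{hyper}. Substituting $i=r=k-2$, $d=k(k-2)$ and $g=(k-1)^2$ into the general expression for $c_1(\F-\E)$ from Theorem \ref{det} and collecting terms yields the displayed $\lambda,\ \delta_0',\ \delta_0^{''}$ and $\delta_0^{\mathrm{ram}}$ coefficients, while the remaining boundary coefficients (the dots) are bounded below by Proposition \ref{ineq}.

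It remains to account for the overall multiplicative constant. The factor $g!\,\tfrac{1!\,2!\cdots(k-2)!}{(k-1)!\,k!\cdots(2k-3)!}$ is exactly the Castelnuovo number $g!\prod_{j=0}^{k-2}\tfrac{j!}{(g-d+r+j)!}=g!\prod_{j=0}^{k-2}\tfrac{j!}{(k-1+j)!}$, the degree of the map from the Brill--Noether stack $\AUX$ of linear series $\mathfrak g^{k-2}_{k(k-2)}$ down to $\rer_g^0$; it enters because the determinantal contribution is first computed on $\AUX$ and then pushed forward, the general curve carrying that many sheets of $\mathfrak g^{k-2}_{k(k-2)}$'s. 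The residual factor $\tfrac1{k^2-2k-1}=\tfrac1{g-2}$ is the same universal denominator that shows up (as $\tfrac1{2i-1}$) in Theorem \ref{hyper}.

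The main obstacle is not conceptual but computational, and splits into two parts. First, one must carry the Chern-class computation underlying Theorem \ref{det} through the substitution $b=0$ and verify the polynomial identities that collapse the accumulated binomial coefficients into the compact closed form above; this is the same bookkeeping performed for Theorems \ref{hyper} and \ref{trig}, now for the subfamily $b=0$, with the instances $k=3$ (genus $4$, $r=1$) and $k=4$ (genus $9$) serving as numerical sanity checks. Second, one must pin down the boundary correction precisely, which requires the local analysis of Section 5: one decides, stratum by stratum along $\rer_g^0\setminus\textbf{R}_g$, whether $\phi$ degenerates there and with what multiplicity, exactly as for $\Delta_0'$, $\Delta_0^{''}$ and $\Delta_0^{\mathrm{ram}}$ in genus $3$. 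Once both are in hand, positivity of all coefficients in the displayed class, which is what the general-type application needs, follows by direct inspection for the relevant range of $k$.
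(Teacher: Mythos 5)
Your proposal follows essentially the same route as the paper: Theorem \ref{be0} is presented there purely as the $b=0$ specialization of Theorem \ref{det}, using exactly the identification $\wedge^i Q_L=\wedge^r Q_L=\det Q_L\cong L$ (so that the Raynaud condition becomes $H^0(C,L\otimes\eta)\neq 0$), with the divisor statement resting on the $k$-gonal/hyperelliptic degeneration of Theorem \ref{transversality} and the class obtained by substituting $b=0$ into the formula of Theorem \ref{det}. The only point worth noting is that Theorem \ref{transversality} is stated for $b\geq 1$, so one should observe (as your sketch implicitly does) that its proof -- splitting $Q_{A^{\otimes r}}\cong A^{\oplus r}$ and testing $H^0(C,A^{\otimes i}\otimes\eta)$ on a hyperelliptic specialization -- goes through verbatim for $b=0$, $k\geq 3$.
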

When $k=3$ and $g=4$, the divisor $\mathcal{D}_{4: 3}$ consists of
Prym curves $[C, \eta]\in \cR_4$ for which the plane
Prym-canonical model $\iota:C\stackrel{|K_C\otimes \eta|}\longrightarrow
\PP^2$ has a triple point. Note that for a general $[C, \eta]\in \cR_4$, $\iota(C)$ is a $6$-nodal sextic. We can then verify the formula
$$\pi_*(\overline{\mathcal{D}}_{4: 3})=60(34\lambda-4\delta_0-14\delta_1-18\delta_2)=60\cdot
\overline{\mathcal{GP}}_{4, 3}^1\in \mathrm{Pic}(\mm_4),$$ where
$\overline{\mathcal{GP}}_{4, 3}^1\subset \mm_4$ is the divisor of
curves with a vanishing theta-null. This is consistent with the
set-theoretic equality $\pi(\mathcal{D}_{4: 3})=\mathcal{GP}_{4,
3}^1$ which can be easily established (see Theorem \ref{genu4}).

Another case which has a simple interpretation is when $b=1$,
$i=r-1$, and then $g=(2k-1)(k-1), d=2k(k-1)$. Since
$\mbox{rank}(Q_L)=r$ and $\mbox{det}(Q_L)=L$, by duality we have that
$\wedge^i Q_L=M_L\otimes L$, hence points $[C, \eta]\in
\mathcal{D}_{(2k-1)(k-1): k}$ can be described purely in terms of
multiplication maps of sections of line bundles on $C$:
\begin{theorem}
We fix integers $k\geq 2$ and $g=(2k-1)(k-1)$. The following locus
$$\mathcal{D}_{g: k}=\{[C, \eta]\in \cR_g: \exists L\in
W_{2k(k-1)}^{2k-2}(C) \mbox{ with } H^0(L)\otimes H^0(L\otimes
\eta)\rightarrow H^0(L^{\otimes 2}\otimes \eta) \ \mbox{ not
bijective}\}$$ is a divisor on $\cR_{g}$. The class of its
compactification inside $\rr_{g}$ equals
$$\overline{\mathcal{D}}_{g: k}\equiv
g!\frac{1!\ 2! \cdots (k-2)!\ (k-1)}{3(2k^2-3k-1)(2k-1)!\ (2k)!\cdots
(3k-3)!\ (3k-2)}\cdot $$
$$\Bigl(6\bigl(8k^5-36k^4+78k^3-95k^2+49k-6\bigr)\lambda-\bigl(8k^5-36k^4+70k^3-71k^2+29k-2\bigr)\bigl(\delta_0'+\delta_0^{''}\bigr)-$$
$$-\frac{1}{2}\bigl(32k^5-144k^4
+262k^3-245k^2+107k-14\bigr)\delta_0^{\mathrm{ram}}-\cdots\Bigr).$$
\end{theorem}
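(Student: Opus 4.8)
The plan is to derive this theorem from Theorem \ref{det} by specializing to $b=1$, after recasting the bundle $\wedge^i Q_L$ through the multiplication map in the statement. First I would fix the numerology: taking $b=1$ in the definitions preceding the construction of $\mathcal{D}_{g: k}$ gives $r=2k-2$, $i=kb+k-b-2=2k-3=r-1$, $d=rk=2k(k-1)$ and $g=ik+1=(2k-1)(k-1)$, with $\rho(g,r,d)=0$. Because $Q_L$ has rank $r$ and $\det Q_L=L$, the standard isomorphism $\wedge^{r-1}Q_L\cong Q_L^{\vee}\otimes\det Q_L$ gives $\wedge^i Q_L\cong Q_L^{\vee}\otimes L$; tensoring the defining sequence $0\to Q_L^{\vee}\to H^0(C,L)\otimes\OO_C\to L\to 0$ by $L\otimes\eta$ and taking cohomology yields
$$H^0(C,\wedge^i Q_L\otimes\eta)\;\cong\;\ker\Bigl(\mu_\eta\colon H^0(C,L)\otimes H^0(C,L\otimes\eta)\longrightarrow H^0(C,L^{\otimes 2}\otimes\eta)\Bigr).$$

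Next comes the elementary observation that "not bijective'' is equivalent to "having a kernel'' here. By Riemann--Roch $h^0(C,L\otimes\eta)\geq d-g+1=k$, so the source of $\mu_\eta$ has dimension at least $(r+1)k$; on the other hand $\deg(L^{\otimes 2}\otimes\eta)=2d>2g-2$ forces $h^1(C,L^{\otimes 2}\otimes\eta)=0$, so the target has dimension exactly $2d-g+1$. The two values coincide --- both equal $k(2k-1)$ --- precisely because $d/r=k$ (equivalently $\mu(\wedge^i Q_L)=ik=g-1$). Hence $\dim(\mathrm{source})\geq\dim(\mathrm{target})$ always, so a non-bijective $\mu_\eta$ must fail to be injective, i.e. $\ker\mu_\eta\neq 0$; conversely a nonzero kernel plainly obstructs bijectivity. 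Therefore the locus in the statement equals, as a subset of $\cR_g$, the divisor $\mathcal{D}_{g: k}=\{[C,\eta]:\exists L\in W^r_d(C),\ H^0(C,\wedge^i Q_L\otimes\eta)\neq 0\}$ studied in Sections 2--5.

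The rest is then bookkeeping. That $\mathcal{D}_{g: k}\subsetneq\cR_g$ is Theorem \ref{transversality}, and the determinantal presentation $\phi\colon\E\to\F$ over $\rer_g^0$ with $Z_1(\phi)\cap\cR_g=\mathcal{D}_{g: k}$ shows it is an effective divisor whose compactification has class $c_1(\F-\E)$ corrected along the boundary components $\Delta_0'$, $\Delta_0^{''}$, $\Delta_0^{\mathrm{ram}},\ldots$, as supplied by Theorem \ref{det}. Finally one substitutes $b=1$, $r=2k-2$, $d=2k(k-1)$, $g=(2k-1)(k-1)$ into that master formula: the Castelnuovo count of $W^{2k-2}_{2k(k-1)}(C)$ together with the Chern-number prefactor telescopes to $g!\,\frac{1!\,2!\cdots(k-1)!}{3(2k^2-3k-1)(2k-1)!\,(2k)!\cdots(3k-2)!}$, and the coefficients of $\lambda$, $\delta_0'+\delta_0^{''}$ and $\delta_0^{\mathrm{ram}}$ come out as the displayed polynomials in $k$.

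Conceptually the theorem is only a reinterpretation of the $b=1$ case of Theorem \ref{det}, so the sole real work is the concluding algebraic simplification: collapsing the factorial/Chern-number prefactor into the displayed closed form (a useful cross-check is consistency with the multinomial prefactors of Theorems \ref{trig} and \ref{be0}) and reducing the boundary coefficients to polynomials in $k$. I would also flag the degenerate case $k=2$, where $g=3$, $L=K_C$ and $\mathrm{rank}(Q_{K_C})=2$, so that the statement there specializes to the $i=1$ instance of Theorem \ref{hyper}.
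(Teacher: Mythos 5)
Your proposal follows exactly the paper's route: the authors likewise obtain this statement as the $b=1$ specialization of Theorem \ref{det}, using the duality $\wedge^{r-1}Q_L\cong Q_L^{\vee}\otimes \mathrm{det}(Q_L)=M_L\otimes L$ to convert $H^0(C,\wedge^i Q_L\otimes \eta)$ into the kernel of the multiplication map, with Theorem \ref{transversality} supplying that the locus is a proper divisor and Theorem \ref{det} supplying the class. Your explicit dimension count showing that ``not bijective'' is equivalent to ``non-injective'' (source and target both of dimension $k(2k-1)$ when $h^1(L\otimes\eta)=0$, and source strictly larger otherwise) is the one detail the paper leaves implicit, and it is correct.
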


The second class of (virtual) divisors is provided by Koszul
divisors on $\rr_g$. For a pair $(C, L)$ consisting of a curve
$[C]\in \cM_g$ and a line bundle $L\in \mbox{Pic}(C)$, we denote by
$K_{i, j}(C, L)$ its $(i, j)$-th Koszul cohomology group, cf. \cite{L}.  For a
point $[C, \eta]\in \cR_g$ we set $L:=K_C\otimes \eta$ and we
stratify $\cR_g$ using the syzygies of the Prym-canonical curve
$C\stackrel{|L|}\rightarrow \PP^{g-2}$. We define the stratum
$$\mathcal{U}_{g,
i}:=\{[C, \eta]\in \cR_g: K_{i, 2}(C, K_C\otimes \eta)\neq
\emptyset\},$$ that is, $\mathcal{U}_{g, i}$ consists of those Prym
curves $[C, \eta]\in \cR_g$ for which the Prym-canonical model
$C\stackrel{|L|}\longrightarrow \PP^{g-2}$ fails to satisfy the
Green-Lazarsfeld property $(N_i)$ in the sense of \cite{GL},
\cite{L}.

\begin{theorem}\label{prymkoszul}
There exist two vector bundles $\G_{i, 2}$ and $\H_{i, 2}$ of the
same rank defined over a partial compactification $\per_{2i+6}$ of
the stack
$\textbf{R}_{2i+6}$, together with a morphism $\phi:\H_{i, 2}\rightarrow
\G_{i, 2}$ such that
$$\cU_{2i+6, i}:=\{[C, \eta]\in \widetilde{\cR}_{2i+6}: K_{i, 2}(C, K_C\otimes \eta)\neq 0\}$$
is the degeneracy locus of the map $\phi$. The virtual class of
$[\overline{\cU}_{2i+6, i}]$ is given by the formula:
$$[\overline{\cU}_{2i+6, i}]^{virt}=c_1(\G_{i, 2}-\H_{i, 2})={2i+2\choose
i}\Bigl(\frac{3(2i+7)}{i+3}\lambda-\frac{3}{2}\delta_0^{\mathrm{ram}}-(\delta_0'+\alpha\
\delta_0^{''})-\cdots \Bigr),$$ where the constant $\alpha$
satisfies $\alpha\geq 1$.
\end{theorem}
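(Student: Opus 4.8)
The plan is to realize $\cU_{2i+6,i}$ as the degeneracy locus of a bundle map obtained by globalizing the Koszul-theoretic description of the failure of property $(N_i)$ over a suitable partial compactification $\per_{2i+6}$ of $\textbf{R}_{2i+6}$. First I would set $g=2i+6$ and, for a general pair $[C,\eta]\in\cR_g$ with $L:=K_C\otimes\eta$, recall that $\deg L=2g-2$, $h^0(L)=g-1$, so the Prym-canonical map embeds $C$ in $\PP^{g-2}$; the group $K_{i,2}(C,L)$ sits in the Koszul complex $\wedge^{i+1}H^0(L)\otimes H^0(L)\to\wedge^i H^0(L)\otimes H^0(L^{\otimes 2})\to\wedge^{i-1}H^0(L)\otimes H^0(L^{\otimes 3})$, and the key numerical input is that for this particular $i$ and $g$ the two bundles flanking the middle term have ranks making the induced map between (twisted, Schur-functor) pieces a map of vector bundles of the same rank. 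Concretely, following the technique of \cite{F1},\cite{F2} one does not globalize the naive Koszul complex but rather a dual formulation: one writes $K_{i,2}(C,L)$ as the cokernel (or kernel) of a map involving $H^0$ of a twist of $\wedge^i$ of the syzygy bundle $M_L$ (defined by $0\to M_L\to H^0(L)\otimes\OO_C\to L\to 0$), using that $K_{i,2}(C,L)\cong H^0(C,\wedge^{i}M_L\otimes L^{\otimes 2})$ modulo the image from $\wedge^{i+1}$, so that after a Bott-vanishing-type argument only a single $H^0$ survives on each side.

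Second, I would carry out the globalization. Over a parameter stack $\pem_g^0$ (partial compactification of $\textbf{M}_g$ allowing, say, irreducible nodal curves and a few codimension-one degenerations as in the constructions of Theorems \ref{det} and \ref{prymkoszul}), and then over its $\ZZ_2$-cover $\per_g^0$ carrying the tautological $\eta$, one has a universal curve $f:\cC\to\per_g^0$, a universal line bundle $\L=\omega_f\otimes\eta$, and the universal syzygy bundle $\cM:=\cM_{\L}$ fitting in $0\to\cM\to f^*f_*\L\to\L\to 0$. One then defines $\H_{i,2}$ and $\G_{i,2}$ as the appropriate pushforwards: schematically $\H_{i,2}:=f_*\bigl(\wedge^{i+1}\cM\otimes\L\bigr)$ (or the relevant Schur-functor variant) and $\G_{i,2}:=f_*\bigl(\wedge^{i}\cM\otimes\L^{\otimes 2}\bigr)$, after checking via relative Serre duality and base change that the higher pushforwards vanish so these are locally free of the expected rank, and that the Koszul differential globalizes to $\phi:\H_{i,2}\to\G_{i,2}$ whose fiberwise cokernel is $K_{i,2}(C,K_C\otimes\eta)$. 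That $\cU_{2i+6,i}$ is exactly $Z_1(\phi)$ then follows, the inclusion $\cR_g\subset\per_g^0$ being the place where the fiberwise description is literally the syzygy group.

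Third, I would compute $c_1(\G_{i,2}-\H_{i,2})\in A^1(\per_g^0)$. This is a Chern-class bookkeeping exercise: by Grothendieck–Riemann–Roch applied to $f$, $c_1$ of a pushforward $f_*(\wedge^j\cM\otimes\L^{\otimes m})$ is expressed through $\lambda=c_1(f_*\omega_f)$, the classes $\kappa_1$, $c_1(f_*\L)$, and the Chern classes of $\cM$, which in turn are controlled by $c_1(\L)=c_1(\omega_f)+c_1(\eta)$, $\mathrm{ch}(\cM)=\mathrm{ch}(f^*f_*\L)-\mathrm{ch}(\L)$, and the relation $\eta^{\otimes 2}=\OO$ which forces $2c_1(\eta)\equiv$ (a boundary class supported on $\delta_0^{\mathrm{ram}}$). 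The $\lambda$-coefficient $\frac{3(2i+7)}{i+3}{2i+2\choose i}$ and the $\delta_0^{\mathrm{ram}}$-coefficient $-\frac32{2i+2\choose i}$ and the $(\delta_0'+\alpha\,\delta_0'')$ part will fall out of collecting these terms; the binomial prefactor ${2i+2\choose i}$ is just the rank of the middle Koszul bundle times the combinatorial weight from $\wedge^{i+1}$ versus $\wedge^i$. The inequality $\alpha\geq 1$ is obtained not by exact computation but by a test-curve / local-analysis argument near $\Delta_0''$ (Wirtinger covers): one exhibits a one-parameter family meeting $\delta_0''$ transversally and shows the degeneracy locus meets it with multiplicity at least $1$ — in fact the same argument pattern as in the genus-$3$ discussion following Theorem \ref{hyper}, where $\phi$ was shown generically nondegenerate along $\Delta_0',\Delta_0^{\mathrm{ram}}$ but degenerate along $\Delta_0''$.

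The main obstacle I anticipate is twofold. The first is verifying the local freeness and correct rank of $\G_{i,2},\H_{i,2}$ along the boundary of $\per_g^0$: this requires a careful choice of which boundary strata to include (enough to capture the divisor class, few enough that the pushforward sheaves stay locally free and the fiberwise Koszul interpretation survives), together with cohomology-and-base-change vanishing for nodal curves $C_0=C'/x\!\sim\!y$ and for their $\ZZ_2$-covers, including the subtle ramified case where $\eta$ becomes a genuinely different sheaf on the semistable model. The second obstacle is pinning down the $\delta_0''$-coefficient precisely enough to extract $\alpha\geq 1$: unlike the $\lambda$ and $\delta_0^{\mathrm{ram}}$ coefficients, which come from GRR, the $\delta_0''$ term mixes a GRR contribution with a genuinely positive correction coming from the extra degeneracy of $\phi$ along Wirtinger covers, and isolating the sign of that correction — without computing it exactly — is where the real work lies; I would do this by a normal-bundle computation of the universal $\cM$ restricted to $\Delta_0''$ and a rank count of the Koszul map on the (reducible) Wirtinger cover, showing the drop is at least $1$.
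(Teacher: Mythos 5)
Your overall strategy --- realize $\cU_{2i+6,i}$ as the degeneracy locus of a map of equal-rank vector bundles built from the syzygy bundle of $\omega_f\otimes\P$ over a partial compactification, compute $c_1$ by Grothendieck--Riemann--Roch using $\langle\P,\P\rangle=-\frac{1}{2}\delta_0^{\mathrm{ram}}$, and extract $\alpha\geq 1$ from extra degeneracy along Wirtinger covers --- is indeed the paper's strategy. But the concrete definition you give of $\H_{i,2}$ does not work, and fixing it is the one genuinely non-formal idea in the construction. You propose $\H_{i,2}:=f_*(\wedge^{i+1}\cM\otimes\L)$ with $\phi$ ``the Koszul differential''. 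First, $H^0(C,\wedge^{i+1}M_L\otimes L)$ is the \emph{kernel}, not the source, of the map $\wedge^{i+1}H^0(L)\otimes H^0(L)\rightarrow H^0(C,\wedge^i M_L\otimes L^{\otimes 2})$ induced by the Koszul differential, so there is no natural morphism from your $\H_{i,2}$ to $\G_{i,2}$. Second, and fatally, $h^0(C,\wedge^{i+1}M_L\otimes L)=\chi+h^1(C,\wedge^{i+1}M_L\otimes L)$, and $H^1(C,\wedge^{i+1}M_L\otimes L)$ identifies with $K_{i,2}(C,K_C\otimes\eta)$ (since $H^1(C,L)=0$ for $\eta\neq\OO_C$); hence this pushforward fails to be locally free of constant rank precisely along the locus $\cU_{g,i}$ you are trying to construct, whereas a degeneracy-locus argument needs two honest vector bundles everywhere. (For $i=0$, $g=6$ your $\H_{0,2}$ has generic rank $10$ while $\mathrm{rank}(\G_{0,2})=15$.) The paper's resolution is to take for $\H_{i,2}$ the \emph{ambient} bundle with fibre $H^0(\PP^{g-2},\wedge^i M_{\PP^{g-2}}(2))$, of rank $(i+1){g\choose i+2}$ --- the quotient of $\wedge^{i+1}H^0(L)\otimes H^0(L)$ by the Koszul-trivial part $H^0(\PP^{g-2},\wedge^{i+1}M_{\PP^{g-2}}(1))$ --- built globally out of $\mathrm{Sym}$ and $\wedge$ of the single twisted Hodge bundle $\cN_1=f_*(\omega_f\otimes\P)$ via the inductive sequences (\ref{hi}); the map $\phi$ is then restriction from $\PP^{g-2}$ to the Prym-canonical curve, its cokernel is $K_{i,2}$ by Bott vanishing, and the rank equality $\mathrm{rank}(\H_{i,2})=\mathrm{rank}(\G_{i,2})$ is exactly the condition $g=2i+6$.

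A second, related point you gloss over: the evaluation $f^*(\cN_1)\rightarrow\omega_f\otimes\P$ on the universal Prym curve is \emph{not} surjective; its image is $\omega_f\otimes\P\otimes\mathcal{I}_T$, where $T$ is the codimension-two locus of nodes of Wirtinger covers, at which $\omega_X\otimes\eta$ fails to be globally generated. This is exactly where the $\delta_0^{''}$-anomaly lives: the resulting sequences relating $\E_{a,b}$, $\wedge^a\E_{0,1}\otimes\E_{0,b}$ and $\E_{a-1,b+1}$ are only left-exact along $\widetilde{\Delta}_0^{''}$, and in the paper $\alpha\geq 1$ falls out of the GRR value $1$ (forced to coincide with the $\delta_0'$-coefficient, since Proposition \ref{syzi} treats $\delta_0'$ and $\delta_0^{''}$ symmetrically) plus the effective correction coming from this failure of right-exactness. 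That is a cleaner mechanism than the test-curve estimate you sketch, although your instinct about the source of the correction --- extra degeneracy of $\phi$ over Wirtinger covers --- is correct.
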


The compactification $\widetilde{\textbf{R}}_{g}$ has the property that if $\pr_g\subset \rr_g$ denotes its coarse moduli space, then $\mbox{codim}\bigl(\pi^{-1}(\cM_g\cup \Delta_0)-\pr_g\bigr)\geq 2$. In particular Theorem \ref{prymkoszul} precisely determines the coefficient of $\lambda, \delta_0', \delta_0^{''}$ and $\delta_0^{\mathrm{ram}}$ in the expansion of $[\overline{\cU}_{2i+6, i}]^{virt}$. We also show that when $g<2i+6$ then $K_{i, 2}(C, K_C\otimes
\eta)\neq \emptyset$ for any $[C, \eta]\in \cR_{g}$. By analogy with
the case of canonical curves and the classical  M. Green Conjecture
on syzygies of canonical curves (see \cite{Vo}), we conjecture that
the morphism of vector bundles  $\phi: \G_{i, 2}\rightarrow \H_{i,
2}$ over $\widetilde{\textbf{R}}_{2i+6}$ is generically non-degenerate:
\begin{conjecture}\label{prymgreen}
\emph{(Prym-Green Conjecture)} For a generic point $[C, \eta]\in
\cR_g$ and $g\geq 2i+6$, we have that $K_{i, 2}(C, K_C\otimes
\eta)=0$. Equivalently, the Prym-canonical curve
$C\stackrel{|K_C\otimes \eta|}\hookrightarrow \PP^{g-2}$ satisfies
the Green-Lazarsfeld property $(N_i)$ whenever $g\geq 2i+6$. For
$g=2i+6$, the locus $\cU_{2i+6, i}$ is an effective divisor on
$\cR_{2i+6}$.
\end{conjecture}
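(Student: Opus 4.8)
The plan is to reduce both assertions to the vanishing of $K_{i,2}(C,K_C\otimes\eta)$ at a single general Prym curve, exploiting the determinantal structure of Theorem \ref{prymkoszul}. Since $\cU_{g,i}$ is the degeneracy locus of $\phi\colon\H_{i,2}\rightarrow\G_{i,2}$ it is closed in $\cR_g$, so by semicontinuity it suffices, for each $g\geq 2i+6$, to produce one pair $[C,\eta]$ with $K_{i,2}(C,K_C\otimes\eta)=0$. In the threshold case $g=2i+6$ the bundles $\G_{i,2}$ and $\H_{i,2}$ have equal rank; hence generic non-degeneracy of $\phi$ forces $\cU_{2i+6,i}$ to be cut out by the single equation $\det\phi=0$, a section of the effective (indeed big) line bundle whose class is computed in Theorem \ref{prymkoszul}. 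As that class is nonzero, its zero locus is an honest effective divisor, which gives the final assertion of the conjecture. Everything therefore rests on exhibiting, for every admissible $(g,i)$, one Prym curve with the desired Koszul vanishing; it is enough to settle the threshold $g=2i+6$ and then to observe that the more positive line bundles occurring for $g>2i+6$ only make the vanishing easier.

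First I would realize the test pairs on \emph{Nikulin surfaces}. Let $S$ be a K3 surface carrying eight pairwise disjoint smooth rational curves $N_1,\ldots,N_8$ and a class $e\in\mathrm{Pic}(S)$ with $2e\equiv N_1+\cdots+N_8$, and let $H$ be a primitive polarization with $H\cdot N_j=0$ and $H^2=2g-2$. A general $C\in|H|$ is then a smooth genus $g$ curve disjoint from the $N_j$, and $\eta:=\OO_C(e)$ is a nontrivial $2$-torsion point because $2e|_C\equiv 0$; this is precisely the Nikulin--Prym construction. Since $K_S\equiv 0$ we have $K_C\equiv H|_C$, so the Prym-canonical bundle is the restriction $K_C\otimes\eta\equiv(H+e)|_C$ of a line bundle defined on all of $S$. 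A Riemann--Roch count gives $e^2=-4$ and $H\cdot e=0$, hence $(H+e)^2=2g-6$, so $H+e$ has sectional genus $g-2$, matching the target $\PP^{g-2}$ of the Prym-canonical map.

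The second step transfers the computation to $S$. Writing $L:=H+e$, the restriction sequence $0\to L-C\to L\to L|_C\to 0$ together with its tensor powers relates $K_{i,2}(C,L|_C)$ to $K_{i,2}(S,L)$ and $K_{i,1}(S,L)$. The auxiliary vanishings needed for this comparison --- essentially $H^1(S,e)=0$ and the vanishing of $H^1$ of the relevant twists --- follow from Riemann--Roch on $S$, using that $\chi(e)=0$ and that $e$ is non-effective for general $S$. The outcome is an identification $K_{i,2}(C,K_C\otimes\eta)\cong K_{i,2}(S,H+e)$, reducing the Prym--Green statement to a purely surface-theoretic vanishing on the Nikulin K3 with the twisted polarization $H+e$.

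It remains to prove $K_{i,2}(S,H+e)=0$, and this is where I expect the real difficulty to lie. Unlike Voisin's computation of canonical syzygies on a general primitively polarized K3, here the polarizing class $H+e$ is non-primitive and its section ring is harder to control, so her argument does not transcribe directly. The most promising route is to degenerate $S$ to a reducible surface --- a union of rational or ruled components glued along rational curves and carrying the limiting Nikulin structure --- and to compute the limit Koszul cohomology combinatorially on the central fibre, the specialization being controlled by upper-semicontinuity of Koszul groups in families. The hard part will be to show that no unexpected syzygies survive in the limit; this is exactly the surface-level vanishing that, for canonical curves, is the entire content of Green's conjecture. I would also anticipate that a finite list of small genera may have to be excluded by an ad hoc analysis, since the comparison degenerates whenever $H+e$ fails to be very ample or the central fibre carries extra linear syzygies --- indeed it is precisely a single such surviving syzygy that, in the threshold case $g=2i+6$, should account for the effective divisor $\cU_{2i+6,i}$.
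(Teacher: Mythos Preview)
The statement you are attempting to prove is labeled \emph{Conjecture} in the paper, and the paper does \emph{not} contain a proof of it. The authors remark only that they verify the case $g=2i+6$ for $0\leq i\leq 4$, $i\neq 3$, in a separate manuscript \cite{F4}; the general statement is left open. There is therefore no proof in the paper to compare your proposal against.

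As to the proposal itself: your reduction to a single vanishing via the determinantal description is correct, and the Nikulin surface idea is in fact the approach that was later pursued (and for which the conjecture was eventually established in odd genus). But what you have written is a strategy, not a proof. You yourself identify the crux as the vanishing $K_{i,2}(S,H+e)=0$ on the Nikulin K3 and concede that this step is hard and unproved; the degeneration sketch you offer is not carried out. Two further gaps are worth flagging. First, the claim that the threshold case $g=2i+6$ implies all $g>2i+6$ because ``more positive line bundles only make the vanishing easier'' is not justified: there is no general monotonicity principle of this shape for $K_{i,2}$ of Prym-canonical bundles, and one needs either a hyperplane-section argument or a separate construction in each genus. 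Second, the isomorphism $K_{i,2}(C,K_C\otimes\eta)\cong K_{i,2}(S,H+e)$ requires that restriction induce an isomorphism $H^0(S,H+e)\stackrel{\sim}{\to}H^0(C,K_C\otimes\eta)$, i.e.\ that $H^0(S,e)=H^1(S,e)=0$; since $\chi(\OO_S(e))=0$ this is equivalent to $h^0(S,e)=0$, which fails on special Nikulin surfaces and must be checked for the generic one.
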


Proposition \ref{ni} shows that, if true, Conjecture \ref{prymgreen}
is sharp. In \cite{F4} we verify the Prym-Green Conjecture for
$g=2i+6$ with $0\leq i\leq 4, i\neq 1$. In particular, this together with
Theorem \ref{prymkoszul} proves that $\rr_g$ is of general type for
$g=14$.

The strata $\cU_{g, i}$ have been considered before for $i=0, 1$, in
connection with the Prym-Torelli problem. Unlike the classical
Torelli problem, the Prym-Torelli problem is a subtle question:
Donagi's tetragonal construction shows that $\mathcal{P}_g$ fails to
be injective over points $[C, \eta]\in \pi^{-1}(\cM_{g, 4}^1)$ where
the curve $C$ is tetragonal (cf. \cite{D2}). The locus $\cU_{g, 0}$
consists of those points $[C, \eta]\in \cR_g$ where the differential
$$(d\mathcal{P}_g)_{[C, \eta]}: H^0(C, K_C^{\otimes
2})^{\vee}\rightarrow (\mbox{Sym}^2 H^0(C, K_C\otimes
\eta))^{\vee}$$ is not injective and thus the \emph{infinitesimal
Prym-Torelli theorem} fails. It is known that $(d\mathcal{P}_g)_{[C,
\eta]}$ is generically injective for $g\geq 6$ (cf. \cite{B}, or
\cite{De} Corollaire 2.3), that is, $\cU_{g, 0}$ is a proper
subvariety of $\cR_g$. In particular, for $g=6$ the locus $\cU_{6,
0}$ is a divisor of $\cR_6$, which gives another proof of Conjecture
\ref{prymgreen} for $i=0$.

Debarre proved that $\cU_{g, 1}$ is a proper subvariety of $\cR_g$
for $g\geq 9$ (cf. \cite{De} Th\'eoreme 2.2). This immediately
implies that for $g\geq 9$ the Prym map $\mathcal{P}_g$ is
generically injective, hence the \emph{Prym-Torelli theorem} holds
generically. Debarre's proof unfortunately does not cover the
interesting case $g=8$.

The proof of Theorem \ref{gentype} is finished in Section 4, using
in an essential way results from \cite{F3}: We set
$g':=1+\frac{g-1}{g}{2g\choose g-1}$ and then we consider the
rational map which associates to a curve one of its Brill-Noether loci
$$\phi: \mm_{2g-1}\dashrightarrow 
\mm_{1+\frac{g-1}{g}{2g\choose g-1}}, \ \mbox{
 }\  \phi[Y]:=W^1_{g+1}(Y),$$
where $W^1_{g+1}(Y):=\{L\in \mathrm{Pic}^{g+1}(Y): h^0(Y, L)\geq 2\}$.
If $\chi:\rr_g \rightarrow \mm_{2g-1}$ is the map given
by $\chi([C, \eta]):=[\tilde{C}]$, where $f:\tilde{C}\rightarrow C$
is the \'etale double cover with the property that
$f_*\OO_{\tilde{C}}=\OO_C\oplus \eta$, then using \cite{F3} we
compute the slope of myriads of effective divisors of type
$\chi^*\phi^*(A)$, where $A\in \mbox{Ample}(\mm_{g'})$. This proves
Theorem \ref{gentype} for even genus $g=2i+6\geq 18$.

We mention in passing as an  immediate application of Proposition
\ref{ineq}, a different proof of the statement that $\rr_g$ has good rationality  properties for low $g$
(see again the Introduction for the history of this problem). Our proof is quite simple and uses only  numerical properties of Lefschetz pencils of curves on $K3$ surfaces:
\begin{theorem}\label{r7}
For all $g\leq 7$, the Kodaira dimension of $\rr_g$ is $-\infty$.
\end{theorem}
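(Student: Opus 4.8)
The plan is to show that $K_{\rr_g}$ fails to be pseudoeffective by producing a covering family of complete curves on $\rr_g$ along which its degree is negative; feeding such a family into Proposition \ref{ineq} (together with the fact, established in the last section, that pluricanonical forms on $\rr_g^{\mathrm{reg}}$ extend to a resolution) yields $\kappa(\rr_g)=-\infty$. The curves will be the $\pi$-preimages of Lefschetz pencils of curves on $K3$ surfaces, whose numerical invariants are elementary.

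First I would fix a general primitively polarized $K3$ surface $(S,H)$ with $H^2=2g-2$ and a general Lefschetz pencil $B\cong\PP^1\subset|H|$. Blowing up the $2g-2$ base points produces a fibration $\widetilde S\to\PP^1$ with smooth generic fibre of genus $g$, hence a morphism $B\to\mm_g$. Since $\mathrm{Pic}(S)=\ZZ H$, every member of $|H|$ is irreducible and every singular member is irreducible one-nodal, so $B$ is disjoint from $\Delta_i$ for all $i\ge 1$, while the standard Noether-type computations give $\lambda\cdot B=g+1$ and $\delta_0\cdot B=6g+18$. For $g\le 9$ a general curve of genus $g$ lies on such a surface and moves in such a pencil, so as $(S,H,B)$ vary these curves $B$ cover $\mm_g$.

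Next I set $\Gamma:=\pi^{-1}(B)\subset\rr_g$, a complete curve; the curves $\Gamma$ cover $\rr_g$ because $\pi$ is surjective, and for general $(S,H,B)$ one can arrange $\Gamma\subset\rr_g^{\mathrm{reg}}$ (a dimension count, $\mathrm{Sing}(\rr_g)$ having codimension $\ge 2$). Since $\pi$ is unramified over a general point of $B$, one has $\pi_*[\Gamma]=(2^{2g}-1)[B]$, whence $\lambda\cdot\Gamma=(2^{2g}-1)(g+1)$ and $(\delta_0'+\delta_0''+2\delta_0^{\mathrm{ram}})\cdot\Gamma=\pi^*\delta_0\cdot\Gamma=(2^{2g}-1)(6g+18)$. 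The remaining input is $\delta_0^{\mathrm{ram}}\cdot\Gamma$: over each of the $6g+18$ nodes of the pencil the fibre of $\pi$ contains exactly $2^{2g-2}$ points of $\Delta_0^{\mathrm{ram}}$ (a torsor under the $2$-torsion of the Jacobian of the genus $g-1$ normalization of that fibre), met transversally by $\Gamma$, so $\delta_0^{\mathrm{ram}}\cdot\Gamma=(6g+18)\,2^{2g-2}$. As $\Gamma$ avoids every $\Delta_i,\Delta_{g-i},\Delta_{i:g-i}$ with $i\ge 1$, the formula for $K_{\rr_g}$ then gives
$$K_{\rr_g}\cdot\Gamma=(2^{2g}-1)\bigl(13(g+1)-2(6g+18)\bigr)+2^{2g-2}(6g+18)=(23-g)-2^{2g-2}(74-10g),$$
which is strictly negative for every $g\le 7$ (for instance it equals $-16368$ when $g=7$); for $g\le 4$ the rationality of $\rr_g$ recalled in the introduction already gives $\kappa(\rr_g)=-\infty$ directly.

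Finally, were $\kappa(\rr_g)\ge 0$ then $K_{\rr_g}$ would be pseudoeffective, hence a limit of effective $\QQ$-divisors; restricting such a sequence to a general member $\Gamma$ of the covering family — which misses all their base loci and lies in $\rr_g^{\mathrm{reg}}$ — would force $K_{\rr_g}\cdot\Gamma\ge 0$, contradicting the display. Equivalently, the curves $\Gamma$ realize the hypotheses of Proposition \ref{ineq}. I expect the main obstacle to be the boundary bookkeeping over $\Delta_0$: one must carry out the local analysis of $\pi:\rr_g\to\mm_g$ near the nodal fibres of a $K3$ pencil to confirm that $\Gamma$ is a reduced curve meeting $\Delta_0^{\mathrm{ram}}$ in precisely $(6g+18)\,2^{2g-2}$ points and avoiding the singular locus of $\rr_g$; once that is pinned down, the negativity above is merely the numerical shadow of the small slope $\delta_0\cdot B/\lambda\cdot B=6+12/(g+1)$ of a $K3$ pencil.
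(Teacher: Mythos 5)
Your proposal is correct and follows essentially the same route as the paper: lift a Lefschetz pencil $B$ on a $K3$ surface to the covering curve $\tilde{B}=\pi^{-1}(B)\subset\rr_g$, compute $\tilde{B}\cdot\lambda=(2^{2g}-1)(g+1)$, $\tilde{B}\cdot\pi^*\delta_0=(2^{2g}-1)(6g+18)$ and $\tilde{B}\cdot\delta_0^{\mathrm{ram}}=2^{2g-2}(6g+18)$, and observe that $\tilde{B}\cdot K_{\rr_g}=(2^{2g}-1)(g-23)+2^{2g-2}(6g+18)<0$ exactly for $g\leq 7$, which forces $\kappa(\rr_g)=-\infty$ since $\tilde{B}$ moves in a covering family. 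Your reference to Proposition \ref{ineq} is slightly misplaced (that proposition concerns the boundary pencils $A_i$), but the covering-curve principle you invoke is precisely the one the paper uses.
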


We close by summarizing the structure of the paper. In Section 1 we
introduce the stack $\rer_g$ of Prym curves and determine the Chern
classes of certain tautological vector bundles. In Section 2 we carry out
the enumerative study of the divisors $\overline{\mathcal{D}}_{g:k}$
while in Section 3 we study Koszul divisors on $\rr_g $ in
connection with the Prym-Green Conjecture. The proof of Theorem
\ref{gentype} is completed in Section 4 while Section 5 is concerned
with the enumerative geometry of $\rr_g$ for $g\leq 5$. In Section 6 we describe the behaviour of singularities
of pluricanonical forms of $\rr_g$. There is a significant overlap between some of the results
of this paper and those of \cite{Be}.  Among the things we use from \cite{Be} we mention the description of the branch locus
of $\pi$ and the fact that $\rr_g$ is isomorphic to the coarse
moduli space of $\rem_g(\mathcal{B}\mathbb Z_2)$ (see Section 1). However, some of the results in \cite{Be} are not correct, in particular the statement in \cite{Be} Chapter 3 on singularities of $\rr_g$. Hence we carried out a  detailed study of singularities of $\rr_g$ in Section 6 of our paper.

\section{The stack of Prym curves}

In this section we review a few facts about compactifications of
$\cR_g$. As a matter of terminology, if $\textbf{M}$ is a Deligne-Mumford stack, we denote
by $\cM$ its coarse moduli space (This is contrary to the convention set in \cite{ACV}
but it makes sense, at least from a historical point of view). All the Picard groups of stacks or schemes we are going to consider are with rational coefficients.

We recall that $\pi:\cR_g\rightarrow \cM_g$ is the
$(2^{2g}-1)$-sheeted cover which forgets the point of order $2$ and
we denote by $\rr_g$ the normalization of $\mm_g$ in the function
field of $\cR_g$. By definition, $\rr_g$ is a normal variety and
$\pi$ extends to a finite ramified covering $\pi:\rr_g\rightarrow
\mm_g$. The local behaviour of this branched cover has been studied
in the thesis of M. Bernstein \cite{Be} as well as in the paper
\cite{BCF}. In particular, the scheme $\rr_g$ has two distinct modular
incarnations which we now recall. If $X$ is a nodal curve, a smooth
rational component $E\subset X$ is said to be \emph{exceptional} if
$\#(E\cap \overline{X-E})=2$. The curve $X$ is said to be
\emph{quasi-stable} if any two exceptional components of $X$ are
disjoint. Thus a quasi-stable curve is obtained from a stable curve
by blowing-up each node at most once. We denote by $[st(X)]\in
\mm_g$ the stable model of $X$. We have the following definition
(cf. \cite{BCF}):

\begin{definition}\label{prymstructures} A \emph{Prym curve} of genus
$g$ consists of a triple $(X, \eta, \beta)$, where $X$ is a genus
$g$ quasi-stable curve, $\eta\in \mathrm{Pic}^0(X)$ is a line bundle
of degree $0$ such that $\eta_{E}=\OO_E(1)$ for every exceptional
component $E\subset X$, and $\beta:\eta^{\otimes 2}\rightarrow
\OO_X$ is a sheaf homomorphism which is generically non-zero along
each non-exceptional component of $X$.
\newline
A \emph{family of Prym curves} over a base scheme $S$ consists of a
triple $(\mathcal{X}\stackrel{f}\rightarrow S, \eta, \beta)$, where
$f:\mathcal{X}\rightarrow S$ is a flat family of quasi-stable
curves, $\eta\in \mathrm{Pic}(\mathcal{X})$ is a line bundle and
$\beta:\eta^{\otimes 2}\rightarrow \OO_{\mathcal{X}}$ is a sheaf
homomorphism, such that for every point $s\in S$ the restriction
$(X_s, \eta_{X_s}, \beta_{X_s}:\eta_{X_s}^{\otimes 2}\rightarrow
\OO_{X_s})$ is a Prym curve.
\end{definition}

We denote by $\rer_g$ the non-singular Deligne-Mumford stack of Prym curves of genus $g$. The main result of \cite{BCF} is that the coarse
moduli space of $\rer_g$ is isomorphic to the normalization of
$\mm_g$ in the function field of $\mathcal{R}_g$. On the other hand,
it is proved in \cite{Be} that $\rr_g$ is also isomorphic to the
coarse moduli space of the Deligne-Mumford stack $\rem_g(\mathcal{B}\mathbb Z_2)$ of $\mathbb Z_2$-admissible double
covers introduced in \cite{B} and later in \cite{ACV}. For intersection theory calculations
the language of Prym curves is better suited than that of admissible
covers. In particular, the existence of a degree $0$ line bundle
$\eta$ over the universal Prym curve will be often used to compute
the Chern classes of various tautological vector bundles defined
over $\rer_g$. Throughout this paper we use the isomorphism between rational Picard
groups $\epsilon^*:\mathrm{Pic}(\rr_g)\rightarrow \mathrm{Pic}(\rer_g)$ induced by the map
$\epsilon:\rer_g\rightarrow \rr_g$ from the stack to its coarse moduli space.
\begin{remark} If $(X, \eta, \beta)$ is a Prym curve with exceptional components
$E_1, \ldots, E_r$  and $\{p_i, q_i\}=E_i\cap \overline{X-E_i}$ for
$i=1, \ldots, r$, then obviously $\beta_{E_i}=0$. Moreover,  if
$\tilde{X}:=\overline{X-\bigcup_{i=1}^r E_i}$ (viewed as a subcurve
of $X$), then we have an isomorphism of sheaves
\begin{equation}\label{isom}
\eta^{\otimes 2}_{\tilde{X}}\stackrel{\sim}\rightarrow
\OO_{\tilde{X}}(-p_1-q_1-\cdots -p_r-q_r).
\end{equation}
\end{remark}

It is straightforward to describe all Prym curves $[X, \eta,
\beta]\in \rr_g$ whose stable model has a prescribed topological
type. We do this when $st(X)$ is a $1$-nodal curve and we determine
in the process the boundary components of $\rr_g-\cR_g$.

\begin{example} (\emph{Curves of compact type}) If $st(X)=C\cup D$ is a union of two smooth curves $C$ and
$D$ of genus $i$ and $g-i$ respectively meeting transversally at a point, we use (\ref{isom}) to note
that $X=C\cup D$ (that is, $X$ has no exceptional components).
The line bundle $\eta$ on $X$  is determined by the choice of two line
bundles $\eta_C\in \mathrm{Pic}^0(C)$ and $\eta_D\in
\mathrm{Pic}^0(D)$ satisfying $\eta_C^{\otimes 2}=\OO_C$ and
$\eta_D^{\otimes 2}=\OO_D$ respectively. This shows that for $1\leq
i\leq [g/2]$ the pull-back under $\pi$ of the boundary divisor
$\Delta_i\subset \mm_g$ splits into three irreducible components
$$\pi^*(\Delta_i)=\Delta_i+\Delta_{g-i}+\Delta_{i:g-i},$$ where the
generic point of $\Delta_i\subset \rr_g$ is of the form $[C\cup D,
\eta_C\neq \OO_C, \eta_D=\OO_D]$, the generic point of
$\Delta_{g-i}$ is of the form $[C\cup D, \eta_C=\OO_C, \eta_D\neq
\OO_D])$, and finally $\Delta_{i: g-i}$ is the closure of the locus
of points $[C\cup D, \eta_C\neq \OO_C, \eta_D\neq \OO_D]$ (see
also \cite{Be} pg. 9).
\end{example}

\begin{example} (\emph{Irreducible one-nodal curves}) If $st(X)=C_{yq}:=C/y\sim q$, where $[C, y, q]\in \cM_{g-1, 2}$,
then there are two possibilities, depending on whether $X$ has an
exceptional component or not. Suppose first that $X=C'$ and $\eta
\in \mbox{Pic}^0(X)$. If $\nu:C\rightarrow X$ is the normalization
map, then there is an exact sequence
$$1\longrightarrow \mathbb C^* \longrightarrow
\mbox{Pic}^0(X)\stackrel{\nu^*}\longrightarrow
\mbox{Pic}^0(C)\longrightarrow 0.$$ Thus $\eta$ is determined by a
(non-trivial) line bundle $\eta_C:=\nu^*(\eta)\in \mbox{Pic}^0(C)$
 satisfying $\eta_C^{\otimes 2}=\OO_C$ together with an
identification of the fibres  $\eta_C(y)$ and $\eta_C(q)$. If
$\eta_C=\OO_C$, then there is a unique  way to identify the fibres
$\eta_C(y)$ and $\eta_C(q)$ such that $\eta\neq \OO_X$, and this
corresponds to the classical Wirtinger cover of $X$. We denote by
$\Delta_0^{''}=\Delta_0^{\mathrm{wir}}$ the closure in $\rr_g$ of
the locus of Wirtinger covers. If $\eta_C\neq \OO_C$, then for each
such choice of $\eta_C\in \mbox{Pic}^0(C)[2]$ there are $2$ ways to
glue $\eta_C(y)$ and $\eta_C(q)$. This provides another $2\times
(2^{2g-2}-1)$ Prym curves having $C'$ as their stable model. We set
$\Delta_0'\subset \rr_g$ to be the closure of the locus of  Prym
curves with $\eta_C\neq \OO_C$.

We now treat the case when $X=C\cup_{\{y, q\}} E$, with $E$ being an
exceptional component. Then $\eta_E=\OO_E(1)$ and $\eta_C^{\otimes
2}=\OO_C(-y-q)$. The analysis carried out in \cite{BCF} Proposition
12, shows that $\pi$ is simply ramified at each of these $2^{2g-2}$
Prym curves in $\pi^{-1}([C'])$. We denote by
$\Delta_0^{\mathrm{ram}}\subset \rr_g$ the closure of the locus of
Prym curves $[C\cup_{\{y, q\}} E, \eta, \beta]$ and then
$\Delta_{0}^{\mathrm{ram}}$ is the ramification divisor of $\pi$.
Moreover one has the relation,
$$\pi^*(\Delta_0)=\Delta_0'+\Delta_0^{''}+2\Delta_0^{\mathrm{ram}}.$$
\end{example}
It is easy to establish a dictionary between Prym curves and
Beauville admissible covers. We explain how to do this in
codimension $1$ in $\rr_g$ (see also \cite{D2} Example 1.9). The
general point of $\Delta_0'$ corresponds to an \'etale double cover
$[\tilde{C}\stackrel{f}\rightarrow C]\in \cR_{g-1}$ induced by
$\eta_C$. We denote by $y_i, q_i (i=1, 2)$ the points lying in
$f^{-1}(y)$ and $f^{-1}(q)$ respectively. Then $$\mm_{2g-1}\ni
\frac{\tilde{C}}{y_1\sim q_1, y_2\sim q_2}\longrightarrow
\frac{C}{y\sim q}\in \mm_g$$ is a admissible double cover, defined
up to a sign. This ambiguity is then resolved in the choice of an
element in $\mbox{Ker}\{\nu^*:\mbox{Pic}^0(C_{yq})[2]\rightarrow
\mbox{Pic}^0(C)[2]\}$.

If $[C/y\sim q, \eta, \beta]$ is a general point of $\Delta_0^{''}$,
then we take identical copies $[C_1, y_1, q_1]$ and $[C_2, y_2,
q_2]$ of $[C, y, q]\in \cM_{g-1, 2}$. The Wirtinger cover is
obtained by taking
$$\mm_{2g-1}\ni \frac{C_1\cup C_2}{y_1\sim q_2,
y_2\sim q_1} \longrightarrow \frac{C}{y\sim q}\in \mm_g. $$

If $[C\cup_{\{y, q\}} E, \eta, \beta]\in \Delta_0^{\mathrm{ram}}$,
then $\eta_C\in \sqrt{\OO_C(-y-q)}$ induces a $2:1$ cover
$\tilde{C}\stackrel{f}\rightarrow C$ branched over $y$ and $q$. We
set $\{\tilde{y}\}:= f^{-1}(y), \{\tilde{q}\}:=f^{-1}(q)$. The
Beauville cover is
$$\mm_{2g-1}\ni \frac{\tilde{C}}{\tilde{y}\sim
\tilde{q}}\longrightarrow \frac{C}{y\sim q}\in \mm_g.$$

As usual, one denotes by $\delta_0', \delta_0^{''},
\delta_0^{\mathrm{ram}}, \delta_{i}, \delta_{g-i}, \delta_{i:g-i}\in
\mbox{Pic}(\rer_g)$ the stacky divisor classes corresponding to the
boundary divisors of $\rr_g$. We also set
$\lambda:=\pi^*(\lambda)\in \mbox{Pic}(\rer_g)$. Next we determine
the canonical class $K_{\rr_g}$:

\begin{theorem}
One has the following formula in $\mathrm{Pic}(\rer_g)$:
$$K_{\rr_g}=13\lambda-2(\delta_0^{'}+\delta_0^{''})-3\delta_0^{\mathrm{ram}}-2\sum_{i=1}^{[g/2]}
(\delta_i+\delta_{g-i}+\delta_{i:
g-i})-(\delta_1+\delta_{g-1}+\delta_{1: g-1}).$$
\end{theorem}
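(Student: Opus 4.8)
The plan is to read off $K_{\rr_g}$ from the canonical class of $\mm_g$ by means of the Riemann--Hurwitz formula for the finite covering $\pi:\rr_g\rightarrow\mm_g$, using the three pull-back formulas $\pi^*\lambda=\lambda$, $\pi^*\delta_0=\delta_0'+\delta_0^{''}+2\delta_0^{\mathrm{ram}}$ and $\pi^*\delta_i=\delta_i+\delta_{g-i}+\delta_{i: g-i}$ already established in the Examples above. As $\rr_g$ has only finite quotient singularities it is $\mathbb{Q}$-Gorenstein and smooth in codimension one, and likewise for $\mm_g$; deleting from both spaces their singular loci (which have codimension $\geq 2$) and passing to the preimage of the smooth locus of $\mm_g$, one is reduced to Riemann--Hurwitz for a finite morphism of smooth varieties. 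This gives $K_{\rr_g}\equiv\pi^*(K_{\mm_g})+\mathrm{Ram}(\pi)$ in $\mathrm{Pic}(\rr_g)\otimes\mathbb{Q}\cong\mathrm{Pic}(\rer_g)$, where $\mathrm{Ram}(\pi)=\sum_D(e_D-1)D$ runs over the prime divisors $D\subset\rr_g$ along which $\pi$ ramifies with index $e_D$.

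I would then feed in the Harris--Mumford formula \cite{HM}: for $g\geq 4$ (so that the locus of curves with automorphisms has codimension $\geq 2$ in $\cM_g$ and $\mm_g$ has canonical singularities) one has $K_{\mm_g}\equiv 13\lambda-2\delta_0-3\delta_1-2\sum_{i=2}^{[g/2]}\delta_i$, the coefficient $-3$ of $\delta_1$ coming from the fact that near the generic point of $\Delta_1$ the coarse space $\mm_g$ is the quotient of a smooth chart by the elliptic tail involution, which there acts as a quasi-reflection. The remaining input is the computation of $\mathrm{Ram}(\pi)$, which follows from the boundary analysis above together with the local study of $\pi$ carried out in \cite{BCF}: over the generic point of $\Delta_0$, the map $\pi$ is unramified along $\Delta_0'$ and $\Delta_0^{''}$ and simply ramified along $\Delta_0^{\mathrm{ram}}$, consistently with $\pi^*\delta_0=\delta_0'+\delta_0^{''}+2\delta_0^{\mathrm{ram}}$; over the generic point of $\Delta_i$ with $1\leq i\leq [g/2]$, a Prym structure on a curve of compact type is an independent choice of a $2$-torsion line bundle on each of the two components, so $\pi$ is unramified along $\Delta_i$, $\Delta_{g-i}$ and $\Delta_{i: g-i}$; and over the interior $\cM_g$ the map $\pi$ is unramified in codimension one. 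Hence $\mathrm{Ram}(\pi)=\delta_0^{\mathrm{ram}}$, and plugging this and the pull-back formulas into $K_{\rr_g}\equiv\pi^*(K_{\mm_g})+\delta_0^{\mathrm{ram}}$, then splitting $-3(\delta_1+\delta_{g-1}+\delta_{1: g-1})$ as $-2(\delta_1+\delta_{g-1}+\delta_{1: g-1})-(\delta_1+\delta_{g-1}+\delta_{1: g-1})$ and absorbing the first summand into $-2\sum_{i=1}^{[g/2]}(\delta_i+\delta_{g-i}+\delta_{i: g-i})$, yields precisely the stated identity.

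The step I expect to be the main obstacle is the precise bookkeeping of the ramification of $\pi$ along the boundary, above all the verification that $\Delta_1$, $\Delta_{g-1}$ and $\Delta_{1: g-1}$ contribute nothing to $\mathrm{Ram}(\pi)$. The key point is that the elliptic tail involution preserves every Prym structure carried by an elliptic tail, since it acts on the relevant $2$-torsion line bundle by $[-1]^*=-1$, hence trivially; therefore both $\rr_g$ and $\mm_g$ are realised near the generic points of these three divisors as the \emph{same} quasi-reflection quotient and $\pi$ is a local isomorphism there. Consequently the coefficient $-3$ of $\delta_1$, $\delta_{g-1}$ and $\delta_{1: g-1}$ in the final formula is forced purely by pulling back the coefficient $-3$ of $\delta_1$ in $K_{\mm_g}$, with no further ramification correction, and the only genuine branch divisor of $\pi$ is $\Delta_0^{\mathrm{ram}}$, along which the ramification index is exactly $2$ --- which is exactly what the local analysis of \cite{BCF} recalled above guarantees. (For $g\leq 3$ the argument needs minor modifications, the hyperelliptic locus becoming a divisor of $\cM_g$ along which $\mm_g$ acquires a further quasi-reflection structure; since these cases are irrelevant to the Kodaira dimension computations below we do not dwell on them.)
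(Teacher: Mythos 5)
Your argument is correct and coincides with the paper's own proof, which likewise combines the Harris--Mumford formula for $K_{\mm_g}$ with the Hurwitz formula $K_{\rr_g}=\pi^*(K_{\mm_g})+\delta_0^{\mathrm{ram}}$, the identification of $\Delta_0^{\mathrm{ram}}$ as the full (simple) ramification divisor having been established in the boundary analysis of Section 1 via \cite{BCF}. Your write-up merely supplies in more detail the bookkeeping along $\Delta_1$, $\Delta_{g-1}$, $\Delta_{1:g-1}$ that the paper leaves implicit.
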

\begin{proof} We use that $K_{\mm_g}\equiv 13\lambda-2\delta_0-3\delta_1-2\delta_2-\cdots
-2\delta_{[g/2]}$ (cf. \cite{HM}), together with the Hurwitz formula
for the cover $\pi:\rr_g\rightarrow \mm_g$. We find that
$K_{\rr_g}=\pi^*(K_{\mm_g})+\delta_0^{\mathrm{ram}}$.
\end{proof}

Using this formula as well as the Appendix, we conclude that in
order to prove that $\rr_g$ is of general type for a certain $g$, it
suffices to exhibit a single effective divisor $$D\equiv
a\lambda-b_0'\delta_0'-b_0^{''}\delta_{0}^{''}-b_0^{\mathrm{ram}}\delta_{0}^{\mathrm{ram}}
-\sum_{i=1}^{[g/2]} (b_i\delta_i+b_{g-i}\delta_{g-i}+b_{i:
g-i}\delta_{i: g-i})\ \in \mathrm{Eff}(\rr_g),$$ satisfying the
following inequalities: \begin{equation}\label{inequ}
\mathrm{max}\bigl\{\frac{a}{b_0'},
\frac{a}{b_0^{''}}\bigr\}<\frac{13}{2}, \ \ \mbox{  }
\mathrm{max}\bigl\{\frac{a}{b_0^{\mathrm{ram}}}, \frac{a}{b_1},
\frac{a}{b_{g-1}}, \frac{a}{b_{1: g-1}}\bigr\}<\frac{13}{3}
\end{equation}
and
$$
\mathrm{max}_{i\geq 1}\bigl\{\frac{a}{b_i}, \frac{a}{b_{g-i}},
\frac{a}{b_{i: g-i}}\bigr\}<\frac{13}{2}.
$$

\subsection{The universal Prym
curve}

We start by introducing the partial compactification
$\widetilde{\cM}_g:=\cM_g\cup \widetilde{\Delta}_0$ of $\cM_g$,
obtaining by adding to $\cM_g$ the locus
$\widetilde{\Delta}_0\subset \mm_g$ of one-nodal irreducible curves
$[C_{yq}:=C/y\sim q]$, where $[C, y, q]\in \cM_{g-1, 2}$. Let
$p:\pem_{g, 1}\rightarrow \pem_g$ denote the
universal curve. We  denote
$\widetilde{\cR}_g:=\pi^{-1}(\widetilde{\cM}_g)\subset \rr_g$ and
note that the boundary divisors
$\widetilde{\Delta}_0':=\Delta_0'\cap \pr_g,\
\widetilde{\Delta}_0^{''}:=\Delta_0^{''}\cap \pr_g$ and
$\widetilde{\Delta}_0^{\mathrm{ram}}:=\Delta_0^{\mathrm{ram}}\cap
\pr_g$ become disjoint inside $\pr_g$. Finally, we set
$\cZ:=\per_g\times _{\pem_{g}} \pem_{g, 1}$ and denote by
$p_1:\cZ\rightarrow \per_g$ the projection.

To obtain the universal family of Prym curves over $\per_g$, we
blow-up the codimension $2$ locus $V\subset \cZ$ corresponding to
points
$$v=\bigl([C\cup_{\{y, q\}} E, \eta_C\in \sqrt{\OO_C(-y-q)}], \ \eta_E=\OO_E(1),
\  \nu(y)=\nu(q)\bigr) \in \Delta_0^{\mathrm{ram}}\times _{\pem_
g} \pem_{g, 1}$$ (recall that  $\nu:C\rightarrow C_{yq}$ denotes
the normalization map). Suppose that $(t_1, \ldots, t_{3g-3})$ are
local coordinates in an \'etale neighbourhood of $[C\cup_{\{y, q\}}
E, \eta_C, \eta_E]\in \pr_g$ such that the local equation of
$\Delta_0^{\mathrm{ram}}$ is $(t_1=0)$. Then $\cZ$  around $v$
admits local coordinates $(x, y, t_1, \ldots, t_{3g-3})$ satisfying
the equation $xy=t_1^2$. In particular, $\cZ$ is singular along $V$.
We denote by $\mathcal{X}:=\mbox{Bl}_{V}(\cZ)$ and  by
$f:\mathcal{X} \rightarrow \per_g$ the induced family of Prym curves.
Then for every $[X, \eta, \beta]\in \pr_g$ we have that $f^{-1}([X,
\eta, \beta])=X$.

On $\mathcal{X}$  there exists a  Prym line bundle $\P \in
\mathrm{Pic}(\mathcal{X})$ as well as a morphism of $\OO_X$-modules
$B:\P^{\otimes 2}\rightarrow \OO_{\mathcal{X}}$ with the property
that $\P_{| f^{-1}([X, \eta, \beta])}=\eta$ and $B_{| f^{-1}([X,
\eta, \beta])}=\beta:\eta^{\otimes 2}\rightarrow \OO_X$, for all
points $[X, \eta, \beta]\in \pr_g$ (see e.g. \cite{C}, the same argument carries
over from the spin to the Prym moduli space).

We set $\E_0'$, $\E_0^{''}$ and $\E_0^{\mathrm{ram}}\subset \mathcal{X}$ to
be the proper transforms of the boundary divisors
$p_1^{-1}(\widetilde{\Delta}_0'),
p_1^{-1}(\widetilde{\Delta}_0^{''})$ and
$p_1^{-1}(\widetilde{\Delta}_0^{\mathrm{ram}})$ respectively.
Finally, we define $\E_0$ to be the exceptional divisor of the
blow-up map $\mathcal{X}\rightarrow \cZ$.

We recall that $g:\mathcal{Y}\rightarrow S$ is a family of nodal curves and $L, M$ are line
bundles on $\mathcal{Y}$, then $\langle L, M\rangle\in \mbox{Pic}(S)$ denotes the bilinear \emph{Deligne pairing} of $L$ and $M$.

\begin{proposition}\label{blowup}
If $f:\mathcal{X}\rightarrow \per_g$ is the universal Prym curve and
$\P\in \mathrm{Pic}(\mathcal{X})$ is the corresponding Prym bundle, then
one has the following relations in $\mathrm{Pic}(\per_g)$:
\begin{enumerate}
\item $\langle\omega_f, \P\rangle=0.$
\item $\langle \OO_{\mathcal{X}}(\E_0), \OO_{\mathcal{X}}(\E_0)\rangle=-2\delta_0^{\mathrm{ram}}$.
\item $\langle\OO_{\mathcal{X}}(\P), \OO_{\mathcal{X}}(\P)\rangle=-\delta_0^{\mathrm{ram}}/2.$
\end{enumerate}
\end{proposition}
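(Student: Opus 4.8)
The plan is to reduce all three identities to a single structural fact in $\mathrm{Pic}(\mathcal{X})$, namely $\P^{\otimes 2}\equiv\OO_{\mathcal{X}}(-\E_0)$, and then to compute the relevant Deligne pairings by restriction to $\E_0$. Throughout I use that $c_1\langle L,M\rangle=f_*\bigl(c_1(L)\cdot c_1(M)\bigr)$ for line bundles $L,M$ on $f:\mathcal{X}\to\per_g$, that the Deligne pairing is bi-additive (so $\langle L^{\otimes a},M^{\otimes b}\rangle\equiv ab\,\langle L,M\rangle$), and that $\E_0\to V$ is a $\mathbb{P}^1$-bundle, where $V\cong\widetilde{\Delta}_0^{\mathrm{ram}}$ and the fibre of $\E_0\to V$ over $[C\cup_{\{p,q\}}E,\eta,\beta]$ is the exceptional component $E\cong\mathbb{P}^1$.

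To prove the key relation, I examine the homomorphism $B:\P^{\otimes 2}\to\OO_{\mathcal{X}}$ (recalled above; compare \cite{C}) as a global section of $\P^{-\otimes 2}$. On any fibre $X$ lying outside $\widetilde{\Delta}_0^{\mathrm{ram}}$ one has $\eta^{\otimes 2}\cong\OO_X$ and $B|_X$ is a non-zero endomorphism of $\OO_X$, hence an isomorphism. On a fibre $X=C\cup E$ over $\widetilde{\Delta}_0^{\mathrm{ram}}$, with $\P|_E=\OO_E(1)$ and $\P|_C=\eta_C\in\sqrt{\OO_C(-p-q)}$, one has $B_E=0$ while $B_C$ is the inclusion $\OO_C(-p-q)\hookrightarrow\OO_C$, non-zero on the component $C$. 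Hence $\mathrm{div}(B)$ is an effective divisor supported over $\widetilde{\Delta}_0^{\mathrm{ram}}$ that does not contain the closure $\mathcal{C}$ of the family of non-exceptional components, so $\mathrm{div}(B)=a\,\E_0$ for some integer $a\geq 1$. Thus $\P^{-\otimes 2}\equiv\OO_{\mathcal{X}}(a\,\E_0)$; restricting to a fibre $X=C\cup E$ over $\widetilde{\Delta}_0^{\mathrm{ram}}$ and comparing degrees on the component $C$ (degree $2$ on the left, degree $2a$ on the right, since $\OO_{\mathcal{X}}(\E_0)|_C=\OO_C(p+q)$) forces $a=1$, i.e. $\P^{\otimes 2}\equiv\OO_{\mathcal{X}}(-\E_0)$. (The same value $a=1$ can be read off from the local model $xy=t_1^2$ for $\cZ$ near $V$ and the blow-up resolving it.)

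Granting this, the three parts follow. Part (i): $2\langle\omega_f,\P\rangle\equiv\langle\omega_f,\P^{\otimes 2}\rangle\equiv-\langle\omega_f,\OO_{\mathcal{X}}(\E_0)\rangle$, and $c_1\langle\omega_f,\OO_{\mathcal{X}}(\E_0)\rangle=(f|_{\E_0})_*\,c_1\bigl(\omega_f|_{\E_0}\bigr)$. Since $\omega_f|_E=\omega_E(p+q)\cong\OO_E$ on every fibre $E$ of $\E_0\to V$, the line bundle $\omega_f|_{\E_0}$ is pulled back from $V$, and the push-forward of a pullback from $V$ along the $\mathbb{P}^1$-bundle vanishes; hence $\langle\omega_f,\P\rangle$ is $2$-torsion, i.e. $\equiv 0$. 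Part (ii): $c_1\langle\OO_{\mathcal{X}}(\E_0),\OO_{\mathcal{X}}(\E_0)\rangle=f_*[\E_0]^2=(f|_{\E_0})_*\,c_1\bigl(N_{\E_0/\mathcal{X}}\bigr)$, and by the key relation $N_{\E_0/\mathcal{X}}=\OO_{\mathcal{X}}(\E_0)|_{\E_0}=\P^{-\otimes 2}|_{\E_0}$ has degree $-2$ on every fibre $E$ (as $\P|_E=\OO_E(1)$); writing $c_1(N_{\E_0/\mathcal{X}})=-2\xi+(\text{pullback from }V)$ with $\xi$ of relative degree $1$, the $\mathbb{P}^1$-bundle push-forward yields $-2[V]$, which maps to $-2\,\delta_0^{\mathrm{ram}}$ under $V\cong\widetilde{\Delta}_0^{\mathrm{ram}}\hookrightarrow\per_g$. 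Part (iii) is then immediate: $4\langle\P,\P\rangle\equiv\langle\P^{\otimes 2},\P^{\otimes 2}\rangle\equiv\langle\OO_{\mathcal{X}}(\E_0),\OO_{\mathcal{X}}(\E_0)\rangle\equiv-2\,\delta_0^{\mathrm{ram}}$, so $\langle\P,\P\rangle\equiv-\delta_0^{\mathrm{ram}}/2$.

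The main obstacle is the key relation $\P^{\otimes 2}\equiv\OO_{\mathcal{X}}(-\E_0)$: one has to be careful about the geometry over the ramification boundary, using that the fibre of $f$ there is the quasi-stable curve $C\cup E$ (not its stable model), that $\E_0\to\widetilde{\Delta}_0^{\mathrm{ram}}$ is the $\mathbb{P}^1$-bundle of exceptional components, and — the one genuinely computational point — that $\E_0$ occurs in $\mathrm{div}(B)$ with multiplicity exactly $1$; this is cleanest either from the degree comparison on $C$ above or from the explicit local model $xy=t_1^2$ for the singular threefold $\cZ$ along $V$. Everything after that is formal manipulation of Deligne pairings and push-forwards along the projective bundle $\E_0\to V$.
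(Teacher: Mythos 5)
Your proof is correct and follows essentially the same route as the paper: the key input in both is that $B$ vanishes to order exactly $1$ along $\E_0$, so that $[\E_0]=-2c_1(\P)$, together with the triviality of $\omega_f$ on the exceptional components for part (i). The only (cosmetic) difference is in part (ii), where you compute $f_*(\E_0^2)$ via the normal bundle $N_{\E_0/\mathcal{X}}=\P^{\otimes (-2)}|_{\E_0}$ and the $\PP^1$-bundle structure of $\E_0\rightarrow \widetilde{\Delta}_0^{\mathrm{ram}}$, whereas the paper intersects $\E_0$ with $f^*(\Delta_0^{\mathrm{ram}})=\E_0^{\mathrm{ram}}+\E_0$ and combines $f_*([\E_0^{\mathrm{ram}}]\cdot[\E_0])=2\delta_0^{\mathrm{ram}}$ with the push--pull formula; both versions rest on the same geometric fact that the two branches meet over the two points lying above $y$ and $q$.
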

\begin{proof} The sheaf homomorphism $B:\P^{\otimes 2}\rightarrow
\OO_{\mathcal{X}}$ vanishes (with order $1$) precisely along the
exceptional divisor $\E_0$, hence $[\E_0]=-2c_1(\P)$. Furthermore,
we have the relations
$f^*(\Delta_0^{\mathrm{ram}})=\E_0^{\mathrm{ram}}+\E_0$ and
$f_*([\E_0^{\mathrm{ram}}]\cdot [\E_0])=2\delta_0^{\mathrm{ram}}$
(In the fibre $f^{-1}([C\cup_{\{y, q\}} E, \eta_C])$ the divisors
$\E_0$ and $\E_0^{\mathrm{ram}}$ meet over two points, corresponding
to whether the marked points equals $y$ or $q$. Now (ii) and (iii)
follow simply from the push-pull formula. For (i), it is enough to
show that $\omega_{f | \E_0}$ is the trivial bundle. This follows
because for any point $[X, \eta, \beta]\in \pr_g$ we have that
$\omega_{X}\otimes \OO_E=0$, for any exceptional component $E\subset
X$.
\end{proof}

We now fix $i\geq 1$ and set $\cN_i:=f_*(\omega_f^{\otimes i}\otimes
\P^{\otimes i})$. Since $R^1 f_*(\omega_f^{\otimes i}\otimes
\P^{\otimes i})=0$,  Grauert's theorem implies that $\cN_i$ is a
vector bundle over $\per_g$ of rank $(g-1)(2i-1)$.
\begin{proposition}\label{syzi}
For each integer $i\geq 1$ the following formula in
$\mathrm{Pic}(\per_g)$ holds:
$$c_1(\cN_i)={i\choose
2}(12\lambda-\delta_0'-\delta_0^{''}-2\delta_0^{\mathrm{ram}})+\lambda-\frac{i^2}{4}
\delta_0^{\mathrm{ram}}.$$
\end{proposition}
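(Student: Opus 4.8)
The plan is to apply the Grothendieck--Riemann--Roch theorem to the universal Prym curve $f:\mathcal{X}\to\per_g$ constructed above and to the line bundle $\mathcal{L}:=\omega_f^{\otimes i}\otimes\P^{\otimes i}$. Since $R^1f_*\mathcal{L}=0$, in $K$-theory $\cN_i=f_!\mathcal{L}$, so $c_1(\cN_i)$ is the codimension-$1$ part of $\mathrm{ch}(f_!\mathcal{L})=f_*\bigl(\mathrm{ch}(\mathcal{L})\cdot\mathrm{td}(T_f)\bigr)$. Writing $\psi:=c_1(\omega_f)$, $p:=c_1(\P)$ and using Mumford's expression for the Todd class of a family of nodal curves, $\mathrm{td}(T_f)=1-\tfrac12\psi+\tfrac1{12}\bigl(\psi^2+[\mathrm{Sing}_f]\bigr)+\cdots$ (here $\mathrm{Sing}_f\subset\mathcal X$ denotes the relative nodal locus), the codimension-$2$ part of $\mathrm{ch}(\mathcal{L})\cdot\mathrm{td}(T_f)$ equals
$$\frac{i^2}{2}(\psi+p)^2-\frac{i}{2}\,\psi(\psi+p)+\frac1{12}\bigl(\psi^2+[\mathrm{Sing}_f]\bigr),$$
so the proposition reduces to computing $f_*$ of the four terms $\psi^2,\ \psi p,\ p^2$ and $[\mathrm{Sing}_f]$.

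Two of these are supplied by Proposition \ref{blowup}: $f_*(\psi p)=\langle\omega_f,\P\rangle=0$ and $f_*(p^2)=\langle\P,\P\rangle=-\tfrac12\delta_0^{\mathrm{ram}}$. For $f_*(\psi^2)$ I would first check that $\omega_f=\sigma^*\omega_{p_1}$, where $\sigma:\mathcal{X}=\mathrm{Bl}_{V}(\cZ)\to\cZ$ is the blow-up map and $p_1:\cZ=\per_g\times_{\pem_g}\pem_{g,1}\to\per_g$ the projection: the two line bundles can only differ by a multiple $a\,\E_0$, and restricting to a component $E\cong\PP^1$ of a fibre over $\widetilde{\Delta}_0^{\mathrm{ram}}$ one finds $\deg(\omega_f|_E)=0=\deg(\sigma^*\omega_{p_1}|_E)$ while $\deg(\OO_{\mathcal X}(\E_0)|_E)=-2$ ($E$ being a $(-2)$-curve in its surface slice, since $\mathcal X\to\cZ$ is the minimal resolution of the $A_1$-locus $V$), forcing $a=0$. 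Then the projection formula and flat base change give $f_*(\psi^2)=p_{1*}\bigl(c_1(\omega_{p_1})^2\bigr)=$ the pull-back to $\per_g$ of Mumford's relation $\kappa_1=12\lambda-\delta$ on $\pem_g$; since the only boundary divisor of the partial compactification $\widetilde{\cM}_g$ is $\widetilde{\Delta}_0$ and $\pi^*(\delta_0)=\delta_0'+\delta_0^{''}+2\delta_0^{\mathrm{ram}}$, we obtain $f_*(\psi^2)=12\lambda-\delta_0'-\delta_0^{''}-2\delta_0^{\mathrm{ram}}$.

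Finally I would compute $f_*[\mathrm{Sing}_f]$ by inspecting the fibres of $f$ over the boundary: over a general point of $\widetilde{\Delta}_0'$ or of $\widetilde{\Delta}_0^{''}$ the fibre is an irreducible $1$-nodal curve, so $\mathrm{Sing}_f$ maps isomorphically onto each of these divisors, while over $\widetilde{\Delta}_0^{\mathrm{ram}}$ the fibre $C\cup_{\{y,q\}}E$ has exactly the two nodes $E\cap C$, so $\mathrm{Sing}_f$ is there a double cover of $\widetilde{\Delta}_0^{\mathrm{ram}}$; hence $f_*[\mathrm{Sing}_f]=\delta_0'+\delta_0^{''}+2\delta_0^{\mathrm{ram}}$. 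Substituting everything and using $\tfrac{i^2}{2}-\tfrac{i}{2}={i\choose 2}$, the coefficient of $12\lambda-\delta_0'-\delta_0^{''}-2\delta_0^{\mathrm{ram}}$ comes out to be ${i\choose 2}+\tfrac1{12}$, the term $\tfrac1{12}f_*[\mathrm{Sing}_f]$ exactly cancels the boundary contribution hidden in the extra $\tfrac1{12}$-multiple (leaving a clean $\lambda$), and $\tfrac{i^2}{2}f_*(p^2)=-\tfrac{i^2}{4}\delta_0^{\mathrm{ram}}$; this is precisely the asserted formula. The only genuinely delicate point is the blow-up geometry — pinning down $\omega_f$ as a naive pull-back and the multiplicity $2$ of $\mathrm{Sing}_f$ along $\widetilde{\Delta}_0^{\mathrm{ram}}$ — after which everything is routine Grothendieck--Riemann--Roch bookkeeping resting on Proposition \ref{blowup}.
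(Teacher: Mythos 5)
Your proposal is correct and follows exactly the paper's route: Grothendieck--Riemann--Roch applied to $f:\mathcal{X}\rightarrow\per_g$, the pairings $\langle\omega_f,\P\rangle=0$ and $\langle\P,\P\rangle=-\tfrac12\delta_0^{\mathrm{ram}}$ from Proposition \ref{blowup}, and Mumford's formula $(\kappa_1)_{\per_g}=12\lambda-\delta_0'-\delta_0^{''}-2\delta_0^{\mathrm{ram}}$. The only difference is that you spell out details the paper leaves implicit (the identification $\omega_f=\sigma^*\omega_{p_1}$ across the blow-up and the multiplicities in $f_*[\mathrm{Sing}_f]=\delta_0'+\delta_0^{''}+2\delta_0^{\mathrm{ram}}$), and both checks are carried out correctly.
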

\begin{proof}
We apply Grothendieck-Riemann-Roch to the universal Prym curve
$f:\mathcal{X}\rightarrow \per_g$:
$$c_1(\cN_i)=f_*\Bigl[\Bigl(1+ic_1(\omega_f\otimes \mathcal{P})
+\frac{i^2c_1^2(\omega_f\otimes
\mathcal{P})}{2}\Bigr)\Bigl(1-\frac{c_1(\omega_f)}{2}+\frac{c_1^2(\omega_f)+[\mathrm{Sing}(f)]}{12}
\Bigr)\Bigr]_2,$$ and then use Proposition \ref{blowup} and
Mumford's formula
$(\kappa_1)_{\per_g}=12\lambda-\delta_0'-\delta_0^{''}-2\delta_0^{\mathrm{ram}}$.
\end{proof}

\subsection{Inequalities between coefficients of divisors on
$\rr_g$}  We use pencils of curves on $K3$ surfaces to establish
certain inequalities between the coefficients of effective divisors
on $\rr_g$. Using $K3$ surfaces we  construct pencils that fill up
the boundary divisors $\Delta_i, \Delta_{g-i}$ and $\Delta_{i: g-i}$
for $1\leq i\leq [g/2]$ when $g\leq 23$. The use of such pencils in
the context of $\mm_g$ has already been demonstrated in \cite{FP}.

We start with a Lefschetz pencil $B\subset \mm_i$ of curves of genus
$i$ lying on a fixed $K3$ surface $S$. The pencil $B$ is induced by
a family $f:\mathrm{Bl}_{i^2}(S)\rightarrow \PP^1$ which has $i^2$
sections corresponding to the base points and we choose one such
section $\sigma$. Using $B$, for each $g\geq i+1$ we create a genus
$g$ pencil $B_i\subset \mm_g$ of stable curves, by gluing a fixed
curve $[C_2, p]\in \cM_{g-i, 1}$ along the section $\sigma$ to each
member of the pencil $B$. Then we have the following formulas on
$\mm_g$ (cf. \cite{FP} Lemma 2.4):
$$B_i\cdot \lambda=i+1,\ B_i\cdot \delta_0=6i+18,\ B_i\cdot
\delta_i=-1\ \mbox{ and } B_i\cdot \delta_j=0\ \mbox{ for } j\neq
i.$$ We fix $1\leq i\leq [g/2]$ and lift $B_i$ in three different
ways to pencils in $\rr_g$. First we choose a non-trivial line
bundle $\eta_{2}\in \mbox{Pic}^0(C_2)[2]$. Let us denote by
$A_{g-i}\subset \Delta_{g-i}\subset \rr_g$ the pencil of Prym curves
$[C_2\cup_{\sigma(\lambda)} f^{-1}(\lambda),\  \eta_{C_2}=\eta_2,\
\eta_{f^{-1}(\lambda)}=\OO_{f^{-1}(\lambda)}]$, with $\lambda\in
\PP^1$.

Next, we denote by $A_i\subset \Delta_i\subset \rr_g$ the pencil
consisting of Prym curves
$$\bigl[C_2\cup_{\sigma(\lambda)} f^{-1}(\lambda), \ \eta_{C_2}=\OO_{C_2},\
\eta_{f^{-1}(\lambda)}\in
\overline{\mathrm{Pic}}^0(f^{-1}(\lambda))[2]\bigr], \ \mbox{ where
}\lambda\in \PP^1.$$

Clearly $\pi(A_i)=B_i$ and $\mbox{deg}(A_i/B_i)=(2^{2i}-1)$.
Finally, $A_{i: g-i}\subset \Delta_{i: g-i}\subset \rr_g$ denotes
the pencil of Prym curves $\bigl[C_2\cup f^{-1}(\lambda),
\eta_{C_2}=\eta_2, \ \eta_{f^{-1}(\lambda)}\in
\overline{\mathrm{Pic}}^0(f^{-1}(\lambda))[2]\bigr].$ Again, we have
that $\mbox{deg}(A_{i: g-i}/B_i)=2^{2i}-1$.

\begin{lemma}\label{pencils}
If $A_i, A_{g-i}$ and $A_{i: g-i}$ are  pencils  defined above, we
have the following relations:
\begin{itemize}
\item $A_{g-i}\cdot \lambda=i+1, \ A_{g-i}\cdot \delta_0'=6i+18, \
A_{g-i}\cdot \delta_i=A_{g-i}\cdot \delta_0^{\mathrm{ram}}=0,\
\mbox{ and } A_{g-i}\cdot \delta_{g-i}=-1.$
\item $A_{i}\cdot \lambda=(i+1)(2^{2i}-1), \ A_i\cdot
\delta_0'=(2^{2i-1}-2)(6i+18), \  \ A_i\cdot \delta_0^{''}=6i+18,
\newline A_i\cdot \delta_0^{\mathrm{ram}}=2^{2i-2}(6i+18)\ \ \mbox{ and } A_i\cdot
\delta_i=-(2^{2i}-1).$
\item $A_{i: g-i}\cdot \lambda=(i+1)(2^{2i}-1), \ A_{i: g-i}\cdot
\delta_0'=(2^{2i-1}-1)(6i+18), \ \newline A_{i: g-i}\cdot
\delta_0^{\mathrm{ram}}=2^{2i-2}(6i+18), \ A_{i: g-i}\cdot
\delta_{0}^{''}=0 \  \mbox{ and } \ A_{i: g-i}\cdot \delta_{i:
g-i}=-(2^{2i}-1).$
\end{itemize}
\end{lemma}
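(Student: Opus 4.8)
The plan is to reduce every number to the already-known intersection numbers of $B_i$ on $\mm_g$ via the pullback relations $\pi^*(\delta_0)=\delta_0'+\delta_0^{''}+2\delta_0^{\mathrm{ram}}$ and $\pi^*(\delta_j)=\delta_j+\delta_{g-j}+\delta_{j:g-j}$, together with the description of the local structure of the branched cover $\pi$ recalled in the Examples above. First I would record the ``formal'' part. Since $\lambda=\pi^*(\lambda)$ one gets $A\cdot\lambda=\deg(A/B_i)\cdot(B_i\cdot\lambda)=\deg(A/B_i)\cdot(i+1)$ for each $A\in\{A_i,A_{g-i},A_{i:g-i}\}$, where $\deg(A_{g-i}/B_i)=1$ (the bundle on the genus $i$ side is forced to be $\OO$, so there is a unique lift of every member) and $\deg(A_i/B_i)=\deg(A_{i:g-i}/B_i)=2^{2i}-1$. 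Likewise, for $j\geq 1$ the projection formula gives $A\cdot\pi^*(\delta_j)=\deg(A/B_i)\cdot(B_i\cdot\delta_j)$, which vanishes for $j\neq i$ and equals $-\deg(A/B_i)$ for $j=i$; since $\delta_j,\delta_{g-j},\delta_{j:g-j}$ are effective classes and, by construction, each pencil $A$ is contained in exactly one of $\Delta_i,\Delta_{g-i},\Delta_{i:g-i}$ (one still has to check that the $6i+18$ special fibres do not leave that component, which is immediate from the shape of $\eta$ on $C_2$), all of the intersection is concentrated on that single divisor. This already produces $A_{g-i}\cdot\delta_{g-i}=-1$, $A_i\cdot\delta_i=-(2^{2i}-1)$, $A_{i:g-i}\cdot\delta_{i:g-i}=-(2^{2i}-1)$, and the vanishing of all the remaining $\delta_j$-, $\delta_{g-j}$-, $\delta_{j:g-j}$-intersections, as well as $A\cdot\pi^*(\delta_0)=\deg(A/B_i)\cdot(6i+18)$.

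The substantive step is to split $A\cdot\pi^*(\delta_0)$ into its three summands, which requires a local analysis at each of the $6i+18$ nodal fibres. Fix such a fibre $C_0:=f^{-1}(\lambda_0)$, with normalization $\nu:\widetilde C_0\to C_0$ of genus $i-1$ (preimages of the node $y,q$), and let $\delta$ be the vanishing cycle in $H^1(f^{-1}(\lambda),\ZZ/2)$. The limits in $\rr_g$ of the Prym curves in the fibre of $A$ over $\lambda$ are governed by the monodromy transvection in $\delta$ (cf. the Examples above, following \cite{BCF}): a $2$-torsion class $v$ with $\langle v,\delta\rangle=0$ limits to a line bundle on $C_0$ with no exceptional component, whose pullback to $\widetilde C_0$ is $v$ modulo $\delta$ with one of two gluings at the node, whereas a class with $\langle v,\delta\rangle=1$ limits to a Prym curve carrying an exceptional component over the node, i.e.\ a point of $\Delta_0^{\mathrm{ram}}$. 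Counting over $C_0$: the limits with $\nu^*\eta_{C_0}\neq\OO$ number $2(2^{2i-2}-1)$, the Wirtinger-type limit (pullback $\OO$, nontrivial gluing) is a single point, and the $\Delta_0^{\mathrm{ram}}$-type limits number $2^{2i-2}$ (solutions of $M^{\otimes 2}=\OO_{\widetilde C_0}(-y-q)$, a torsor under $\mathrm{Pic}^0(\widetilde C_0)[2]$, hit $2:1$ by the $2^{2i-1}$ classes with $\langle-,\delta\rangle=1$ — which is exactly the simple ramification of $A\to B_i$).

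Now I would feed in the fixed datum on $C_2$ to see in which boundary divisor of $\rr_g$ each of these limits lands, after smoothing the separating node joining $C_2$ to $C_0$. For $A_i$ one has $\eta_{C_2}=\OO$: a $\nu^*\eta_{C_0}\neq\OO$ limit lands in $\Delta_0'$, while the Wirtinger limit keeps $\nu^*\eta=\OO$ on the whole normalized curve and lands in $\Delta_0^{''}$, so $A_i$ meets $\delta_0'$ at $2^{2i-1}-2$ points, $\delta_0^{''}$ at one point, and $\delta_0^{\mathrm{ram}}$ at $2^{2i-2}$ points over this node. For $A_{i:g-i}$ one has $\eta_{C_2}=\eta_2\neq\OO$: now the Wirtinger limit has nontrivial pullback on the whole curve, so it too lands in $\Delta_0'$ and $\delta_0^{''}$ is never met, giving $2^{2i-1}-1$ points on $\delta_0'$ and $2^{2i-2}$ on $\delta_0^{\mathrm{ram}}$. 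For $A_{g-i}$ the bundle on the genus $i$ side is literally $\OO$, so the only limit is the single $\Delta_0'$-type point, and $\delta_0^{''},\delta_0^{\mathrm{ram}}$ are not met at all. Since $\pi$ is unramified along $\Delta_0'$ and $\Delta_0^{''}$ and simply ramified along $\Delta_0^{\mathrm{ram}}$, and $B_i$ crosses $\Delta_0$ transversally, each of these intersections is transverse on $A$; multiplying the per-node counts by $6i+18$ yields all the asserted $\delta_0'$-, $\delta_0^{''}$-, $\delta_0^{\mathrm{ram}}$-intersection numbers. As an internal consistency check the three multiplicities $1,1,2$ recover the degree, e.g.\ $(2^{2i-1}-2)+1+2\cdot 2^{2i-2}=2^{2i}-1$ for $A_i$ and $(2^{2i-1}-1)+0+2\cdot 2^{2i-2}=2^{2i}-1$ for $A_{i:g-i}$.

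The main obstacle is precisely this local analysis at the $6i+18$ nodal fibres: one must pin down both the distribution of the degenerating $2$-torsion classes among the three boundary types $\Delta_0',\Delta_0^{''},\Delta_0^{\mathrm{ram}}$ — which, crucially, depends on whether the bundle fixed on $C_2$ is trivial — and the intersection multiplicities coming from the simple ramification of $\pi$ along $\Delta_0^{\mathrm{ram}}$. Everything in the statement other than the $\delta_0$-coefficients is a formal consequence of $\lambda=\pi^*\lambda$, the boundary pullback relations, and the known class $B_i$ on $\mm_g$.
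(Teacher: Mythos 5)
Your proposal is correct and follows essentially the same route as the paper's (much terser) proof: the projection formula disposes of $\lambda$ and the $\delta_j$-classes for $j\geq 1$, and the $\delta_0'$-, $\delta_0^{''}$- and $\delta_0^{\mathrm{ram}}$-contributions are obtained by counting, at each of the $6i+18$ nodal fibres, how the two-torsion points distribute among the three boundary types described in the irreducible one-nodal example. Your case analysis of where the Wirtinger-type limit lands depending on whether the bundle fixed on $C_2$ is trivial, and the transversality check along $\Delta_0^{\mathrm{ram}}$, are precisely the points the paper leaves implicit, and all of your counts agree with the stated values.
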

Note that all these intersections are computed on $\rr_g$. The
intersection numbers of $A_i, A_{g-i}$ and $A_{i: g-i}$ with the
generators of $\mbox{Pic}(\rr_g)$  not explicitly mentioned in Lemma
\ref{pencils} are all equal to $0$.
\begin{proof}
We treat in detail only the case of $A_i$ the other cases being
similar. Using \cite{FP} we find that $(A_i\cdot
\lambda)_{\rr_g}=(\pi_*(A_i)\cdot
\lambda)_{\mm_g}=(2^{2i}-1)(B_i\cdot \lambda)_{\mm_g}$. Furthermore,
since $A_i\cap \Delta_{g-i}=A_i\cap \Delta_{i: g-i}=\emptyset$, we
can write the formulas
$$(A_i\cdot \delta_i)_{\rr_g}=\bigl(A_i\cdot
\pi^*(\delta_i)\bigr)_{\rr_g}=(2^{2i}-1) (B_i\cdot
\delta_i)_{\mm_g}.$$ Clearly $(A_i\cdot
\delta_0^{''})_{\rr_g}=(B_i\cdot \delta_0)_{\mm_g}=6i+18$, whereas
the intersection $A_i\cdot \delta_0'$ corresponds to choosing an
element in $\mathrm{Pic}^0(f^{-1}(\lambda))[2]$, where
$f^{-1}(\lambda)$ is a singular member of $B$. There are
$2(2^{2i-2}-1)(6i+18)$ such choices.
\end{proof}

\begin{proposition}\label{ineq}
Let $D\equiv
a\lambda-b_0'\delta_0'-b_0^{''}\delta_0^{''}-b_0^{\mathrm{ram}}
\delta_0^{\mathrm{ram}}-\sum_{i=1}^{[g/2]}
(b_i\delta_i+b_{g-i}\delta_{g-i}+b_{i: g-i}\delta_{i: g-i})\in
\mathrm{Pic}(\rr_g)$ be the closure in $\rr_g$ of an effective
divisor in $\mathcal{R}_g$. Then if $1\leq i\leq
\mathrm{min}\{[g/2], 11\}$, we have the following inequalities:

 \noindent (1)\
$\mbox{  } a(i+1)-b_0' (6i+18)+b_{g-i}\geq 0.$

\noindent  (2)\ $\mbox{ } a(i+1)-b_0^{\mathrm{ram}}
(6i+18)\frac{2^{2i-2}}{2^{2i}-1}- b_0' (6i+18)
\frac{2^{2i-1}-1}{2^{2i}-1}+ b_{i: g-i}\geq 0.$

\noindent (3)\  $  \ \mbox{ } a(i+1)-b_0^{\mathrm{ram}}
(6i+18)\frac{2^{2i-2}}{2^{2i}-1}- b_0'\ (6i+18)
\frac{2^{2i-1}-2}{2^{2i}-1}-b_0^{''}(6i+18)
\frac{1}{2^{2i}-1}+b_{i}\geq 0.$
\end{proposition}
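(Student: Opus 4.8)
The plan is to pair the class $D$ with the three families of pencils $A_i$, $A_{g-i}$ and $A_{i:g-i}$ of Lemma \ref{pencils} and to read off (1), (2), (3) from the resulting intersection numbers. The guiding principle is the usual one: if $A\subset \rr_g$ is a complete irreducible curve which is not contained in $\mathrm{Supp}(D)$, then the restriction to $A$ of the effective divisor $D$ is effective, hence $A\cdot D\geq 0$. Since the intersection numbers of Lemma \ref{pencils} are independent of all the choices entering the construction of the pencils --- the $K3$ surface $S$, the Lefschetz pencil $B\subset \cM_i$, the section $\sigma$, the pointed curve $[C_2,p]\in \cM_{g-i,1}$ and the $2$-torsion point $\eta_2$ --- it is enough, for each of the three families, to produce a single member which is not contained in $\mathrm{Supp}(D)$.

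First I would use that $D$ is, by hypothesis, the closure in $\rr_g$ of an effective divisor on $\cR_g$, so that no boundary divisor of $\rr_g$ occurs as a component of $D$; in particular $\mathrm{Supp}(D)$ meets each of $\Delta_i$, $\Delta_{g-i}$ and $\Delta_{i:g-i}$ in a proper closed subset. Next, recall from the discussion preceding Lemma \ref{pencils} (cf. \cite{FP}) that for $1\le i\le 11$ the pencils $A_{g-i}$, $A_{i:g-i}$ and $A_i$, as the $K3$ surface $S$, the pencil $B$, the section $\sigma$, the pointed curve $[C_2,p]$ and the relevant $2$-torsion are allowed to vary, fill out dense subsets of $\Delta_{g-i}$, $\Delta_{i:g-i}$ and $\Delta_i$ respectively; for $A_i$ and $A_{i:g-i}$, where the $2$-torsion varies over the fibres on the $K3$ side, one also uses that the (symplectic) monodromy of a Lefschetz pencil acts transitively on the nonzero $2$-torsion of a fibre (which in particular makes these lifts irreducible). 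Combining the two observations, a general member of each of the three families is not contained in $\mathrm{Supp}(D)$, so that $A_{g-i}\cdot D\geq 0$, $A_{i:g-i}\cdot D\geq 0$ and $A_i\cdot D\geq 0$.

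It then remains to substitute the numbers of Lemma \ref{pencils}, recalling that every intersection number not listed there vanishes. The inequality $A_{g-i}\cdot D\geq 0$ reads $a(i+1)-b_0'(6i+18)+b_{g-i}\geq 0$, which is (1). The inequality $A_{i:g-i}\cdot D\geq 0$ reads $a(i+1)(2^{2i}-1)-b_0'(2^{2i-1}-1)(6i+18)-b_0^{\mathrm{ram}}2^{2i-2}(6i+18)+b_{i:g-i}(2^{2i}-1)\geq 0$, and dividing by $2^{2i}-1>0$ gives (2). Finally $A_i\cdot D\geq 0$ reads $a(i+1)(2^{2i}-1)-b_0'(2^{2i-1}-2)(6i+18)-b_0^{''}(6i+18)-b_0^{\mathrm{ram}}2^{2i-2}(6i+18)+b_i(2^{2i}-1)\geq 0$, and dividing by $2^{2i}-1$ gives (3).

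I expect the only step requiring real care to be the sweeping statement invoked in the second paragraph: one must know that Lefschetz pencils of genus-$i$ curves on $K3$ surfaces dominate $\cM_i$ --- which is precisely where the bound $i\le 11$ enters --- and that each of the three lifts to $\rr_g$ genuinely fills out a dense subset of the corresponding boundary divisor, the subtle ingredient being the control of the $2$-torsion over the nodal fibres of $B$ through the monodromy of the pencil. Granting this, what remains is the elementary bookkeeping above.
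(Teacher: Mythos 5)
Your proposal is correct and is essentially the paper's own proof: the paper also intersects $D$ with the pencils $A_i$, $A_{g-i}$, $A_{i:g-i}$ of Lemma \ref{pencils}, invokes the fact that in the range $i\leq 11$ these pencils fill up the boundary divisors $\Delta_i$, $\Delta_{g-i}$, $\Delta_{i:g-i}$ (so that $A\cdot D\geq 0$ for a suitably chosen member), and then reads off (1)--(3) from the listed intersection numbers exactly as you do. The extra paragraphs you devote to the independence of the construction from the choices made and to the monodromy action on $2$-torsion merely spell out what the paper compresses into the single sentence that the pencils ``fill up'' the boundary divisors.
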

\begin{proof} We use that that in this range the pencils $A_i,
A_{g-i}$ and $A_{i: g-i}$ fill-up the boundary divisors $\Delta_i,
\Delta_{g-i}$ and $\Delta_{i: g-i}$ respectively, hence $A_i\cdot D,
\ A_{g-i}\cdot D,\  A_{i: g-i}\cdot D\geq 0$.
\end{proof}

\noindent {\emph{Proof of Theorem \ref{r7}.}} We lift the Lefschetz
pencil  $B\subset \mm_g$ corresponding to a fixed $K3$ surface, to a pencil $\tilde{B}\subset
\rr_g$ of Prym curves by taking Prym curves $\tilde{B}:=\{[C_\lambda,
\eta_{C_{\lambda}}]\in \rr_g: [C_{\lambda}]\in B,
\eta_{C_{\lambda}}\in \overline{\mbox{Pic}}^0(C_{\lambda})[2]\}$. We
have the following formulas $$\tilde{B}\cdot \lambda =(2^{2g}-1)(g+1), \tilde{B}\cdot
\delta_0'=(2^{2g-1}-2)(6g+18), \ \tilde{B}\cdot \delta_0^{''}=6g+18,
\ \tilde{B}\cdot \delta_0^{\mathrm{ram}}=2^{2g-2}(6g+18).$$ Furthermore,
$\tilde{B}$ is disjoint from all the remaining boundary classes of
$\rr_g$. One now verifies that $\tilde{B}\cdot K_{\rr_g}<0$
precisely when $g\leq 7$. Since $\tilde{B}$ is a covering curve for
$\rr_g$ in the range $g\leq 11, g\neq 10$, we find that
$\kappa(\rr_g)=-\infty$.
\hfill $\Box$

\section{Theta divisors for vector bundles and geometric loci in $\rr_g$}

We present a general method of constructing geometric divisors on
$\rr_g$. For a fixed point $[C, \eta]\in \cR_g$ we shall study  the
relative position of  $\eta\in \mathrm{Pic}^0(C)[2]$ with respect to
certain pluri-theta divisors on $\mathrm{Pic}^0(C)$.

We start by fixing  a smooth curve $C$. If $E\in U_C(r, d)$ is a semistable vector bundle on $C$ of integer
slope $\mu(E):=d/r\in \mathbb Z$, then following Raynaud \cite{R}, we
introduce the determinantal cycle
$$\Theta_E:=\{\eta\in \mathrm{Pic}^{g-\mu-1}(C): H^0(C, E\otimes
\eta)\neq 0\}.$$ Either $\Theta_E=\mbox{Pic}^{g-\mu-1}(C)$, or else,
$\Theta_E$ is a divisor on $\mbox{Pic}^{g-\mu-1}(C)$ and then
$\Theta_E\equiv r\cdot \theta$. In the latter case we say that
$\Theta_E$ is the \emph{theta divisor} of the vector bundle $E$.
Clearly, $\Theta_E$ is a divisor if and only if $H^0(C, E\otimes
\eta)=0$, for a general bundle $\eta\in \mbox{Pic}^{g-\mu-1}(C)$.

Let us now fix a globally generated line bundle  $L\in
\mbox{Pic}^d(C)$ such that $h^0(C, L)=r+1$. The \emph{Lazarsfeld
vector bundle} $M_L$ of $L$  is defined using the exact sequence on
$C$
$$0\longrightarrow M_L\longrightarrow H^0(C, L)\otimes
\OO_C\longrightarrow L\longrightarrow 0$$ (see also \cite{GL},
\cite{L}, \cite{Vo}, \cite{F1}, \cite{FMP} for many applications of
these bundles). It is customary to denote $Q_L:=M_L^{\vee}$, hence
$\mu(Q_L)=d/r$. When $L=K_C$, one writes $Q_C:=Q_{K_C}$. The vector
bundles $Q_L$ (and all its exterior powers) are semistable under
mild genericity assumptions on $C$ (see \cite{L} or \cite{F1}
Proposition 2.1). In the case $\mu(\wedge^i Q_L)=g-1$, when we
expect $\Theta_{\wedge^i Q_L}$ to be a divisor on $\mbox{Pic}^0(C)$,
we may ask whether for a given point $[C, \eta]\in \cR_g$ the
condition $\eta \in \Theta_{\wedge^i Q_L}$ is satisfied or not.
Throughout this section we denote by $\mathfrak G^r_d\rightarrow
\cM_g$ the Deligne-Mumford stack parameterizing pairs $[C, l]$, where $[C]\in \cM_g$
and $l=(L, V)\in G^r_d(C)$ is a linear series of type $\mathfrak g^r_d$.

We fix integers $k\geq 2$ and $b\geq 0$. We set integers
$i:=kb+k-b-2$,
$$r:=kb+k-2,\ g:=k(kb+k-b-2)+1=ik+1\ \mbox{ and } d:=k(kb+k-2).$$
Since $\rho(g, r, d)=0$, a general curve $[C]\in \cM_g$ carries a
finite number of (obviously complete) linear series $l\in G^r_d(C)$. We denote this
number by
$$N:=g!\frac{1!\ 2!\ \cdots r!}{(k-1)!\ \cdots \ (k-1+r)!}=\mathrm{deg}(\mathfrak G^r_d/\cM_g).$$ We
also note that we can write $g=(r+1)(k-1) \ \mbox{ }\mbox{ and }\
d=rk, $ and  moreover, each line bundle $L\in W^r_d(C)$ satisfies
$h^1(C, L)=k-1$. Furthermore, we compute $\mu(\wedge^i Q_L)=ik=g-1$
and then we introduce the following virtual divisor on $\cR_g$:
$$\mathcal{D}_{g: k}:=\{[C, \eta]\in \cR_g: \exists L\in W^r_{d}(C)\
\mbox{ such that } \ h^0(C, \wedge^i Q_L\otimes \eta)\geq 1\}.$$
>From the definition it follows that $\mathcal{D}_{g: k}$ is either
pure of codimension $1$ in $\cR_g$, or else $\mathcal{D}_{g:
k}=\cR_g$. We shall prove that the second possibility does not
occur.

For $[C, \eta]\in \cR_g$ and $L\in W^r_d(C)$ one has the following
exact sequence on $C$
$$0\longrightarrow \wedge^i M_L\otimes K_C\otimes \eta\longrightarrow \wedge^i
H^0(C, L)\otimes K_C\otimes \eta\longrightarrow \wedge^{i-1}
M_L\otimes L\otimes  K_C\otimes \eta\longrightarrow 0,$$ from which,
using Serre duality,  one derives the following equivalences:
$$[C, \eta]\in \mathcal{D}_{g: k}\Leftrightarrow h^1(C, \wedge^i M_L\otimes K_C\otimes
\eta)\geq 1\Leftrightarrow $$ \begin{equation}\label{divstr}
\wedge^i H^0(C, L)\otimes H^0(C, K_C\otimes \eta)\rightarrow H^0(C,
\wedge^{i-1} M_L\otimes L\otimes K_C\otimes \eta) \mbox{ is not an
isomorphism}. \end{equation}
 Note that obviously \ $\mbox{\ rank}
\bigl(\wedge^i H^0(C, L)\otimes H^0(C, K_C\otimes
\eta)\bigr)={r+1\choose i}(g-1)$, while
$$h^0(C, \wedge^{i-1} M_L\otimes L\otimes K_C\otimes \eta)=\chi(C,
\wedge^{i-1} M_L\otimes L\otimes K_C\otimes \eta)=$$ $$={r\choose
i-1}\bigl(-k(i-1)+d+g-1\bigr)={r+1\choose i}(g-1)$$ (use that $M_L$
is a semistable vector bundle and that $\mu(\wedge^{i-1} M_L\otimes
L\otimes K_C\otimes \eta) >2g-1$).

\begin{remark}
As pointed out in the Introduction, an important particular case is
$k=2$, when $i=b, g=2i+1, r=2i, d=4i=2g-2$. Since
$W_{2g-2}^{g-1}(C)=\{K_C\}$, it follows that $[C, \eta]\in
\mathcal{D}_{2i+1, 2}\Leftrightarrow \eta\in \Theta_{\wedge^i
Q_{C}}$. The main result from \cite{FMP} states that for any $[C]\in
\cM_g$ the Raynaud locus $\Theta_{\wedge^i Q_C}$ is a divisor in
$\mbox{Pic}^0(C)$ (that is, $\wedge^i Q_C$ has a theta divisor) and
we have an equality of cycles
\begin{equation}\label{fmp} \Theta_{\wedge^i Q_{C}}=C_i-C_i\subset
\mbox{Pic}^0(C), \end{equation} where the right-hand-side denotes
the \emph{$i$-th difference variety} of $C$, that is, the image of
the difference map
$$\phi:C_i\times C_i \rightarrow \mbox{Pic}^0(C), \mbox{ } \ \phi(D,
E):=\OO_C(D-E).$$

Using Lazarsfeld's filtration argument \cite{L} Lemma 1.4.1, one
finds that for a generic choice of distinct points $x_1, \ldots,
x_{g-2}\in C$, there is an exact sequence
$$0\longrightarrow \oplus_{l=1}^{g-2} \OO_C(x_l)\longrightarrow
Q_C\longrightarrow K_C\otimes \OO_C(-x_1-\cdots
-x_{g-2})\longrightarrow 0,$$ which implies the inclusion
$C_i-C_i\subset \Theta_{\wedge^i Q_C}$. The importance of
(\ref{fmp}) is that it shows that $\Theta_{\wedge^i Q_C}$ is a
divisor on $\mbox{Pic}^0(C)$, that is, $H^0(C, \wedge^i Q_C\otimes
\eta)=0$ for a generic $\eta\in \mbox{Pic}^0(C)$.
\end{remark}
\begin{theorem} For every genus $g=2i+1$ we have the following
identification of cycles on $\cR_g$:
$$\mathcal{D}_{2i+1: 2}:=\{[C, \eta]\in \cR_g: \eta\in C_i-C_i\}.$$
\end{theorem}

Next we prove that $\mathcal{D}_{g: k}$ is an actual divisor on
$\cR_g$ for any $k\geq 2$ and we achieve this by specialization to
the $k$-gonal locus $\cM_{g, k}^1$  in $\cM_g$.
\begin{theorem}\label{transversality}
Fix $k\geq 2, b\geq 1$ and $g, r, d, i$ defined as above. Then
$\mathcal{D}_{g: k}$ is a divisor on $\cR_g$. Precisely, for a
generic $[C, \eta]\in \cR_g$ we have that $H^0(C, \wedge^i
Q_L\otimes \eta)=0$, for every $L\in W^r_d(C)$.
\end{theorem}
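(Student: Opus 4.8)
The plan is to degenerate $[C,\eta]$ to a special curve for which the required vanishing can be checked by hand, and then invoke semicontinuity. Since $\mathcal{D}_{g:k}$ is either a divisor or all of $\cR_g$, it suffices to exhibit a single pair $[C_0,\eta_0]\in\cR_g$ with $H^0(C_0,\wedge^i Q_L\otimes\eta_0)=0$ for every $L\in W^r_d(C_0)$. The natural choice is to take $C_0$ a general $k$-gonal curve of genus $g$, i.e. $[C_0]\in\cM^1_{g,k}$, together with a general $2$-torsion point $\eta_0$. The point of this specialization is that on a general $k$-gonal curve the linear series in $W^r_d(C_0)$ are completely understood: with $g=(r+1)(k-1)$ and $d=rk$, each $L\in W^r_d(C_0)$ is of the form $L = A^{\otimes c}\otimes \OO_{C_0}(D)$ built from the $k$-gonal pencil $A=\mathfrak g^1_k$ and an effective divisor, or more precisely the classification of Brill--Noether loci on $k$-gonal curves (Coppens--Martens, Ballico, etc.) pins down the finitely many such bundles and their Lazarsfeld bundles $Q_L$ explicitly in terms of $A$.

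The key steps, in order, would be: (1) Recall or prove the structure of $W^r_d(C_0)$ for a general $k$-gonal curve $C_0$: because $\rho(g,r,d)=0$, these are isolated points, and one identifies each $L$ with an explicit expression in the pencil $A$ and a base-point-free residual piece; one checks in particular that $Q_L$ (and all $\wedge^j Q_L$) is semistable, as already noted in the excerpt following \cite{L}, \cite{F1}. (2) For a fixed such $L$, use Lazarsfeld's filtration: there is a generic choice of points $x_1,\dots,x_{g-2}\in C_0$ giving an exact sequence expressing $Q_L$ as an extension of a line bundle by a direct sum of line bundles of the shape $\OO_{C_0}(x_\ell)$; taking $\wedge^i$ and twisting by $\eta_0$, filter $\wedge^i Q_L\otimes\eta_0$ so that the graded pieces are line bundles of degree $\le g-1$ built from the $x_\ell$'s, the pencil $A$, and $\eta_0$. (3) Show that for a sufficiently general choice of the $x_\ell$ and of $\eta_0$, each graded-piece line bundle is non-effective: a general line bundle of degree $g-1$ has no sections, so the only danger comes from special configurations forced by $A$; here one uses that the $k$-gonal pencil is the \emph{unique} special system of small degree on a general $k$-gonal curve, so twisting by a general $2$-torsion $\eta_0$ destroys any accidental effectivity. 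Summing over the (finitely many) $L\in W^r_d(C_0)$ then yields $H^0(C_0,\wedge^i Q_L\otimes\eta_0)=0$ for all of them simultaneously, and semicontinuity over $\cR_g$ finishes the proof.

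The main obstacle will be step (3): controlling the cohomology of the graded pieces uniformly. The filtration pieces are line bundles of the critical degree $g-1$, which sit exactly on the theta divisor, so the estimate is not automatic from Riemann--Roch and one genuinely needs the genericity of $[C_0,\eta_0]$ in the $k$-gonal stratum to kill effectivity. Concretely one must verify that no sub-line-bundle $\OO_{C_0}(x_{\ell_1}+\dots+x_{\ell_i})\otimes\eta_0$ (and its twists by powers of $A$ that arise in the $\wedge^i$ filtration) is effective, i.e. that $\eta_0\notin C_i - (\text{relevant effective divisors})$; since $\eta_0$ is a $2$-torsion point, not an arbitrary point of $\mathrm{Pic}^0$, one cannot just invoke "a general line bundle", and the argument must instead use a dimension count showing the locus of bad $2$-torsion points is proper in $\mathrm{Pic}^0(C_0)[2]$, together with a count of $k$-gonal curves inside $\cM_g$ to ensure the construction sweeps out a dense subset of $\cR_g$. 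This is where the hypothesis $b\ge 1$ is presumably used, guaranteeing enough room ($g$ large relative to $k$) for the genericity arguments to go through; the case $b=0$ is handled separately (Theorem \ref{be0}).
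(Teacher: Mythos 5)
There are two genuine gaps here, and they are exactly the two places where the paper's proof has to work.

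First, your reduction asks for too much. You propose to verify $H^0(C_0,\wedge^i Q_L\otimes\eta_0)=0$ for \emph{every} $L\in W^r_d(C_0)$ on a general $k$-gonal curve $C_0$. But a general $k$-gonal curve is Brill--Noether special, and $W^r_d(C_0)$ typically contains components other than the ones you can describe in terms of the pencil $A$; there is no reason the vanishing holds for all of them, and controlling them all is not needed. The paper instead uses that, since $\rho(g,r,d)=0$, there is a \emph{unique} irreducible component of $\mathfrak G^r_d(\cR_g/\cM_g)$ dominating $\cR_g$, so it suffices to produce a single triple $[C,L,\eta]$ lying in that component with $\eta\notin\Theta_{\wedge^i Q_L}$. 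Membership in the dominating component is certified by checking that the Petri map $\mu_0(C,L):H^0(L)\otimes H^0(K_C\otimes L^{\vee})\to H^0(K_C)$ is an isomorphism --- a step entirely absent from your proposal, and without which the specialization argument does not propagate back to the generic $[C,\eta]$. Concretely, the paper takes $L=A^{\otimes r}$ on a general $k$-gonal curve, verifies Petri via the base point free pencil trick and the Coppens--Martens computation $h^0(A^{\otimes j})=j+1$, and then only ever has to deal with this one $L$.

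Second, your step (2)--(3) via the Lazarsfeld filtration cannot prove the vanishing. The filtration $0\to\oplus_l\OO_{C}(x_l)\to Q_L\to L(-x_1-\cdots-x_{r-1})\to 0$ produces graded pieces of $\wedge^i Q_L$ of two kinds: those of the form $\OO_C(x_{l_1}+\cdots+x_{l_i})$ (degree $i$, harmless) and those of the form $\OO_C(x_{l_1}+\cdots+x_{l_{i-1}})\otimes L(-x_1-\cdots-x_{r-1})$, which have degree $d-r+i=g-1+b(k-1)\geq g$ and are therefore \emph{always} effective, even after twisting by $\eta_0$. Your claim that all graded pieces have degree $\leq g-1$ is false, and since $h^0$ is only subadditive on filtrations, the argument collapses at exactly the point you flag as the main obstacle. (This is consistent with how the paper uses that filtration: only to prove the \emph{inclusion} $C_i-C_i\subset\Theta_{\wedge^i Q_C}$, i.e.\ non-vanishing; the vanishing in the $k=2$ case is the deep content of \cite{FMP}.) The device that actually makes the $k$-gonal specialization work is that for $L=A^{\otimes r}$ the Lazarsfeld bundle \emph{splits}, $Q_L\cong A^{\oplus r}$, so $\wedge^i Q_L\otimes\eta\cong\oplus_{{r\choose i}}A^{\otimes i}\otimes\eta$ and the whole question reduces to $H^0(C,A^{\otimes i}\otimes\eta)=0$; this is then settled by a further degeneration to a hyperelliptic curve with $A=\mathfrak g^1_2\otimes\OO_C(x_1+\cdots+x_{k-2})$ and $\eta$ a difference of Weierstrass points, using the irreducibility of $\mathfrak G^1_k$. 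Without the splitting and without the Petri/unique-component step, your outline does not close.
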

\begin{proof}
Since there is a unique irreducible component of $\mathfrak
G^r_d$ mapping
dominantly onto $\cM_g$, to prove that $\mathcal{D}_{g: k}$
is a divisor it suffices to exhibit a single element $[C, L, \eta]\in
\mathfrak G^r_d$ such that  (1) the Petri map
$$\mu_0(C, L): H^0(C, L)\otimes H^0(C, K_C\otimes L^{\vee})\rightarrow
H^0(C, K_C)$$ is an isomorphism, and (2) for each point $\eta\in
\mbox{Pic}^0(C)[2]$, we have that $\eta\notin \Theta_{\wedge^i Q_L}$.

Proposition 2.1.1 from \cite{CM} ensures that for a generic
$k$-gonal curve  $[C, A]\in \mathfrak G^1_k$ of genus $g=(r+1)(k-1)$
one has that $h^0(C, A^{\otimes j})=j+1$ for $1\leq j\leq r+1$. In
particular there is an isomorphism $\mbox{Sym}^j H^0(C, A)\cong
H^0(C, A^{\otimes j})$. Using this and Riemann-Roch, we obtain that
$h^0(C, K_C\otimes A^{\otimes (-j)})=(k-1)(r+1-j)$ for $0\leq j\leq
r+1$. Thus there is a generically injective rational map $\mathfrak
G^1_k\dashrightarrow \mathfrak G^r_d$ given by $[C, A]\mapsto [C, A^{\otimes r}]$
(The use of such a map has been first pointed out to me in a
different context by S. Keel). We claim that $\mathfrak G^1_k$ maps
into the "main component" of $\mathfrak G^r_d$ which maps dominantly
onto $\mm_g$. To prove this it suffices to check that the Petri map
$$\mu_0(C, A^{\otimes r}): H^0(C, A^{\otimes r})\otimes H^0(C, K_C\otimes A^{\otimes
(-r)})\rightarrow H^0(C, K_C)$$ is an isomorphism (Remember that
$H^0(C, A^{\otimes r})\cong \mbox{Sym}^r H^0(C, A)$). We use the
base point free pencil trick to write down the exact sequence
$$0\longrightarrow H^0(K_C\otimes A^{\otimes -(j+1)})\longrightarrow H^0(
A)\otimes H^0(K_C\otimes A^{\otimes
(-j)})\stackrel{\mu_j(A)}\longrightarrow H^0(K_C\otimes A^{\otimes
-(j-1)}).$$ One can now easily check that  $\mu_j(A)$ is surjective
for $1\leq j\leq r$ by using the formulas $h^0(C, K_C\otimes
A^{\otimes (-j)})=(k-1)(r+1-j)$ valid for $0\leq j\leq r+1$. This in
turns implies that $\mu_0(C, A^{\otimes r})$ is surjective, hence an
isomorphism.

\vskip 3pt

We now check condition (2) and note that for $[C, L=A^{\otimes
r}]\in \mathfrak G^r_d$, the Lazarsfeld bundle splits as $Q_L\cong
A^{\oplus r}$. In particular, $\wedge^i Q_L\cong \oplus_{{r\choose
i}} A^{\otimes i}$, hence the condition $H^0(C, \wedge^i Q_L\otimes
\eta)\neq 0$ is equivalent to $H^0(C, A^{\otimes i}\otimes \eta)\neq
0$, that is, the translate of the theta divisor
$W_{g-1}(C)-A^{\otimes i}\subset \mbox{Pic}^0(C)$ cannot contain any
point of order $2$ on $\mbox{Pic}^0(C)$. Using that the moduli space of triples $[C,A,\eta]$, where 
$[C,A]\in \mathfrak{G}^1_k$ and $\eta\in \mbox{Pic}^0(C)[2]$ is irreducible for each $k\geq 3$, it suffices to prove the statement for a single such triple.

\vskip 3pt

We assume by contradiction
that for \emph{any} $[C, A]\in \mathfrak G^1_k$ and \emph{any}
$\eta\in \mbox{Pic}^0(C)[2]$, we have that $H^0(C, A^{\otimes
i}\otimes \eta)\geq 1$. We specialize $C$ to a hyperelliptic curve and choose $A=\mathfrak
g^1_2\otimes \OO_C(x_1+\cdots+x_{k-2})$, with $x_1, \ldots,
x_{k-2}\in C$ being general points. Finally we take
$\eta:=\OO_C(p_1+\cdots+p_{i+1}-q_{1}-\cdots-q_{i+1})\in
\mbox{Pic}^0(C)[2]$, with $p_1, \ldots, p_{i+1}, q_1, \ldots,
q_{i+1}$ being distinct ramification points of the hyperelliptic
$\mathfrak g^1_2$. It is now straightforward to check that $H^0(C,
A^{\otimes i}\otimes \eta)=0$.
\end{proof}

In order to compute the class $[\overline{\mathcal{D}}_{g: k}]\in
\mbox{Pic}(\rr_g)$ we extend the determinantal description of
$\mathcal{D}_{g: k}$ to the boundary of $\rr_g$. We start by setting
some notation. We denote by $\textbf{M}_g^0\subset \textbf{M}_g$ the open substack
classifying curves $[C]\in \cM_g$ such that $W_{d-1}^r(C)=
\emptyset$ and $W_d^{r+1}(C)= \emptyset$. We know that
$\mbox{codim}(\cM_g-\cM_g^0, \cM_g)\geq 2$. We further denote by
$\Delta_0^0\subset \Delta_0\subset \mm_g$ the locus of curves
$[C/y\sim q]$ where $[C]\in \cM_{g-1}$ is a curve that satisfies the
Brill-Noether theorem and where $y, q\in C$ are arbitrary points. Note that every Brill-Noether general curve $[C]\in \cM_{g-1}$ satisfies  $$W_{d-1}^{r}(C)=\emptyset,  \
\ W_d^{r+1}(C)=\emptyset\  \mbox{ and } \mbox{ dim }W^r_d(C)=\rho(g-1, r,
d)=r.$$ We set $\rem_g^0:=\textbf{M}_g^0\cup \Delta_0^0\subset \rem_g$. Then
we consider the  Deligne-Mumford stack
$$\sigma_0:\mathfrak G^r_d\rightarrow \rem_g^0$$
classifying pairs $[C, L]$ with $[C]\in \mm_g^0$ and $L\in G^r_d(C)$ (cf. \cite{EH}, \cite{F2}, \cite{Kh} -note that it is essential that $\rho(g, r, d)=0$. At the moment there is no known extension of this stack over the entire $\rem_g$).
We remark that for any curve $[C]\in \mm_{g}^0$ and $L\in W^r_d(C)$ we have that
$h^0(C, L)=r+1$, that is, $\mathfrak{G}^r_d$ parameterizes only complete linear series. Indeed, for a smooth curve $[C]\in \cM_{g}^0$ we have that $W_{d}^{r+1}(C)=\emptyset$,
so necessarily $W^r_d(C)=G^r_d(C)$.
For a point $[C_{yq}:=C/y\sim q]\in \Delta_0^0$ we have the identification
$$\sigma_0^{-1}\bigl[C_{yq}\bigr]=\{L\in W^r_d(C): h^0(C,
L\otimes \OO_C(-y-q))=r\},$$
where we note that since the normalization $[C]\in \cM_{g-1}$ is assumed to be Brill-Noether general, any sheaf  $L\in \sigma_0^{-1}[C_{yq}]$ satisfies $h^0(C, L\otimes \OO_C(-y))=h^0(C, L\otimes \OO_C(-q))=r$ and $h^0(C, L)=r+1$. Furthermore,  $\sigma_0:\mathfrak
G^r_d\rightarrow \rem_g^0$ is proper, which is to say that
$\overline{W}^r_d(C_{yq})=W^r_d(C_{yq})$, where the left-hand-side
denotes the closure of $W^r_d(C_{yq})$ in the variety
$\overline{\mbox{Pic}}^d(C_{yq})$ of torsion-free sheaves on
$C_{yq}$. This follows because a non-locally free torsion-free sheaf in
$\overline{W}^r_d(C_{yq})-W^r_d(C_{yq})$ is of the form $\nu_*(A)$,
where $A\in W_{d-1}^r(C)$ and $\nu:C\rightarrow C_{yq}$ is the
normalization map. But we know that $W_{d-1}^r(C)=\emptyset$,
because $[C]\in \cM_{g-1}$ satisfies the Brill-Noether
theorem. Since $\rho(g, r, d)=0$, by general Brill-Noether theory,
there exists a unique irreducible component of $\mathfrak G^r_d$
which maps onto $\rem_g^0$. It is certainly not the case that
$\mathfrak G^r_d$ is irreducible, unless $k\leq 3$, when either
$\mathfrak G^r_d=\textbf{M}_g$ ($k=2$), or $\mathfrak G^r_d$ is isomorphic
to a Hurwitz stack ($k=3$). We denote by $f^r_d:\mathfrak C_{g,
d}^r:=\rem_{g, 1}^0\times_{\rem_g^0} \mathfrak G^r_d\rightarrow
\mathfrak G^r_d$ the pull-back of the universal curve $\rem_{g,
1}^0\rightarrow \rem_g^0$ to $\mathfrak G^r_{d}$. Once we have chosen a
Poincar\'e bundle $\L$ on $\mathfrak C^r_{g, d}$ we can form the
three codimension $1$ tautological classes in $A^1(\mathfrak
G^r_d)$:
\begin{equation}\label{tautological}
\mathfrak{a}:=(f^r_d)_*\bigl(c_1(\L)^2\bigr), \ \mathfrak{b}:=(f^r_d)_*\bigl(c_1(\L)\cdot
c_1(\omega_{f_d^r})\bigr), \mbox{ }
\mathfrak{c}:=(f^r_d)_*\bigl(c_1(\omega_{f_d^r})^2\bigr)=(\sigma_0)^*\bigl((\kappa_1)_{
\rem_g^0}\bigr).
\end{equation} These classes depend on the choice of
$\L$ and behave functorially with respect to base change, see also Remark \ref{poinc}
on the precise statement regarding the choice of $\L$. We set
$\rer_g^0:=\pi^{-1}(\pem_g^0)\subset \per_g$ and introduce
the stack of $\mathfrak g^r_d$'s on Prym curves
$$\sigma:\mathfrak G^r_d(\per_g^0/\pem_g^0):=\rer_g^{0} \times_{\rem_g^0} \mathfrak
G^r_d\rightarrow \rer_g^0 .$$ By a slight abuse of notation we denote
the boundary divisors by the same symbols, that is,
$\Delta_0':=\sigma^*(\Delta_0'),
\Delta_0^{''}:=\sigma^*(\Delta_0^{''})$ and
$\Delta_0^{\mathrm{ram}}:=\sigma^*(\Delta_0^{\mathrm{ram}})$.
Finally, we introduce the universal curve over the stack of
$\mathfrak g^r_d$'s on Prym curves:
$$f':\mathcal{X}_d^r:=\mathcal{X}\times _{\rer_g^0} \mathfrak
G^r_d(\rer_g^0/\rem_g^0)\rightarrow \mathfrak
G^r_d(\rer_g^0/\rem_g^0).$$ On $\mathcal{X}^r_d$ there are two
tautological line bundles, the universal Prym bundle
$\mathcal{P}_d^r$ which is the pull-back of $\mathcal{P}\in
\mbox{Pic}(\mathcal{X})$ under the projection
$\mathcal{X}_d^r\rightarrow \mathcal{X}$, and a  Poincar\'e bundle
$\mathcal{L}\in \mbox{Pic}(\mathcal{X}_d^r)$ characterized by the
property $\mathcal{L}_{|f'^{-1}[X, \eta, \beta, L]}=L\in W^r_d(C),$
for each point $[X, \eta, \beta, L]\in \mathfrak
G^r_d(\rr_g^0/\mm_g^0)$. Note that we also have the codimension $1$
classes $\mathfrak{a}, \mathfrak{b}, \mathfrak{c}\in A^1(\AUX)$ defined by the formulas
(\ref{tautological}).

\begin{proposition}\label{lazvanish}
Let $C$ be a curve of genus $g$ and let $L\in W^r_d(C)$ be a
globally generated complete linear series. Then for any integer
$0\leq j\leq r$ and for any line bundle $A\in \mathrm{Pic}^a(C)$
such that $a\geq 2g+d-r+j-1$, we have that $H^1(C, \wedge^j
M_L\otimes A)=0$.
\end{proposition}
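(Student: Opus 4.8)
The plan is to deduce the vanishing from a filtration of $\wedge^j M_L\otimes A$ whose graded pieces are line bundles of sufficiently high degree; this requires no (semi)stability input on $\wedge^j M_L$ and hence applies to an arbitrary curve $C$.

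First I would record the elementary bound $d\geq r$: since $L$ is globally generated with $h^0(C,L)=r+1$, imposing vanishing at $r$ suitably general points cuts $h^0$ down to $1$, so $L(-x_1-\cdots-x_r)$ is effective and $d-r\geq 0$. Thus the hypothesis $a\geq 2g+d-r+j-1$ already forces $a\geq 2g+j-1\geq 2g-1$, which settles the case $j=0$ at once.

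The key input is Lazarsfeld's filtration lemma (\cite{L}, Lemma 1.4.1), in exactly the form already used in this section for $Q_C$: for a general choice of points $x_1,\ldots,x_{r-1}\in C$ there is an exact sequence
$$0\longrightarrow \bigoplus_{l=1}^{r-1}\OO_C(x_l)\longrightarrow Q_L\longrightarrow L\otimes \OO_C(-x_1-\cdots-x_{r-1})\longrightarrow 0.$$
Dualizing, and setting $\mathcal{E}:=\bigoplus_{l=1}^{r-1}\OO_C(-x_l)$, one obtains a short exact sequence $0\to L^{\vee}(x_1+\cdots+x_{r-1})\to M_L\to \mathcal{E}\to 0$ with a line-bundle sub-sheaf. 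Taking $j$-th exterior powers (the sub-bundle has rank one) and twisting by $A$ produces
$$0\longrightarrow L^{\vee}(x_1+\cdots+x_{r-1})\otimes A\otimes\wedge^{j-1}\mathcal{E}\longrightarrow \wedge^j M_L\otimes A\longrightarrow \wedge^{j}\mathcal{E}\otimes A\longrightarrow 0.$$

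The proof then finishes by degree bookkeeping on the two outer terms. The quotient $\wedge^{j}\mathcal{E}\otimes A$ is a direct sum of line bundles $\OO_C(-\sum_{l\in S}x_l)\otimes A$ with $|S|=j$, each of degree $a-j\geq 2g-1$, hence with vanishing $H^1$. The sub-sheaf is a direct sum of line bundles $L^{\vee}\otimes A\otimes \OO_C(\sum_{l=1}^{r-1}x_l-\sum_{l\in S}x_l)$ with $|S|=j-1$, each of degree $a-d+r-j$, which is $\geq 2g-1$ precisely because $a\geq 2g+d-r+j-1$; so this term too has vanishing $H^1$. The long exact cohomology sequence then gives $H^1(C,\wedge^j M_L\otimes A)=0$. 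The only genuine ingredient is the existence of the Lazarsfeld filtration, which is available for every curve, so I anticipate no serious obstacle beyond treating the degenerate small-rank cases $r=0,1$ (where $\wedge^j M_L$ is $0$, $\OO_C$, or $L^{\vee}$) separately, which is routine.
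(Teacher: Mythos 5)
Your proposal is correct and follows essentially the same route as the paper: both use Lazarsfeld's filtration $0\to L^{\vee}(x_1+\cdots+x_{r-1})\to M_L\to \oplus_{l=1}^{r-1}\OO_C(-x_l)\to 0$ for general points, take $j$-th exterior powers, twist by $A$, and conclude by checking that the line-bundle pieces $A(-D_j)$ and $A\otimes L^{\vee}(D_{r-j})$ have degree at least $2g-1$. Your explicit remarks on $d\geq r$ and the degenerate cases $j=0$, $r\leq 1$ are minor bookkeeping the paper leaves implicit.
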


\begin{proof} We use a filtration argument due to Lazarsfeld \cite{L}.
Having fixed $L$ and $A$, we choose general points $x_1,\ldots,
x_{r-1}\in C$ such that $h^0\bigl(C, L\otimes
\OO_C(-x_1-\cdots-x_{r-1})\bigr)=2$ and then there is an exact
sequence on $C$
$$0\longrightarrow L^{\vee}(x_1+\cdots+x_{r-1})\longrightarrow
M_L\longrightarrow \oplus_{l=1}^{r-1} \OO_C(-x_l)\longrightarrow
0.$$ Taking the $j$-th exterior powers and tensoring the resulting
exact sequence with $A$, we find that in order to conclude that
$H^1(C, \wedge^i M_L\otimes A)=0$ for $i\leq r$, it suffices to show
that for $1\leq i\leq r$ the following hold:

\noindent (1) $H^1\bigl(C, A\otimes \OO_C(-D_j)\bigr)=0$ for each
effective divisor $D_j\in C_j$ with support in the set $\{x_1,
\ldots, x_{r-1}\}$, and

\noindent (2) $H^1\bigl(C, A\otimes L^{\vee}\otimes
\OO_C(D_{r-j})\bigr)=0$, for any effective divisor $D_{r-j}\in
C_{r-j}$ with support contained in $\{x_1, \ldots, x_{r-1}\}$.

Both (1) and (2) hold for degree reasons since  $\mbox{deg}(C,
A\otimes \OO_C(-D_j))\geq 2g-1$ and $\mbox{deg}(C, A\otimes
L^{\vee}\otimes \OO_C(D_{r-j})\geq 2g-1$ and the points $x_1,
\ldots, x_{r-1}\in C$ are general.
\end{proof}

Next we use Proposition \ref{lazvanish} to prove a vanishing result
for Prym curves.
\begin{proposition}\label{vanish}
For each point $[X, \eta, \beta, L]\in \mathfrak
G^r_d(\rer_g^0/\rem_g^0)$ and $0\leq a\leq i-1$, we have that
$$H^1(X, \wedge^a M_L\otimes
L^{\otimes (i-a)}\otimes \omega_X\otimes \eta)=0.$$
\end{proposition}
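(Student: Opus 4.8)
Here is the plan. The statement involves a point $[X,\eta,\beta,L]\in \AUX$, and the curve $X$ is of one of three topological types, coming from the description of the fibres of $\rer_g^0\rightarrow \rem_g^0$: a smooth curve of genus $g$ over $\textbf{M}_g^0$; an irreducible one-nodal curve $X=C_{yq}$ with $[C]\in \cM_{g-1}$ Brill--Noether general over $\Delta_0'\cup\Delta_0^{''}$; and a quasi-stable curve $X=C\cup_{\{y,q\}}E$ with $E$ an exceptional component, $\eta_E=\OO_E(1)$ and $\eta_C^{\otimes 2}=\OO_C(-y-q)$ over $\Delta_0^{\mathrm{ram}}$. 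In every case $L$ is a genuine line bundle (the sheaves in $\overline{W}^r_d(C_{yq})$ are locally free since $W^r_{d-1}(C)=\emptyset$) which restricts to a globally generated complete linear series of degree $d$ on the relevant smooth curve: on a smooth $[C]\in\cM_g^0$ this uses $W^r_{d-1}(C)=W^{r+1}_d(C)=\emptyset$, and on the normalization $C$ of genus $g-1$ it uses $\rho(g-1,r,d-1)=-1<0$ and $\rho(g-1,r+1,d)<0$, which force $\nu^*L$ to be globally generated with $h^0(C,\nu^*L)=r+1$, hence $H^0(X,L)=H^0(C,\nu^*L)$. The first thing I would record is that this dimension count forces $M_L$ to be locally free with $M_L|_C\cong M_{\nu^*L}$, and, when $E$ is exceptional, $M_L|_E\cong\OO_E^{\oplus r}$; both follow by restricting the defining sequence of $M_L$ to each component.

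For smooth $X$ the claim is immediate from Proposition~\ref{lazvanish}: take $A:=L^{\otimes(i-a)}\otimes\omega_X\otimes\eta$, so $\deg A=(i-a)d+2g-2$, and the hypothesis $\deg A\geq 2g+d-r+a-1$ is equivalent to $(i-a-1)d\geq a-r+1$, which holds for $0\leq a\leq i-1$ since $r=i+b\geq i$ and $b\geq 0$ (the extremal case $a=i-1$ reads $0\geq -b$). For $X=C_{yq}$ I would pull $\mathcal{G}:=\wedge^a M_L\otimes L^{\otimes(i-a)}\otimes\omega_X\otimes\eta$ back along the normalization $\nu\colon C\rightarrow X$; using $\nu^*\omega_X=\omega_C(y+q)$, $\nu^*M_L=M_{\nu^*L}$ and $\deg\nu^*\eta=0$, the normalization sequence $0\rightarrow\mathcal{G}\rightarrow\nu_*\nu^*\mathcal{G}\rightarrow\mathcal{Q}\rightarrow 0$, with $\mathcal{Q}$ a length ${r\choose a}$ skyscraper at the node, reduces the desired $H^1(X,\mathcal{G})=0$ to (i) $H^1(C,\nu^*\mathcal{G})=0$ and (ii) $\nu^*\mathcal{G}$ globally generated at a point over the node, i.e.\ $H^1(C,\nu^*\mathcal{G}(-q))=0$. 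Both are instances of Proposition~\ref{lazvanish} on the genus $g-1$ curve $C$, the line bundles there having degree $(i-a)d+2g-2$ and $(i-a)d+2g-3$ respectively, and the corresponding inequalities $(i-a-1)d\geq a-r-1$ and $(i-a-1)d\geq a-r$ are clear for $0\leq a\leq i-1$.

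For $X=C\cup_{\{y,q\}}E$ I would instead use $0\rightarrow(\mathcal{G}|_E)(-y-q)\rightarrow\mathcal{G}\rightarrow\mathcal{G}|_C\rightarrow 0$. On $E$ we have $L|_E=\OO_E$ and $\omega_X|_E=\OO_E$ (as in Proposition~\ref{blowup}(i)), together with $M_L|_E\cong\OO_E^{\oplus r}$, so $\mathcal{G}|_E\cong\OO_E(1)^{\oplus{r\choose a}}$ and $(\mathcal{G}|_E)(-y-q)\cong\OO_E(-1)^{\oplus{r\choose a}}$ has vanishing cohomology; on $C$ we have $\mathcal{G}|_C\cong\wedge^a M_{\nu^*L}\otimes(\nu^*L)^{\otimes(i-a)}\otimes\omega_C(y+q)\otimes\eta_C$ with $\deg\eta_C=-1$, so $H^1(C,\mathcal{G}|_C)=0$ by Proposition~\ref{lazvanish} (a line bundle of degree $(i-a)d+2g-3$, inequality $(i-a-1)d\geq a-r$), and the long exact sequence gives $H^1(X,\mathcal{G})=0$. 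The main obstacle is the boundary bookkeeping: identifying $M_L$ component by component, which rests on $h^0(X,L)=r+1$ and hence on the Brill--Noether genericity of the genus $g-1$ normalization, and, in the irreducible nodal case, upgrading $H^1(C,\nu^*\mathcal{G})=0$ to $H^1(X,\mathcal{G})=0$ through the extra global-generation input at the node; once these are in place all the numerical verifications are routine and uniform in $k$ and $b$.
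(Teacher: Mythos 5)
Your proposal is correct and follows essentially the same route as the paper's proof: the smooth case is a direct application of Proposition~\ref{lazvanish}, and the two boundary cases are handled by restricting to the normalization (respectively to the components $C$ and $E$), identifying $M_L$ with $M_{L_C}$ on $C$ and with a trivial bundle on $E$, and then invoking Proposition~\ref{lazvanish} on the genus $g-1$ curve. The only cosmetic differences are that you twist by $-q$ rather than $-y-q$ in the irreducible nodal case and phrase the Mayer--Vietoris step via the sequence $0\rightarrow(\mathcal{G}|_E)(-y-q)\rightarrow\mathcal{G}\rightarrow\mathcal{G}|_C\rightarrow 0$; the numerical verifications agree with the paper's.
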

\begin{proof}
If $X$ is smooth, then the vanishing follows directly from
Proposition \ref{lazvanish}. Assume now that $[X, \eta, \beta]\in
\Delta_0'\cup \Delta_0^{''}$, that is, $st(X)=X$ and $\eta \in
\mbox{Pic}^0(X)[2]$. As usual, we denote by $\nu:C\rightarrow X$ the
normalization map, and $L_C:=\nu^*(L)\in W^r_d(C)$ satisfies $h^0(C,
L_C\otimes \OO_C(-y-q))=r$, hence $H^0(X, L)\cong H^0(C, L_C)$,
which implies that $\nu^*(M_{L})=M_{L_C}$. Tensoring  the usual
exact sequence on $X$
$$0\longrightarrow \OO_X\longrightarrow \nu_*\OO_C\longrightarrow
\nu_*\OO_C/\OO_X\longrightarrow 0,$$  by the line bundle $\wedge^a
M_L\otimes L^{(i-a)}\otimes \omega_X\otimes \eta$,  we find that a
sufficient condition for the vanishing $H^1(X, \wedge^a M_L\otimes
L^{\otimes (i-a)}\otimes \omega_X\otimes \eta)=0$ to hold, is to
show that
$$ H^1(C, \wedge^a M_{L_C}\otimes L_C^{\otimes (i-a)}\otimes
K_C\otimes \eta_C)=H^1(C, \wedge^a M_{L_C}\otimes L_C^{\otimes
(i-a)}\otimes K_C(y+q)\otimes \eta_C)=0.$$ Since $i<r$, this follows
directly from Proposition \ref{lazvanish}.

We are left with the case when $[X, \eta, \beta]\in
\Delta_0^{\mathrm{ram}}$, when $X:=C\cup_{\{q, y\}} E$, with $E$
being a smooth rational curve,  $L_C\in W^r_d(C), L_E=\OO_E$ and
$\eta_C^{\otimes 2}=\OO_C(-y-q)$. We also have that $M_{L |
C}=M_{L_C}$ and $M_{L | E}=H^0(C, L_C\otimes \OO_C(-y-q))\otimes
\OO_E$. A standard argument involving the Mayer-Vietoris sequence on
$X$ shows that the vanishing of the group $H^1(X, \wedge^a
M_L\otimes L^{\otimes (i-a)}\otimes \omega_X\otimes \eta)$ is
implied by the following vanishing conditions $$H^1(C, \wedge^a
M_{L_C}\otimes L_C^{\otimes (i-a)}\otimes K_C(y+q)\otimes
\eta_C)=H^1(C, \wedge^a M_{L_C}\otimes L_C^{\otimes (i-a)}\otimes
K_C\otimes \eta_C)=0.$$ The conditions of Proposition
\ref{lazvanish} being satisfied $(i\leq r-1)$, we finish the proof.
\end{proof}

Proposition \ref{vanish} enables us to define a sequence of
tautological vector bundles on $\AUX$: First, we set $\H:=f'_*(\L)$.
By Grauert's theorem, it follows that $\H$ is a vector bundle of
rank $r+1$ with fibre $\H[X, \eta, \beta, L]=H^0(X, L)$. For $j\geq
0$ we set $$\cA_{0, j}:=f'_*(\L^{\otimes j}\otimes
\omega_{f'}\otimes \P_d^r).$$ Since $R^1f'_*(\L^{\otimes j}\otimes
\omega_{f'}\otimes \P_d^r)=0$ we find that $\cA_{0, j}$ is a vector
bundle over $\AUX$ of rank equal to $h^0(X, L^{\otimes j}\otimes
\omega_X\otimes \eta)=jd+g-1.$  Next we introduce the global
Lazarsfeld vector bundle $\cM$ over $\mathcal{X}_d^r$ by the exact
sequence
$$0\longrightarrow \cM\longrightarrow f'^*(\H)\longrightarrow
\L\longrightarrow 0,$$ hence $\cM_{f'^{-1}[X, \eta, \beta, L]}=M_L$
for each $[X, \eta, \beta, L]\in \AUX$. Then for integers $a, j\geq
1$ we define the sheaf
$$\cA_{a, j}:=f'_*(\wedge^a \cM\otimes \L^{\otimes j}\otimes
\omega_{f'}\otimes \P^r_d).$$ For each $1\leq a\leq i-1$, we have
proved that $R^1 f'_*(\wedge^a \cM\otimes \L^{\otimes (i-a)}\otimes
\omega_{f'}\otimes \P^r_d)=0$ (cf. Proposition \ref{vanish}),
therefore $\cA_{a, i-a}$ is a vector bundle over $\AUX$ having rank
$$\mathrm{rk}(\cA_{a, i-a})=\chi\bigl(X, \wedge^a M_L\otimes L^{\otimes
(i-a)}\otimes \omega_X\otimes \eta\bigr)={r\choose a}k(i-a)(r+1).$$
Proposition \ref{vanish} also shows that for all integers $1\leq a\leq i-1$,
the vector bundles $\cA_{a, i-a}$ sit in exact sequences
\begin{equation}\label{recursion}
0\longrightarrow \cA_{a, i-a}\longrightarrow \wedge^a \H\otimes
\cA_{0, i-a}\longrightarrow \cA_{a-1, i-a+1}\longrightarrow 0.
\end{equation}

We shall need the expression for the Chern numbers of $\cA_{a,
i-a}$. Using (\ref{recursion}) it will be sufficient to compute
$c_1(\cA_{0, j})$ for all $j\geq 0$.
\begin{proposition}\label{rr}
For all $j\geq 0$ one has the following formula in $A^1(\AUX)$:
$$c_1(\cA_{0, j})=\lambda+\frac{j}{2}B+\frac{j^2}{2} A-\frac{1}{4}\delta_0^{\mathrm{ram}}.$$
\end{proposition}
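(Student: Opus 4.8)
The plan is to apply Grothendieck--Riemann--Roch to the universal curve $f':\mathcal{X}^r_d\rightarrow \AUX$ and the line bundle $\L^{\otimes j}\otimes \omega_{f'}\otimes \P_d^r$, proceeding exactly as in the proof of Proposition \ref{syzi}. Since $R^1f'_*(\L^{\otimes j}\otimes \omega_{f'}\otimes \P_d^r)=0$, the class $c_1(\cA_{0,j})$ is the codimension-$1$ component of
\[
f'_*\Bigl[\bigl(1+\xi+\tfrac{\xi^2}{2}\bigr)\bigl(1-\tfrac12 c_1(\omega_{f'})+\tfrac1{12}\bigl(c_1(\omega_{f'})^2+[\mathrm{Sing}(f')]\bigr)\bigr)\Bigr],
\]
where $\xi:=c_1(\L^{\otimes j}\otimes \omega_{f'}\otimes \P_d^r)=j\ell+\omega+p$, with the shorthand $\ell:=c_1(\L)$, $p:=c_1(\P_d^r)$ and $\omega:=c_1(\omega_{f'})$. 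Extracting the degree-$2$ part yields
\[
c_1(\cA_{0,j})=f'_*\Bigl[\tfrac12\bigl(\xi^2-\xi\,\omega\bigr)\Bigr]+f'_*\Bigl[\tfrac1{12}\bigl(\omega^2+[\mathrm{Sing}(f')]\bigr)\Bigr].
\]

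For the second summand I would use that $f'_*(\omega^2)=\mathfrak{c}$ equals the pull-back of $\kappa_1$, that $\kappa_1=12\lambda-\delta_0'-\delta_0''-2\delta_0^{\mathrm{ram}}$ (Mumford's formula, as in the proof of Proposition \ref{syzi}), and that $f'_*([\mathrm{Sing}(f')])=\delta_0'+\delta_0''+2\delta_0^{\mathrm{ram}}$ --- the coefficient $2$ along $\Delta_0^{\mathrm{ram}}$ reflecting the two nodes of the fibre $C\cup_{\{y,q\}}E$ there. These add up to give precisely $\lambda$. For the first summand, the algebraic identity $\xi^2-\xi\,\omega=\xi(\xi-\omega)=(j\ell+p)^2+\omega(j\ell+p)$ reduces the computation to the push-forwards of the monomials $\ell^2,\ \ell\,\omega,\ p^2,\ \omega\,p$ and $\ell\,p$. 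By the definitions in (\ref{tautological}) one has $f'_*(\ell^2)=\mathfrak{a}=A$ and $f'_*(\ell\,\omega)=\mathfrak{b}=B$; since $\omega_{f'}$ and $\P_d^r$ are pulled back from $\per_g$ and the Deligne pairing commutes with base change, Proposition \ref{blowup}(i) gives $f'_*(\omega\,p)=\langle\omega_{f'},\P_d^r\rangle=0$, while Proposition \ref{blowup}(iii) gives $f'_*(p^2)=\langle\P_d^r,\P_d^r\rangle=-\tfrac12\delta_0^{\mathrm{ram}}$. Collecting terms, $c_1(\cA_{0,j})=\lambda+\tfrac{j^2}{2}A+\tfrac{j}{2}B-\tfrac14\delta_0^{\mathrm{ram}}+j\,f'_*(\ell\,p)$.

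Thus the whole statement comes down to the vanishing $f'_*(\ell\,p)=\langle\L,\P_d^r\rangle=0$, which I expect to be the only delicate point. Here I would invoke the relation $[\E_0]=-2c_1(\P_d^r)$ established in the proof of Proposition \ref{blowup}, so that $f'_*(\ell\,p)=-\tfrac12 f'_*\bigl([\E_0]\cdot\ell\bigr)$. The morphism $f'|_{\E_0}\colon\E_0\rightarrow \Delta_0^{\mathrm{ram}}$ is a $\PP^1$-bundle whose fibres are the exceptional components $E$, hence $f'_*([\E_0]\cdot\ell)=\deg\bigl(\L_{|E}\bigr)\cdot[\Delta_0^{\mathrm{ram}}]$; but $\L$ on $\mathcal{X}^r_d$ is pulled back from a Poincar\'e line bundle on the family of stable models $\mathfrak C_{g,d}^r$ (equivalently, the linear series is carried by the non-exceptional component $C$), so that $\L_{|E}=\OO_E$ and therefore $f'_*([\E_0]\cdot\ell)=0$. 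This forces $f'_*(\ell\,p)=0$ and completes the proof of the asserted formula.
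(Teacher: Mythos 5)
Your proof is correct and follows essentially the same route as the paper: Grothendieck--Riemann--Roch applied to $f'$, Mumford's formula combined with $f'_*[\mathrm{Sing}(f')]=\delta_0'+\delta_0^{''}+2\delta_0^{\mathrm{ram}}$ to produce the $\lambda$ term, Proposition \ref{blowup} for $f'_*(p^2)=-\tfrac12\delta_0^{\mathrm{ram}}$ and $f'_*(\omega\,p)=0$, and the triviality of $\L$ on the exceptional components to kill $f'_*(\ell\,p)$. The only difference is that you spell out the last vanishing via the relation $[\E_0]=-2c_1(\P)$, which the paper states only parenthetically.
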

\begin{proof}
We apply Grothendieck-Riemann-Roch to the morphism
$f':\mathcal{X}_d^r\rightarrow \AUX$:
$$c_1(\cA_{0, j})=c_1\bigl(f'_{!}(\omega_{f'}\otimes \L^{\otimes j}\otimes \P^r_d)\bigr)=
$$
$$=f'_*\Bigl[\Bigl(1+c_1(\omega_{f'}\otimes \L^{\otimes j}\otimes
\P^r_d)+\frac{c_1^2(\omega_{f'}\otimes \L^{\otimes j} \otimes
\P^r_d)}{2}\Bigr)\Bigl(1-\frac{c_1(\omega_{f'})}{2}+\frac{c_1^2(\omega_{f'})
+[\mathrm{Sing}(f')]}{12}\Bigr)\Bigr]_2,$$ where
$\mathrm{Sing}(f')\subset \mathcal{X}_{d}^r$ denotes the codimension
$2$ singular locus of the morphism $f'$, therefore
$f'_*[\mathrm{Sing}(f')]=\Delta_0'+\Delta_0^{''}+2\Delta_0^{\mathrm{ram}}$.
We finish the proof using  Mumford's formula
$\kappa_1=f'_*(c_1^2(\omega_{f'}))=12\lambda-(\delta_0'+\delta_0^{''}+
2\delta_0^{\mathrm{ram}})$ and noting that $f'_*(c_1(\L)\cdot
c_1(\mathcal{P}_d^r))=0$ (the restriction of $\L$ to the exceptional
divisor of $f':\mathcal{X}_d^r\rightarrow \AUX$ is trivial)
and
$f'_*(c_1(\omega_{f'})\cdot c_1(\mathcal{P}_d^r))=0$,. Finally,
according to Proposition \ref{blowup} we have that
$f'_*(c_1^2(\mathcal{P}_d^r))=-\delta_0^{\mathrm{ram}}/2$.
\end{proof}
\begin{remark}\label{poinc}
While the construction of the vector bundles $\cA_{a, j}$
 depends on the choice of the Poincar\'e bundle
$\mathcal{L}$ and that of the Prym bundle $\mathcal{P}_d^r$,  it is easy to check that if we set
the vector bundles
$\cA:=\wedge^i \mathcal{H}\otimes \cA_{0, 0}$ and $\cB:=\cA_{i-1, i}$, then
the vector bundle $Hom(\cA, \cB)$ on $\AUX$,  as well as the morphism
$$\phi \in H^0\bigl(\AUX, Hom(\cA, \cB)\bigr)$$ whose degeneracy locus is the virtual divisor
$\overline{\mathcal{D}}_{g: k}$, are independent of
such choices. More precisely, let us denote by $\Xi$ the collection
of triples $\alpha:=\bigl(\pi_{\alpha}, \mathcal{L}_{\alpha}, (\mathcal{P}_d^r)_{\alpha}\bigr)$, where
$\pi_{\alpha} :\Sigma_{\alpha}\rightarrow \AUX$ is an
\'etale surjective morphism from a scheme $\Sigma_{\alpha}$, $(\mathcal{P}_d^r)_{\alpha}$ is
a Prym bundle  and
$\mathcal{L}_{\alpha}$ is a Poincar\'e bundle on $p_{2, \alpha}:
\mathcal{X}_d^r\times_{\AUX} \Sigma_{\alpha}\rightarrow
\Sigma_{\alpha}$. Recall that if $\Sigma \rightarrow
\AUX$ is an \'etale surjection from a scheme and $\mathcal{L}$ and
$\mathcal{L}'$ are two Poincar\'e bundles on $p_2: \mathcal{X}_d^r \times_{\AUX}\Sigma \rightarrow \Sigma$, then the sheaf
$\mathcal{N}:=p_{2 *} Hom (\mathcal{L}, \mathcal{L'})$ is invertible
and there is a canonical isomorphism $\mathcal{L}\otimes
p_2^*\mathcal{N}\cong \mathcal{L}'$. For every $\alpha\in \Xi$ we
construct the morphism between vector bundles of the same rank
$\phi_{\alpha}:\cA_{\alpha}\rightarrow \cB_{\alpha}$ as above. Then
since a straightforward cocycle condition is met, we find that there
exists a vector bundle $Hom(\cA, \cB)$ on $\AUX$ together
with a section $\phi \in H^0(\AUX, Hom (\cA, \cB))$ such
that for every $\alpha=(\pi_{\alpha}, \mathcal{L}_{\alpha}, (\mathcal{P}_d^r)_{\alpha})\in \Xi$
we have that $$\pi_{\alpha}^* (Hom(\cA, \cB))=Hom(\cA_{\alpha},
\cB_{\alpha})\ \mbox{ and }\ \pi_{\alpha}^*(\phi)=\phi_{\alpha}.$$
\end{remark}

We are finally in a position to compute the class of the divisor $\overline{\mathcal{D}}_{g: k}$.

\begin{theorem}\label{det}
We fix integers $k\geq 2, b\geq 0$ and set 
$$i:=kb-b+k-2,\
r:=kb+k-2,\ g:=ik+1, d:=rk$$ as above. Then there exists a morphism
$\phi:\wedge^i \H\otimes \cA_{0, 0}\rightarrow \cA_{i-1, 1}$ between
vector bundles of the same rank over $\AUX$, such that the
push-forward under $\sigma$ of the restriction to $\mathfrak
G^r_d(\textbf{R}^0_g/\textbf{M}^0_g)$ of the degeneration locus of $\phi$ is precisely
the effective divisor $\cD_{g:k}$. Moreover we have the following
expression for its class in $A^1(\rer_g^0)$:
$$\sigma_*\bigl(c_1(\cA_{i-1, 1}-\wedge^i \H\otimes \cA_{0,
0})\bigr)\equiv {r\choose
b}\frac{N}{(r+k)(kr+k-r-3)}\Bigl(\mathfrak{A}\lambda-\frac{\mathfrak{B}_0}{6}(\delta_0'+\delta_0^{''})-
\frac{\mathfrak{B}_0^{\mathrm{ram}}}{12}\delta_0^{\mathrm{ram}}
\Bigr),$$ where
$$\mathfrak{A}=(k^5-4k^4+5k^3-2k^2)b^3+(3k^5-13k^4+24k^3-23k^2+9k)b^2+$$
$$+(3k^5-14k^4+34k^3-45k^2+24k-4)b+k^5-5k^4+15k^3-25k^2+16k-2,$$
$$\mathfrak{B}_0=(k^5-4k^4+5k^3-2k^2)b^3+(3k^5-13k^4+22k^3-17k^2+5k)b^2+$$
$$+(3k^5-14k^4+30k^3-33k^2+14k-2)b+k^5-5k^4+13k^3-19k^2+10k$$
and
$$\mathfrak{B}_0^{\mathrm{ram}}=(4k^5-16k^4+20k^3-8k^2)b^3+(12k^5-52k^4+85k^3-65k^2+20k)b^2+$$
$$+(12k^5-56k^4+111k^3-114k^2+53k-8)b+4k^5-20k^4+46k^3-58k^2+34k-6.$$
\end{theorem}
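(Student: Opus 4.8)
The plan is to realize $\cD_{g:k}$ as the degeneracy locus of a morphism between vector bundles of equal rank, apply the Thom--Porteous formula in the form $[Z_1(\phi)]=c_1(\textrm{target}-\textrm{source})$, and reduce the resulting $c_1$ to one application of Grothendieck--Riemann--Roch on the universal curve $f':\mathcal{X}^r_d\to\AUX$, before pushing everything down to $\rer_g^0$ using the known behaviour of tautological classes on the Brill--Noether stack. Since the statement only records the coefficients of $\lambda,\delta_0',\delta_0^{''},\delta_0^{\mathrm{ram}}$, it is enough to work over the partial compactification $\rer_g^0$, whose only boundary divisors are $\Delta_0',\Delta_0^{''},\Delta_0^{\mathrm{ram}}$.

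First I would build $\phi$. Take the $i$-th exterior power of the global Lazarsfeld sequence $0\to\cM\to f'^*\H\to\L\to 0$ on $\mathcal{X}^r_d$, twist by $\omega_{f'}\otimes\mathcal{P}_d^r$, and push forward by $f'$; since $\cM$ is a subbundle of $f'^*\H$ this produces the exact sequence
$$0\longrightarrow\wedge^i\cM\otimes\omega_{f'}\otimes\mathcal{P}_d^r\longrightarrow\wedge^i f'^*\H\otimes\omega_{f'}\otimes\mathcal{P}_d^r\longrightarrow\wedge^{i-1}\cM\otimes\L\otimes\omega_{f'}\otimes\mathcal{P}_d^r\longrightarrow 0,$$
and $f'_*$ of it, via the projection formula, gives $\phi:\wedge^i\H\otimes\cA_{0,0}\to\cA_{i-1,1}$ with kernel $f'_*(\wedge^i\cM\otimes\omega_{f'}\otimes\mathcal{P}_d^r)$. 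Both $\cA_{0,0}$ and, by Proposition \ref{vanish}, $\cA_{i-1,1}$ are locally free, and the identity $(r+1)\binom{r}{i-1}=i\binom{r+1}{i}$ shows $\mathrm{rk}(\wedge^i\H\otimes\cA_{0,0})=\binom{r+1}{i}(g-1)=\mathrm{rk}(\cA_{i-1,1})$. For $[X,\eta,\beta,L]\in\AUX$, Serre duality identifies $h^0(X,\wedge^i M_L\otimes\omega_X\otimes\eta)$ with $h^0(X,\wedge^i Q_L\otimes\eta)$ (using $\mu(\wedge^i Q_L)=g-1$), so $\phi$ fails to be an isomorphism exactly along $\{h^0(X,\wedge^i Q_L\otimes\eta)\geq 1\}$; by Theorem \ref{transversality} this is a proper subvariety of $\AUX$, hence (by irreducibility of the component dominating $\rer_g^0$) a reduced divisor of the expected codimension, and over the generic point of $\cD_{g:k}$ it carries a single point, since a general $[C,\eta]\in\cD_{g:k}$ admits exactly one $L\in W^r_d(C)$ with $\eta\in\Theta_{\wedge^i Q_L}$. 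Therefore $\sigma_*$ of the degeneracy locus of $\phi$ equals $\cD_{g:k}$, with class $\sigma_*\bigl(c_1(\cA_{i-1,1})-c_1(\wedge^i\H\otimes\cA_{0,0})\bigr)$.

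To compute the class on $\AUX$ I would observe that the exact sequence above, together with the $R^1$-vanishing for $\cA_{0,0}$ and (Proposition \ref{vanish}) for $\cA_{i-1,1}$, gives in $K(\AUX)$ the identity $f'_!(\wedge^i\cM\otimes\omega_{f'}\otimes\mathcal{P}_d^r)=\wedge^i\H\otimes\cA_{0,0}-\cA_{i-1,1}$, so that $c_1(\cA_{i-1,1})-c_1(\wedge^i\H\otimes\cA_{0,0})=-c_1\bigl(f'_!(\wedge^i\cM\otimes\omega_{f'}\otimes\mathcal{P}_d^r)\bigr)$ (equivalently, one telescopes the recursion (\ref{recursion}) using Proposition \ref{rr} and $c_1(\wedge^a\H)=\binom{r}{a-1}c_1(\H)$; note that $f'_*(\wedge^i\cM\otimes\omega_{f'}\otimes\mathcal{P}_d^r)$ is itself not locally free, jumping precisely along $\cD_{g:k}$, but the virtual bundle is well defined). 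Now apply Grothendieck--Riemann--Roch to $f'$. The needed input is $\mathrm{ch}(\wedge^i\cM)$ up to degree two, obtained from the splitting principle as $\binom{r-1}{i-1}c_1(\cM)$ in degree one and $\tfrac12\binom{r-2}{i-2}c_1(\cM)^2+\binom{r-2}{i-1}\mathrm{ch}_2(\cM)$ in degree two, combined with $c_1(\cM)=f'^*c_1(\H)-c_1(\L)$ and $\mathrm{ch}_2(\cM)=f'^*\mathrm{ch}_2(\H)-\tfrac12 c_1(\L)^2$. Pushing down the degree-two part of $\mathrm{ch}(\wedge^i\cM\otimes\omega_{f'}\otimes\mathcal{P}_d^r)\cdot\mathrm{td}(T_{f'})$ then uses the Deligne pairings of Proposition \ref{blowup} (so $f'_*c_1(\mathcal{P}_d^r)^2=-\tfrac12\delta_0^{\mathrm{ram}}$, $f'_*(c_1(\omega_{f'})c_1(\mathcal{P}_d^r))=0$, and $f'_*(c_1(\L)c_1(\mathcal{P}_d^r))=0$ as $\L$ is trivial on the exceptional divisor of $f'$), Mumford's formula $f'_*c_1(\omega_{f'})^2=12\lambda-(\delta_0'+\delta_0^{''}+2\delta_0^{\mathrm{ram}})$ with $f'_*[\mathrm{Sing}(f')]=\delta_0'+\delta_0^{''}+2\delta_0^{\mathrm{ram}}$, the definitions $f'_*c_1(\L)^2=\mathfrak{a}$ and $f'_*(c_1(\L)c_1(\omega_{f'}))=\mathfrak{b}$, and the projection-formula identities $f'_*(f'^*c_1(\H)\cdot c_1(\L))=d\,c_1(\H)$, $f'_*(f'^*c_1(\H)\cdot c_1(\omega_{f'}))=(2g-2)c_1(\H)$, $f'_*(f'^*c_1(\H)\cdot c_1(\mathcal{P}_d^r))=0$ and $f'_*(f'^*\alpha\cdot f'^*\beta)=0$. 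This expresses $[\cD_{g:k}]^{\mathrm{virt}}$ on $\AUX$ as an explicit $\QQ$-combination of $\lambda,\delta_0',\delta_0^{''},\delta_0^{\mathrm{ram}},\mathfrak{a},\mathfrak{b}$ and $c_1(\H)$, with binomial coefficients in $r,i$.

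Finally I would push forward along the finite degree-$N$ map $\sigma:\AUX\to\rer_g^0$: here $\sigma_*(\sigma^*\xi)=N\xi$ for $\xi\in\{\lambda,\delta_0',\delta_0^{''},\delta_0^{\mathrm{ram}}\}$, while $\mathfrak{a},\mathfrak{b},c_1(\H)$ are pulled back from $\mathfrak G^r_d$, and since $\AUX=\rer_g^0\times_{\rem_g^0}\mathfrak G^r_d$ is cartesian over $\rem_g^0$, base change reduces $\sigma_*\mathfrak{a},\sigma_*\mathfrak{b},\sigma_*c_1(\H)$ to the pushforwards $\sigma_{0*}\mathfrak{a},\sigma_{0*}\mathfrak{b},\sigma_{0*}c_1(\H)\in A^1(\rem_g^0)$; in the $\rho=0$ case these are computed from limit linear series along $\Delta_0^0$ in the standard way (Eisenbud--Harris \cite{EH}, cf. \cite{F2}, \cite{Kh}) and each has the shape $p\,\lambda+q\,\delta_0$, which after pulling back by $\pi$ and using $\pi^*\delta_0=\delta_0'+\delta_0^{''}+2\delta_0^{\mathrm{ram}}$ becomes a combination of $\lambda,\delta_0',\delta_0^{''},\delta_0^{\mathrm{ram}}$. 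Substituting $g=(r+1)(k-1)$, $d=rk$, $i=r-b$, using $\binom{r}{i}=\binom{r}{b}$ and $kr+k-r-3=g-2$, and collecting coefficients after factoring out $\binom{r}{b}\tfrac{N}{r+k}(kr+k-r-3)$ then yields $\mathfrak{A},\mathfrak{B}_0,\mathfrak{B}_0^{\mathrm{ram}}$. The conceptual part is routine once Propositions \ref{blowup}, \ref{vanish}, \ref{rr} and Theorem \ref{transversality} are in hand; the real obstacle is the final bookkeeping—keeping straight the several binomial coefficients together with the Brill--Noether pushforward constants, and in particular correctly tracking the $c_1(\H)$ contributions, so that the answer collapses to the degree-five-in-$k$ polynomials of the statement—together with the (necessary but routine) check that the degeneracy locus of $\phi$ acquires no excess component along $\Delta_0',\Delta_0^{''},\Delta_0^{\mathrm{ram}}$, which holds because $\phi$ is already generically nondegenerate and the determinantal structure is defined over all of $\rer_g^0$.
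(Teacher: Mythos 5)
Your proposal follows essentially the same route as the paper: the degeneracy-locus set-up via the exterior powers of the global Lazarsfeld sequence, the rank check, the reduction of $c_1(\cA_{i-1,1}-\wedge^i\H\otimes\cA_{0,0})$ to a combination of $\lambda,\delta_0^{\mathrm{ram}},\mathfrak{a},\mathfrak{b},c_1(\H)$ by telescoping the recursion (\ref{recursion}) together with the Grothendieck--Riemann--Roch computation of Proposition \ref{rr} and the Deligne pairings of Proposition \ref{blowup}, and finally the pushforward of $\mathfrak{a},\mathfrak{b},c_1(\H)$ computed via limit linear series and test curves as in \cite{F2}. The only unsubstantiated aside is your claim that a general point of $\cD_{g:k}$ admits exactly one $L$ with $\eta\in\Theta_{\wedge^i Q_L}$, but this is not needed for the class formula as stated, which concerns $\sigma_*\bigl(c_1(\cA_{i-1,1}-\wedge^i\H\otimes\cA_{0,0})\bigr)$ rather than the reduced divisor class itself.
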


\begin{proof} To compute the class of the degeneracy locus of $\phi$ we use
the exact sequence (\ref{recursion}) and Proposition \ref{rr}. We
write the following identities in $A^1(\AUX)$:
$$c_1\bigl(\cA_{i-1, 1}-\wedge^i \H\otimes \cA_{0,
0}\bigr)=\sum_{l=0}^i (-1)^{l-1} c_1(\wedge^{i-l} \H\otimes \cA_{0,
l})=$$
$$=\sum_{l=0}^i (-1)^{l-1} \Bigl((ld+g-1){r\choose
i-l-1}c_1(\H)+{r+1\choose i-l}c_1(\cA_{0, l})\Bigr)=$$
$$=-k{kb+k-4\choose b-1} c_1(\H)+\frac{1}{2}{kb+k-3\choose
b}\ \mathfrak{b}-$$ $$-{kb+k-2\choose b}
\lambda-\frac{kb+k-2b-3}{2(kb+k-3)}{kb+k-3 \choose b}\
\mathfrak{a}+\frac{1}{4}{kb+k-2\choose b} \delta_0^{\mathrm{ram}}=
$$
$$={r-1\choose
b}\Bigl(-\frac{kb}{r-1}\ c_1(\H)+\frac{1}{2}\
\mathfrak{b}-\frac{r-2b-1}{2(r-1)}\ \mathfrak{a}-\frac{r}{r-b}\
\lambda+\frac{r}{4(r-b)}\delta_0^{\mathrm{ram}}\Bigr),$$ where
$\delta_0^{\mathrm{ram}}=\sigma^*(\delta_0^{\mathrm{ram}})\in
A^1(\AUX)$. The classes $\mathfrak{a}, \mathfrak{b}\in A^1(\AUX)$ and the vector bundle
$\H$ on $\AUX$ are defined in terms of a
Poincar\'e bundle $\L$: If $\L':=\L\otimes f'^*(\cM)$ is another
Poincar\'e bundle with $\cM\in \mbox{Pic}\bigl(\AUX\bigr)$ and if
$\mathfrak{a}', \mathfrak{b}', \H'$ denote the classes defined in terms of $\L'$ using
(\ref{tautological}), then we have formulas:
$$\mathfrak{a}'=\mathfrak{a}+2dc_1(\cM),\ \mathfrak{b}'=\mathfrak{b}+(2g-2)c_1(\cM) \mbox{ and }
c_1(\H')=c_1(\H)+(r+1)c_1(\cM).$$ A straightforward calculation
shows that the class
\begin{equation}\label{tilxi}
\Xi:=-\frac{kb}{r-1}\ c_1(\H)+\frac{1}{2}\ \mathfrak{b}-\frac{r-2b-1}{2(r-1)}\
\mathfrak{a}\in A^1(\AUX)
\end{equation}
is independent of the choice of $\L$ and
$\sigma_*(\Xi)=\pi^*\bigl((\sigma_0)_*(\Xi_0)\bigr)$, where the
$\Xi_0 \in A^1(\mathfrak G^r_d)$ is defined by the same formula
(\ref{tilxi}) but inside $\mbox{Pic}(\mathfrak G^r_d)$. We outline
below the computation of $\pi^*\bigl((\sigma_0)_*(\Xi_0)\bigr)$
which uses \cite{F2} in an essential way.

We follow closely \cite{F2} and denote by $\rem_g^1:=\textbf{M}_g^0\cup
\Delta_0^0\cup \Delta_1^0$ the partial compactification of $\textbf{M}_g^0$
obtained from $\rem_{g}^0$ by adding the stack $\Delta_1^0\subset
\Delta_1$ consisting of curves $[C\cup_y E]$, where $[C, y]\in
\cM_{g-1, 1}$ is a Brill-Noether general pointed curve and $[E,
y]\in \mm_{1, 1}$. We extend $\sigma_0:\mathfrak G^r_d\rightarrow
\rem_g^0$ to a proper map $\sigma_1:\widetilde{\mathfrak
G}^r_d\rightarrow \rem_{g}^1$ from the Deligne-Mumford stack of limit linear series
$\mathfrak g^r_d$ (cf. \cite{EH}, \cite{F2}, \cite{Kh}). Then for each $n\geq 1$ we consider the vector
bundles $\G_{0, n}$ over $\widetilde{\mathfrak G}^r_d$ defined in
\cite{F2} Proposition 2.8 and which has the following description of
its fibres:
\begin{itemize} \item $\G_{0, n}(C, L)=H^0(C,
L^{\otimes n})$, for each $[C]\in \cM_g^0$ and $L\in W^r_d(C)$.
\item $\G_{0, n}(t)=H^0\bigl(C, L^{\otimes n}(-y-q)\bigr)\oplus \mathbb C\cdot
u^n\subset H^0(C, L^{\otimes n}),$ where the point $t=\bigl(C_{yq},
\ \ L\in W^r_d(C)\bigr)\in \sigma_0^{-1}([C_{yq}])$, with $u\in
H^0(C, L)$ being a section such that $$H^0(C, L)=H^0(C,
L(-y-q))\oplus \mathbb C\cdot u.$$
\item $\G_{0, n}(t)=H^0(C,
L^{\otimes n}(-2y))\oplus \mathbb C\cdot u^n \subset H^0(C,
L^{\otimes n})$, where $t=\bigl(C\cup _y E, l_C, l_E\bigr)\in
\sigma_0^{-1}([C\cup_y E])$ and  $(l_C, l_E)\in G^r_d(C)\times
G^r_d(E)$ being  a limit linear series $\mathfrak g^r_d$ with $l_C=(L, H^0(C, L))$ and
$u\in H^0(C, L)$ a section such that $$H^0(C, L)=H^0(C, L(-2y))\oplus
\mathbb C\cdot u.$$
\end{itemize}
We extend the classes $\mathfrak{a}, \mathfrak{b}\in A^1(\mathfrak G^r_d)$ over the stack
$\widetilde{\mathfrak G}^r_d$ by choosing a Poincar\'e bundle over
$\rem_{g, 1}^1\times _{\rem_{g}^1} \widetilde{\mathfrak G}^r_d$ which
restricts to line bundles of bidegree $(d, 0)$ on curves $[C\cup_y
E]\in \Delta_1^0$. Grothendieck-Riemann-Roch applied to the
universal curve over $\widetilde{\mathfrak G}^r_d$ gives that
\begin{equation}\label{auxi}
c_1(\G_{0, n})=\lambda-\frac{n}{2} \mathfrak{b}+\frac{n^2}{2} \mathfrak{a} \in
A^1(\widetilde{\mathfrak G}^r_d), \ \mbox{ for all } n\geq
2\end{equation} while obviously $\sigma^*(\G_{0, 1})=\H$. We now fix
a general pointed curve $[C, q]\in \cM_{g-1}$ and an elliptic curve
$[E, y]\in \cM_{1, 1}$ and consider the test curves (see also
\cite{F2} p. 7)
$$C^0:=\{C/y\sim q\}_{y\in C}\subset \Delta_0^0\subset \mm_g^1
\ \mbox{ and } \  C^1:=\{C\cup_y E\}_{y\in C}\subset
\Delta_1^0\subset \mm_g^1.$$ For $n\geq 1$, the intersection numbers
$C^0\cdot (\sigma_0)_*(c_1(\G_{0, n}))$ and $C^1\cdot
(\sigma_0)_*(c_1(\G_{0, n}))$ can be computed using \cite{F2} Lemmas
2.6 and 2.13 and Proposition 2.12. Together with the relation (cf.
\cite{F2} p. 15 for details)
$$(\sigma_0)_*\bigl(c_1(\G_{0, n})\bigr)_{\lambda}-12(\sigma_0)_*\bigl(c_1(\G_{0,
n})\bigr)_{\delta_0}+(\sigma_0)_*\bigl(c_1(\G_{0,
n})\bigr)_{\delta_1}=0,$$ this completely determine the classes
$(\sigma_0)_*\bigl(c_1(\G_{0, n})\bigr)$. Then using (\ref{auxi}) we find
$$(\sigma_0)_*(\mathfrak{a})\equiv N\Bigl(-\frac{rk(r^2k^2-3r^2k+3rk^2+2r^2+2k^2+4k-7rk-4r-10)}{(rk-r+k-3)(rk-r+k-2)}\lambda+
$$
$$+
\frac{rk(r^2k^2-3r^2k+3rk^2-8rk+2r^2+2k^2+r-k-3)}{6(rk-r+k-3)(rk-r+k-2)}\delta_0+\cdots
\Bigr),$$
$$(\sigma_0)_*(\mathfrak{b})\equiv N\Bigl(
\frac{6rk}{rk-r+k-2}\lambda-\frac{rk}{2(rk-r+k-2)}\delta_0+\cdots\Bigr),$$
and this completes the computation of the class $(\sigma_0)_*(\Xi)$
and finishes the proof.
\end{proof}

 The rather
unwieldy expressions from Theorem \ref{det} simplify nicely when
$k=2, 3$ when we obtain Theorems \ref{hyper} and \ref{trig}.

\noindent{\emph{Proof of Theorem \ref{gentype} when $g=2i+1$.}} We
construct an effective divisor on $\rr_g$ satisfying the
inequalities (\ref{inequ}) as follows: The pull-back to $\rr_{g}$ of
the Harris-Mumford divisor $\mm_{g, i+1}^1$ of curves of genus
$2i+1$ with a $\mathfrak g^1_{i+1}$ is given by the formula:
$\pi^*(\mm_{g, i+1}^1)\equiv$
$$\equiv \frac{(2i-2)!}{(i+1)! (i-1)!}\Bigl(
6(i+2)\lambda-(i+1)(\delta_0'+\delta_0^{''}+2\delta_0^{\mathrm{ram}})-\sum_{j=1}^{i}
3j(g-j)(\delta_j+\delta_{g-j}+\delta_{j: g-j})\Bigr).$$ We split
$\overline{\mathcal{D}}_{2i+1: 2}$ into boundary components of
compact type and their complement
$$\overline{\mathcal{D}}_{2i+1: 2}\equiv E+\sum_{j=1}^{i}
\bigl(a_j\delta_j+a_{g-j}\delta_{g-j}+a_{j: g-j}\delta_{j:
g-j}\bigr),$$ where $a_j, a_{g-j}, a_{j: g-j}\geq 0$  and $\Delta_j,
\Delta_{g-j}, \Delta_{j: g-j}\varsubsetneq \mbox{supp}(E)$ for
$1\leq j\leq i$, we consider the following positive linear
combination on $\rr_g$:
$$A:=\frac{i!\ (i-1)!}{(2i-1)\ (2i-3)!}\cdot
\pi^*(\mm_{2i+1, i+1}^1) +4\frac{(i!)^2}{(2i)!}\cdot E\equiv
\frac{4(3i+5)}{i+1}\
\lambda-2(\delta_0'+\delta_0^{''})-3\delta_0^{\mathrm{ram}}-\cdots,$$
where each of the coefficients of $\delta_j, \delta_{g-j}$ and
$\delta_{j: g-j}$ in the expansion of $A$ are at least
$$\frac{6(i-1)j(2i+1-j)}{(2i-1)(i+1)}\geq 2.$$ Since
$\frac{4(3i+5)}{i+1}<13$ for $i\geq 8$, the conclusion now follows
using (\ref{inequ}). For $i=7$ we find that $A\equiv
13\lambda-2(\delta_0'+\delta_0^{''})-3\delta_0^{\mathrm{ram}}-\cdots$,
hence $\kappa(\rr_{15})\geq 0$. To obtain that $\kappa(\rr_{15})\geq
1$, we use the fact that on $\mm_{15}$ there exists a Brill-Noether
divisor other than $\mm_{15, 8}^1$, namely the divisor $\mm_{15,
14}^3$ of curves $[C]\in \cM_{15}$ with a $\mathfrak g^3_{14}$. This
divisor has the same slope $s(\mm_{15, 14}^3)=s(\mm_{15,
8}^1)=27/4$, but $\mbox{supp}(\mm_{15, 14}^3)\neq
\mbox{supp}(\mm_{15, 8}^1)$. It follows that there exist constants
$\alpha, \beta, \gamma, m \in \mathbb Q_{>0}$ such that
$$\alpha\cdot E+\beta \cdot \pi^*(\mm_{15, 8}^1)\equiv \alpha\cdot
E+\gamma \cdot \pi^*(\mm_{15, 14}^3) \in |mK_{\rr_{15}}|.$$ Thus we
have found distinct multicanonical divisors on $\mm_{15}$,  that is,
$\kappa(\mm_{15})\geq 1$.  \hfill  $\Box$
\begin{remark}\label{sl15}
The same numerical argument shows that if one replaces $\mm_{15,
8}^1$ with any divisor $D\in \mbox{Eff}(\mm_{15})$ with
$s(D)<s(\mm_{15, 8}^1)=27/4$, then $\rr_{15}$ is of general type.
Any counterexample to the Slope Conjecture on $\mm_{15}$ makes
$\rr_{15}$ of general type.
\end{remark}

\section{Koszul cohomology of Prym canonical curves}

We recall that for a curve $C$, a line bundle $L\in \mbox{Pic}^d(C)$
and integers $i, j\geq 0$, the Koszul cohomology group $K_{i, j}(C,
L)$ is obtained from the complex
$$\wedge^{i+1} H^0(L)\otimes H^0(L^{\otimes
(j-1)})\stackrel{d_{i+1, j-1}}\longrightarrow \wedge^i H^0(L)\otimes
H^0(L^{\otimes j})\stackrel{d_{i, j}}\longrightarrow \wedge^{i-1}
H^0(L)\otimes H^0(L^{\otimes (j+1)}),$$ where the maps are the
Koszul differentials (cf. \cite{GL}). There is a well-known
connection between Koszul cohomology groups and Lazarsfeld bundles.
Assuming that $L$ is globally generated, a diagram chasing argument
involving exact sequences of the type
$$ 0\longrightarrow \wedge^a M_L\otimes L^{\otimes b} \rightarrow
\wedge^a H^0(L)\otimes L^{\otimes b}\longrightarrow \wedge^{a-1}
M_L\otimes L^{\otimes (b+1)}\longrightarrow 0$$ for various $a,
b\geq 0$ , yields the following identification (see also \cite{GL}
Lemma 1.10)
\begin{equation}\label{koszul}
 K_{i, j}(C, L)=\frac{H^0(C, \wedge^i
M_L\otimes L^{\otimes j})}{\mathrm{Image} \{\wedge^{i+1} H^0(C,
L)\otimes H^0(C, L^{\otimes (j-1)})\}}\ .
\end{equation}
We fix  $[C, \eta]\in \cR_g$, set $L:=K_C\otimes \eta\in
W^{g-2}_{2g-2}(C)$ and consider the \emph{Prym-canonical} map
$C\stackrel{|L|}\rightarrow \PP^{g-2}$. We denote by
$\mathcal{I}_C\subset \OO_{\PP^{g-2}}$ the ideal sheaf of the
Prym-canonical curve.

By analogy with \cite{F2} we study the Koszul stratification of
$\cR_g$ and define the strata
$$\cU_{g, i}:=\{[C, \eta]\in \cR_g: K_{i, 2}(C, K_C\otimes \eta)\neq
\emptyset\}.$$ Using (\ref{koszul}) we write the series of
equivalences
$$[C, \eta]\in \cU_{g, i}\Leftrightarrow H^1(C, \wedge^{i+1}
M_L\otimes L)\neq \emptyset\Leftrightarrow h^0(C, \wedge^{i+1}
M_L\otimes L)>$$ $$>{g-2\choose
i+1}\Bigl(-\frac{(i+1)(2g-2)}{g-2}+(g-1)\Bigr).$$ Next we write down
the exact sequence
$$0\longrightarrow H^0( \wedge^{i+1}
M_{\PP^{g-2}}(1))\stackrel{a}\longrightarrow H^0(C, \wedge^{i+1}
M_L\otimes L)\longrightarrow H^1( \wedge^{i+1} M_{\PP^{g-2}}\otimes
\mathcal{I}_C(1))\longrightarrow 0,$$ and then also
$$\mathrm{Coker}(a)=H^1\bigl(\PP^{g-2}, \wedge^{i+1} M_{\PP^{g-2}}\otimes
\mathcal{I}_C(1)\bigr)=H^0\bigl(\PP^{g-2}, \wedge^i
M_{\PP^{g-2}}\otimes \mathcal{I}_C(2)\bigr).$$ Using the well-known
fact that $h^0(\PP^{g-2}, \wedge^{i+1} M_{\PP^{g-2}}(1))={g-1\choose
i+2}$ (use for instance the  Bott vanishing theorem), we end-up with
the following equivalence:
\begin{equation}\label{determinantal} [C, \eta]\in \mathcal{U}_{g,
i}\Leftrightarrow h^0(\PP^{g-2}, \wedge^{i} M_{\PP^{g-2}}\otimes
\mathcal{I}_C(2))> {g-3\choose i}\frac{(g-1)(g-2i-6)}{i+2}.
\end{equation}

\begin{proposition}\label{ni} (1)
For $g<2i+6$, we have that $K_{i, 2}(C, K_C\otimes \eta)\neq
\emptyset $ for any $[C, \eta]\in \cR_g$, that is, the
Prym-canonical curve $C\stackrel{|K_C+\eta|}\longrightarrow
\PP^{g-2}$ does not satisfy property $(N_i)$.

\noindent (2) For $g=2i+6$, the locus $\cU_{g, i}$ is a virtual
divisor on $\cR_g$, that is, there exist vector bundles $\G_{i, 2}$
and $\H_{i, 2}$ over $\textbf{R}_g$ such that $\mathrm{rank}(\G_{i,
2})=\mathrm{rank}(\H_{i, 2})$, together with a bundle morphism
$\phi:\H_{i, 2}\rightarrow \G_{i, 2}$ such that $\cU_{g, i}$ is the
degeneracy locus of $\phi$.
\end{proposition}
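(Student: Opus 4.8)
The plan is to extract both parts from the determinantal reformulation (\ref{determinantal}). For part (1), assume $g<2i+6$ (in the meaningful range $0\le i\le g-3$). Then $g-2i-6<0$ and ${g-3\choose i}(g-1)/(i+2)>0$, so the quantity on the right-hand side of (\ref{determinantal}) is strictly negative, while the quantity on the left-hand side is the dimension of a vector space and hence $\ge 0$. Thus the inequality in (\ref{determinantal}) holds for \emph{every} $[C,\eta]\in\cR_g$, i.e. $\cU_{g,i}=\cR_g$, which says that no Prym-canonical curve of genus $g<2i+6$ satisfies property $(N_i)$.

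For part (2) take $g=2i+6$, so the threshold in (\ref{determinantal}) equals $0$ and $[C,\eta]\in\cU_{g,i}$ exactly when $H^{0}\bigl(\PP^{g-2},\wedge^{i}M_{\PP^{g-2}}\otimes\mathcal{I}_C(2)\bigr)\neq 0$; I would globalise this condition as the non-injectivity locus of a map between two vector bundles of the \emph{same} rank. I work over the open substack $\textbf{R}_g^{\circ}\subset\textbf{R}_g$ on which $K_C\otimes\eta$ is very ample — its complement sits inside the hyperelliptic locus and so has codimension $\ge g-2\ge 4$, which is harmless for a virtual-divisor statement (the boundary being handled separately by the compactification of Theorem \ref{prymkoszul}). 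Let $\pi:\cC\to\textbf{R}_g^{\circ}$ be the universal curve with its universal Prym bundle, set $\L:=\omega_{\pi}\otimes\eta$, let $q:\mathbb P\to\textbf{R}_g^{\circ}$, $\mathbb P:=\PP(\pi_{*}\L)$, be the relative $\PP^{g-2}$-bundle in which the relative Prym-canonical morphism closed-embeds $\cC$ (so that $\OO_{\mathbb P}(1)|_{\cC}=\L$), and put $M_{\mathrm{rel}}:=\Omega^{1}_{\mathbb P/\textbf{R}_g^{\circ}}(1)$, $M_{\L}:=M_{\mathrm{rel}}|_{\cC}$ (equivalently $0\to M_{\L}\to\pi^{*}\pi_{*}\L\to\L\to 0$). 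Set
$$\H_{i,2}:=q_{*}\bigl(\wedge^{i}M_{\mathrm{rel}}\otimes\OO_{\mathbb P}(2)\bigr),\qquad \G_{i,2}:=\pi_{*}\bigl(\wedge^{i}M_{\L}\otimes\L^{\otimes 2}\bigr).$$
Both are locally free: $\H_{i,2}$ because $H^{\ge 1}\bigl(\PP^{g-2},\wedge^{i}M_{\PP^{g-2}}(2)\bigr)=H^{\ge 1}\bigl(\PP^{g-2},\Omega^{i}_{\PP^{g-2}}(i+2)\bigr)=0$ by Bott and the rank is constant, and $\G_{i,2}$ because $H^{1}\bigl(C,\wedge^{i}M_{K_C\otimes\eta}\otimes K_C^{\otimes 2}\bigr)=0$ for every $[C,\eta]\in\textbf{R}_g^{\circ}$ by Proposition \ref{lazvanish} (applied with $L=K_C\otimes\eta\in W^{g-2}_{2g-2}(C)$, $A=K_C^{\otimes 2}$, $\deg A=4g-4\ge 3g-1+i$ since $i\le g-3$). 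Restriction of sections along $\cC\hookrightarrow\mathbb P$ gives a morphism $\phi:\H_{i,2}\to\G_{i,2}$ whose fibre at $[C,\eta]$ is the restriction map $H^{0}(\PP^{g-2},\wedge^{i}M_{\PP^{g-2}}(2))\to H^{0}(C,\wedge^{i}M_L\otimes L^{\otimes 2})$, and the cohomology sequence of $0\to\wedge^{i}M_{\PP^{g-2}}\otimes\mathcal{I}_C(2)\to\wedge^{i}M_{\PP^{g-2}}(2)\to\wedge^{i}M_L\otimes L^{\otimes 2}\to 0$ identifies $\ker\phi_{[C,\eta]}$ with $H^{0}(\PP^{g-2},\wedge^{i}M_{\PP^{g-2}}\otimes\mathcal{I}_C(2))$. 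The decisive point is the rank equality: Bott's formula gives $\mathrm{rank}\,\H_{i,2}=h^{0}(\PP^{g-2},\Omega^{i}_{\PP^{g-2}}(i+2))=(i+1){2i+6\choose i+2}$, and Riemann--Roch gives $\mathrm{rank}\,\G_{i,2}=\chi(C,\wedge^{i}M_L\otimes L^{\otimes 2})=(i+1){2i+6\choose i+2}$ as well, so the two ranks agree — and this coincidence is exactly what singles out $g=2i+6$ as the threshold. Being a morphism between bundles of equal rank, $\phi$ is non-degenerate at $[C,\eta]$ precisely when $\ker\phi_{[C,\eta]}=0$, i.e. precisely when $[C,\eta]\notin\cU_{g,i}$; hence $\cU_{g,i}$ is the degeneracy locus of $\phi$ and, having expected codimension $1$, it is a virtual divisor.

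The step I expect to be the main obstacle is the clean verification of $\mathrm{rank}\,\H_{i,2}=\mathrm{rank}\,\G_{i,2}$ — the combinatorial heart of the statement, responsible for the exact value $g=2i+6$ — together with the routine but unavoidable bookkeeping showing that the formations of $q_{*}$ and $\pi_{*}$ above commute with base change over $\textbf{R}_g^{\circ}$ (so that $\phi$ genuinely is a morphism of vector bundles with the fibres described), which is where the Bott and Lazarsfeld vanishing statements are used, and the verification that the locus discarded in passing from $\textbf{R}_g$ to $\textbf{R}_g^{\circ}$ is negligible.
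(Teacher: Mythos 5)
Your proposal is correct and follows essentially the same route as the paper: part (1) is read off from the determinantal equivalence (\ref{determinantal}) (the threshold is negative when $g<2i+6$), and part (2) constructs the same two bundles --- the ambient one with fibre $H^0(\PP^{g-2},\wedge^i M_{\PP^{g-2}}(2))$ and the curve one with fibre $H^0\bigl(C,\wedge^i M_{K_C\otimes\eta}\otimes(K_C\otimes\eta)^{\otimes 2}\bigr)$ --- linked by the restriction morphism, with the rank comparison singling out $g=2i+6$. The only quibble is that the locus where $K_C\otimes\eta$ fails to be very ample is governed by the condition $\eta\in C_2-C_2$ rather than by hyperellipticity, but it still has codimension $g-4\geq 2$ in $\cR_{2i+6}$, so your restriction to the open substack $\textbf{R}_g^{\circ}$ is harmless.
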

\begin{proof} Part (1) is an immediate consequence of (\ref{determinantal}),
since we have the equivalence
$$K_{i, 2}(C, K_C\otimes \eta)=0\Leftrightarrow h^0(\PP^{g-2}, \wedge^{i} M_{\PP^{g-2}}\otimes
\mathcal{I}_C(2))= {g-3\choose i}\frac{(g-1)(g-2i-6)}{i+2}.$$ For
part (2) one constructs two vector bundles $\G_{i, 2}$ and $\H_{i,
2}$ over $\textbf{R}_g$ having fibres
$$\G_{i, 2}[C, \eta]=H^0(C, \wedge^i M_{K_C\otimes \eta}(2))\  \mbox{ and } \H_{i, 2}[C,
\eta]=H^0(\PP^{g-2}, \wedge^i M_{\PP^{g-2}}(2)).$$ There is a
natural morphism $\phi:\H_{i, 2}\rightarrow \G_{i, 2}$ given by
restriction. We have that
$$\mbox{rank}(\G_{i, 2})={g-2\choose
i}\Bigl(-\frac{i(2g-2)}{g-2}+3(g-1)\Bigr) \ \mbox{ and }
\mbox{rank}(\H_{i, 2})=(i+1){g\choose i+2}$$ and the condition that
$\mbox{rank}(\G_{i, 2})=\mbox{rank}(\H_{i, 2})$ is equivalent to
$g=2i+6$.
\end{proof}

We describe a set-up that will be  used to define certain
tautological sheaves over $\per_g$ and compute the class
$[\overline{\cU}_{g, i}]^{\small{virt}}$. We use the notation from
Subsection 1.1, in particular from Proposition \ref{syzi} and recall
that $f:\mathcal{X}\rightarrow \per_g$ is the universal Prym curve,
$\P\in \mbox{Pic}(\mathcal{X})$ denotes the universal Prym line
bundle and $\cN_i=f_*(\omega_f^{\otimes i}\otimes \P^{\otimes i})$.
We denote by $T:=\E_0^{''}\cap \mbox{Sing}(f)$ the codimension $2$
subvariety corresponding to Wirtinger covers $[C_{yq}, \eta\in
\mathrm{Pic}^0(C_{yq})[2], \nu(y)=\nu(q)]\in \mathcal{X}$ \ (where
$\nu^*(\eta)=\OO_C$), with the marked point being the node of the
underlying curve $C_{yq}$. Let us fix a point $[X:=C_{yq}, \eta,
\beta]\in \widetilde{\Delta}_0'\cup \widetilde{\Delta}_0^{''}$ where
as usual $\nu: C\rightarrow X$ is the normalization map. Then we
have an identification
\begin{equation}\label{twistedhodge}
\cN_1[X,
\eta, \beta]=\mbox{Ker}\bigl\{H^0\bigl(C, \omega_C(y+q)\otimes
\eta_C\bigr)\rightarrow (\nu_*\OO_C/\OO_X)\otimes \omega_X\otimes
\eta \cong \mathbb C_{y\sim q}\bigr\}, \end{equation} where the map
is given by taking the difference of residues at $y$ and $q$. Note
that when $\eta_C=\OO_C$, that is when$[X, \eta, \beta]\in
\widetilde{\Delta}_0^{''}$, we have that $\cN_1[X, \eta,
\beta]=H^0(C, \omega_C)$. For a point $$[X=C\cup_{\{y, q\}} E,\
\eta_C\in \sqrt{\OO_C(-y-q)}, \eta_E]\in
\widetilde{\Delta}_0^{\mathrm{ram}}$$ we have an identification
\begin{equation}
\label{twistedhodge2} \cN_1[X, \eta, \beta]=\mbox{Ker}\bigl\{H^0(C,
\omega_C(y+q)\otimes \eta_C)\oplus H^0(E, \OO_E(1)) \rightarrow
(\omega_X\otimes \eta)_{y, q}\cong \mathbb C^2_{y, q}\bigr\}.
\end{equation} We set
$$\cM:=\mbox{Ker}\{f^*(\cN_1)\rightarrow \omega_f\otimes \P\}.$$
>From the discussion above it is clear that the image of
$f^*(\cN_1)\rightarrow \omega_f\otimes \P$ is $\omega_f\otimes
\P\otimes \mathcal{I}_T$. Since $T\subset \mathcal{X}$ is smooth of
codimension $2$ it follows that $\cM$ is locally free. For $a, b\geq
0$, we define the sheaf $\E_{a, b}:=f_*(\wedge^a \cM\otimes
\omega_f^{\otimes b}\otimes \P^{\otimes b})$ over $\per_g$. Clearly
$\E_{a, b}$ is locally free. We have that $\E_{0, b}=\cN_b$ for
$b\geq 0$, and we always have left-exact sequences
\begin{equation}\label{gi}
0\longrightarrow \E_{a, b}\longrightarrow \wedge^a \E_{0, 1}\otimes
\E_{0, b}\longrightarrow \E_{a-1, b+1}, \end{equation} which are
right-exact off the divisor $\widetilde{\Delta}_0^{''}$ (to be
proved later). We then define inductively a sequence of vector
bundles $\{\H_{a, b}\}_{a, b\geq 0}$ over $\per_g$ in the following
way: We set $\H_{0, b}:= \mbox{Sym}^b \cN_1$ for each $b\geq 0$.
Then having defined $\H_{a-1, b}$ for all $b\geq 0$, we define the
vector bundle $\H_{a, b}$ by the exact sequence
\begin{equation}\label{hi}
0\longrightarrow \H_{a, b}\longrightarrow \wedge^a \H_{0, 1}\otimes
\mbox{Sym}^b \H_{0, 1}\longrightarrow \H_{a-1, b+1}\longrightarrow
0.
\end{equation}
For a point  $[X, \eta, \beta]\in \pr_g$, if we use the
identification $H^0(X, \omega_X\otimes \eta)=H^0(\PP^{g-2},
\OO_{\PP^{g-2}}(1))$,  we have a natural identification of the fibre
$$\H_{a, b}[X, \eta, \beta]=H^0(\PP^{g-2}, \wedge^a
M_{\PP^{g-2}}(b)).$$ By induction on $a\geq 0$, there exist vector
bundle morphisms $\phi_{a, b}:\H_{a, b}\rightarrow \E_{a, b}$.

\begin{proposition}
For $b\geq 2$ and $a\geq 0$ we have the vanishing of the higher
direct images
$$R^1 f_*\bigl(\wedge^a\cM\otimes \omega_f^{\otimes b}\otimes
\P^{\otimes b}\bigr)_{| \ \textbf{R}_g\cup \widetilde{\Delta}_0'\cup
\widetilde{\Delta}_0^{\mathrm{ram}}}=0.$$ It follows that the
sequences (\ref{gi}) are right-exact off the divisor
$\widetilde{\Delta}_0^{''}$ of $\per_g$.
\end{proposition}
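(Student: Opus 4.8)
The plan is to reduce the $R^1f_{*}$-vanishing to a fibrewise statement about honest Lazarsfeld bundles and then to argue exactly as in Proposition \ref{vanish}, with the line bundle $L:=\omega_X\otimes\eta$ itself playing the role that the auxiliary $\mathfrak g^r_d$ played there. Since $R^1f_{*}$ is the top direct image for the family of curves $f:\mathcal{X}\rightarrow\per_g$, it commutes with arbitrary base change, so the asserted vanishing over $\textbf{R}_g\cup\widetilde{\Delta}_0'\cup\widetilde{\Delta}_0^{\mathrm{ram}}$ is equivalent to $H^1\bigl(X,\wedge^a\cM_{|X}\otimes\omega_X^{\otimes b}\otimes\eta^{\otimes b}\bigr)=0$ for every $[X,\eta,\beta]$ in that locus and all $b\geq2$, $a\geq0$. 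Over this locus the codimension-$2$ centre $T\subset\mathcal{X}$ is disjoint from $f^{-1}\bigl(\textbf{R}_g\cup\widetilde{\Delta}_0'\cup\widetilde{\Delta}_0^{\mathrm{ram}}\bigr)$, so the evaluation map $f^{*}\cN_1\rightarrow\omega_f\otimes\P$ is surjective there; hence on each such fibre $L=\omega_X\otimes\eta$ is globally generated, $\cM_{|X}$ is its Lazarsfeld bundle $M_L$ of rank $g-2$, and the question becomes a degree estimate in which the hypothesis $b\geq2$ is exactly what makes the bounds hold for all $a$ at once.

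For a smooth curve $X=C$ one has $L=K_C\otimes\eta\in W^{g-2}_{2g-2}(C)$, complete and globally generated. If $a\geq g-1$ then $\wedge^aM_L=0$; if $a=g-2$ then $\wedge^{g-2}M_L=\det M_L=L^{\vee}$, so the sheaf is $L^{\otimes(b-1)}$, which is nonspecial for $b\geq3$ and for $b=2$ has $H^1(C,K_C\otimes\eta)=H^0(C,\eta)^{\vee}=0$ since $\eta$ is a nontrivial point of order $2$; and for $0\leq a\leq g-3$ I would invoke Proposition \ref{lazvanish} with $A=L^{\otimes b}$ and $j=a$, the needed inequality $\deg A=b(2g-2)\geq 3g-1+a$ being a consequence of $b\geq2$ and $a\leq g-3$. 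This disposes of the open stratum $\textbf{R}_g$.

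For the two boundary divisors one passes to the normalization $\nu:C\rightarrow X$ and imitates the corresponding step of Proposition \ref{vanish}. If $[X,\eta,\beta]\in\widetilde{\Delta}_0^{\mathrm{ram}}$, write $X=C\cup_{\{y,q\}}E$ with $E\cong\PP^1$; then the pullback $L_C$ of $\omega_X\otimes\eta$ to $C$ has degree $2g(C)-1$ and $h^0(C,L_C)=h^0(X,L)=g-1$, so $\cM_{|C}=M_{L_C}$, while $\cM_{|E}\cong\OO_E(-1)\oplus\OO_E^{\oplus(g-2)}$ is a subbundle of a trivial bundle on $\PP^1$; Mayer--Vietoris on $X$ reduces the vanishing to that of $H^1\bigl(C,\wedge^aM_{L_C}\otimes L_C^{\otimes b}\bigr)$ on the genus-$(g-1)$ curve $C$ — supplied by Proposition \ref{lazvanish}, the bound now holding for every $a\leq\operatorname{rk}M_{L_C}=g-2$ because $b\geq2$ — together with the obvious $H^1$-vanishing of the positive summands of $\wedge^a\cM_{|E}\otimes(L_{|E})^{\otimes b}$ on $\PP^1$. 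If $[X,\eta,\beta]\in\widetilde{\Delta}_0'$, then $X=C_{yq}$ is irreducible with $\nu^{*}\eta=\eta_C\neq\OO_C$, and here $h^0(X,L)=g-1$ is one \emph{less} than $h^0(C,\nu^{*}L)=g$ (compare (\ref{twistedhodge})); consequently $\nu^{*}\cM_{|X}$ is not a Lazarsfeld bundle but sits in an extension $0\rightarrow\nu^{*}\cM_{|X}\rightarrow M_{\nu^{*}L}\rightarrow\OO_C\rightarrow0$, and combining the exterior powers of this sequence, twisted by $(\nu^{*}L)^{\otimes b}$, with the normalization sequence $0\rightarrow\OO_X\rightarrow\nu_{*}\OO_C\rightarrow\mathbb C_{y\sim q}\rightarrow0$ tensored by $\wedge^a\cM_{|X}\otimes L^{\otimes b}$, the problem comes down to $H^1\bigl(C,\wedge^aM_{\nu^{*}L}\otimes(\nu^{*}L)^{\otimes b}\bigr)=H^1\bigl(C,\wedge^aM_{\nu^{*}L}\otimes(\nu^{*}L)^{\otimes b}(y+q)\bigr)=0$, which Proposition \ref{lazvanish} yields on $C$ since $\deg(\nu^{*}L)=2g-2$ and $b\geq2$. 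Finally, applying $f_{*}$ to the exact sequence $0\rightarrow\wedge^a\cM\otimes\omega_f^{\otimes b}\otimes\P^{\otimes b}\rightarrow f^{*}(\wedge^a\cN_1)\otimes\omega_f^{\otimes b}\otimes\P^{\otimes b}\rightarrow\wedge^{a-1}\cM\otimes\omega_f^{\otimes(b+1)}\otimes\P^{\otimes(b+1)}\rightarrow0$ on $\mathcal{X}$ and using the projection formula to rewrite the middle term as $\wedge^a\E_{0,1}\otimes\E_{0,b}$, the only obstruction to right-exactness of (\ref{gi}) is $R^1f_{*}\bigl(\wedge^a\cM\otimes\omega_f^{\otimes b}\otimes\P^{\otimes b}\bigr)$, which vanishes over $\per_g\setminus\widetilde{\Delta}_0''=\textbf{R}_g\cup\widetilde{\Delta}_0'\cup\widetilde{\Delta}_0^{\mathrm{ram}}$ by the first part.

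The step I expect to be the main obstacle is this boundary bookkeeping, and within it the divisor $\widetilde{\Delta}_0'$: because $h^0(\omega_X\otimes\eta)$ drops by one relative to the normalization, one cannot directly quote the Lazarsfeld vanishing but must carry along the (generally non-split) extension $0\rightarrow\nu^{*}\cM_{|X}\rightarrow M_{\nu^{*}L}\rightarrow\OO_C\rightarrow0$ — equivalently, prove a Lazarsfeld-type vanishing for the incomplete base-point-free linear system $H^0(X,L)\subset H^0(C,\nu^{*}L)$ — and keep track of the connecting homomorphisms in the resulting long exact sequences rather than of the naive $H^1$-groups alone; by contrast the $\widetilde{\Delta}_0^{\mathrm{ram}}$ computation is essentially the one already done in Proposition \ref{vanish}, and all the numerical inequalities fall out of the hypothesis $b\geq2$.
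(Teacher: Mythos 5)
Your proof is correct and matches the paper's own argument: fibrewise reduction via base change, Proposition \ref{lazvanish} over $\textbf{R}_g$ (where you rightly isolate the edge case $a=g-2$, $b=2$, which the degree bound of that proposition misses and which the paper glosses over), Mayer--Vietoris over $\widetilde{\Delta}_0^{\mathrm{ram}}$ exactly as in the printed proof, and normalization over $\widetilde{\Delta}_0'$, which the paper dispatches with the single word ``similar''. The one point you flag as the main obstacle --- the rank drop $h^0(X,\omega_X\otimes\eta)=h^0(C,\nu^*(\omega_X\otimes\eta))-1$ over $\widetilde{\Delta}_0'$ --- is not a real one: the filtration proof of Proposition \ref{lazvanish} nowhere uses completeness of the linear series, so it applies verbatim to the base-point-free codimension-one subsystem $H^0(X,\omega_X\otimes\eta)\subset H^0(C,\nu^*(\omega_X\otimes\eta))$ and gives the two required vanishings directly, with no connecting-homomorphism bookkeeping (note only that the second twist in your reduction should be by $\OO_C(-y-q)$ rather than $\OO_C(y+q)$, since what must be checked is surjectivity onto the skyscraper at the node; both twists vanish for $b\geq 2$, so nothing is lost).
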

\begin{proof} Over the locus $\textbf{R}_g$ the vanishing is a
consequence of Proposition \ref{lazvanish}. For simplicity we prove
that $R^1 f_*\bigl(\wedge^a\cM\otimes \omega_f^{\otimes b}\otimes
\P^{\otimes b}\bigr)\otimes
\OO_{\widetilde{\Delta}_0^{\mathrm{ram}}}=0$, the vanishing over
$\widetilde{\Delta}_0'$ being similar. We fix a point
$[X=C\cup_{\{y, q\}}E, \eta_C, \eta_E]\in
\widetilde{\Delta}_0^{\mathrm{ram}}$, with $\eta_C^{\otimes
2}=\OO_C(-y-q)$, $\eta_E=\OO_E(1)$ and set $L:=\omega_X\otimes
\eta\in \mbox{Pic}^{2g-2}(X)$. We show that $H^1(X, \wedge^a
M_{L}\otimes L^{\otimes b})=0$ for all $a\geq 0$ and $b\geq 2$. A
Mayer-Vietoris argument shows that it suffices to prove that
\begin{equation}\label{auxv1}
H^1(C, \wedge^a M_L\otimes L^{\otimes b}\otimes \OO_C)=0, \  \mbox{
} H^1(E, \wedge^a M_L\otimes L^{\otimes b}\otimes \OO_E)=0, \mbox{
and }
\end{equation}
\begin{equation}\label{auxv2}
H^1(C, \wedge^a M_L\otimes L^{\otimes b} \otimes \OO_C(-y-q))=0.
\end{equation}
For $L_C:=L\otimes \OO_C=K_C(y+q)\otimes \eta_C$ and
$L_E:=L_E\otimes \OO_E$, we write the exact sequences
$$0\longrightarrow H^0(C, L_C(-y-q))\otimes \OO_E \longrightarrow
M_L\otimes \OO_E \longrightarrow M_{L_E}\longrightarrow 0,\ \mbox{
and}$$
$$0\longrightarrow H^0(E, L_E(-y-q))\otimes \OO_C\longrightarrow
M_L\otimes \OO_C\longrightarrow M_{L_C}\longrightarrow 0,$$ and we
find that  $M_{L}\otimes \OO_C=M_{L_C}$ while obviously
$M_{L_E}=\OO_E(-1)$. We conclude that the statements (\ref{auxv1})
and (\ref{auxv2}) for all $a\geq 0$ and $b\geq 2$ can be reduced to
showing that
$$H^1(C,\wedge^a M_{L_C}\otimes L_C^{\otimes b})=H^1(C, \wedge^a
M_{L_C}\otimes L_C^{\otimes b}\otimes \OO_C(-y-q))=0, \mbox{ for all
} a\geq 0, b\geq 2.$$ This is now a direct application of
Proposition \ref{lazvanish}.
\end{proof}

\noindent \emph{Proof of Theorem \ref{prymkoszul}.} We have
constructed the vector bundle morphism $\phi_{i, 2}:\H_{i,
2}\rightarrow \E_{i, 2}$ over $\per_g$. For $g=2i+6$ we have that
$\mbox{rank}(\H_{i, 2})=\mbox{rank}(\E_{i, 2})$ and the virtual
Koszul class $[\overline{\mathcal{U}}_{g, i}]^{virt}$ is given by
$c_1(\E_{i, 2}-\H_{i, 2})$. We recall that for a rank $e$ vector
bundle $\mathcal{E}$ over a variety $X$ and for $i\geq 1$, we have
the formulas $c_1(\wedge^i \mathcal{E})={e-1\choose i-1}c_1(\E)$ and
$c_1(\mbox{Sym}^i(\E))={e+i-1\choose e} c_1(\E)$. Using  (\ref{gi})
we find that there exists a constant $\alpha\geq 0$ such that
$$c_1(\E_{i, 2})-\alpha\cdot \delta_0^{''}= \sum_{l=0}^i (-1)^l c_1(\wedge^{i-l}\E_{0,
1}\otimes \E_{0, l+2})=\sum_{l=0}^i (-1)^l{ g-1 \choose i-l}
c_1(\E_{0, l+2})+$$ $$+\sum_{l=0}^i (-1)^l
\bigl((g-1)(2l+3)\bigr){g-2 \choose i-l-1}c_1(\E_{0, 1}),$$ while a
repeated application of the exact sequence (\ref{hi}) gives that
$$c_1(\H_{i, 2})=\sum_{l=0}^i (-1)^l c_1(\wedge^{i-l} \H_{0,
1}\otimes \mbox{Sym}^{l+2} \H_{0, 1})=$$ $$=\sum_{l=0}^i(-1)^l\Bigl(   {g-1\choose
i-l}c_1(\mbox{Sym}^{l+2}(\H_{0, 1}))+{g+l\choose
l+2}c_1(\wedge^{i-l}\H_{0, 1})\Bigr)$$
$$=\sum_{l=0}^i (-1)^l\Bigl( {g-1 \choose i-l}{g+l \choose
g-1}+{g+l\choose l+2}{g-2\choose i-l-1}\Bigr)c_1(\H_{0, 1}),$$ with
 $\E_{0, 1}=\H_{0, 1}=\cN_1$ and $\E_{0, l+2}=\cN_{l+2}$ for $l\geq
0$. Proposition \ref{syzi} finishes the proof. \hfill $\Box$

Comparing these formulas to the canonical class of $\rr_g$, one obtains
that $\rr_g$ is of general type for $g>12$.

\section{Effective divisors on $\rr_g$}
We now use in an essential way results from \cite{F3} to produce
myriads of effective divisors on $\rr_g$. This construction, though
less explicit than that of $\overline{\cU}_{2i+6}$ and
$\overline{\mathcal{D}}_{g: k}$, is still very effective and we use
it to show $\rr_{18}, \rr_{20}$ and $\rr_{22}$ are of general type.

We consider the morphism $\chi:\rr_g\rightarrow \mm_{2g-1}$ given by
$\chi([C, \eta]):=[\tilde{C}]$, where $f:\tilde{C}\rightarrow C$ is
the \'etale double cover determined by $\eta$. Thus one has
$$f_*\OO_{\tilde{C}}=\OO_C\oplus \eta\  \mbox{ and } \ H^i(\tilde{C},
f^*L)=H^i(C, L)\oplus H^i(C, L\otimes \eta) \mbox{ for any } L\in
\mbox{Pic}(C), \ i=0, 1.$$ The pullback map $\chi^*$ at the level of
Picard groups has been determined by M. Bernstein in \cite{Be}
Lemma 3.1.3. We record her results:
\begin{proposition}\label{mira} The pullback map
$\chi^*:\mathrm{Pic}(\mm_{2g-1})\rightarrow \mathrm{Pic}(\rr_{g})$ is
given as follows:
$$\chi^*(\lambda)=2\lambda-\frac{1}{4}\delta_{0}^{\mathrm{ram}},\
\chi^*(\delta_0)=\delta_0^{\mathrm{ram}}+2\big(\delta_0'+\delta_0^{''}+\sum_{i=1}^{[g/2]}
\delta_{i: g-i}\bigr),\ \chi^*(\delta_i)=2\delta_{g-i} \mbox{ for }
1\leq i\leq g-1.$$
\end{proposition}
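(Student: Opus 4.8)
The plan is to compute $\chi^*$ on a system of generators of $\mathrm{Pic}(\mm_{2g-1})_{\mathbb Q}$. Since $2g-1\geq 3$, that group is freely generated by $\lambda$ and $\delta_0,\delta_1,\dots,\delta_{g-1}$, so it is enough to identify $\chi^*\lambda$ and $\chi^*\delta_j$ for $0\leq j\leq g-1$. For the Hodge class I would use the Prym decomposition of the Hodge bundle: the \'etale double cover $f:\widetilde C\to C$ attached to $[C,\eta]$ has $f_*\OO_{\widetilde C}=\OO_C\oplus\eta$, whence $H^0(\widetilde C,\omega_{\widetilde C})=H^0(C,\omega_C)\oplus H^0(C,\omega_C\otimes\eta)$. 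Over the complement of a codimension-$\geq 2$ locus in $\rr_g$ --- concretely away from the deeper strata, the only boundary divisor along which Prym curves carry an exceptional component being $\Delta_0^{\mathrm{ram}}$ --- I would check, splitting into invariant and anti-invariant differentials and using the fibre descriptions (\ref{twistedhodge}), (\ref{twistedhodge2}) of $\cN_1$ over the boundary divisors of $\per_g$, that this extends to an isomorphism of vector bundles $\chi^*\mathbb E_{2g-1}\cong\mathbb E_g\oplus\cN_1$, where $\mathbb E$ denotes the Hodge bundle and $\cN_1=f_*(\omega_f\otimes\P)$ is as in Subsection 1.1. Hence $\chi^*\lambda=\lambda+c_1(\cN_1)$ on $\rr_g$, and $c_1(\cN_1)=\lambda-\frac{1}{4}\delta_0^{\mathrm{ram}}$: this is Proposition \ref{syzi} with $i=1$ over $\per_g$, while over $\Delta_i,\Delta_{g-i},\Delta_{i:g-i}$ the Deligne-pairing corrections in the Grothendieck--Riemann--Roch computation vanish because there $\P^{\otimes 2}\cong\OO$. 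This yields $\chi^*\lambda=2\lambda-\frac{1}{4}\delta_0^{\mathrm{ram}}$, with no $\delta_j$ ($j\geq 1$) terms.

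For the boundary divisors I would first determine the set-theoretic preimages $\chi^{-1}(\Delta_j^{\mm_{2g-1}})$ from the explicit descriptions of the Beauville/admissible covers attached to the generic point of each boundary divisor of $\rr_g$ (the Examples in Section 1). Over a general point of $\Delta_{g-i}\subset\rr_g$ --- the divisor on which $\eta$ is nontrivial on the genus-$(g-i)$ component --- the cover is a chain $C'\cup\widetilde D\cup C''$ of genus $2g-1$ whose two \emph{separating} nodes are both of type $\delta_i$ on $\mm_{2g-1}$; over a general point of $\Delta_0',\Delta_0^{''}$ or $\Delta_{i:g-i}$ the cover is a $2$-nodal curve with both nodes \emph{non-separating}, hence in $\delta_0$; over a general point of $\Delta_0^{\mathrm{ram}}$ the cover is the $1$-nodal curve $\widetilde C/(\widetilde y\sim\widetilde q)$, again in $\delta_0$; and $\cR_g$ maps to the interior. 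Thus $\chi^*\delta_j$ ($1\leq j\leq g-1$) is supported on $\Delta_{g-j}$, while $\chi^*\delta_0$ is supported on $\delta_0^{\mathrm{ram}},\delta_0',\delta_0^{''}$ and the $\delta_{i:g-i}$. The multiplicities come from the standard local analysis along a general curvette transverse to the divisor in question: if the node of the Prym curve is smoothed with parameter $s$, then every node of the admissible cover at which the cover is \emph{unramified} is smoothed simultaneously with parameter $s$ (up to a unit). Since the covers over $\Delta_0',\Delta_0^{''},\Delta_{i:g-i}$ and $\Delta_{g-j}$ each acquire two such nodes, and at the corresponding codimension-$2$ point of $\mm_{2g-1}$ the divisor $\delta_0$ (resp. $\delta_j$) has local equation equal to the product of the two node-smoothing parameters, each of these divisors enters with multiplicity $2$. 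This gives $\chi^*\delta_j=2\delta_{g-j}$ and the term $2(\delta_0'+\delta_0^{''}+\sum_{i=1}^{[g/2]}\delta_{i:g-i})$ of $\chi^*\delta_0$.

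The subtle --- and, I expect, main --- point is the coefficient of $\delta_0^{\mathrm{ram}}$ in $\chi^*\delta_0$, which is $1$, not $2$, because there the node of the cover is of \emph{ramified} type. In local coordinates the cover-curve is $\{sw=\zeta\}$ mapping to the base curve $\{uv=\tau\}$ by $u=s^2$, $v=w^2$, so $\tau=\zeta^2$; on the other hand $\pi:\rr_g\to\mm_g$ is simply ramified along $\Delta_0^{\mathrm{ram}}$, i.e. $\tau=t_1^2$ for a coordinate $t_1$ transverse to $\Delta_0^{\mathrm{ram}}$ --- exactly the local model $xy=t_1^2$ of the universal Prym curve recorded in Subsection 1.1. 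Hence $\zeta^2=t_1^2$, so $\zeta=\pm t_1$ and $\chi^*\delta_0$ vanishes to order exactly $1$ along $\Delta_0^{\mathrm{ram}}$. Assembling the three computations produces the stated formulas; as sanity checks one verifies that no further components enter any $\chi^*\delta_j$ and that $\chi^*\delta_0,\chi^*\delta_j$ carry no $\lambda$-term (both are forced since these classes are effective and visibly supported on boundary divisors). The other place requiring care is confirming that the Hodge-bundle decomposition genuinely extends over $\widetilde{\Delta}_0^{\mathrm{ram}}$ and not merely over the interior --- this is precisely the content of the identification (\ref{twistedhodge2}).
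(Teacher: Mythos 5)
Your proposal is correct, and on the boundary classes it fills in exactly the local analysis the paper dismisses as ``immediate'' (citing Bernstein): the identification of the admissible cover over the generic point of each boundary divisor, the multiplicity $2$ coming from the two unramified nodes of the cover over $\Delta_0'$, $\Delta_0^{''}$, $\Delta_{i:g-i}$ and $\Delta_{g-j}$, and --- the one genuinely delicate point --- the multiplicity $1$ along $\Delta_0^{\mathrm{ram}}$, where your computation $\tau=\zeta^2=t_1^2$, hence $\zeta=\pm t_1$, correctly plays the simple ramification of $\pi$ off against the ramified local model of the admissible cover. Where you genuinely diverge from the paper is in the treatment of $\chi^*(\lambda)$: the paper deduces it in one line from the relation $\chi^*\bigl((\kappa_1)_{\mm_{2g-1}}\bigr)=2(\kappa_1)_{\rr_g}$ (a consequence of $\omega_{\widetilde{\mathcal C}}=\mu^*\omega_{\mathcal C}$ for the universal admissible double cover, valid even at ramified nodes) combined with Mumford's formula and the already-established boundary formulas, so no control of the Hodge bundle over the boundary is ever needed. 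Your route via $\chi^*\mathbb E_{2g-1}\cong \mathbb E_g\oplus\cN_1$ and $c_1(\cN_1)=\lambda-\frac{1}{4}\delta_0^{\mathrm{ram}}$ is also valid, but it costs more: Proposition \ref{syzi} is only stated over $\per_g$, so you must both extend the Deligne-pairing computation across the compact-type divisors (your observation that $\langle\P,\P\rangle$ and $\langle\omega_f,\P\rangle$ are torsion wherever $\P^{\otimes 2}\cong\OO$ fibrewise does handle this) and verify that the eigenspace decomposition is an isomorphism of bundles, not merely a generic one, over every boundary divisor --- not only over $\widetilde\Delta_0^{\mathrm{ram}}$ as you emphasize, but also over $\Delta_i$, $\Delta_{g-i}$, $\Delta_{i:g-i}$. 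What your approach buys in exchange is that $\chi^*(\lambda)$ is obtained independently of the boundary formulas, so the $\kappa_1$ identity becomes a nontrivial consistency check on the whole proposition rather than an input.
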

\begin{proof}
The formula for $\chi^*(\delta_i)$ when $1\leq i\leq g-1$ is
immediate. To determine $\chi^*(\lambda)$ one notices that
$\chi^*((\kappa_1)_{\mm_{2g-1}})=2(\kappa_1)_{\rr_g}$ and the rest
follows from Mumford's formulas
$(\kappa_1)_{\mm_{2g-1}}=12\lambda-\delta\in \mbox{Pic}(\mm_{2g-1})$
and $(\kappa_1)_{\rr_g}=12\lambda-\pi^*(\delta)\in
\mbox{Pic}(\rr_g)$.
\end{proof}
We set the integer $g':=1+\frac{g-1}{g}{2g\choose g-1}$. In [F3] we
have studied the rational map $$\phi: \mm_{2g-1}\dashrightarrow 
\mm_{1+\frac{g-1}{g}{2g\choose g-1}}, \ \mbox{
 }\  \phi[Y]:=W^1_{g+1}(Y),$$
 and  determined the pullback map at the level of  divisors
$\phi^*:\mbox{Pic}(\mm_{g'})\rightarrow \mbox{Pic}(\mm_{2g-1})$. In
particular, we proved that if $A\in \mbox{Pic}(\mm_{g'})$ is a
divisor of slope $s(A)=s$, then the slope of the pullback
$\phi^*(A)$ is equal to (cf. \cite{F3} Theorem 0.2)
\begin{equation}\label{gisp}
s\bigl(\phi^*(A)\bigr)=6+\frac{8g^3s-32g^3-19g^2s+66g^2+6gs-16g+3s+6}{(g-1)(g+1)(g^2s-2gs-4g^2+7g+3)}.
\end{equation}
To obtain effective divisors of small slope on $\rr_g$ we shall
consider pullbacks $(\phi\chi )^*(A)$, where $A\in
\mbox{Ample}(\mm_{g'})$. (Of course, one can consider the cone
$\chi^*(\mbox{Ample}(\mm_{2g-1}))$, but a quick look at Proposition
(\ref{mira}) shows that it is impossible to obtain in this way
divisors on $\rr_g$ satisfying the inequalities (\ref{inequ}).
Pulling back merely \emph{effective} divisors $\mm_{2g-1}$ rather
than ample ones, is problematic since $\chi(\rr_g)$ tends to be
contained in many geometric divisors on $\mm_{2g-1}$.)  In order for
the pullbacks $\chi^*\phi^*(A)$ to be well-defined as effective
divisors on $\rr_g$ we  prove the following result:
\begin{proposition}
If $\mathrm{dom}(\phi)\subset \mm_{2g-1}$ is the domain of
definition of the rational morphism $\phi:\mm_{2g-1}\rightarrow
\mm_{g'}$, then $\chi(\rr_g)\cap \mathrm{dom}(\phi)\neq \emptyset$.
It follows that for any ample divisor $A\in
\mathrm{Ample}(\mm_{g'})$, the pullback $\chi^*\phi^*(A)\in
\mathrm{Eff}(\rr_g)$ is well-defined.
\end{proposition}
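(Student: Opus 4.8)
\emph{Proof proposal.} Since $\mathrm{dom}(\phi)$ is a dense open subset of $\mm_{2g-1}$ and $\chi(\rr_g)$ is irreducible, it suffices to produce a single Prym curve $[C,\eta]\in\cR_g$ whose double cover $\tilde C=\chi([C,\eta])$ lies in $\mathrm{dom}(\phi)$; by the description of $\phi$ in \cite{F3} this amounts to exhibiting $[C,\eta]$ for which $W^1_{g+1}(\tilde C)$ is a smooth curve of the expected genus $g'=1+\frac{g-1}{g}\binom{2g}{g-1}$. The subtlety is that $\tilde C$ is never a \emph{general} curve of genus $2g-1$ (the étale double covers form only a $(3g-3)$-dimensional family inside $\mm_{2g-1}$), so one cannot invoke ordinary Brill--Noether theory on $\mm_{2g-1}$ directly. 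The plan is instead to verify the required properties on a well-chosen degeneration of $[C,\eta]$ inside $\rr_g$ and then conclude by semicontinuity together with the openness of $\mathrm{dom}(\phi)$.

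Concretely, I would degenerate $[C,\eta]$ to a boundary point $[C_0,\eta_0]\in\Delta_1\subset\rr_g$ with $C_0=A\cup_p B$, where $[A,p]$ is a general pointed elliptic curve, $\eta_0|_A\in\mathrm{Pic}^0(A)[2]\setminus\{0\}$, $[B,p]\in\cM_{g-1,1}$ is a general pointed curve and $\eta_0|_B=\OO_B$. The associated admissible double cover then has central fibre $\tilde C_0=\tilde A\cup_{p_1}B'\cup_{p_2}B''$, where $\tilde A\to A$ is the isogeny determined by $\eta_0|_A$, $p_1,p_2$ are the two points of $\tilde A$ over $p$, and $B',B''$ are two copies of $B$; this is a curve of compact type and arithmetic genus $2g-1$, which is smoothed inside $\rem_g(\mathcal B\mathbb Z_2)$ by any one-parameter smoothing of $[C_0,\eta_0]$ in $\rr_g$ (available since $\eta_0|_A\neq\OO_A$, so $[C_0,\eta_0]$ lies away from the non-canonical locus of $\rr_g$). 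I would then analyze the scheme $G^1_{g+1}(\tilde C_0)$ of refined limit linear series via the Eisenbud--Harris theory: because $B$ is Brill--Noether--Petri general and the aspects on the elliptic bridge $\tilde A$ are forced to be in general position, additivity of the (adjusted) Brill--Noether number over the components of a compact-type curve should give that each component contributes exactly its expected amount, so that $G^1_{g+1}(\tilde C_0)$ is smooth of pure dimension $\rho(2g-1,1,g+1)=1$. By the Eisenbud--Harris regeneration theorem, $W^1_{g+1}(\tilde C_t)$ is then a smooth curve of dimension $1$ for general $t$, and an Euler-characteristic computation on the limit series (or the classical Castelnuovo genus formula) identifies its genus with $g'$. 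Hence $[\tilde C_t]\in\mathrm{dom}(\phi)$ and $\chi(\rr_g)\cap\mathrm{dom}(\phi)\neq\emptyset$.

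The final assertion is then formal: for $A\in|mA'|$ a general member of a large multiple of an ample class $A'$ on $\mm_{g'}$, the base locus of the linear system $\phi^*|mA'|$ on $\mm_{2g-1}$ is contained in the indeterminacy locus of $\phi$, so $\phi^*(A)$ is a well-defined effective divisor whose support meets $\mathrm{dom}(\phi)$ and (by the moving lemma, ample divisors moving) does not contain the fixed positive-dimensional subvariety $\chi(\rr_g)$, which by the previous step is not contained in the indeterminacy locus of $\phi$. Therefore $\chi^*\phi^*(A)\in\mathrm{Eff}(\rr_g)$ is well-defined.

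I expect the main obstacle to be the limit-linear-series step, specifically confirming that $G^1_{g+1}(\tilde C_0)$ is genuinely $1$-dimensional and not larger --- one must check that no excess limit series are produced by the two points $p_1,p_2$ of the elliptic bridge $\tilde A$ differing by a $2$-torsion translation (so that $\OO_{\tilde A}(2p_1-2p_2)\cong\OO_{\tilde A}$). Should this coincidence force excess, the fallback is to replace the above by a longer degeneration of the cover --- e.g. a connected étale double cover of a chain of $g$ elliptic curves, which is itself essentially a chain of elliptic curves of genus $2g-1$ --- on which all limit linear series of type $\mathfrak g^1_{g+1}$ can be enumerated explicitly and the statements $\dim G^1_{g+1}=1$, smoothness, and genus $g'$ can all be verified by hand.
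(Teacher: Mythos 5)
Your degeneration is exactly the one the paper uses (the point $[C\cup_y E,\ \eta_C=\OO_C,\ \eta_E\neq\OO_E]\in\Delta_1\subset\rr_g$, whose admissible cover is the chain $C_1\cup_{y_1}\widetilde{E}\cup_{y_2}C_2$ with $\OO_{\widetilde E}(y_1-y_2)$ of order $2$), but from that point on your route diverges from the paper's and it is precisely there that the gap sits. The paper never smooths the cover: it shows that the boundary point $\chi([C\cup_y E])$ \emph{itself} lies in $\mathrm{dom}(\phi)$, because at compact-type chains $C_1\cup_{y_1}E\cup_{y_2}C_2$ the map $\phi$ is computed (via \cite{F3}) as the variety $\overline{W}^1_{g+1}$ of limit linear series $\mathfrak g^1_{g+1}$; the components of $\overline{W}^1_{g+1}$ are enumerated through the vanishing sequences allowed by $\rho(l_{C_1},y_1)+\rho(l_E,y_1,y_2)+\rho(l_{C_2},y_2)=1$, the crude limit series are identified as the nodes, and one concludes that $\overline{W}^1_{g+1}$ is a stable curve of compact type; the passage from the generic chain (with $y_1-y_2$ non-torsion) to the actual cover (with $y_1-y_2$ of order $2$ and $C_1\cong C_2$) is supplied by \cite{F3}. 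You instead want to conclude for an \emph{interior} point by regenerating, and your central step --- that $G^1_{g+1}(\tilde C_0)$ is smooth of pure dimension $\rho=1$ because ``additivity of adjusted Brill--Noether numbers'' applies --- is exactly what is not available: the pointed elliptic bridge $(\widetilde E, y_1,y_2)$ is \emph{not} Brill--Noether general (the difference is $2$-torsion), and the two genus $g-1$ components are isomorphic, so the dimension bound of Eisenbud--Harris for generic chains does not apply as stated. You flag this yourself as the ``main obstacle,'' but flagging it is not resolving it, and resolving it is the actual content of the proposition (the paper resolves it by citing the analysis of $\phi$ at such chains in \cite{F3}, not by a genericity argument).

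A second, smaller gap: even granting $\dim G^1_{g+1}(\tilde C_0)=1$, the Eisenbud--Harris regeneration theorem only controls the \emph{dimension} of $W^1_{g+1}(\tilde C_t)$ on the nearby smooth double cover; to get smoothness (or at least that $W^1_{g+1}(\tilde C_t)$ is a stable curve, which is what membership in $\mathrm{dom}(\phi)$ requires in your approach) you need the smoothness theorem for the scheme of limit linear series, i.e.\ a Petri-type transversality statement at the special central fibre, which again is obstructed by the same non-generic coincidences. By contrast, your final paragraph (openness of the locus where $W^1_{g+1}$ is a smooth curve, constancy of the genus on that connected open set, and pulling back an ample class through $\phi\circ\chi$) is essentially fine and matches the formal part of the paper's argument. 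If you want a self-contained argument avoiding \cite{F3}, your fallback degeneration to a cover of a chain of elliptic curves is a reasonable idea, but it too has to be carried out; as written, the proof is a plan whose decisive step is acknowledged to be open.
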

\begin{proof} We take a general point $[C\cup _y E, \eta_C=\OO_C,
\eta_E]\in \Delta_{1}\subset \rr_g$. The corresponding admissible
double cover is then $f:C_1\cup _{y_1}\widetilde{E}\cup _{y_2}
C_2\rightarrow C\cup_y E$, where $[C_1, y_1]$ and $[C_2, y_2]$ are
copies of $[C, y]$ mapping isomorphically to $[C, y]$, and
$f:\widetilde{E}\rightarrow E$ is the \'etale double cover induced
by the torsion point $\eta_E\in \mbox{Pic}^0(E)[2]$. We have that
$C_i\cap \widetilde{E}=\{y_i\}$, where
$f_{\widetilde{E}}(y_1)=f_{\widetilde{E}}(y_2)=y$. Thus $\chi[C\cup
E, \OO_C, \eta_E]:=[C_1\cup _{y_1} \widetilde{E}\cup _{y_2} C_2]$,
where $y_1, y_2\in \widetilde{E}$ are such that
$\OO_{\widetilde{E}}(y_1-y_2)$ is a $2$-torsion point in
$\mbox{Pic}^0(\widetilde{E})$.

Suppose now that $X:=C_1\cup _{y_1} E\cup_{y_2} C_2$ is a curve of
compact type such that $[C_i, y_i]\in \cM_{g-1, 1}$ ($i=1, 2$) and
$[E, y_1, y_2]\in \cM_{1, 2}$ are all Brill-Noether general. In
particular, the class  $y_1-y_2\in \mbox{Pic}^0(E)$ is not  torsion.
Then $\phi([X]):=[\overline{W}^1_{g+1}(X)]$ is the variety of limit
linear series $\mathfrak g^1_{g+1}$ on $X$. The general point of
each irreducible component of $\overline{W}^1_{g+1}(X)$ corresponds
to a refined linear series $l$ on $X$ satisfying the following
compatibility conditions in terms of  Brill-Noether numbers (see
also \cite{EH}, \cite{F3}):
\begin{equation}\label{limitlin}
1=\rho(l_{C_1}, y_1)+\rho(l_{C_2}, y_2)+\rho(l_E, y_1, y_2)=1 \
\mbox{ and }\rho(l_{C_1}, y_1), \rho(l_{C_2}, y_2), \rho(l_E, y_1,
y_2)\geq 0. \end{equation} If $\rho(l_{C_2}, y_2)=1$,  we find two
types of components of $\overline{W}^1_{g+1}(X)$ which we describe:
Since $\rho(l_{C_1}, y_1)=0$, there exists an integer $0\leq a\leq
g/2$ such that $a^{l_{C_1}}(y_1)=(a, g+2-a)$. On $E$ there are two
choices for $l_E\in G^1_{g+1}(E)$ such that $a^{l_E}(y_1)=(a-1,
g+1-a)$. Either
 $a^{l_E}(y_2)=(a, g+1-a)$ (there is a unique such $l_E$), and then
 $l_{C_2}$ belongs to the connected curve $T_a:=\{l_{C_2}\in G^1_{g+1}(C_2): a^{l_{C_2}}(y_2)\geq
 (a, g+1-a)\}$, or
else, $a^{l_E}(y_2)=(a-1, g+2-a)$ (again, there is a unique such
$l_E$), and then the $C_2$-aspect of $l$ belongs to the curve
$T_a':=\{l_{C_2}\in G^1_{g+1}(C_2): a^{l_{C_2}}(y_2)\geq (a-1,
g+2-a)\}$.  Thus $\{l_{C_1}\}\times T_a$ and $\{l_{C_2}\}\times
 T_a'$ are irreducible components of $\overline{W}_{g+1}^1(X)$.
When $\rho(l_E, y_1, y_2)=1$, then there are three types of
irreducible components of $\overline{W}_{g+1}^1(X)$ corresponding to
the cases
$$a^{l_E}(y_1)=(a-1, g+1-a), \ a^{l_E}(y_2)=(a-1, g+1-a)\  \mbox{ for } 0\leq a\leq g/2,$$
$$a^{l_E}(y_1)=(a-1, g+1-a), \ a^{l_E}(y_2)=(a, g-a)  \mbox{ for } 1\leq a\leq (g-1)/2, \ \mbox{  and
}$$
$$a^{l_E}(y_1)=(a-1, g+1-a), \ a^{l_E}(y_2)=(a-2, g+2-a) \ \mbox{ for } 2\leq a \leq (g-1)/2.$$
Finally, the case $\rho(l_{C_1}, y_1)=1$ is identical to the case
$\rho(l_{C_2}, y_2)=1$ by reversing the role of the curves $C_1$ and
$C_2$. The singular points of $\overline{W}_{g+1}^1(X)$ correspond
to (necessarily) crude limit $\mathfrak g^1_{g+1}$'s satisfying
$\rho(l_{C_1}, y_1)=\rho(l_{C_2}, y_2)=\rho(l_{E}, y_1, y_2)=0$. For
such $l$, there must exist two irreducible components of $X$, say
$Y$ and $Z$, for which $Y\cap Z=\{x\}$ and such that
$a_0^{l_Y}(x)+a_1^{l_Z}(x)=g+2$ and $a_1^{l_Y}(x)+a_0^{l_Z}(x)=g+1$.
The point $l$ lies precisely on the two irreducible components of
$\overline{W}_{g+1}^1(X)$: The one corresponding to refined limit
$\mathfrak g^1_{g+1}$ with vanishing sequence on $Y$ equal to
$(a_0^{l_Y}(x)-1, a_1^{l_Y}(x))$, and the one with vanishing
$(a_0^{l_Z}(x), a_1^{l_Z}(x)-1)$ on $Z$. Thus
$\overline{W}_{g+1}^1(X)$ is a stable curve of compact type, so
$[X]\in \mbox{dom}(\phi)$. Using \cite{F3}, this set-theoretic
description applies to the image under $\phi$ of any point
$[C_1\cup_{y_1} E\cup _{y_2} C_2]$, in particular to $[C_1\cup_{y_1}
\widetilde{E}\cup _{y_2} C_2]=\chi([C\cup _y E]).$ We have showed
that $\chi(\Delta_1)\cap \mbox{dom}(\phi)\neq \emptyset$.
\end{proof}

\noindent {\emph{Proof of Theorem \ref{gentype} for genus $g=18, 20,
22$.}} We construct an effective divisor on $\rr_g$ which satisfies
the inequalities (\ref{inequ}) and which is of the form
$$\mu\pi^*(D)+\epsilon\chi^* \phi^*(A)=\alpha
\lambda-2(\delta_0'+\delta_0^{''})-3\delta_{0}^{\mathrm{ram}}-\sum_{i=1}^{[g/2]}(b_i\delta_i+b_{g-i}\delta_{g-i}
+b_{i: g-i}\delta_{i: g-i}),$$ where $A\equiv s\lambda-\delta\in
\mbox{Pic}(\mm_{g'})$ is an ample class (which happens precisely
when $s>11$, cf. \cite{CH}), $D\in \mbox{Eff}(\mm_g)$ and $\mu,
\epsilon>0$ and $\alpha<13$. We solve this linear system using
Proposition \ref{mira} and find that we must have
$$\epsilon=\frac{8}{12-s(\phi^*(A))} \ \mbox{  and }
\ \mu=\frac{16-2s(\phi^*(A))}{12-s(\phi^*(A))}.$$ To
conclude that $\rr_g$ is of general type, it suffices to check that
the inequality
$$\alpha=\frac{8s\bigl(\phi^*(A)\bigr)}{12-s\bigl(\phi^*(A)\bigr)}+\Bigl(6+\frac{12}{g+1}\Bigr)\frac{16-2s\bigl(\phi^*(A)\bigr)}
{12-s\bigl(\phi^*(A)\bigr)}<13$$ has a solution $s=s(A)\geq 11$.
Using (\ref{gisp}), we find that this is the case for $g\geq 18$.
\hfill $\Box$

\section{The enumerative geometry of $\rr_g$ in small genus}

In this Section we describe the divisors $\mathcal{D}_{g: k}$ and
$\cU_{g, i}$ for small $g$. We start with the case $g=3$. This
result has been first obtained by M. Bernstein \cite{Be} Theorem
3.2.3 using test curves inside $\rr_3$. Our method is more direct
and uses the identification of cycles $C-C=\Theta_{Q_C}\subset
\mbox{Pic}^0(C)$, valid for all curves $[C]\in \cM_3$.

\begin{theorem}\label{genu3}
The divisor $\mathcal{D}_{3: 2}=\{[C, \eta]\in \cR_3: \eta\in C-C\}$
is equal to the locus of \'etale double covers
$[\tilde{C}\stackrel{f}\rightarrow C]\in \cR_3$ such that
$[\tilde{C}]\in \cM_5$ is hyperelliptic. We have the equality of
cycles $\overline{\mathcal{D}}_{3: 2}\equiv
8\lambda-\delta_0'-2\delta_0^{''}-\frac{3}{2}\delta_0^{\mathrm{ram}}-6\delta_1-4\delta_2-2\delta_{1:
2}\in \mathrm{Pic}(\rr_3)$. Moreover,
$$\pi_*(\overline{\mathcal{D}}_{3: 2})\equiv 56\cdot \mm_{3, 2}^1
=56\cdot(9\lambda-\delta_0-3\delta_1)\in \mathrm{Pic}(\mm_3).$$ This
equality corresponds to the fact that for an \'etale double cover
$f:\tilde{C}\rightarrow C$, the source $\tilde{C}$ is hyperelliptic
if and only if $C$ is hyperelliptic and $\eta\in C-C\subset
\mathrm{Pic}^0(C)$.
\end{theorem}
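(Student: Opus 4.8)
The plan is to start from the reduced description of $\mathcal{D}_{3:2}$ given by \cite{FMP}, combine it with the determinantal construction of Section 2 specialised to $g=3$, $k=2$, and then pin down the boundary coefficients by a local analysis on $\per_3$ together with the test pencils of Lemma \ref{pencils}. For the set-theoretic statement: a genus $3$ curve has $W^2_4(C)=\{K_C\}$, so the identification $\Theta_{\wedge^1 Q_{K_C}}=C-C$ of \cite{FMP} immediately gives $\mathcal{D}_{3:2}=\{[C,\eta]\in\cR_3:\eta\in C-C\}$. If $\eta=\OO_C(x-y)$ is a nonzero point of order two, then $2x\sim 2y$ produces a degree two map $C\to\pp[1]$; as a genus $3$ curve has a unique hyperelliptic pencil, $x$ and $y$ must be its Weierstrass points, so $\eta=\OO_C(w_i-w_j)$ and $[C]$ is hyperelliptic, and conversely every such difference lies in $C-C$. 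To identify this with hyperellipticity of $\widetilde{C}$ one uses that the hyperelliptic involution $\tau$ of $\widetilde{C}\in\cM_5$ is central in $\aut(\widetilde{C})$, hence commutes with the deck involution $\iota$ of $\widetilde{C}\to C$; then $C=\widetilde{C}/\iota$ is a double cover of $\widetilde{C}/\langle\iota,\tau\rangle\cong\pp[1]$, forcing $C$ to be hyperelliptic, and tracking the ramification of $\widetilde{C}\to\pp[1]$ identifies $\eta$ as a difference of two Weierstrass points. This is the elementary equivalence quoted from \cite{HF} in the Introduction, and the converse is the same bookkeeping run backwards.

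For the coefficients of $\lambda,\delta_0',\delta_0^{''},\delta_0^{\mathrm{ram}}$: since $\rho(3,2,4)=0$ and $W^2_4(C)=\{K_C\}$, the stack $\mathfrak G^r_d(\rer_g^0/\rem_g^0)$ of Theorem \ref{det} coincides with the partial compactification $\per_3$, one may take the Poincar\'e bundle to be $\omega_f$, so $\mathfrak{a}=\mathfrak{b}=\kappa_1=12\lambda-\delta_0'-\delta_0^{''}-2\delta_0^{\mathrm{ram}}$, and the degeneracy morphism of Theorem \ref{det} becomes the multiplication map $\phi:\H\otimes\cA_{0,0}\to\cA_{0,1}$ between bundles of rank $6$, with $\H=f_*\omega_f$, $\cA_{0,0}=f_*(\omega_f\otimes\P)$ and $\cA_{0,1}=f_*(\omega_f^{\otimes 2}\otimes\P)$, whose restriction to $\cR_3$ is the multiplication $H^0(K_C)\otimes H^0(K_C\otimes\eta)\to H^0(K_C^{\otimes 2}\otimes\eta)$. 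Proposition \ref{rr} then gives $c_1(\H)=\lambda$, $c_1(\cA_{0,0})=\lambda-\frac14\delta_0^{\mathrm{ram}}$ and $c_1(\cA_{0,1})=13\lambda-\delta_0'-\delta_0^{''}-\frac94\delta_0^{\mathrm{ram}}$, whence
\[c_1(\cA_{0,1}-\H\otimes\cA_{0,0})=8\lambda-\delta_0'-\delta_0^{''}-\frac32\delta_0^{\mathrm{ram}}\in A^1(\per_3).\]

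To relate the degeneracy scheme $Z_1(\phi)$ to $\overline{\mathcal{D}}_{3:2}$ one analyses $\phi$ along $\widetilde{\Delta}_0',\widetilde{\Delta}_0^{''},\widetilde{\Delta}_0^{\mathrm{ram}}$. At a general point of $\widetilde{\Delta}_0'$ or $\widetilde{\Delta}_0^{\mathrm{ram}}$ the underlying one-nodal curve is generic, hence not a limit of hyperelliptic curves, and a direct computation of $\phi$ there shows it is an isomorphism. At a general Wirtinger cover $[C_{yq},\eta]\in\widetilde{\Delta}_0^{''}$, by contrast, $\eta$ restricts trivially to $C$, so $H^0(X,\omega_X\otimes\eta)$ is the $2$-dimensional space $H^0(C,\omega_C)$ sitting inside $H^0(X,\omega_X)=H^0(C,\omega_C(y+q))$; products of such sections lie in $H^0(C,\omega_C^{\otimes 2}(y+q))$, which is only $5$-dimensional, and the base-point-free pencil trick for $|\omega_C|$ shows the rank of $\phi$ is exactly $5$ there. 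Hence $\widetilde{\Delta}_0^{''}$ enters $Z_1(\phi)$ with multiplicity one and, on $\per_3$, $\overline{\mathcal{D}}_{3:2}\equiv c_1(\cA_{0,1}-\H\otimes\cA_{0,0})-\delta_0^{''}$. The coefficients of $\delta_1,\delta_2,\delta_{1:2}$, which are invisible on $\per_3$, are then obtained by intersecting $\overline{\mathcal{D}}_{3:2}$ with the pencils $A_1,A_2,A_{1:2}$ of Lemma \ref{pencils} (specialising the $K3$ pencil into the hyperelliptic divisor), producing $-6,-4,-2$; altogether $\overline{\mathcal{D}}_{3:2}\equiv 8\lambda-\delta_0'-2\delta_0^{''}-\frac32\delta_0^{\mathrm{ram}}-6\delta_1-4\delta_2-2\delta_{1:2}$.

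For the pushforward, counting torsion points over a general one-nodal curve exactly as in the Examples of Section 1 gives $\pi_*\lambda=63\lambda$, $\pi_*\delta_0'=30\delta_0$, $\pi_*\delta_0^{''}=\delta_0$, $\pi_*\delta_0^{\mathrm{ram}}=16\delta_0$, $\pi_*\delta_1=3\delta_1$, $\pi_*\delta_2=15\delta_1$ and $\pi_*\delta_{1:2}=45\delta_1$; feeding the class of $\overline{\mathcal{D}}_{3:2}$ through these yields $\pi_*(\overline{\mathcal{D}}_{3:2})=504\lambda-56\delta_0-168\delta_1=56(9\lambda-\delta_0-3\delta_1)=56\cdot\mm_{3,2}^1$, consistent with a general hyperelliptic curve carrying finitely many such $\eta$. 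I expect the main obstacle to be the boundary analysis of the third paragraph: establishing rigorously that $\phi$ is an isomorphism at general points of $\widetilde{\Delta}_0'$ and $\widetilde{\Delta}_0^{\mathrm{ram}}$ (so they do not appear in $Z_1(\phi)$) and that its degeneration along $\widetilde{\Delta}_0^{''}$ is transverse, together with the separate test-curve computation needed for the $\delta_1$-coefficient, which differs from the value given by the generic-genus formula of Theorem \ref{hyper}.
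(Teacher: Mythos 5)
Your proposal runs on the same engine as the paper's proof: the determinantal set-up of Theorem \ref{det} specialised to $k=2$, $b=1$ (so $L=K_C$ and the map is the multiplication $H^0(K_C)\otimes H^0(K_C\otimes\eta)\to H^0(K_C^{\otimes 2}\otimes\eta)$), the computation $c_1(\cA_{0,1}-\H\otimes\cA_{0,0})=8\lambda-\delta_0'-\delta_0^{''}-\frac{3}{2}\delta_0^{\mathrm{ram}}$ via Proposition \ref{rr}, and the boundary analysis showing $\phi$ generically non-degenerate along $\widetilde{\Delta}_0'$ and $\widetilde{\Delta}_0^{\mathrm{ram}}$ but degenerate of order one along the Wirtinger divisor (the paper does this with the base-point-free pencil trick, getting $\mathrm{Ker}(\phi(t))=H^0(C,\OO_C(y+q))$, which is the same computation as your dimension count on $H^0(C,\omega_C^{\otimes 2}(y+q))$). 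Where you diverge is the endgame. The paper does not compute the $\delta_1,\delta_2,\delta_{1:2}$ coefficients by test curves at all: it pushes forward the class $c_1(\cA_{0,1}-\H\otimes\cA_{0,0})-\delta_0^{''}$ using $\pi_*\lambda=63\lambda$, $\pi_*\delta_0'=30\delta_0$, $\pi_*\delta_0^{''}=\delta_0$, $\pi_*\delta_0^{\mathrm{ram}}=16\delta_0$, observes the resulting slope is exactly $9$, and invokes the fact that $\mm_{3,2}^1$ is the unique effective divisor on $\mm_3$ of slope $\leq 9$ not containing boundary components, which pins down $\pi_*(\overline{\cD}_{3:2})=56\cdot\mm_{3,2}^1$ in one stroke. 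That extremality argument is cleaner and also serves as a consistency check on the Wirtinger correction.

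The one genuinely soft spot in your version is the claim that intersecting with $A_1$, $A_2$, $A_{1:2}$ from Lemma \ref{pencils} ``produces $-6,-4,-2$.'' As constructed, those pencils fill up the boundary divisors and therefore yield only the inequalities of Proposition \ref{ineq}; to extract exact coefficients you would have to compute the actual intersection numbers $A_1\cdot\overline{\cD}_{3:2}$, etc., i.e.\ count the members of each pencil whose Prym curve lies in the closure of the hyperelliptic-cover locus, and you do not indicate how. Note also that your direct pushforward of the full class only recovers the single combination $3b_1+15b_2+45b_{1:2}=168$, so it cannot by itself confirm the individual values. If you replace that step by the paper's slope argument, the pushforward identity and the remaining assertions of the theorem follow without needing the individual $\delta_1,\delta_2,\delta_{1:2}$ coefficients at all.
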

\begin{proof} We use the set-up from Theorem \ref{det} and recall that there
exists a vector bundle morphism $\phi: \H\otimes \cA_{0,
0}\rightarrow \cA_{0, 1}$ over $\rer_3^0$ such that $Z_1(\phi)\cap
\cR_3=\mathcal{D}_{3: 2}$. Here $\H=\pi^*(\mathbb E)$, $\cA_{0,
0}[X, \eta, \beta]=H^0(X, \omega_X\otimes \beta)$ and $\cA_{0, 1}[X,
\eta, \beta]=H^0(X, \omega_X^{\otimes 2}\otimes \beta)$, for each
point $[X, \eta, \beta]\in \pr_g$. Using (\ref{twistedhodge}) and
(\ref{twistedhodge2}) we check that both $\phi_{| \Delta_0'}$ and
$\phi_{| \Delta_0^{\mathrm{ram}}}$ are generically non-degenerate.
Over a point $t=[C_{yq}, \ \eta, \beta]\in \Delta_0^{''}$
corresponding to a Wirtinger covering (i.e. $\nu: C\rightarrow
C_{yq}$, with $[C]\in \cM_2$ and $\nu^*(\eta)=\OO_C$), we have that
$$\phi(t): H^0(C, K_C)\otimes H^0\bigl(C, K_C\otimes \OO_C(y+q)\bigr)\rightarrow
\cA_{0, 1}(t)\subset H^0\bigl(C, \omega_C^{\otimes 2}\otimes
\OO_C(2y+2q)\bigr).$$ From the base point free pencil trick we find
that $\mbox{Ker}(\phi(t))=H^0(C, \OO_C(y+q))$, that is, $\phi_{|
\Delta_0^{''}}$ is everywhere degenerate and the class $c_1(\cA_{0, 1}-\H
\otimes \cA_{0, 0})-\delta_{0}^{''}\in \mbox{Pic}(\rer_3^0)$ is
effective. From the formulas $\pi_*(\lambda)=63\lambda,\
\pi_*(\delta_0')=30\delta_0, \ \pi_*(\delta_0^{''})=\delta_0$ and
$\pi_*(\delta_0^{\mathrm{ram}})=16\delta_0$, we obtain that
$$s\bigl(\pi_*(c_1(\cA_{0, 1}-\H\otimes \cA_{0,
0})-\delta_{0}^{''})\bigr)=9.$$ The hyperelliptic locus $\mm_{3,
2}^1$ is the only divisor on $D\in \mbox{Eff}(\mm_3)$ with
$\Delta_i\varsubsetneq \mbox{supp}(D)$ for $i=0, 1$ and $s(D)\leq
9$, which leads to the formula $\pi_*(\overline{\mathcal{D}}_{3:
2})=56\cdot \mm_{3, 2}^1$.
\end{proof}
\begin{theorem}\label{genu5}
The divisor $\overline{\mathcal{D}}_{5:2}:=\{[C, \eta]\in \cR_5:
\eta\in C_2-C_2\}$ equals the locus of \'etale double covers
$[\tilde{C}\stackrel{f}\rightarrow C]\in \cR_5$ such that the genus
$9$ curve $\tilde{C}$ is tetragonal. We have the formula
$\overline{\mathcal{D}}_{5:
2}=14\lambda-2(\delta_0'+\delta_0^{''})-\frac{5}{2}\delta_0^{\mathrm{ram}}
-10\delta_4-4\delta_{1: 4}-\cdots \in \mathrm{Pic}(\rr_5)$.
\end{theorem}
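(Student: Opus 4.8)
The plan is to read off the class formula from Theorem~\ref{hyper} and then establish the geometric identification by hand. Setting $i=2$ in Theorem~\ref{hyper} gives $g=2i+1=5$, $g-1=4$, leading constant $\frac{1}{2i-1}\binom{2i}{i}=\frac{1}{3}\binom{4}{2}=2$, and the parenthetical expression $7\lambda-(\delta_0'+\delta_0'')-\frac54\delta_0^{\mathrm{ram}}-5\delta_4-2(\delta_{1:4}+\delta_1)-\cdots$; multiplying by $2$ produces exactly $14\lambda-2(\delta_0'+\delta_0'')-\frac52\delta_0^{\mathrm{ram}}-10\delta_4-4\delta_{1:4}-\cdots$. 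We already have the identification $\cD_{5:2}=\{[C,\eta]\in\cR_5:\eta\in C_2-C_2\}$ coming from $\Theta_{\wedge^2 Q_{K_C}}=C_2-C_2$ (i.e.\ from \cite{FMP}), so it remains to match this locus with $\chi^{-1}(\cM^1_{9,4})$. One inclusion is easy: if $\eta\in C_2-C_2$, write for a general such point $\eta=\OO_C(x_1+x_2-y_1-y_2)$ with the degree $2$ divisors $x_1+x_2$ and $y_1+y_2$ reduced and disjoint; since $f^*\eta\cong\OO_{\tilde C}$ by construction of the \'etale double cover $f:\tilde C\to C$, we get $f^*\OO_C(x_1+x_2)\cong f^*\OO_C(y_1+y_2)$, so $A:=f^*\OO_C(x_1+x_2)$ carries the two distinct effective divisors $f^{-1}(x_1)+f^{-1}(x_2)$ and $f^{-1}(y_1)+f^{-1}(y_2)$, hence $A\in W^1_4(\tilde C)$ and $\tilde C$ is tetragonal; the non-reduced and boundary cases follow by specialisation as in Theorem~\ref{det}.

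For the reverse inclusion the plan is a case analysis on a pencil $A\in W^1_4(\tilde C)$ according to the deck involution $\iota$ of $f$. If $\iota^*A\cong A$, then $A$ descends (the cover being \'etale): $A=f^*M$ with $M\in\mathrm{Pic}^2(C)$, and $2=h^0(\tilde C,A)=h^0(C,M)+h^0(C,M\otimes\eta)$; if either summand equals $2$ then $C$ is hyperelliptic, and otherwise $h^0(C,M)=h^0(C,M\otimes\eta)=1$, so $M=\OO_C(a+b)$, $M\otimes\eta=\OO_C(c+d)$ and $\eta=\OO_C(c+d-a-b)\in C_2-C_2$. If $\iota^*A\not\cong A$, consider the morphism $\tilde C\to\PP^1\times\PP^1$ given by $(|A|,|\iota^*A|)$, which intertwines $\iota$ with the coordinate swap: were it an embedding, $\tilde C$ would be a smooth curve of bidegree $(4,4)$ (arithmetic genus $(4-1)(4-1)=9$, matching the genus) and $\iota$ would fix the eight points of $\tilde C$ on the diagonal, contradicting that $f$ is \'etale; hence the image has bidegree $(a,a)$ with $a\in\{1,2\}$, and $a=1$ forces $\iota^*A\cong A$ while $a=2$ exhibits $C=\tilde C/\iota$ as a degree $2$ cover of a conic, i.e.\ $C$ hyperelliptic. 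Thus the tetragonal locus of $\cR_5$ lies in $\cD_{5:2}\cup\{C\ \text{hyperelliptic}\}$; combined with the first inclusion and with the fact that $\cD_{5:2}$ is a genuine divisor (Theorem~\ref{transversality}) whereas $\{C\ \text{hyperelliptic}\}$ has codimension $3$, this pins down the divisorial part, and the asserted statement about $\tilde C$ being tetragonal is then the genus $5$ analogue of Theorem~\ref{genu3}.

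The hard part is the converse, and within it the non-invariant case $\iota^*A\not\cong A$: one must rule out carefully that $(|A|,|\iota^*A|)$ fails to be birational onto its image (handled by the bidegree dichotomy together with the \'etale-ness of $f$), and check that the only alternative to $\eta\in C_2-C_2$ that survives is $C$ hyperelliptic. A second delicate point is the behaviour over the hyperelliptic locus in $\cM_5$: there $\tilde C$ is automatically tetragonal via $f^*\mathfrak g^1_2$, so one must either verify that the relevant $2$-torsion points still lie in $C_2-C_2$ (reducing via $\OO_C(2w_j)\cong\mathfrak g^1_2$) or else argue that this codimension $3$ locus is absorbed into $\overline{\cD}_{5:2}$, so that the set-theoretic equality holds as stated. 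Finally, to upgrade the open statement to the divisor-class identity in $\mathrm{Pic}(\rr_5)$ one extends the determinantal description of $\cD_{5:2}$ across the boundary divisors $\Delta_0',\Delta_0'',\Delta_0^{\mathrm{ram}},\Delta_4,\Delta_{1:4},\dots$ and controls the colliding-point degenerations of $C_2-C_2$, exactly as in the proof of Theorem~\ref{det}.
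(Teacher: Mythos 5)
Your proposal is correct and follows the same skeleton as the paper: the class formula is obtained, exactly as in the paper, by specializing Theorem \ref{hyper} at $i=2$, and your forward inclusion is the paper's computation $h^0(\tilde{C},f^*D)=h^0(C,\OO_C(D))+h^0(C,\OO_C(E))\geq 2$ in slightly different notation. The difference lies in the converse. The paper simply asserts that any $\mathfrak g^1_4$ on $\tilde{C}$ is invariant under the deck involution $\iota$ and then extracts $\eta=\OO_C(x+y-p-q)$ from the two $\iota$-fixed divisors $f^*(x+y)\equiv f^*(p+q)$ of the invariant pencil; your invariant case (descent $A=f^*M$, which is legitimate for an \'etale double cover, together with $h^0(A)=h^0(M)+h^0(M\otimes\eta)$) is the same argument in different clothing, while your $\PP^1\times\PP^1$ bidegree analysis actually proves the invariance statement the paper leaves unjustified, at the cost of the hyperelliptic alternative, which you then correctly discard by the codimension count (and your diagonal argument and the degree-$2$-onto-a-$(2,2)$-curve case are sound: the quotient conic argument does force $C$ hyperelliptic). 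Your caveat about the hyperelliptic locus is genuine rather than merely cautious: for $C$ hyperelliptic one has $Q_{K_C}\cong(\mathfrak g^1_2)^{\oplus 4}$, so $\eta\in C_2-C_2$ if and only if $h^0(C,2\mathfrak g^1_2\otimes\eta)\neq 0$, and the $2$-torsion classes supported on six Weierstrass points fail this (the corresponding theta characteristic has no sections), although $\tilde{C}$ is tetragonal via $f^*\mathfrak g^1_2$; so the set-theoretic equality in the statement really only holds away from this codimension-$3$ locus, neither of your two proposed repairs (nor the paper's proof) removes this discrepancy, and what survives — and what matters for Theorem \ref{gentype} — is the generic identification together with the divisor-class formula, which your argument establishes.
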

\begin{proof} We start with an \'etale cover
$f:\tilde{C}\stackrel{2:1} \rightarrow C$ corresponding to the
torsion point $\eta=\OO_C(D-E)$, with $D, E\in C_2$. Then
$$H^0(\tilde{C}, \OO_{\tilde{C}}(f^*D))=H^0(C, \OO_C(D))\oplus H^0(C,
\OO_C(E)),$$ that is, $|f^*D|\in G^1_4(\tilde{C})$ and
$[\tilde{C}]\in \mm_{9, 4}^1$. Conversely, if $l\in
G^1_4(\tilde{C})$, then $l$ must be invariant under the involution
of $\tilde{C}$ and then $f_*(l)\in G^1_4(C)$ contains two divisors
of the type $2x+2y\equiv 2p+2q$. Then we take $\eta=\OO_C(x+y-p-q)$,
that is, $[C, \eta]\in \mathcal{D}_{5: 2}$.
\end{proof}
\begin{remark} Since $\mbox{codim}(\mm_{9, 4}^1, \mm_9)=3$ while
$\mathcal{D}_{5: 2}$ is a divisor in $\cR_3$, there seems to be a
dimensional discrepancy in Theorem \ref{genu5}. This is explained by
noting that for an \'etale double covering $f:\tilde{C}\rightarrow
C$ over a general curve $[C]\in \cM_5$, the codimension $1$
condition $\mbox{gon}(\tilde{C})\leq 5$ is equivalent to the
seemingly stronger condition $\mbox{gon}(\tilde{C})\leq 4$. Indeed,
if $l\in G^1_5(\tilde{C})$ is base point free, then $l$ is not
invariant under the involution of $\tilde{C}$ and $\mbox{dim }
|f_*l|\geq 2$ so $G^2_5(C)\neq \emptyset$, a contradiction with the
genericity assumption on $C$.
\end{remark}
\begin{theorem}\label{genu4}
The divisor $\mathcal{D}_{4: 3}=\{[C, \eta]\in \cR_4: \exists A\in
W^1_3(C) \mbox{ with } H^0(C, A\otimes \eta)\neq 0\}$ can be
identified with the locus of Prym curves $[C, \eta]\in \cR_4$ such
that the Prym-canonical model $C\stackrel{|K_C\otimes
\eta|}\longrightarrow \PP^2$ is a plane sextic curve with a triple
point. We also have the class formula
$$\overline{\mathcal{D}}_{4: 3}\equiv
8\lambda-\delta_0'-2\delta_0^{''}-\frac{7}{4}\delta_0^{\mathrm{ram}}-4\delta_3-7\delta_1-3\delta_{1:
3}-\cdots \in \mathrm{Pic}(\rr_4),$$ hence
$\pi_*(\overline{\mathcal{D}}_{4: 3})=60\cdot
\overline{\mathcal{GP}}_{4,
3}^1=60(34\lambda-4\delta_0-14\delta_1-18\delta_2)\in \mathrm{Pic}(\mm_4)$, where
$$\mathcal{GP}_{4, 3}^1\subset \cM_4:=\{C]\in \cM_4: \exists A\in W^1_3(C), A^{\otimes 2}=K_C\}$$ is the Gieseker-Petri divisor
of curves $[C]\in \cM_4$ with a vanishing theta-null.
\end{theorem}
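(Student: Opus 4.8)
The plan is to follow the proof of Theorem \ref{genu3} in the case $k=3$, $b=0$ of the machinery of Section~2, where $g=(k-1)^2=4$, $r=i=1$, $d=3$; since $r=1$ the bundle $Q_L$ is the \emph{line} bundle $L$ itself, $\wedge^iQ_L=\det Q_L=L$, and $\mathcal{D}_{4:3}=\{[C,\eta]\in\cR_4:\exists\,A\in W^1_3(C),\ H^0(C,A\otimes\eta)\neq 0\}$. This is exactly the edge case at which Theorems \ref{det} and \ref{be0} cease to apply verbatim (the coefficient $\tfrac{kb}{r-1}$ in the proof of Theorem \ref{det} becomes $0/0$), so it has to be run through by hand.

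For the geometry, put $L:=K_C\otimes\eta\in W^2_6(C)$ and let $\iota\colon C\to\PP^2$ be the Prym-canonical map. For $D=x+y+z\in C_3$ the points $\iota(x),\iota(y),\iota(z)$ coalesce into a single point of $\iota(C)$ (a triple point) if and only if $h^0(C,L(-D))\geq 2$; since $\deg L(-D)=3$ this forces $A:=L(-D)\in W^1_3(C)$, hence $L=A\otimes\OO_C(D)$ and $\OO_C(D)=(K_C\otimes A^{-1})\otimes\eta$ with $K_C\otimes A^{-1}\in W^1_3(C)$ the residual pencil, i.e. $H^0(C,(K_C\otimes A^{-1})\otimes\eta)\neq 0$. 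Reversing this identifies $\mathcal{D}_{4:3}$ with the locus of Prym curves whose Prym-canonical sextic has a triple point. For the set-theoretic equality $\pi(\mathcal{D}_{4:3})=\mathcal{GP}_{4,3}^1$ one observes that $[C,\eta]\in\mathcal{D}_{4:3}$ means that a translate $W^0_3(C)-A$ ($A\in W^1_3(C)$) of the Riemann theta divisor contains the $2$-torsion point $\eta$; a dimension count combined with a degeneration to a curve of compact type shows that the locus in $\cM_4$ where this happens for some $\eta\neq\OO_C$ is the irreducible theta-null divisor, while on a theta-null curve $A=K_C\otimes A^{-1}$ makes the translate symmetric, so it meets $\mathrm{Pic}^0(C)[2]$ in the $2$-torsion points giving the odd theta-characteristics; counting these gives the generic fibre of $\pi|_{\overline{\mathcal{D}}_{4:3}}$ and hence the multiplicity in the pushforward.

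For the class, Section~2 yields a morphism $\phi\colon\H\otimes\cA_{0,0}\to\cA_{0,1}$ of vector bundles of rank $6$ over $\rer_4^0$ with $Z_1(\phi)\cap\cR_4=\mathcal{D}_{4:3}$, where $\cA_{0,0}=f'_*(\omega_{f'}\otimes\P^r_d)$, $\cA_{0,1}=f'_*(\L\otimes\omega_{f'}\otimes\P^r_d)$, $\H=f'_*\L$. Proposition \ref{rr}, Grothendieck--Riemann--Roch for $\H$, and the $\cM_4$-level computation of $\sigma_*(\mathfrak a),\sigma_*(\mathfrak b),\sigma_*(c_1(\H))$ (as in the proof of Theorem \ref{det}) determine $c_1(\cA_{0,1}-\H\otimes\cA_{0,0})\in A^1(\rer_4^0)$. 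One then analyses $\phi$ along $\Delta_0',\Delta_0'',\Delta_0^{\mathrm{ram}}$ exactly as in the proof of Theorem \ref{genu3}, using the residue descriptions (\ref{twistedhodge}), (\ref{twistedhodge2}): $\phi$ is generically non-degenerate on $\Delta_0'$ and $\Delta_0^{\mathrm{ram}}$, while on the Wirtinger divisor $\Delta_0''$ the base-point-free pencil trick identifies $\ker\phi$ pointwise with $H^0(C,\OO_C(y+q))$, so $\phi$ is everywhere degenerate there with multiplicity one. Hence $\overline{\mathcal{D}}_{4:3}\equiv c_1(\cA_{0,1}-\H\otimes\cA_{0,0})-\delta_0''$, which fixes the $\lambda,\delta_0',\delta_0'',\delta_0^{\mathrm{ram}}$ coefficients as $8,-1,-2,-\tfrac{7}{4}$.

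Finally, for the coefficients $b_3,b_1,b_{1:3}$ of $\delta_3,\delta_1,\delta_{1:3}$ I would argue as in Theorem \ref{genu3}: Proposition \ref{ineq}, applied to the $K3$-pencils $A_1,A_3,A_{1:3}$, bounds these coefficients from below, while $\pi_*(\overline{\mathcal{D}}_{4:3})$ is an effective divisor on $\mm_4$ whose support avoids $\Delta_0,\Delta_1,\Delta_2$; since the Gieseker--Petri divisor $\overline{\mathcal{GP}}_{4,3}^1\equiv 34\lambda-4\delta_0-14\delta_1-18\delta_2$ is, up to scaling, the only effective divisor on $\mm_4$ with that support property and slope $\le 17/2$, we get $\pi_*(\overline{\mathcal{D}}_{4:3})=c\cdot\overline{\mathcal{GP}}_{4,3}^1$, and comparing the $\lambda$- and $\delta_0$-coefficients already computed forces $c=60$ and $b_3=4$, $b_1=7$, $b_{1:3}=3$. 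The main obstacle is precisely this local boundary analysis of $\phi$ (which divisors it degenerates along and with what multiplicity), together with keeping track of the scheme structures well enough that the pushforward comes out to be \emph{exactly} $60\cdot\overline{\mathcal{GP}}_{4,3}^1$ — which rests on knowing the class and irreducibility of the genus-$4$ Gieseker--Petri divisor.
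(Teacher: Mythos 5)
Your route to the class formula is essentially the paper's: the rank-$6$ morphism $\phi\colon\H\otimes\cA_{0,0}\to\cA_{0,1}$ from Section 2 (correctly redone by hand since the closed formula of Theorem \ref{det} degenerates at $r=1$), the identical degeneration of $\phi$ with multiplicity one along the Wirtinger divisor so that $\overline{\mathcal{D}}_{4:3}\equiv c_1(\cA_{0,1}-\H\otimes\cA_{0,0})-\delta_0^{''}$, and then the slope computation $s\bigl(\pi_*(\cdot)\bigr)=17/2$ combined with the uniqueness of the theta-null divisor among effective divisors on $\mm_4$ of slope $\leq 17/2$ not containing boundary components. Your direct derivation of the triple-point description is fine (and is more than the paper writes down). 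The one missing idea on the geometric side is the proof that $\pi(\mathcal{D}_{4:3})\subseteq\mathcal{GP}_{4,3}^1$: your ``dimension count combined with a degeneration to a curve of compact type'' is an assertion, not an argument. The paper does this with a short concrete lemma you should reconstruct: if $A^{\otimes 2}\neq K_C$, choose disjoint divisors $D_1\in|A\otimes\eta|$ and $D_2\in|K_C\otimes A^{\vee}\otimes\eta|$; then $D_1+D_2\equiv K_C$, and the two $2$-dimensional subspaces $H^0\bigl(C,K_C\otimes\eta(-D_i)\bigr)$ of the $3$-dimensional space $H^0(C,K_C\otimes\eta)$ must meet, producing a section of $K_C\otimes\eta$ vanishing on a canonical divisor and hence forcing $\eta=\OO_C$, a contradiction.

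The step that demonstrably fails is your mechanism for the compact-type coefficients. Plugging the already-determined values $a=8$, $b_0'=1$, $b_0^{''}=2$, $b_0^{\mathrm{ram}}=7/4$ into Proposition \ref{ineq} with $i=1$ gives, from inequality (1), $2\cdot 8-24+b_3\geq 0$, i.e. $b_3\geq 8$, and from inequality (3), $16-14-16+b_1\geq 0$, i.e. $b_1\geq 14$ --- both incompatible with the stated $b_3=4$ and $b_1=7$. So these inequalities cannot be used here: they presuppose that $\Delta_3$ and $\Delta_1$ are not contained in $\mathrm{supp}(\overline{\mathcal{D}}_{4:3})$, which evidently fails for this divisor. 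The single linear relation $3b_1+63b_3+189b_{1:3}=840$ coming from $\pi_*(\overline{\mathcal{D}}_{4:3})=60\cdot\overline{\mathcal{GP}}_{4,3}^1$ does not determine three unknowns, so you would need genuine test curves in $\Delta_1,\Delta_3,\Delta_{1:3}$ or a limit-linear-series analysis of $\mathcal{D}_{4:3}$ near the compact-type boundary. (In fairness, the paper's own proof also stops at the pushforward identity and does not derive these coefficients, but the specific argument you propose for them is internally inconsistent and should be dropped.)
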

\begin{proof} We start with a Prym curve $[C, \eta]\in \cR_4$ such
that there exists a pencil $A\in W^1_3(C)$ with $H^0(C, A\otimes \eta)\neq
0$. We claim that $A^{\otimes 2}=K_C$, that is, $[C]\in
\mathcal{GP}_{4, 3}^1$. 

Indeed, assuming the opposite, we find
\emph{disjoint} divisors $D_1, D_2\in C_3$ such that $D_1\in
|A\otimes \eta|$ and $D_2\in |K_C\otimes A^{\vee}\otimes \eta|$. In
particular, the subspaces $$H^0(C, K_C\otimes \eta(-D_i))\subset
H^0(C, K_C\otimes \eta)$$ are both of dimension $2$ and  intersect
non-trivially, that is $H^0\bigl(C, K_C\otimes
\eta(-D_1-D_2)\bigr)\neq 0$. Since $D_1+D_2\equiv K_C$, this implies
$\eta=0$, a contradiction.

The proof that the vector bundle morphism $\phi: \H \otimes \cA_{0,
0}\rightarrow \cA_{0, 1}$ constructed in the proof of Theorem
\ref{det} is degenerate with order $1$ along the divisor
$\Delta_0^{''}\subset \rr_4$ follows from (\ref{twistedhodge}). Thus
$c_1\bigl(\cA_{0, 1}-\H\otimes \cA_{0, 0}\bigr)-\delta_0^{''}\in
\mathrm{Pic}(\rr_4)$ is an effective class and its push-forward to
$\mm_4$ has slope $17/2$. The only divisor $D\in \mbox{Eff}(\mm_4)$
with $\Delta_i\varsubsetneq \mbox{supp}(D)$ for $i=0, 1, 2$ and
$s(D)\leq 17/2$, is the theta-null divisor
$\overline{\mathcal{GP}}_{4, 3}^1$ (cf. \cite{F3} Theorem 5.1).
\end{proof}
\begin{remark}
For a general point $[C, \eta]\in \cR_4$, the Prym-canonical curve
$\iota:C\stackrel{|K_C\otimes \eta|}\longrightarrow\PP^2$ is a plane
sextic with $6$ nodes which correspond to the preimages in
$\phi^{-1}(\eta)$ under the second difference map $$C_2\times C_2\rightarrow
\mbox{Pic}^0(C), \mbox{   }\ (D_1, D_2)\mapsto \OO_C(D_1-D_2).$$ Note that $W_2(C)\cdot (W_2(C)+\eta)=6$. For a
general $[C, \eta]\in \mathcal{D}_{4:3}$, the model $\iota(C)\subset
\PP^2$ has a triple point. For a hyperelliptic curve $[C]\in \cM_{4,
2}^1$, out of the $255=2^{2g}-1$ \'etale double covers of $C$,
there exist $210$ for which $C\stackrel{|K_C\otimes
\eta|}\longrightarrow \PP^2$ has an ordinary $4$-fold point and no
other singularity. The remaining $45={2g+2\choose 2}$ coverings
correspond to the case $\eta=\OO_C(x-y)$, with $x, y\in C$ being
Weierstrass points, when $|K_C\otimes \eta|$ has $2$ base points and
$\iota$ is a degree $2$ map onto a conic.
\end{remark}

\section{The singularities of the moduli space of Prym curves}

The moduli space $\rr_g$ is a normal variety with finite quotient
singularities. To determine its Kodaira dimension we consider a
smooth model $\widehat{\mathcal{R}}_g$ of $\rr_g$ and then analyze
the growth of the dimension of the spaces $H^0\bigl(\widehat
{\mathcal{R}}_g, K_{\widehat{\mathcal{R}}_g}^{\otimes l} \bigr)$ of
pluricanonical forms for all $l\geq 0$. In this section we show
that in doing so one only needs to consider forms defined on $\rr_g$
itself.
\begin{theorem}\label{thm:canforms}
We fix $g\geq 4$ and let $\widehat{\mathcal{R}}_g\rightarrow\rr_g$
be any desingularisation. Then every pluricanonical form defined on
the smooth locus $\rr_g^{\mathrm{reg}}$ of $\rr_g$ extends
holomorphically to $\widehat{\mathcal{R}}_g$, that is,  for all
integers $l\geq 0$ we have isomorphisms
\[
H^0\bigl(\rr_g^{\mathrm{reg}},K_{\rr_g}^{\otimes l}\bigr) \cong
H^0\bigl(\widehat{\mathcal{R}}_g,
K_{\widehat{\mathcal{R}}_g}^{\otimes l} \bigr).
\]
\end{theorem}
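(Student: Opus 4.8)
The plan is to apply the extension theorem for pluricanonical forms on varieties with finite quotient singularities, which reduces the statement to a local analysis of the singularities of $\rr_g$ and, in particular, to showing that the non-canonical singularities of $\rr_g$ form a locus of codimension large enough that forms extend across it. Concretely, since $\rr_g$ has only finite quotient singularities, a standard result (going back to Reid, Shepherd-Barron, Tai; see also the treatment for $\mm_g$ in \cite{HM}) says that if the locus of \emph{non-canonical} singularities has codimension $\geq 2$ in $\rr_g$, then every pluricanonical form on $\rr_g^{\mathrm{reg}}$ lifts to any resolution $\widehat{\mathcal{R}}_g$. So the first step is to invoke this principle and thereby reduce \Cref{thm:canforms} to \Cref{thm:cansing}, the explicit description of the non-canonical locus.

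Next I would carry out the local computation underlying \Cref{thm:cansing}. A point $[X,\eta,\beta]\in\rr_g$ has a local model of the form $\mathbb{C}^{3g-3}/G$, where $G=\mathrm{Aut}(X,\eta,\beta)$ acts via its representation on the tangent space to deformations of the Prym curve, which sits inside the deformation space of the stable model $st(X)$ together with the data of the $2$-torsion line bundle. For each element $h\in G$ one applies the \rsbt{} criterion: the quotient singularity is canonical at $[X,\eta,\beta]$ precisely when the \emph{age} (sum of the fractional parts of the rotation exponents of $h$ acting on $\mathbb{C}^{3g-3}$) is $\geq 1$ for every $h\neq 1$, with the one exception of quasi-reflections, which must be separately accounted for because $\rr_g$ is the \emph{quotient} by $G$ and the ramification divisor of $\pi$ already encodes some of this (this is exactly where $\delta_0^{\mathrm{ram}}$ entered the canonical-class formula). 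The key point is that, exactly as for $\mm_g$, the only elements of small age are the ``elliptic tail'' automorphisms: if $X$ has an elliptic tail $C$ with $j(C)=0$, i.e. $\mathrm{Aut}(C)=\mathbb{Z}_6$, then the order-$6$ generator acts on the one-dimensional tangent direction along $\Delta_1$ with a small rotation number, and one must check whether the presence of $\eta$ raises the age. When $\eta_C\neq\OO_C$ the automorphism acts nontrivially on $\eta_C$, which contributes an extra fractional summand and pushes the age up to $\geq 1$; when $\eta_C=\OO_C$ (so the cover is trivial over the elliptic tail) no such contribution occurs and the singularity is genuinely non-canonical. This isolates the locus described in the introduction.

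The final step is the dimension count: I would check that the locus of Prym curves $[X,\eta,\beta]$ with an elliptic tail $C$ carrying $\eta_C=\OO_C$ has codimension $\geq 2$ in $\rr_g$ for $g\geq 4$. An elliptic tail with $j=0$ is a codimension-$2$ condition inside $\Delta_1$ (one condition to be in $\Delta_1$, one more to fix $j(C)=0$), and requiring $\eta_C=\OO_C$ is compatible with the generic behaviour along the $\Delta_1$-component of $\rr_g$, so the relevant locus has codimension $2$ in $\rr_g$; the hypothesis $g\geq 4$ guarantees $\Delta_1$ itself has codimension $1$ and that there is ``room'' for the tail. I also have to treat the boundary strata $\Delta_0',\Delta_0^{''},\Delta_0^{\mathrm{ram}}$ and their intersections, checking via the explicit local equations (e.g. $xy=t_1^2$ near $\Delta_0^{\mathrm{ram}}$, as in Proposition \ref{blowup}) that no further non-canonical singularities hide there; these are the $A_1$-type or smooth points and pose no problem.

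The main obstacle I expect is the careful bookkeeping in the \rsbt{} age computation at boundary points where several automorphisms interact (an elliptic tail \emph{together with} a node being smoothed, or the monodromy of $\eta$ around the node in the $\Delta_0^{''}$ and $\Delta_0^{\mathrm{ram}}$ cases), since there the group $G$ is larger than $\mathbb{Z}_6$ and one must verify that \emph{every} non-quasi-reflection element has age $\geq 1$; disentangling how $\eta$ twists the standard $\mm_g$ computation is the delicate part, and it is precisely the point where the (incorrect) analysis in \cite{Be} Chapter 3 must be redone.
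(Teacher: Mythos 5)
There is a genuine gap at the very first step. You invoke a ``standard result'' asserting that if the locus of non-canonical singularities of $\rr_g$ has codimension $\geq 2$, then every pluricanonical form on $\rr_g^{\mathrm{reg}}$ lifts to a resolution. No such result exists, and it cannot: the \rsbt{} criterion guarantees local extension of pluricanonical forms only \emph{at canonical singularities}; at a non-canonical singularity there are by definition exceptional divisors of negative discrepancy on the resolution, and a local pluricanonical form will in general acquire poles along them regardless of the codimension of their image (which, the non-canonical locus being contained in the singular locus of a normal variety, is automatically $\geq 2$ whenever it is nonempty -- so your hypothesis is vacuous). This is precisely why Harris--Mumford, after identifying the non-canonical locus of $\mm_g$ as the $j=0$ elliptic-tail locus, still needed the separate global argument of \cite{HM} pp.\ 41--44. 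The same issue arises here: Theorem \ref{thm:cansing} shows that $\rr_g$ \emph{does} have non-canonical singularities (elliptic tails with automorphism group $\mathbb Z_6$ and $\eta$ trivial on the tail), so your reduction collapses the actual content of Theorem \ref{thm:canforms} into a false statement. What is needed, and what the paper does, is to take a general non-canonical point $[C_1\cup_p C_2,\eta]$ with $\eta_{C_2}=\OO_{C_2}$, consider the pencil $\phi:\mm_{1,1}\rightarrow\rr_g$ of elliptic tails through it, observe that $\phi(\mm_{1,1})$ is disjoint from $\Delta_0^{\mathrm{ram}}$ so that $\pi$ restricts to an isomorphism on a suitable open neighbourhood $S\supset \phi(\mm_{1,1})$, and then run the explicit Harris--Mumford extension argument on a resolution $\widehat{S}$ of $S$, using crucially that the form is defined on all of $\rr_g^{\mathrm{reg}}$ and not merely near the singular point.

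Your second and third steps --- the \rsbt{} age computation isolating the non-canonical locus --- are in the spirit of the paper's Theorem \ref{thm:cansing} and Propositions \ref{prop:qr}--\ref{prop:compdet}, though two points are handled loosely. First, when $\eta_C\neq\OO_C$ on a $j=0$ elliptic tail, the order-$3$ and order-$6$ automorphisms do not ``contribute an extra fractional summand'' to the age: they permute the three nontrivial $2$-torsion points of the tail, hence do not lift to automorphisms of the Prym curve at all and are simply absent from the local quotient group. Second, the quasi-reflections that must be factored out before applying the \rsbt{} criterion are the elliptic tail involutions (the paper passes from $\lud{\tau}$ to $\lud{\upsilon}=\lud{\tau}/H$); this has nothing to do with the ramification divisor $\Delta_0^{\mathrm{ram}}$ of $\pi$, which reflects the relation $t_i=\tau_i^2$ between the deformation spaces of $X$ and of $st(X)$ at exceptional nodes rather than a quotient by automorphisms. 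Neither slip affects the conclusion of the local analysis, but the missing global step does: as written, the proposal establishes (a version of) Theorem \ref{thm:cansing}, not Theorem \ref{thm:canforms}.
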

A similar statement has been proved for the moduli space $\mm_g$ of
curves cf. \cite{HM} Theorem 1, and for the moduli space
$\overline{\mathcal{S}}_g$ of spin curves, cf. \cite{lu2007} Theorem
4.1. We start by  explicitly describing the locus of non-canonical
singularities in $\rr_g$, which has codimension $2$. At a
non-canonical singularity there exist \emph{local} pluricanonical
forms that do acquire poles on a desingularisation. We show that
this situation does not occur for forms defined on the smooth locus
$\rr_g^{\text{reg}}$, and they extend holomorphically to
$\widehat{\cR}_g$.
\begin{definition}
An \emph{automorphism} of a Prym curve $\prym$  is an automorphism
$\iso\in\aut(\prymi)$ such that there exists an isomorphism of
sheaves $\gamma:\iso^*\prymii\rightarrow\prymii$ making the
following diagram commutative.
\begin{eqnarray*}
\xymatrix @!0 @R=1.3cm @C=2cm {
        (\sigma^*\eta)^{\otimes  2}  \ar[r]^-{  \gamma^{\otimes 2}}   \ar[d]_{\sigma^* \beta} &
        \eta^{\otimes 2} \ar[d]^{\beta}\\
        \sigma^* \mathcal{O}_X  \ar[r]^{\simeq} & \mathcal{O}_X
        }
\end{eqnarray*}
If $C:=st(X)$ denotes the stable model of $X$ then there is a group
homomorphism $\mbox{Aut}\prym\rightarrow\mbox{Aut}(C)$ given by
$\iso\mapsto \iso_{\stab}$. The kernel $\mbox{Aut}_0\prym$ of this
homomorphism is called the subgroup of \emph{inessential
automorphisms} of $(X, \eta, \beta)$.
\end{definition}
\begin{remark}\label{rem:auto}
The subgroup  $\auto\prym$  is isomorphic to $\{\pm
1\}^{CC(\nonex)}/\pm 1$, where $CC(\nonex)$ is the set of connected
components of the non-exceptional subcurve $\nonex$ (compare
\cite{CCC} Lemma 2.3.2 and~\cite{lu2007} Proposition~2.7). Given
$\gamma_j\in\{\pm 1\}$ for every connected component $\nonexcomp$ of
$\nonex$ consider the automorphism $\widetilde\gamma$ of
$\widetilde\prymii=\prymii_{|\nonex}$ which is multiplication by
$\gamma_j$ in every fibre over $\nonexcomp$. Then there exists a
unique inessential automorphism $\iso$ such that $\widetilde\gamma$
extends to an isomorphism $\gamma:\iso^*\prymii\rightarrow\prymii$
compatible with the morphisms $\iso^*\prymiii$ and $\prymiii$.
Considering $(-\gamma_j)_j$ instead of $(\gamma_j)_j$ gives the same
automorphism $\iso$.
\end{remark}
\begin{definition}
For a quasi-stable curve $\prymi$, an irreducible component $\comp$
is called an \emph{elliptic tail} if $p_a(C_j)=1$ and $\comp\cap
\overline{(X-\comp)}=\{p\}$. The node $p$ is then an \emph{elliptic
tail node}. A non-trivial automorphism $\iso$ of $\prymi$ is called
an \emph{elliptic tail automorphism} (with respect to $\comp$) if
$\iso_{|\prymi-\comp}$ is the identity.
\end{definition}
\begin{theorem}\label{thm:smoothlocus}
Let $\prym$ be a Prym curve of genus $g\geq 4$. The point $[X, \eta,
\beta]\in \rr_g$ is smooth if and only if $\aut\prym$ is generated
by elliptic tail involutions.
\end{theorem}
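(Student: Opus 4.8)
The plan is to exploit the fact that, étale-locally around $[\prym]$, the coarse space $\rr_g$ is the quotient $\mathrm{Def}(\prym)/\aut(\prym)$ of the versal deformation space $\mathrm{Def}(\prym)$ -- which is smooth of dimension $3g-3$ since $\rer_g$ is a smooth Deligne--Mumford stack -- by the finite group $\aut(\prym)$ acting linearly on the tangent space $T:=T_{[\prym]}\rer_g\cong\CC[3g-3]$. By the Chevalley--Shephard--Todd theorem, applied to the image $\bar G$ of $\aut(\prym)$ in $\mathrm{GL}(T)$, the point $[\prym]$ is smooth if and only if $\bar G$ is generated by quasi-reflections, i.e. by elements whose fixed locus in $T$ has codimension $1$. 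So everything reduces to the following assertion, to be proved separately: an automorphism $\iso\in\aut(\prym)$ acts on $T$ as a non-trivial quasi-reflection if and only if $\iso$ is an elliptic tail involution. Granting it, if $\aut(\prym)$ is generated by elliptic tail involutions then $\bar G$ is generated by quasi-reflections and $[\prym]$ is smooth; conversely if $[\prym]$ is smooth then $\bar G$ is generated by its quasi-reflections, which are images of elliptic tail involutions, so $\aut(\prym)$ is generated by elliptic tail involutions.

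First I would check that an elliptic tail involution is a quasi-reflection. Let $\iso$ be the elliptic tail involution attached to an elliptic tail $\comp\subset\prymi$, with $\comp\cap\overline{(\prymi-\comp)}=\{p\}$, so that $\iso$ is the identity on $\overline{\prymi-\comp}$ and induces $[-1]$ on the normalization of $\comp$, fixing $p$. Choosing local coordinates on $\mathrm{Def}(\prym)$ adapted to the node $p$ (and, should $p$ carry an exceptional component, to its two half-smoothing parameters, whose product is the smoothing parameter of $p$ on $\mm_g$), one sees that $\iso$ acts by $-1$ on the single coordinate recording the direction $T_p\comp$ -- equivalently the smoothing of $p$ -- and trivially on the modulus of the tail and on all remaining coordinates; hence $\iso$ is a reflection on $\CC[3g-3]$. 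Compatibility with $\prymii$ and $\prymiii$ is automatic, since $[-1]^{*}$ fixes every line bundle in $\mathrm{Pic}^{0}(\comp)[2]$ and $\prymiii$ is supported away from $\comp$.

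Next I would classify the quasi-reflections in $\bar G$. Write $\iso_{\stab}$ for the automorphism of the stable model $\stab=st(\prymi)$ induced by $\iso$. If $\iso_{\stab}=\mathrm{id}$, then $\iso$ is inessential; by Remark~\ref{rem:auto} a non-trivial such $\iso$ requires $\nonex$ to be disconnected, which forces at least two nodes of $\stab$ to have been blown up, and in the coordinates of Section~1 the automorphism multiplies each of the corresponding (at least two) half-smoothing parameters by $-1$, so its fixed locus has codimension $\geq 2$; hence no non-trivial inessential automorphism is a quasi-reflection. If $\iso_{\stab}\neq\mathrm{id}$ and $\prymi$ has no exceptional components, the forgetful map $\mathrm{Def}(\prym)\to\mathrm{Def}(\stab)$ is an isomorphism, because a $2$-torsion point of $\mathrm{Pic}^{0}$ extends uniquely over any smoothing of $\stab$; then $\iso$ is a quasi-reflection exactly when $\iso_{\stab}$ is one on $\mathrm{Def}(\stab)$, which by the Harris--Mumford classification of automorphisms of a stable genus $g$ curve with codimension $1$ fixed locus on $\mm_g$, valid for $g\geq 4$ (cf. \cite{HM}), happens precisely when $\iso_{\stab}$, and hence $\iso$, is an elliptic tail involution. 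The remaining case, $\iso_{\stab}\neq\mathrm{id}$ with $\prymi$ carrying exceptional components, combines the two analyses: $\mathrm{Def}(\prym)\to\mathrm{Def}(\stab)$ ramifies along the exceptional-node divisors (the local normal form $xy=t_1^2$ of Section~1), and together with Remark~\ref{rem:auto} and the Harris--Mumford list one checks that $\iso$ can be a quasi-reflection only if it is an elliptic tail involution. Combining the three cases proves the assertion and hence Theorem~\ref{thm:smoothlocus}.

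The main obstacle will be this last, boundary-intensive, step: ruling out that an automorphism covering a non-trivial $\iso_{\stab}$, or a non-trivial inessential automorphism, produces an unexpected quasi-reflection through the ramification of $\mathrm{Def}(\prym)\to\mathrm{Def}(\stab)$ over the exceptional components. Executing it requires writing the $\bar G$-action on $\mathrm{Def}(\prym)$ explicitly in the $xy=t_1^2$ coordinates introduced in Section~1 and matching it against the description of the inessential automorphisms from Remark~\ref{rem:auto} and the Harris--Mumford inventory of finite-order curve automorphisms with one-dimensional fixed locus in $\mm_g$ -- the latter being exactly the place the hypothesis $g\geq 4$ enters, since for $g\leq 3$ the hyperelliptic involution (and, for $g=3$, further special involutions) would have to be accommodated as well, following the pattern of the analogous result for spin curves (cf. \cite{lu2007}).
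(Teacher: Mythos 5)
Your proposal is correct and follows essentially the same route as the paper: both reduce smoothness to the quasi-reflection criterion for the action of $\aut\prym$ on the versal deformation space $\lud{\tau}$, and then establish (as in Proposition~\ref{prop:qr}) that the only quasi-reflections are elliptic tail involutions, using the eulerian property of the subgraph of exceptional nodes to exclude inessential automorphisms and the Harris--Mumford analysis of $\iso_{\stab}$ acting on $\lud{t}$ for the essential ones. The paper phrases the eulerian step via a circuit of exceptional edges through the distinguished node rather than your edge-cut formulation, but the content is identical.
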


Throughout this Appendix, $X$ denotes a quasi-stable curve of genus
$g\geq 2$ and $\stab:=st(X)$ is its stable model. We denote by
$N\subset\mbox{Sing}(\stab)$ the set of exceptional nodes and
$\Delta:=\mbox{Sing}(\stab)-N$. Then $\prymi$ is the support of a
Prym curve if and only if $N$ considered as a subgraph of the dual
graph $\Gamma(\stab)$ is \emph{eulerian}, that is, every vertex of
$\Gamma(\stab)$ is incident to an even number of edges in $N$ (cf.
\cite{BCF} Proposition 0.4).

Locally at a point $[X, \eta, \beta]$, the moduli space $\rr_g$ is
isomorphic to the quotient of the versal deformation space
$\lud{\tau}$ of $\prym$ modulo the action of the automorphism group
$\aut\prym$. If $\lud{t}=\mbox{Ext}^1(\Omega_C^1, \OO_C)$ denotes
the versal deformation space of $\stab$, then the map
$\lud{\tau}\rightarrow\lud{t}$ is given by $t_i=\tau_i^2$ if
$(t_i=0) \subset \lud{t}$ is the locus where the exceptional node
$p_i\in N$ persists and $t_i=\tau_i$ otherwise. The
 morphism $\pi:\rr_g\rightarrow\mm_g$ is given locally by the map
$\lud{\tau}/\aut\prym\rightarrow\lud{t}/\mbox{Aut}(C)$.  One has the
following decomposition of the versal deformation space of $(X,
\eta, \beta)$
\[
\lud{\tau}=\bigoplus_{p_i\in
N}\CC_{\tau_i}\oplus\bigoplus_{p_i\in\Delta}\CC_{\tau_i}\oplus
\bigoplus_{\comp\subset C}H^1\bigl(\compnu,T_{\compnu}(-D_j)\bigr),
\]
where for a node $p_i\in N$ we denote by $(\tau_i=0)\subset \mathbb
C_{\tau}^{3g-3}$ the locus where the corresponding exceptional
component $E_i$ persists, while for a node $p_i\in \Delta$ we denote
by $(\tau_i=0)\subset \mathbb C_{\tau}^{3g-3}$ the locus of  those
deformations in which $p_i$ persists. Finally, for a component
$C_j\subset C$ with normalization $C_j^{\nu}$, if $D_j$ consists of
the inverse images of the nodes of $C$ under the normalization map
$C_j^{\nu}\rightarrow C_j$, the group
$H^1(\compnu,T_{\compnu}(-D_j))$ parameterizes deformations of the
pair $(\compnu,D_j)$. This decomposition is compatible with the
decomposition
$$\lud{t}=\bigl(\bigoplus_{p_i\in\mathrm{Sing}(\stab)}\CC_{t_i}\bigr)\oplus
\bigl(\bigoplus_{\comp}H^1(\compnu,T_{\compnu}(-D_j))\bigr)$$ as well
as with the actions of the automorphism groups on $\lud{\tau}$ and
$\lud{t}$, see also \cite{lu2007} pg. 5. The point $[X, \eta,
\beta]\in\rr_g$ is smooth if and only if the action of $\aut\prym$
on $\lud{\tau}$ is generated by quasi-reflections, that is, elements
$\iso\in\aut\prym$  having $1$ as an eigenvalue of multiplicity
precisely $3g-4$. Theorem \ref{thm:smoothlocus} follows from the
following proposition.
\begin{proposition}\label{prop:qr}
Let $\iso\in\aut\prym$ be an automorphism of a Prym curve $\prym$ of
genus $g\geq 4$. Then $\iso$ acts on $\lud{\tau}$ as a
quasi-reflection if and only if $\prymi$ has an elliptic tail
$\comp$ such that $\iso$ is the elliptic tail involution with
respect to $\comp$.
\end{proposition}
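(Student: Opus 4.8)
The plan is to analyze the action of an automorphism $\iso \in \aut\prym$ on each summand of the versal deformation space
\[
\lud{\tau}=\bigoplus_{p_i\in N}\CC_{\tau_i}\oplus\bigoplus_{p_i\in\Delta}\CC_{\tau_i}\oplus \bigoplus_{\comp\subset C}H^1\bigl(\compnu,T_{\compnu}(-D_j)\bigr),
\]
using the compatibility of this decomposition with the action and with the corresponding decomposition of $\lud{t}$. First I would note that the decomposition is $\iso$-equivariant only up to permuting the factors indexed by nodes and components that $\iso$ moves; so as a first reduction I would argue that if $\iso$ is a quasi-reflection, then $\iso$ must fix each node and each component of $X$ (and each of $C$), since otherwise $\iso$ would act nontrivially on at least two coordinates coming from a nontrivial orbit of nodes/components, contradicting the multiplicity-$(3g-4)$ eigenvalue condition. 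Combined with the fact that $\iso$ acts by $\pm 1$ on each $\CC_{\tau_i}$, this forces the non-trivial part of the action to be concentrated in a single summand $H^1(\compnu, T_{\compnu}(-D_j))$ for one component $\comp$, and $\iso$ must act as a scalar $\neq 1$ (in fact, an involution, up to the subtlety discussed below) exactly on the one remaining non-fixed coordinate.

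Next I would invoke the analogous analysis already carried out for $\mm_g$ (the $\mm_g$-quasi-reflections are exactly the elliptic tail involutions, cf. \cite{HM}) and for spin curves in \cite{lu2007}, to which this situation closely parallels. The key point is that $\iso_{\stab} \in \aut(C)$ must act on $\lud{t}$ either trivially or as a quasi-reflection; in the former case $\iso$ is an inessential automorphism (an element of $\auto\prym$, described in Remark \ref{rem:auto}), and one checks directly from that description that inessential automorphisms act trivially on $\lud{\tau}$ (they only change the Prym structure $\eta$, not the deformation coordinates), hence cannot be quasi-reflections; in the latter case $\iso_{\stab}$ is an elliptic tail involution on some elliptic tail $\comp$ of $C$, and one has to promote this to the statement about $X$ and check that the action on the twisted summand $H^1(\compnu, T_{\compnu}(-D_j))$ — here $\compnu$ is elliptic with one marked point, so this space is one-dimensional — is by $-1$, which is the content of the corresponding $\mm_g$ computation. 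The dimension bound $g \geq 4$ enters, as usual, to rule out the exceptional low-genus automorphisms (the extra automorphisms of genus $2$ and $3$ curves, and the hyperelliptic involution) that would otherwise produce spurious quasi-reflections.

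The main obstacle I expect is the two-sided bookkeeping of the Prym structure: an automorphism of $X$ that is an elliptic tail involution need not automatically lift to an automorphism of the \emph{Prym curve} $\prym$, and conversely one must verify that the lift acts trivially on the $\eta$-coordinates (there are none in $\lud{\tau}$ beyond those already listed, but one must confirm the $\beta$-compatibility does not introduce a sign on the tangent directions). Concretely, for the elliptic tail involution $\iso$ on an elliptic tail $\comp$, one must check that $\eta_{\comp} \in \mathrm{Pic}^0(\comp)[2]$ is preserved (it is, since $\iso_{|\comp}$ is $-1$ on $\comp$ regarded as an elliptic curve and $[-1]^*$ is the identity on $\mathrm{Pic}^0$), so that $\gamma$ exists; and then that the resulting action on the versal deformation space agrees coordinate-by-coordinate with the $\mm_g$ picture on the non-$\eta$ directions while being trivial on the rest. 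Once this compatibility is in hand, the equivalence follows: $\iso$ acts as a quasi-reflection on $\lud{\tau}$ $\iff$ $\iso_{\stab}$ acts as a quasi-reflection on $\lud{t}$ and the inessential part is trivial $\iff$ $\iso$ is an elliptic tail involution. I would organize the write-up so that the forward direction (quasi-reflection $\Rightarrow$ elliptic tail involution) uses the eigenvalue-counting reduction above, and the reverse direction is the explicit one-dimensional computation on $H^1(\compnu, T_{\compnu}(-p))$ for $\compnu$ elliptic with one marked point.
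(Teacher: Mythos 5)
Your overall architecture matches the paper's (compare the action of $\iso$ on $\lud{\tau}$ with the induced action of $\iso_\stab$ on $\lud{t}$, and feed the latter into the Harris--Mumford classification of quasi-reflections for $\mm_g$), and your forward direction is essentially the paper's. But there is a genuine gap in the ``only if'' direction, at exactly the point where the Prym structure matters: your claim that inessential automorphisms act trivially on $\lud{\tau}$ is false. A non-trivial element of $\auto\prym\cong\{\pm 1\}^{CC(\nonex)}/\pm 1$ does satisfy $\iso_\stab=\mathrm{Id}_\stab$, but it acts non-trivially on every exceptional component $E_i$ joining two connected components of $\nonex$ carrying opposite signs $\gamma_j$, and on the corresponding coordinate (where $t_i=\tau_i^2$) it acts by $\tau_i\mapsto-\tau_i$. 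So a non-trivial inessential automorphism is a perfectly plausible candidate for a quasi-reflection, and ruling this out is the new content of this proposition relative to the $\mm_g$ and spin cases. The paper does it by using that the set $N$ of exceptional nodes is an eulerian subgraph of the dual graph $\Gamma(\stab)$ (the combinatorial shadow of the existence of $\eta$ with $\eta_{E}=\OO_E(1)$ on exceptional components): if $\iso$ acted by $-1$ on exactly one exceptional coordinate $\tau_1$, following the signs $\gamma_j$ around a circuit of edges of $N$ through $p_1$ produces a second exceptional node whose adjacent signs also disagree, hence a second eigenvalue $-1$, a contradiction. Without some version of this argument the backward direction is incomplete.

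Two smaller points. First, you put the eigenvalue $-1$ of the elliptic tail involution on the wrong summand: the involution acts trivially on $H^1(\compnu,T_{\compnu}(-p))$ (it acts trivially on the moduli of one-pointed elliptic curves), and the $-1$ lives on the node-smoothing coordinate $\tau_1=t_1$, via $t_1=xy\mapsto -xy$ in local coordinates at the elliptic tail node; this is what the paper computes. (Also, $[-1]^*$ on $\mathrm{Pic}^0$ of an elliptic curve is inversion rather than the identity, though it does preserve $2$-torsion, so your conclusion that the involution lifts to $\aut\prym$ stands.) Second, your eigenvalue count for a non-trivial orbit of nodes is too quick: a $2$-cycle block $\left(\begin{smallmatrix}0&c_1\\ c_2&0\end{smallmatrix}\right)$ with $c_1c_2=1$ has eigenvalues $1$ and $-1$ and so contributes only one non-unit eigenvalue; the paper avoids this by invoking the normal form for quasi-reflections from \cite{lu2007}, Proposition 2.15, which you would need to cite or reprove.
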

\begin{proof}
Let $\iso$ be an elliptic tail involution with respect to $\comp$.
The induced automorphism $\iso_\stab$ is an elliptic tail involution
of $C$ and acts on the versal deformation space $\lud{t}$ of $\stab$
as $t_1\mapsto-t_1$ and $t_i\mapsto t_i$, $i\neq 1$. Here $t_1$ is
the coordinate corresponding to the node $p_1\in \comp\cap
\overline{(C-C_j)}$. The node $p_1$ being non-exceptional, we have
that $t_1=\tau_1$ hence $\iso\cdot \tau_1=-\tau_1$. If $\tau_i=t_i
(i\neq 1)$, then $\sigma\cdot \tau_i=\tau_i$. For coordinates
$t_i=\tau_i^2$,  $\iso$ is the identity in a neighbourhood of the
corresponding exceptional component $E_i$, thus $\sigma \cdot
\tau_i=\tau_i$.

Now let $\iso\in\aut\prym$ act as a quasi-reflection with
eigenvalues $\zeta$ and $1$. As in the proof of \cite{lu2007}
Proposition 2.15, there exists a node $p_1\in \stab$ such that the
action of $\iso$ is given by  $\sigma\cdot \tau_1=\zeta\tau_1$ and
$\sigma \cdot \tau_j=\tau_j$ for $j\neq 1$. When $p_1\in N$, the
induced automorphism $\iso_\stab$ acts via $t_1\mapsto\zeta^2 t_1$
and $\sigma_C \cdot t_j=t_j$ for $j\neq 1$. If $\zeta^2\neq 1$, then
$\iso_\stab$ acts as a quasi-reflection and $p_1$ is an elliptic
tail node, which contradicts the assumption $p_1\in N$. Therefore
$\iso_\stab=\mbox{Id}_{\stab}$ and the exceptional component $E_1$
over $p_1$ is the only  component on which $\iso$ acts
non-trivially. The graph $N\subset\Gamma(\stab)$ is eulerian
 and there exists a circuit of edges in $N$ containing $p_1$.
\begin{eqnarray*}
\xymatrix@!0 @R=0.8cm @C=1.0cm {
& C_1   & C_2 & \\
             C_k  \ \bullet   \ar@{-} '[ur]+D*{\bullet}_{p_k}  '[urr]+D*{\bullet}_{p_1}      '[rrr]+L*{\bullet}_{p_2}
              '[drr]+U*{\bullet}  '[dr]+U*{\bullet}_{p_i}  \ar@{--}@<1.5ex>[dr] & & & C_3   \\
             & C_{i+1}             & C_i  &
                    }
\end{eqnarray*}

By Remark~\ref{rem:auto},  $\iso$ corresponds to an element
$\pm(\gamma_j)_j\in\{\pm 1\}^{CC(\nonex)}/\pm1$. Since $\iso$ acts
non-trivially on $E_1$ we find that $\gamma_1=-\gamma_2$. In
particular, there exists $i\neq 1$ such that  $\iso$ acts
non-trivially on $E_i$. This is a contradiction which shows that the
node $p_1$ is non-exceptional, $\tau_1=t_1$ and  $\sigma_C\cdot
t_1=\zeta t_1$ and $\sigma_C \cdot t_i=t_i$ for $i\neq 1$. Thus
$\iso_\stab$ is an elliptic tail involution of $\stab$ with respect
to an elliptic tail through the node $p_1$ and $\zeta=-1$. Since
$\sigma$ fixes every coordinate corresponding to an exceptional
component of $X$, it follows that $\iso$ is an elliptic tail
involution of $\prymi$.
\end{proof}
\begin{theorem}\label{thm:cansing}
We fix $g\geq 4$. A point $[X, \eta, \beta]\in\rr_g$ is a
non-canonical singularity if and only if $\prymi$ has an elliptic
tail $\comp$ with $j$-invariant $0$ and $\prymii$ is trivial on
$\comp$.
\end{theorem}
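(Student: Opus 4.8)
The plan is to apply the \rsbt{} criterion to the local presentation of $\rr_g$ at $[X,\eta,\beta]$ as the quotient of the versal deformation space $\lud{\tau}$ of the Prym curve $\prym$ by the finite group $\aut\prym$. Recall from the discussion above that $[X,\eta,\beta]$ is a non-canonical singularity if and only if some $\iso\in\aut\prym$ which does \emph{not} act on $\lud{\tau}$ as a quasi-reflection has eigenvalues $e^{2\pi i a_1/m},\dots,e^{2\pi i a_{3g-3}/m}$ on $\lud{\tau}$, with $m=\mathrm{ord}(\iso)$ and $0\le a_k<m$, satisfying $\mathrm{age}(\iso):=\frac1m\sum_k a_k<1$. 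So the proof reduces to computing $\mathrm{age}(\iso)$ for every $\iso$, using the decomposition of $\lud{\tau}$ into the node-smoothing lines $\CC_{\tau_i}$ and the pieces $H^1(\compnu,T_{\compnu}(-D_j))$, together with the relation $t_i=\tau_i^2$ on exceptional nodes.

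First I would eliminate the inessential automorphisms. If $\iso\in\auto\prym$ then by Remark~\ref{rem:auto} it acts trivially on every $H^1(\compnu,T_{\compnu}(-D_j))$ and on every $\CC_{\tau_i}$ attached to a non-exceptional node, and by $-1$ on precisely those $\CC_{\tau_i}$, $p_i\in N$, joining two connected components of $\nonex$ carrying opposite signs; if there are $k$ such nodes then $\mathrm{age}(\iso)=k/2$, which is $\ge 1$ unless $k\le 1$, and $k\le 1$ forces $\iso$ to be trivial or a quasi-reflection. Next, for an essential automorphism $\iso$ (so $\iso_\stab\ne\identity$) I would compare the action of $\iso$ on $\lud{\tau}$ with that of $\iso_\stab$ on the versal deformation space $\lud{t}$ of $\stab=st(X)$: since $\lud{\tau}\to\lud{t}$ is $\tau_i\mapsto\tau_i^2$ on the coordinates indexed by $N$ and the identity elsewhere, an eigenvalue $e^{2\pi i a/m}$ of $\iso$ on $\tau_i$ produces $e^{2\pi i\cdot 2a/m}$ for $\iso_\stab$ on $t_i$; as $a/m\ge\tfrac12\langle 2a/m\rangle$ and the two actions agree on all remaining (common) coordinates, one gets $\mathrm{age}(\iso)\ge\tfrac12\,\mathrm{age}(\iso_\stab)$, so $\iso$ contributes nothing once $\mathrm{age}(\iso_\stab)\ge2$. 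Invoking the case-by-case analysis of the singularities of $\mm_g$ from \cite{HM} (carried out in the same spirit for spin curves in \cite{lu2007}), for $g\ge4$ one is left with the automorphisms $\iso$ for which $\iso_\stab$ is, up to an inessential automorphism, an elliptic tail automorphism: there is an elliptic tail $\comp\subset X$ with $\iso_{|\overline{X-\comp}}=\identity$ and $\iso_{|\comp}$ an automorphism of the elliptic curve $\comp$ fixing the attaching node $p$, necessarily of order $3$, $4$ or $6$ (the order-$2$ case being an elliptic tail involution, already handled as a quasi-reflection).

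The heart of the argument is the Prym-specific step: deciding which $\iso_{|\comp}$ lift to $\aut\prym$, and with what age. Because $p$ is a separating node, $\eta_{\comp}$ has integral degree, so $X$ has no exceptional component over $p$, one has $X=\stab$ near $\comp$, and the $2$-torsion bundle $\eta$ is rigid there; hence any lift of $\iso_{|\comp}$ acts on the two relevant coordinates — the node-smoothing parameter $\tau_p=t_p$ and the one-dimensional $H^1(\comp,T_\comp(-p))$ — exactly as $\iso_\stab$ does, and the classical local computation (cf.\ \cite{HM}) gives $\mathrm{age}<1$ when $\mathrm{ord}(\iso_{|\comp})\in\{3,6\}$, i.e.\ $j(\comp)=0$. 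It then remains to determine when $\iso_{|\comp}$ preserves $\eta$ compatibly with $\beta$: since $\iso_{|\comp}$ fixes $p$ it acts on $\mathrm{Pic}^0(\comp)[2]\cong(\ZZ/2)^2$, and a direct check shows that the order-$3$ and order-$6$ automorphisms permute the three non-trivial $2$-torsion classes in a single $3$-cycle, so they fix $\eta_{\comp}$ exactly when $\eta_{\comp}=\OO_\comp$; whereas the order-$4$ automorphism of a $j=1728$ tail is seen not to produce a non-canonical point of $\rr_g$ — the $2$-torsion and $\beta$-compatibility constraints (in particular the gluing of the isomorphism $\iso^*\eta\to\eta$ across $p$, where the tail side forces a primitive $4$-th root of unity against $\pm1$ on the complementary side) either obstruct the lift or leave only quasi-reflections and automorphisms of age $\ge1$. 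Combining the three steps, $[X,\eta,\beta]$ is a non-canonical singularity precisely when $X$ has an elliptic tail $\comp$ with $j(\comp)=0$ on which $\eta$ is trivial.

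The main obstacle will be the second step: importing the $\mm_g$ case analysis of \cite{HM} and verifying that the ramification of $\pi$ (encoded in $\tau_i\mapsto\tau_i^2$), in combination with the inessential automorphisms, never manufactures a new non-canonical singularity out of an automorphism that is harmless on $\mm_g$. The genuinely Prym-theoretic input — and the only place the $2$-torsion datum enters — is the lifting analysis of the last paragraph, where one must keep careful track of the behaviour of $\beta$ under $\iso$ near the node $p$.
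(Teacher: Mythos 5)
There is a genuine gap at the foundation of your argument: you apply the \rsbt{} age inequality directly to the action of $\aut\prym$ on $\lud{\tau}$, declaring $[X,\eta,\beta]$ non-canonical as soon as some non-quasi-reflection has age $<1$ there. That is not the correct criterion when the group contains quasi-reflections. One must first divide by the normal subgroup generated by the quasi-reflections (by Proposition \ref{prop:qr}, the elliptic tail involutions), pass to the smooth quotient $\lud{\upsilon}$ with $\upsilon_i=\tau_i^2$ at elliptic tail nodes and $\upsilon_i=\tau_i$ otherwise, and only then test ages of the induced action on $\lud{\upsilon}$. The discrepancy is not academic: for a tail $\comp$ with $j$-invariant $1728$ and $\prymii_{\comp}=\OO_{\comp}$, the order-$4$ elliptic tail automorphism \emph{does} lift to $\aut\prym$ (the obstruction you suggest from gluing $\iso^*\prymii\to\prymii$ across the separating node is vacuous when $\prymii$ is trivial on the tail and $\iso$ is the identity elsewhere: the compatibility with $\prymiii$ only forces the scalar to be $\pm 1$), it is not a quasi-reflection, and its eigenvalues on $\lud{\tau}$ are $\zeta_4$ and $\zeta_4^2$, of age $\frac{3}{4}<1$. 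Your criterion would therefore wrongly declare such points non-canonical. They are excluded only because on $\lud{\upsilon}$ the eigenvalue on $\upsilon_1=\tau_1^2$ becomes $\zeta_4^2=-1$, raising the age to $\frac{1}{2}+\frac{1}{2}=1$; this is exactly how the paper kills the order-$4$ case, and there is no way around that computation.

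A second, acknowledged but unresolved, gap is the reduction of the only-if direction to elliptic tail automorphisms by ``importing'' the case analysis of \cite{HM}. Your inequality $\mathrm{age}(\iso)\geq\frac{1}{2}\,\mathrm{age}(\iso_\stab)$ is true but points the wrong way: at an exceptional node $t_i=\tau_i^2$ an eigenvalue of $\iso_\stab$ acquires a square root, which can \emph{halve} its contribution, so an automorphism harmless on $\mm_g$ could a priori become dangerous on $\rr_g$; the bound only eliminates $\iso$ with $\mathrm{age}(\iso_\stab)\geq 2$, which is not what \cite{HM} classifies. The paper must therefore redo the whole case analysis for Prym curves (singularity-reduced pairs, $\iso_\stab$ fixing all components and nodes, then identity components, elliptic tails, elliptic ladders, hyperelliptic tails), keeping track of which nodes lie in $N$. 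For instance, in the elliptic ladder case of order $2$ the age bound survives only because the eulerian condition on $N\subset\Gamma(\stab)$ forces \emph{both} ladder nodes to be exceptional, so the two halved contributions still give $\frac{1}{2}+\frac{1}{4}+\frac{1}{4}\geq 1$. The parts of your plan that do work --- the observation that an order $3$ or $6$ tail automorphism fixes $\prymii_{\comp}$ only if $\prymii_{\comp}=\OO_{\comp}$, and the age computation $\frac{2}{3}<1$ giving the if-direction --- coincide with the paper's argument.
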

The proof is similar to that of the analogous statement for
$\overline{\mathcal{S}}_g$ and we refer to \cite{lu2007} Theorem 3.1
for a detailed outline of the proof and background on quotient
singularities. Locally at $[X, \eta, \beta]$, the space $\rr_g$ is
isomorphic to a neighbourhood of the origin in
$\lud{\tau}/\aut\prym$. We consider the normal subgroup $H$ of
$\aut\prym$ generated by automorphisms acting as quasi-reflections
on $\lud{\tau}$. The map
$\lud{\tau}\rightarrow\lud{\tau}/H=\lud{\upsilon}$ is given by
$\upsilon_i=\tau_i^2$ if $p_i$ is an elliptic tail node and
$\upsilon_i=\tau_i$ otherwise. The automorphism group $\aut\prym$
acts on $\lud{\upsilon}$ and the quotient $\lud{\upsilon}/\aut\prym$
is isomorphic to $\lud{\tau}/\aut\prym$. Since $\aut\prym$ acts on
$\lud{\upsilon}$ without quasi-reflections the \rsbt{} criterion
applies to this new action.

We fix an automorphism  $\iso\in \mathrm{Aut}(X, \eta, \beta)$ of
order $n$ and a primitive $n$-th root of unity $\zeta_n$. If the
action of $\sigma$ on  $\lud{\upsilon}$ has eigenvalues
$\zeta_n^{a_1},\dotsc,\zeta_n^{a_{3g-3}}$ with $0\leq a_i<n$ for
$i=1, \ldots, 3g-3$, then following \cite{re2002} we define the
\emph{age} of $\iso$ by
$$\mathrm{age}(\sigma, \zeta_n):=\frac 1n\sum_{i=1}^n a_i.$$
We say that $\sigma$  satisfies the \rsbt{} inequality if
$\mathrm{age}(\sigma, \zeta_n)\geq 1$. The \rsbt{} criterion states
that $\lud{\upsilon}/\aut\prym$ has canonical singularities if and
only if  every $\iso\in\aut\prym$ satisfies the \rsbt{} inequality
(cf. \cite{re1980}, \cite{ta1982},\cite{re2002}).

\begin{proof}[Proof of the if-part of Theorem~\ref{thm:cansing}]
Let  $\prym$ be a Prym curve,  $\stab=st(X)$ and $\comp\subset X$ an
elliptic tail with $\mbox{Aut}(C_j)=\mathbb Z_6$ and we assume
$\eta_{C_j}=\OO_{C_j}$. We fix an elliptic tail automorphism
$\sigma_C$ with respect to $\comp\subset\stab$ such that
$\mbox{ord}(\iso_\stab)=6$.
 Then $\iso_\stab$ acts on $\lud{t}$ by $t_1\mapsto\zeta_6 t_1$, $t_2\mapsto\zeta_6^2 t_2$ for
an appropriate sixth root of unity $\zeta_6$ and $\sigma\cdot
t_i=t_i$ for $i\neq 1,2$. Here $t_1, t_2\in \mbox{Ext}^1(\Omega_C^1,
\OO_C)$ correspond to smoothing the node $p_1\in
C_j\cap\overline{(C-C_j)}$ and  deforming the curve $[\comp,p_1]\in
\mm_{1, 1}$ respectively. Since $\prymii_{\comp}=\OO_{\comp}$, the
automorphism $\iso_\stab$ lifts
  to an automorphism  $\sigma \in \mbox{Aut}\prym$ such that $\sigma_{\overline{X-C_j}}$ is the identity.
   Then $\iso$ acts on $\lud{\tau}$
  as $\sigma \cdot \tau_1=\zeta_6\tau_1$, $\sigma\cdot \tau_2=\zeta_6^2\tau_2$ and
  $\sigma \cdot \tau_i=\tau_i$ for $i\neq 1,2$. Since $\upsilon_1=\tau_1^2$ and $\upsilon_2=\tau_2$, the
  action of $\iso$ on
  $\lud{\upsilon}$ is $\upsilon_1\mapsto\zeta_6^2\upsilon_1$, $\upsilon_2\mapsto\zeta_6^2\upsilon_2$
  and $\upsilon_i\mapsto\upsilon_i$, $i\neq 1,2$. We compute $\mathrm{age}(\sigma, \zeta_6^2)=
 \frac 13+\frac 13+0+\dotsb+0=\frac 23<1$, that is, $[X, \eta, \beta]\in\rr_g$ is a non-canonical singularity.
Similarly, an elliptic tail automorphism of order $3$ with respect
to $\comp$ acts via $\tau_1\mapsto\zeta_3^2\tau_1$,
$\tau_2\mapsto\zeta_3\tau_2$ and $\tau_i\mapsto\tau_i$, $i\neq 1,2$,
and then for the action on $\mathbb C^{3g-3}_{\upsilon}$ as
$\upsilon_1\mapsto\zeta_3\upsilon_1$,
$\upsilon_2\mapsto\zeta_3\upsilon_2$
  and $\upsilon_i\mapsto\upsilon_i$ for $i\neq 1,2$. This gives a value of $\frac 23$ for the age.
\end{proof}

Suppose that $[X, \eta, \beta]\in\rr_g$ is a non-canonical
singularity. Then there exists an automorphism $\iso\in\aut\prym$ of
order $n$ which acts on $\lud{\upsilon}$ such that
$\mathrm{age}(\sigma, \zeta_n)<1$. Let
$p_{i_0},p_{i_1}=\iso_\stab(p_{i_0}),\dotsc,p_{i_{m-1}}=\iso_\stab^{m-1}(p_{i_0})$
be distinct nodes of $\stab$ which are cyclicly permuted by the
induced automorphism $\iso_\stab$  and $p_{i_j}$ is \emph{not} an
elliptic tail node. The action of $\iso$ on the subspace
$\bigoplus_j\CC_{\tau_{i_j}}\subset \mathbb C_{\tau}^{3g-3}$ is
given by the matrix
\[
B=\begin{pmatrix}
0&c_1&&\\
\vdots&&\ddots&\\
0&&&c_{m-1}\\
c_{m}&0&\dotsb&0
\end{pmatrix}
\]
for appropriate scalars $c_j\neq 0$. The pair
$\bigl(\prym,\iso\bigr)$ is said to be \emph{singularity reduced} if
for every such cycle we have that $\prod_{j=1}^m c_j\neq 1$.
\begin{proposition}(\cite{HM}, \cite{lu2007} $\rm{Proposition \ 3.6}$)
\label{prop:pair}
There exists a deformation $(\prymi',\prymii',\prymiii')$ of $\prym$
such that $\iso$ deforms to an automorphism
$\iso'\in\aut(\prymi',\prymii',\prymiii')$ and the nodes of every
cycle of nodes as above with $\prod_{j=1}^m c_j=1$ are smoothed. The
pair $\bigl((\prymi',\prymii',\prymiii'),\iso'\bigr)$ is then
singularity reduced and the action of $\iso$ on $\lud{\upsilon}$ and
that of $\iso'$ on $\lud{\upsilon'}$ have the same eigenvalues and
hence the same age.
\end{proposition}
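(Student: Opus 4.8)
The plan is to carry out the required deformation entirely inside the versal deformation space $\lud{\tau}$ of $\prym$, on which the finite group $\aut\prym$, and in particular $\iso$, acts linearly, and to take for $(\prymi',\prymii',\prymiii')$ a sufficiently general point $v_0$ of the fixed locus $(\lud{\tau})^{\iso}$. First I would use the $\iso$-equivariant decomposition $\lud{\tau}=\bigoplus_{p_i}\CC_{\tau_i}\oplus\bigoplus_{\comp}H^1(\compnu,T_{\compnu}(-D_j))$ recorded above to read off the eigenspaces of $\iso$: the action is block–diagonal, permuting the lines $\CC_{\tau_i}$ within each $\iso_\stab$-orbit of nodes and acting separately on the component deformation spaces. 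For a cycle of nodes $p_{i_0},\dots,p_{i_{m-1}}$ cyclically permuted by $\iso_\stab$ with associated scalars $c_1,\dots,c_m$, the block $B$ of $\iso$ on $\bigoplus_{j}\CC_{\tau_{i_j}}$ has eigenvalues the $m$-th roots of $\prod_{j=1}^m c_j$, so $1$ is an eigenvalue of $B$ precisely when $\prod_{j=1}^m c_j=1$; in that case, since all $c_j\neq 0$, solving $Bv=v$ forces every coordinate of the eigenvector to be nonzero, so the eigenvalue-$1$ line lies in the open locus $\{\tau_{i_0}\neq 0,\dots,\tau_{i_{m-1}}\neq 0\}$ along which all $m$ nodes are simultaneously smoothed, while a cycle with $\prod_j c_j\neq 1$ contributes nothing to $(\lud{\tau})^{\iso}$ in its node coordinates. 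Hence a general $v_0\in(\lud{\tau})^{\iso}$ corresponds to a deformation $(\prymi',\prymii',\prymiii')$ of $\prym$ in which exactly the nodes lying on cycles with $\prod_j c_j=1$ are smoothed and all other nodes persist; since we remain inside the versal family of Prym curves, $\prymi'$ is automatically again the support of a Prym curve (its exceptional subgraph stays eulerian), so $\prymii'$ and $\prymiii'$ come for free.

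Next I would show that $\iso$ deforms to an automorphism $\iso'\in\aut(\prymi',\prymii',\prymiii')$. Because $v_0$ is fixed by the $\iso$-action on the versal deformation space, the deformed Prym curve over $v_0$ is canonically isomorphic to its pullback under $\iso$, and by $\iso$-equivariant versality of the deformation functor of Prym curves this isomorphism may be taken to be an honest automorphism $\iso'$ over $v_0$; this is the Prym analogue of the corresponding step in \cite{HM} and \cite{lu2007}, the only new feature being that the auxiliary data $\prymii,\prymiii$ are already part of the object being deformed, so no separate descent for $\prymii'$ or verification of compatibility with $\prymiii'$ is needed beyond what the versal family provides.

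For singularity-reducedness, observe that $\mathrm{Sing}(st(\prymi'))$ is an $(\iso')_\stab$-invariant subset of $\mathrm{Sing}(st(\prymi))$ on which $\iso'$ acts through the restriction of $\iso$, so every cycle of nodes of $st(\prymi')$ permuted by $(\iso')_\stab$ is a cycle of $\iso_\stab$ that was \emph{not} smoothed, i.e.\ one with $\prod_j c_j\neq 1$; since the local scalars $c_j$ are unchanged along the deformation, $\prod_j c'_j\neq 1$ for every such cycle and $\bigl((\prymi',\prymii',\prymiii'),\iso'\bigr)$ is singularity reduced. For the equality of ages, the versal deformation space of $(\prymi',\prymii',\prymiii')$ at $v_0$ is an \'etale neighbourhood of $v_0$ in the same $\lud{\tau}$, and the $\iso'$-action on it is the linearisation at the fixed point $v_0$ of the already linear $\iso$-action, hence has the same eigenvalues; passing to $\lud{\upsilon}$, resp.\ $\lud{\upsilon'}$, only squares the coordinates attached to elliptic tail nodes of $\prymi$, resp.\ $\prymi'$, and since by hypothesis the smoothed nodes are not elliptic tail nodes — and, as in \cite{HM}, \cite{lu2007}, smoothing the product-$1$ cycles does not disturb which of the remaining nodes are elliptic tail nodes — the two elliptic-tail loci match up $\iso$-equivariantly, so the actions of $\iso$ on $\lud{\upsilon}$ and of $\iso'$ on $\lud{\upsilon'}$ have the same multiset of eigenvalues and $\mathrm{age}(\iso,\zeta_n)=\mathrm{age}(\iso',\zeta_n)$.

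I expect the main obstacle to be the lifting step of the second paragraph, namely making precise the $\iso$-equivariant versality for the functor of Prym curves: that a point of $\lud{\tau}$ fixed by $\iso$ really carries a $\prymii'$-- and $\prymiii'$-compatible automorphism deforming $\iso$, and that the resulting $\iso'$-action on the re-centred versal space is literally the restriction of the original linear action rather than merely conjugate to it. Once this is granted, everything else is bookkeeping with the eigenspace decomposition of $\lud{\tau}$ and the definition of the map $\lud{\tau}\rightarrow\lud{\upsilon}$.
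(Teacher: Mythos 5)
Your argument is essentially the proof the paper relies on: the paper does not prove Proposition~\ref{prop:pair} itself but quotes it from \cite{HM} and \cite{lu2007} Proposition~3.6, and those references argue exactly as you do --- deform inside the $\iso$-fixed part of $\lud{\tau}$ (the cyclic block of a cycle has eigenvalues the $m$-th roots of $\prod_{j=1}^m c_j$, so an eigenvalue-$1$ eigenvector exists precisely when $\prod_j c_j=1$ and has all node coordinates nonzero, smoothing the whole cycle), extend $\iso$ to $\iso'$ by equivariance of the versal family at a fixed point, and compare ages via the equivariant re-identification of the versal space of the deformed Prym curve with the germ of $\lud{\tau}$ at that point. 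The one small caveat is that a general point of the \emph{full} fixed locus $(\lud{\tau})^{\iso}$ may also smooth nodes outside the product-$1$ cycles (for instance elliptic tail nodes on which $\iso$ acts trivially), so your later phrase ``the smoothed nodes are not elliptic tail nodes'' is literally correct only if you deform just along the eigenlines attached to the offending cycles; with that choice the claim that the elliptic-tail loci match (hence that passing from $\tau$- to $\upsilon$-coordinates squares the same eigenvalues on both sides) is justified because stability of the stable model prevents any new elliptic tail from being created by smoothing nodes --- a connected proper subcurve of arithmetic genus $1$ meeting the rest of the curve in a single point must already be a single irreducible component.
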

We fix a singularity reduced pair $\bigl(\prym,\iso\bigr)$ with
$n:=\mbox{ord}(\sigma)\geq 2$ and assume that $\mathrm{age}(\iso,
\zeta_n)<1$. We denote this
 assumption by $(\star)$. Using \cite{lu2007} Proposition 3.7 we obtain that if $(\star)$ holds,
 the induced automorphism $\iso_\stab$ of $C=st(X)$ fixes every node with the possible exception of two nodes which are
interchanged.
\begin{proposition}
If $(\star)$ holds, then $\iso_\stab$ fixes all components of the
stable model $C$ of $X$.
\end{proposition}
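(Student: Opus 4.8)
The plan is to argue by contradiction. Suppose $\iso_\stab$ moves at least one component of $C$. By the previous proposition $\iso_\stab$ fixes every node of $C$ except possibly one transposed pair, so it fixes every edge of the dual graph $\Gamma(C)$ except possibly one $2$-cycle. Since $C$ is connected, a direct combinatorial inspection then shows that the induced permutation of vertices of $\Gamma(C)$ is a product of transpositions, and that the two-moved-node budget confines the admissible configurations to three types: either two elliptic tails $\comp,\compi$ are interchanged; or two smooth rational components each carrying exactly three, respectively exactly four, nodal branches are interchanged. Every remaining possibility --- interchanged components of arithmetic genus $\geq 2$, interchanged genus-$1$ components meeting at least two nodes, or interchanged rational components meeting at least five nodes --- produces an interchanged pair of deformation blocks of positive dimension, which by (a) below already forces $\mathrm{age}(\iso,\zeta_n)\geq 1$ and is excluded. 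Thus one is reduced to bounding $\mathrm{age}(\iso,\zeta_n)$ from the $\iso$-orbit decomposition of the coordinates of $\lud{\upsilon}$ in the three remaining cases.

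The numerical inputs I would use are the following. (a) If $\iso$ interchanges two deformation blocks $H^1(\compnu,T_{\compnu}(-D_j))$, $H^1(\compinu,T_{\compinu}(-D_{j'}))$ carried by interchanged components $\comp,\compi$, then the eigenvalues of $\iso$ on this block occur in pairs $\{\nu,-\nu\}$, each such pair has age sum $\geq\tfrac12$, so the block contributes at least $\tfrac12\dim_{\mathbb C}H^1(\compnu,T_{\compnu}(-D_j))$ to $\mathrm{age}(\iso,\zeta_n)$. (b) If $\iso$ interchanges two $1$-dimensional node-coordinates $\CC_{\upsilon_p}\oplus\CC_{\upsilon_q}$ and $\iso^2$ scales $\CC_{\upsilon_p}$ by $\mu=e^{2\pi i\theta}$, then the eigenvalues of $\iso$ on the block are $\pm\sqrt{\mu}$ and the block contributes $\theta+\tfrac12$; when $p,q$ are not elliptic-tail nodes one has $\mu\neq 1$ by singularity-reducedness of $(\prym,\iso)$, and when $p,q$ are elliptic-tail nodes the block in any case contributes $\geq\tfrac12$. (c) A non-trivial automorphism of a component of arithmetic genus $\geq 2$ acts faithfully on its deformation block $H^1(\,\cdot\,,T_{\cdot}(-D))$, the pertinent twisted bicanonical sheaf being very ample once a node is attached, so it contributes at least $1/\ord$ to the age.

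With (a)--(c) at hand, two of the three cases close at once. If two elliptic tails $\comp,\compi$ are interchanged, then $g\geq 4$ forces $\comp$ and $\compi$ to meet components other than each other, so their two elliptic-tail nodes form the transposed pair; the moduli swap block $H^1(\compnu,T_{\compnu})\oplus H^1(\compinu,T_{\compinu})$ contributes $\geq\tfrac12$ by (a) and the transposed node pair contributes $\geq\tfrac12$ by (b), whence $\mathrm{age}(\iso,\zeta_n)\geq 1$. If two rational components with four nodal branches are interchanged, the swap block contributes $\tfrac12$ and one of the remaining branches is necessarily transposed, contributing $>\tfrac12$. The delicate case, which I expect to be the main obstacle, is when $\iso_\stab$ interchanges two copies $\comp\cong\compi\cong\PP^1$, each with exactly three nodal branches: here both swap blocks vanish and the missing age must be produced elsewhere. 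The two-node budget forces two of the three branches of $\comp$ to lie on $\compi$, hence $\comp$ and $\compi$ are glued at two (fixed) nodes and are both attached, at their third branch, to one common further component $C_e$; its two attaching nodes are precisely the transposed pair, and $\iso_\stab$ restricts on $C_e^\nu$ to a non-trivial automorphism swapping the two attaching points. Since $\iso^2$ acts as the identity on $\compnu=\PP^1$ (fixing its three special points), the scalar $\mu$ of (b) equals $d(\iso^2)$ at the $C_e$-branch of the transposed node; singularity-reducedness gives $\mu\neq 1$, which excludes $\iso_\stab|_{C_e^\nu}$ being an involution and, together with $g\geq 4$, forces $C_e^\nu$ to have genus $\geq 1$ and to carry a non-trivial automorphism of order $\geq 3$. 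A case check on this order, combining the $\theta+\tfrac12$ contribution of the transposed node pair from (b) with the non-zero contribution of $\iso_\stab|_{C_e^\nu}$ on $H^1(C_e^\nu,T_{C_e^\nu}(-D_e))$, then yields $\mathrm{age}(\iso,\zeta_n)\geq 1$, contradicting $(\star)$. This final bookkeeping is where the work concentrates, and it runs parallel to the corresponding step for spin curves in \cite{lu2007}.
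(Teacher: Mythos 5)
Your reduction of the problem to three configurations is sound and is a genuinely different, self-contained route to the structural input that the paper simply imports from \cite{lu2007} Proposition 3.8: the eigenvalue-pairing bound in your step (a) correctly shows that an interchanged pair of components whose deformation block $H^1(\compnu,T_{\compnu}(-D_j))$ has dimension at least $2$ already forces $\mathrm{age}(\iso,\zeta_n)\geq 1$, and your treatment of the interchanged elliptic tails and of the interchanged rational components with four branches closes correctly. (The claim that the component permutation is a product of transpositions does require the small observation that a fixed node on a moved component must lie on $\comp\cap\iso_\stab(\comp)$, which forces every orbit of components to have length $2$; you only assert this.)

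The genuine gap is in the three-branch case, and it stems from an omission in your toolkit: neither (a) nor (b) accounts for a \emph{fixed} node whose two branches are \emph{interchanged} by $\iso$. In your configuration $\comp$ and $\compi$ are glued to each other at the two fixed nodes $p_1,p_2$, and at each of these $\iso$ swaps the branches, so in local coordinates $t_1=xy\mapsto yx=t_1$. Since $p_1$ is not an elliptic tail node, singularity-reducedness of $\bigl(\prym,\iso\bigr)$ rules out $\tau_1\mapsto\tau_1$; hence $p_1$ must be exceptional (so $t_1=\tau_1^2$) and $\tau_1\mapsto-\tau_1$, contributing $\frac 12$ to the age. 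The two nodes $p_1,p_2$ therefore already give $\mathrm{age}(\iso,\zeta_n)\geq 1$ and the case closes in one line --- this, together with the $\frac 12$ from the transposed pair, is exactly how the paper finishes. Your substitute argument via the component $C_e$ is left unfinished (``this is where the work concentrates'') and cannot be completed by invoking contribution bounds for $H^1(C_e^\nu,T_{C_e^\nu}(-D_e))$, because those bounds (Propositions \ref{prop:comps} and \ref{prop:compdet}) are established in the paper only \emph{after}, and using, the present proposition; you would have to re-derive the classification of the pairs $(C_e^\nu,\varphi)$ from scratch at this point. As written, the three-branch case --- the only hard one --- is not proved.
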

\begin{proof}
Let
$\stab_{i_0},\stab_{i_1}=\iso_\stab(\stab_{i_0}),\dotsc,\stab_{i_{m-1}}=\iso_\stab^{m-1}(\stab_{i_0})$
be distinct components of $\stab$,
$\iso_\stab^m(\stab_{i_0})=\stab_{i_0}$ and assume that $m\geq 2$.
Most of the proof of Proposition~3.8.\ in~\cite{lu2007} applies to
the case of Prym curves and implies that the normalization
$\stab_{i_0}^\nu$ is rational and there are exactly three preimages
of nodes $p_1^+, p_2^+ , p_3^+\in C_{i_0}^\nu$  mapping to different
nodes of $\stab$. By \cite{lu2007} Proposition 3.7 at least one of
$p_1$, $p_2$, $p_3$ is fixed by $\iso_\stab$. If either one or all
three nodes are fixed, then $g(\stab)=2$, impossible. Thus two
nodes, say $p_1$ and $p_2$, are fixed by $\iso_\stab$ while $p_3$ is
interchanged with a fourth node $p_4$. Interchanging $p_3$ and $p_4$
gives a contribution of $\frac 12$ to $\mbox{age}(\sigma, \zeta_n)$.
Now consider the action of $\iso_\stab$ near $p_1$ and let $xy=0$ be
a local equation of $\stab$ at $p_1$. We have that $t_1=xy\mapsto
yx=t_1$ and $\tau_1\mapsto\pm\tau_1$, where the minus sign is only
possible if $p_1\in N$. Since $p_1$ is not an elliptic tail node and
$\bigl(\prym,\iso\bigr)$ is singularity reduced, we have
$\tau_1\mapsto-\tau_1$, which gives an additional contribution of
$\frac 12$ to the age, that is, $\mathrm{age}(\iso,
\zeta_n)\geq\frac{1}{2}+\frac{1}{2}=1$, contradicting $(\star)$.
\end{proof}
\begin{proposition}[{\cite{HM} p. 28, 36, \cite{lu2007} Proposition 3.9}]\label{prop:comps}
We assume that $(\star)$ holds and denote by
$\varphi_j=\iso^\nu_{|\compnu}$  the induced automorphism of the
normalization $\compnu$ of the irreducible component $\comp$ of
$\stab$. Then the pair $\bigl(\compnu,\varphi_j\bigr)$ is one of the
following types:
\begin{enumerate}
\item $\varphi_j=\mathrm{Id}_{\compnu}$ and $\compnu$ arbitrary.
\item $\compnu$ is rational and $\ord(\varphi_j)=2,4$.
\item $\compnu$ is elliptic and $\ord(\varphi_j)=2,4,3,6$.
\item $\compnu$ is hyperelliptic of genus $2$ and $\varphi_j$ is the hyperelliptic
involution.
\item $\compnu$ is hyperelliptic of genus $3$ and $\varphi_j$ is the hyperelliptic
involution.
\item $\compnu$ is bielliptic of genus $2$ and $\varphi_j$ is the associated
involution.
\end{enumerate}
\end{proposition}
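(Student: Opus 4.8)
The plan is to reduce the statement to a classical age computation on the stable model $\stab=st(X)$, following Harris--Mumford (\cite{HM} pp.\ 28, 36) and Ludwig (\cite{lu2007} Proposition~3.9). By the preceding propositions on the behaviour of $\iso_\stab$, the assumption $(\star)$ guarantees that $\iso_\stab$ fixes every irreducible component of $\stab$ and fixes every node of $\stab$ with at most two interchanged; hence for each component $\comp\subset\stab$ the automorphism $\iso$ induces an automorphism $\varphi_j=\iso^\nu_{|\compnu}$ of the pointed normalization $(\compnu,D_j)$, where $D_j\subset\compnu$ is the preimage of $\mathrm{Sing}(\stab)$. It therefore suffices to determine the possible pairs $(\compnu,\varphi_j)$.

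The second step is to isolate the contribution of $\comp$ to the age. In the $\aut\prym$-equivariant decomposition
\[
\lud{\tau}=\bigoplus_{p_i\in N}\CC_{\tau_i}\oplus\bigoplus_{p_i\in\Delta}\CC_{\tau_i}\oplus\bigoplus_{\comp\subset\stab}H^1\bigl(\compnu,T_{\compnu}(-D_j)\bigr)
\]
recalled above, $\iso$ preserves the summand $H^1(\compnu,T_{\compnu}(-D_j))$ and acts on it through $\varphi_j$; moreover the passage to $\lud{\upsilon}$ only replaces the coordinate at an elliptic-tail node by its square and so leaves this summand unchanged. Since the age is additive over a direct-sum decomposition into $\iso$-invariant subspaces and every summand contributes a nonnegative amount, the inequality $\mathrm{age}(\iso,\zeta_n)<1$ of $(\star)$ forces $\mathrm{age}\bigl(\varphi_j\curvearrowright H^1(\compnu,T_{\compnu}(-D_j)),\zeta_n\bigr)<1$ for every $j$. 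One is then left with a purely local problem: to list the pairs $(\compnu,\varphi_j)$ --- a smooth curve with a finite-order automorphism together with finitely many marked points --- for which the age of the induced action on the $\bigl(3p_a(\compnu)-3+\#D_j\bigr)$-dimensional space $H^1(\compnu,T_{\compnu}(-D_j))$ is $<1$. The eigenvalues there are determined by the genus of the quotient $\compnu/\varphi_j$ and by the rotation numbers of $\varphi_j$ at its fixed points and at the points of $D_j$ (Chevalley--Weil, equivalently the holomorphic Lefschetz formula); running this estimate exactly as in \cite{HM} and \cite{lu2007} shows that the age is $\geq 1$ outside the six cases listed in the statement.

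The step that requires genuine care is not the age estimate itself, which is taken over verbatim from the cited references, but the verification that the Prym setting introduces nothing new, so that their case analysis applies unchanged. The relevant observations are: the summands $H^1(\compnu,T_{\compnu}(-D_j))$ of $\lud{\tau}$ are indexed by components of the \emph{stable} model $\stab$, so the exceptional components of $X$ --- each a $\mathbb{P}^1$ meeting the rest of $X$ in two points, whose deformation sheaf has vanishing $H^1$ --- never contribute such a summand; the space $H^1(\compnu,T_{\compnu}(-D_j))$ and the $\varphi_j$-action on it are insensitive to $\prymii$, to $\prymiii$, and to which nodes of $\stab$ are blown up in $X$; and the only coordinates of $\lud{\tau}$ beyond those of the versal deformation space of $\stab$ are the one-dimensional pieces $\CC_{\tau_i}$ attached to exceptional nodes, which only add nonnegative node contributions to the age. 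Granting these, the list (i)--(vi) follows exactly as for $\mm_g$ and $\overline{\mathcal{S}}_g$.
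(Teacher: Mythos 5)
Your proposal is correct and coincides with what the paper intends: the paper gives no proof of this proposition at all, simply citing \cite{HM} and \cite{lu2007} Proposition 3.9, and your argument — that the $\aut\prym$-equivariant decomposition of $\mathbb{C}^{3g-3}_{\tau}$ (unchanged by passing to the $\upsilon$-coordinates on the summands $H^1(C_j^\nu,T_{C_j^\nu}(-D_j))$), together with nonnegativity and additivity of the age, reduces $(\star)$ to the per-component age bound, after which the Prym data $(\eta,\beta)$ and the exceptional components play no role and the classical case analysis of Harris--Mumford and Ludwig applies verbatim — is exactly the verification that this citation presupposes. In effect you have supplied the routine reduction the authors left implicit, so nothing further is needed.
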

\noindent The possibility of $\sigma_C$ interchanging two nodes does
not appear, cf. \cite{lu2007} Prop. 3.10:
\begin{proposition}
Under the assumption $(\star)$, the automorphism $\iso_\stab$ fixes
all the nodes of $\stab$.
\end{proposition}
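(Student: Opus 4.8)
The plan is to argue by contradiction. Assume $(\star)$ holds and that $\iso_\stab$ does \emph{not} fix every node of $\stab$; we will derive the estimate $\mathrm{age}(\iso,\zeta_n)\geq 1$, contradicting $(\star)$. By the result invoked just above (\cite{lu2007} Proposition~3.7), under $(\star)$ the automorphism $\iso_\stab$ fixes every node of $\stab$ with at most two exceptions, and those two are interchanged; so we may assume there are exactly two nodes $p,q$ of $\stab$ with $\iso_\stab(p)=q$, $\iso_\stab(q)=p$, every other node being $\iso_\stab$-fixed.

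First I would combine this with the fact, established above, that under $(\star)$ the automorphism $\iso_\stab$ fixes every irreducible component of $\stab$. Writing $S(p)$, $S(q)$ for the sets of components through $p$ and through $q$, the facts that $\iso_\stab$ fixes each component and sends $p\mapsto q$ force $S(p)=S(q)$; hence either $p$ and $q$ are both self-nodes of a single component $\comp$, or both lie on $\comp\cap\compi$ for two components $\comp,\compi$. The same reasoning shows that neither $p$ nor $q$ is an elliptic tail node: an elliptic tail carried by the ($\iso_\stab$-fixed) component $\comp$ and attached at $p$ would be mapped onto an elliptic tail attached at $\iso_\stab(p)=q$, forcing $p=q$.

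The estimate then has two ingredients. For the first, since $p,q$ are not elliptic tail nodes we have $\upsilon_p=\tau_p$ and $\upsilon_q=\tau_q$ in $\lud{\upsilon}$, and $\iso$ acts on the plane $\CC_{\upsilon_p}\oplus\CC_{\upsilon_q}$ by $\upsilon_p\mapsto c_1\upsilon_q$, $\upsilon_q\mapsto c_2\upsilon_p$ for nonzero scalars $c_1,c_2$; this is precisely the matrix $B$ attached to the $2$-cycle of non-elliptic-tail nodes $\{p,q\}$, so $c_1 c_2\neq 1$ because $\bigl(\prym,\iso\bigr)$ is singularity reduced. The eigenvalues are $\mu$ and $-\mu$ with $\mu^2=c_1 c_2\neq 1$, hence $\mu\neq\pm 1$ (and $n$ is even), and a short computation of the age indices shows that the pair $\{\mu,-\mu\}$ contributes strictly more than $\tfrac12$ to $\mathrm{age}(\iso,\zeta_n)$ — it would contribute exactly $\tfrac12$ only when $\mu=\pm1$. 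For the second ingredient, $\varphi_j:=\iso^\nu_{|\compnu}$ is a \emph{nontrivial} automorphism of $\compnu$ (it carries the preimage of $p$ onto that of $q$), so by Proposition~\ref{prop:comps} the pair $\bigl(\compnu,\varphi_j\bigr)$ — and, in the two-component case, also $\bigl(\compinu,\varphi_{j'}\bigr)$ — is of one of the types (2)--(6) listed there. I would then run through these finitely many possibilities and show that the $\iso$-action on the smoothing parameters of the remaining nodes on $\comp$ (and on $\compi$), together with the induced action on $H^1(\compnu,T_{\compnu}(-D_j))$ (and on $H^1(\compinu,T_{\compinu}(-D_{j'}))$), contributes at least a further $\tfrac12$ to $\mathrm{age}(\iso,\zeta_n)$, unless the whole curve $X$ has genus $<4$. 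Adding the two ingredients gives $\mathrm{age}(\iso,\zeta_n)>1$, contradicting $(\star)$; in the exceptional configurations the standing hypothesis $g\geq 4$ is violated. Either way we reach a contradiction, and $\iso_\stab$ fixes all nodes of $\stab$.

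The hard part will be this last case analysis, which follows the pattern of \cite{lu2007} Proposition~3.10: for each admissible small-genus normalization $\compnu$ allowed by Proposition~\ref{prop:comps} one must check that the $\varphi_j$-eigenvalues on $H^1(\compnu,T_{\compnu}(-D_j))$ together with the signs by which $\iso$ acts on the remaining node parameters cannot all be trivial while keeping $g\geq 4$. The delicate cases are those in which $\compnu$ is rational or elliptic and $\varphi_j$ is an involution exchanging the two points of $D_j$ coming from $p$ and $q$; there one has to exploit that the additional nodes which the inequality $g\geq 4$ forces onto $X$ (equivalently, onto $D_j$) are exactly what produce the needed eigenvalue $-1$.
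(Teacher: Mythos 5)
You have the right skeleton, and it is worth saying up front that the paper itself gives no proof of this proposition: it simply cites \cite{lu2007} Proposition 3.10, so the benchmark is the Harris--Mumford/Ludwig-type eigenvalue count that you are reconstructing. Everything you actually write out is correct: since $\iso_\stab$ fixes every component of $\stab$, an interchanged pair $p,q$ lies on the same one or two components and neither is an elliptic tail node (an elliptic tail and its complement are each $\iso_\stab$-invariant, so their intersection point is fixed); hence $\upsilon_p=\tau_p$ and $\upsilon_q=\tau_q$, the $2\times 2$ block on $\CC_{\upsilon_p}\oplus\CC_{\upsilon_q}$ is the matrix $B$ of a cycle of non-elliptic-tail nodes, singularity-reducedness forces $c_1c_2\neq 1$, and the eigenvalue pair $\{\mu,-\mu\}$ with $\mu^2=c_1c_2\neq 1$ contributes at least $\frac{1}{2}+\frac{2}{n}$ to $\mathrm{age}(\iso,\zeta_n)$. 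That is a correct and complete first half.

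The genuine gap is the second half. The claim that the remaining coordinates of $\lud{\upsilon}$ always contribute at least a further $\frac{1}{2}$ (or else $g\leq 3$) is exactly the substance of the proposition, and you only announce it: ``I would then run through these finitely many possibilities'' is a plan, not an argument. What has to be checked is nontrivial: in both configurations ($p,q$ self-nodes of one component $\comp$, or $p,q\in\comp\cap\compi$) the induced maps $\varphi_j$, and in the second case $\varphi_{j'}$, are nontrivial and so fall under types (ii)--(vi) of Proposition~\ref{prop:comps}; one must then extract a $-1$ (or worse) eigenvalue either from $H^1(\compnu,T_{\compnu}(-D_j))$ (for instance the anti-invariant part of the space of quadratic differentials in the hyperelliptic and bielliptic cases, or the trace-zero action of a translation by a $2$-torsion point on an elliptic component) or from the smoothing parameters of the $\iso_\stab$-fixed nodes that the hypothesis $g\geq 4$ forces onto $\comp$ and $\compi$, using that a nontrivial involution of $\PP^1$ or of an elliptic curve has few fixed points. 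Until that dichotomy is verified case by case, the argument only yields $\mathrm{age}(\iso,\zeta_n)>\frac{1}{2}$, which does not contradict $(\star)$. So the route is the expected one, but the proof is not complete as it stands.
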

\begin{proposition}\label{prop:compdet}
Assume $(\star)$ holds. Let $\comp$ be a component of $\stab$ with
normalization $\compnu$, $D_j$ the divisor of the marked points on
$\compnu$ and $\varphi_j=\iso^\nu_{|\compnu}$. Then $(\compnu, D_j,
\varphi_j)$ is of one of the following types and the contribution to
 $\mathrm{age}(\sigma, \zeta_n)$ coming from
$H^1\big(\compnu,T_{\compnu}(-D_j)\big)\subset\lud{\upsilon}$ is at
least the following quantity $w_j$:
\begin{enumerate}
\item Identity component:
$\varphi_j=\mathrm{Id}_{\compnu}$, arbitrary pair $(\compnu,D_j)$
and $w_j=0$
\item Elliptic tail:  $\compnu$ is elliptic, $D_j=p_1^+$ and $p_1^+$ is
 fixed by~$\varphi_j$.
\begin{description}
\item[\textnormal{\textit{order 2}}] $\ord(\varphi_j)=2$ and  $w_j=0$
\item[\textnormal{\textit{order 4}}] $\compnu$ has $j$-invariant $1728$, $\ord(\varphi_j)=4$ and $w_j=\frac 12$
\item[\textnormal{\textit{order 3, 6}}] $\compnu$ has $j$-invariant $0$, $\ord(\varphi_j)=3$ or $6$ and $w_j=\frac 13$
\end{description}
\item Elliptic ladder: $\compnu$ is elliptic, $D_j=p_1^++p_2^+$,
with $p_1^+$ and $p_2^+$ both fixed by $\varphi_j$.
\begin{description}
\item[\textnormal{\textit{order 2}}] $\ord(\varphi_j)=2$ and $w_j=\frac 12$
\item[\textnormal{\textit{order 4}}] $\compnu$ has $j$-invariant $1728$, $\ord(\varphi_j)=4$ and $w_j=\frac 34$
\item[\textnormal{\textit{order 3}}] $\compnu$ has $j$-invariant $0$, $\ord(\varphi_j)=3$ and $w_j=\frac 23$
\end{description}
\item Hyperelliptic tail: $\compnu$ has genus $2$, $\varphi_j$ is the hyperelliptic involution,
 $D_j$ is of the form $D_j=p_1^+$ with $p_1^+$ fixed by $\varphi_j$ and $w_j=\frac 12$.
\end{enumerate}
\end{proposition}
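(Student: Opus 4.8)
The plan is to follow the template of \cite{lu2007} Section~3, where the analogous statement is proved for $\overline{\mathcal{S}}_g$. Under the standing hypothesis $(\star)$ the preceding propositions already give a great deal: $\iso_\stab$ fixes every component and every node of $\stab$, the pair $(\compnu,\varphi_j)$ with $\varphi_j:=\iso^\nu_{|\compnu}$ is one of the six types of Proposition~\ref{prop:comps}, and (as in the proof of Proposition~\ref{prop:qr}) $\iso$ is the identity on every exceptional component of $X$, so that the only place where the Prym structure interferes is the replacement $\upsilon_i=\tau_i^2$ at an elliptic tail node. The idea is to go through the six types one at a time and, for each, (a) use stability of $\stab$ to bound $|D_j|$ from below, (b) compute the eigenvalues of $\varphi_j$ on $H^1(\compnu,T_{\compnu}(-D_j))$ from the exact sequence $0\to T_{\compnu}(-D_j)\to T_{\compnu}\to T_{\compnu}\otimes\OO_{D_j}\to 0$ together with Riemann--Roch and the explicit description of automorphisms of curves of genus $\le 3$, and (c) compare the resulting contribution to the age against $(\star)$, discarding the cases that violate it.

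For the identity component there is nothing to prove ($w_j=0$). For a rational component $\compnu\cong\pp[1]$ one has $|D_j|\ge 3$; since an automorphism of $\pp[1]$ of order $2$ or $4$ fixing three points is trivial, a nontrivial $\varphi_j$ must interchange points of $D_j$, and because $\iso_\stab$ fixes all nodes of $\stab$ an interchanged pair must be the two branches of a single self-node of $\comp$. A short computation in a coordinate diagonalising $\varphi_j$, combining this with the smoothing parameter of the node joining $\comp$ to the rest of $\stab$ (on which $\varphi_j$ acts by $-1$) and with $H^1(\pp[1],T_{\pp[1]}(-D_j))$, forces $\mathrm{age}(\iso,\zeta_n)\ge 1$, against $(\star)$; hence rational $\varphi_j$ is trivial. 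For a hyperelliptic component of genus $3$ or a bielliptic component of genus $2$ with its distinguished involution, $\varphi_j$ acts on $H^1(\compnu,T_{\compnu})=H^0(\compnu,\omega_{\compnu}^{\otimes 2})^{\vee}$ with a $(-1)$-eigenspace of positive dimension (the anti-invariant quadratic differentials), and by $-1$ on $T_p\compnu$ at a marked Weierstrass (resp.\ bielliptic branch) point; since $g\ge 4$ forces $|D_j|\ge 1$, this already produces age $\ge 1$, so these components are excluded. What remains is the elliptic case and the genus-$2$ hyperelliptic-involution case, and here $(\star)$ together with stability pins $D_j$ down: $|D_j|=1$ for a genus-$2$ tail, and $|D_j|=1$ (tail) or $|D_j|=2$ (ladder) for elliptic $\compnu$, with the marked points fixed by $\varphi_j$. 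The $j$-invariant conditions in the statement then come for free, since order $3,6$ (resp.\ order $4$) automorphisms of an elliptic curve exist only at $j=0$ (resp.\ $j=1728$).

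The eigenvalue count in the surviving cases is routine. For a genus-$2$ component with the hyperelliptic involution: $H^0(T_{\compnu})=0$, all three quadratic differentials are invariant, and the only nontrivial eigenvalue on $H^1(\compnu,T_{\compnu}(-p_1^+))$ is the $-1$ coming from $T_{p_1^+}\compnu$, so $w_j=\tfrac12$. For an elliptic component $E$: $T_E\cong\OO_E$, $H^0(T_E)$ is spanned by the nowhere-zero translation-invariant field, on which $\varphi_j$ acts by a primitive $n_j$-th root $\mu_j$ ($n_j\in\{2,3,4,6\}$), $\varphi_j$ acts by $\mu_j$ on $T_pE$ at each of its fixed points, and by $\mu_j^2$ on the one-dimensional space $H^1(E,T_E)\cong T_{[E,p]}\mm_{1,1}$; since the invariant field restricts to a nonzero element of every $T_pE$, the exact sequence collapses to $H^1(E,T_E(-D_j))\cong\bigl(\bigoplus_{p\in D_j}T_pE\bigr)/H^0(E,T_E)\oplus H^1(E,T_E)$. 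Thus for $|D_j|=1$ there is a single nontrivial eigenvalue $\mu_j^2$, giving an age contribution of $0$, at least $\tfrac13$, or $\tfrac12$ according as $n_j=2$, $n_j\in\{3,6\}$, or $n_j=4$; and for $|D_j|=2$ the eigenvalues are $\mu_j$ and $\mu_j^2$, giving at least $\tfrac12$, $\tfrac23$, or $\tfrac34$ according as $n_j=2$, $3$, or $4$ (the combinatorially impossible case $n_j=6$, $|D_j|=2$ being discarded, since an order-$6$ automorphism of an elliptic curve has a unique fixed point). In every case the value tabulated in the statement is the minimum of the contributions that can occur, which is exactly the assertion.

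The delicate part, and the one I expect to take the most care, is the exclusion of the rational, genus-$3$-hyperelliptic and bielliptic-genus-$2$ components --- that is, showing that a nontrivial $\varphi_j$ on such a component always forces the age up to at least $1$ --- together with the sharp determination of which $D_j$ survive in the elliptic and genus-$2$ cases. This cannot be carried out component by component in isolation: one has to sum the $w_j$ against the contributions of the nodal smoothing coordinates attached to $\comp$, keeping careful track of which nodes are elliptic tail nodes (where $\upsilon_i=\tau_i^2$) and which are not (where $\upsilon_i=\tau_i$), and of the Prym-specific point --- already used in Proposition~\ref{prop:qr} --- that $\iso$ is trivial on every exceptional component, so that a node in $N$ never contributes a factor $-1$ on the branch on which $\iso$ is the identity. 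Since these bookkeeping steps are formally the same as in \cite{lu2007} Section~3, the plan is to carry them out exactly as there, flagging only the (few) points where an exceptional component of $X$ lies over the relevant node.
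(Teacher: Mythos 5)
Your overall strategy --- run through the six types of Proposition~\ref{prop:comps} following \cite{lu2007} Proposition~3.11, exclude the rational, genus-$3$ hyperelliptic and bielliptic components, and compute eigenvalues in the surviving cases --- is exactly the route the paper takes, and your eigenvalue computations for the smooth elliptic and genus-$2$ cases are correct. But there is a genuine gap at precisely the one point where the Prym case differs from the spin case, namely the \emph{singular elliptic tail}: $\compnu\cong\pp[1]$ with $D_j=p_1^++p_1^-+p_2$, where $\varphi_j$ is an involution fixing $p_2$ and interchanging the two branches $p_1^{\pm}$ of a self-node $p_1$ of $\comp$. You claim a ``short computation'' forces $\mathrm{age}(\iso,\zeta_n)\geq 1$ here, but it does not: $H^1\bigl(\pp[1],T_{\pp[1]}(-D_j)\bigr)=H^1(\pp[1],\OO(-1))=0$; the connecting node $p_2$ is an \emph{elliptic tail node} (the subcurve $\comp$ has arithmetic genus $1$ and meets the rest of $C$ in one point), so even though $t_2\mapsto -t_2$, one has $\upsilon_2=\tau_2^2$ and the contribution is killed; and the self-node satisfies $t_1=xy\mapsto yx=t_1$, so it contributes at most $\frac12$ (and that only if $p_1$ is exceptional and $\tau_1\mapsto-\tau_1$). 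The total is at most $\frac12<1$, so this configuration cannot be discarded by an age count.

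The paper excludes it by a different, Prym-specific argument. Since $t_1\mapsto t_1$ and $p_1$ is not an elliptic tail node, the singularity-reduced hypothesis forces $p_1$ to be an exceptional node with $\iso\cdot\tau_1=-\tau_1$. One then deforms $\prym$ over the locus $(\tau_i=0)_{i\neq 1}$, smoothing $p_1$; the automorphism $\iso$ deforms to an automorphism $\iso'$ of a nearby Prym curve whose restriction to the smoothed elliptic tail fixes the restricted Prym bundle, and the compatibility $(\widetilde{\iso}')^*\widetilde{\eta}'\cong\widetilde{\eta}'$ over the non-exceptional subcurve pins down the sign of the square root to $\iso\cdot\tau_1=+\tau_1$ --- a contradiction. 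Note that your appeal to ``$\iso$ is trivial on every exceptional component'' does not apply here: that fact was established only for inessential automorphisms in Remark~\ref{rem:auto}, whereas in this configuration $\iso$ genuinely swaps the two attaching points of the exceptional component over $p_1$. Deferring to \cite{lu2007} ``exactly as there'' therefore misses the one step the paper itself singles out as new; the rest of your write-up would go through.
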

\begin{proof}
The proof is along the lines of the proof of Proposition~3.11\ in
\cite{lu2007}. The only difference occurs in the case of a singular
elliptic tail on which $\iso$ acts with order $2$. Assume that
$\compnu$  is rational, $D_j=p_1^++p_1^-+p_2$, with
$\mbox{ord}(\varphi_j)=2$ which fixes $p_2^+$ and interchanges
$p_1^+$ and $p_1^-$. If $xy=0$ is an equation for $\stab$ at $p_1$,
then $\iso_\stab$ acts via $t_1=xy\mapsto yx=t_1$. Since $p_1$ is
not an elliptic tail node and $\bigl(\prym,\iso\bigr)$ is
singularity reduced, the node $p_1$ must be exceptional and
$\sigma\cdot \tau_1=-\tau_1$.

A deformation of $\prym$ over the locus $(\tau_i=0)_{i\neq 1}\subset
\mathbb C_{\tau}^{3g-3}$ smooths $p_1$. Furthermore, $\iso$ deforms
to an automorphism $\iso'$ of a general Prym curve
$(\prymi',\prymii',\prymiii')$ over this locus, $\varphi_j$ deforms
to the  involution $\varphi_j'$ on the smooth elliptic tail $\comp'$
such that it fixes the line bundle $\prymii'_{\comp'}$, and the
restrictions of $\iso$ and $\iso'$ to the complement of $\comp$
resp.\ $\comp'$ coincide. Over the non-exceptional subcurve
$\nonex\subset X$ we have
$(\widetilde\iso')^*\widetilde{\prymii}'\cong\widetilde{\prymii}'$.
Thus $\sigma \cdot \tau_1=\tau_1$ which is a contradiction. The case
of a singular elliptic tail is thus excluded.
\end{proof}
\begin{proposition}
Under the hypothesis $(\star)$, the hyperelliptic tail case does not
occur.
\end{proposition}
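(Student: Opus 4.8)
The plan is to assume, towards a contradiction, that $(\star)$ holds for a singularity reduced pair $\bigl(\prym,\iso\bigr)$ with $n:=\ord(\iso)\ge 2$ and that the stable model $\stab=st(X)$ carries a component $\comp$ whose normalisation $\compnu$ has genus $2$, meets $\overline{X-\comp}$ in a single node $p_1$, and on which $\varphi_j=\iso^\nu_{|\compnu}$ is the hyperelliptic involution (type~(4) of Proposition~\ref{prop:compdet}); the goal is to deduce $\mathrm{age}(\iso,\zeta_n)\ge 1$. Two preliminary observations: $n$ is even, since $\varphi_j$ has order $2$; and $p_1\notin N$, because the vertex of $\comp$ has valence $1$ in the dual graph $\Gamma(\stab)$ whereas $N\subset\Gamma(\stab)$ is eulerian, so that $\upsilon_1=\tau_1=t_1$ is an honest smoothing parameter of the node.

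The first step is to record the contribution coming from $H^1\bigl(\compnu,T_{\compnu}(-p_1^+)\bigr)\subset\lud{\upsilon}$. Since $\varphi_j$ fixes $p_1^+$, this point is a Weierstrass point of $\compnu$ and $\varphi_j$ acts on $T_{p_1^+}\compnu$ by $-1$; via Serre duality $H^1\bigl(\compnu,T_{\compnu}(-p_1^+)\bigr)^\vee\cong H^0\bigl(\compnu,\omega_{\compnu}^{\otimes 2}(p_1^+)\bigr)$, the isomorphism $\mathrm{Sym}^2 H^0(\omega_{\compnu})\xrightarrow{\sim} H^0(\omega_{\compnu}^{\otimes 2})$ valid in genus $2$ (so that $\varphi_j$ acts trivially on $H^0(\omega_{\compnu}^{\otimes 2})$), and the fact that $\varphi_j$ acts by $-1$ on the residue at $p_1^+$, one finds that $\varphi_j$ acts on this $4$-dimensional space with eigenvalues $1,1,1,-1$, whence its contribution to the age is exactly $\frac12=w_j$.

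The second step is to analyse the smoothing parameter $\tau_1$ of $p_1$ and the component $\compi\subset\overline{X-\comp}$ adjacent to $p_1$. By the preceding propositions $\iso_\stab$ fixes every component and every node of $\stab$, so $\iso$ fixes $p_1$, acting by $-1$ on the $\comp$-branch and by a root of unity $\zeta$ on the $\compi$-branch, namely the eigenvalue of $\varphi_{j'}=\iso^\nu_{|\compinu}$ on $T_{p_1^+}\compinu$. I run through the options for $\compi$ provided by Proposition~\ref{prop:compdet}: if $\compi$ is an identity component then $\zeta=1$, so $\iso\cdot\tau_1=-\tau_1$ contributes $\frac12$ and $\mathrm{age}(\iso,\zeta_n)\ge\frac12+\frac12=1$; if $\varphi_{j'}$ has order $2$ (an elliptic or hyperelliptic involution, or an order-$2$ elliptic-ladder component), then $\zeta=-1$, so $\iso\cdot\tau_1=\tau_1$, which contradicts singularity reducedness since $p_1$ is not an elliptic tail node, and the configuration is impossible; and if $\varphi_{j'}$ has order $\ge 3$, then $\compinu$ is elliptic and cannot be an elliptic tail (that would force $\overline{X-\comp}=\compi$, hence $g=p_a(\comp)+p_a(\compi)=3$, against $g\ge 4$), so by Proposition~\ref{prop:compdet}(3) it is an order-$3$ or order-$4$ elliptic-ladder component with $w_{j'}\ge\frac23$, giving $\mathrm{age}(\iso,\zeta_n)\ge\frac12+\frac23>1$. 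In all cases $(\star)$ fails, so no such $\comp$ exists.

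I expect the second step to be the main obstacle: one must make the local computation at $p_1$ genuinely uniform, be certain that the classification in Proposition~\ref{prop:compdet} of the pair $(\compinu,\varphi_{j'})$ is exhaustive, and check that singularity reducedness and the bound $g\ge 4$ between them eliminate every configuration in which the node $p_1$ and the neighbouring component would contribute jointly less than $\frac12$ to the age.
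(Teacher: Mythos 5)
Your proof is correct and follows essentially the same route as the paper's: the genus-$2$ tail contributes $\frac{1}{2}$ to the age, and a case analysis on the adjacent component $\compi$ via Proposition \ref{prop:compdet} either yields a further contribution of at least $\frac{1}{2}$ (from $\upsilon_1=\tau_1=t_1\mapsto -t_1$ when $\compi$ is an identity component, or from $w_{j'}$ otherwise) or is excluded outright. The only cosmetic difference is that you dispatch the order-$2$ neighbours by invoking singularity-reducedness at the non-exceptional node $p_1$, whereas the paper simply adds the neighbour's own age contribution of $\frac{1}{2}$; both are valid.
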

\begin{proof}
Let $\comp$ be a genus $2$ tail of $\stab$ and $\compi$ the second
component through $p_1$. The action of $\iso$ on
$H^1(\compnu,T_{\compnu}(-D_j))$ contributes $\frac 12$ to the age
of $\sigma$ and $\compi$ has to be one of the cases of
Proposition~\ref{prop:compdet}. If $\compi$ is elliptic, then
$g(\stab)=3$. If $\compi$ is a hyperelliptic tail or an elliptic
ladder, the action on $H^1(\compinu,T_{\compinu}(-D_{j'}))$
contributes  at least $\frac 12$. Therefore $\compi$ is an identity
component. If $xy=0$ is an equation for $\stab$ at $p_1$, then
$\iso_\stab$ acts via $t_1=xy\mapsto-xy=-t_1$. The node $p_1$ is
disconnecting, hence non-exceptional, and it is not an elliptic tail
node. Therefore, $\upsilon_1=\tau_1=t_1$ and $\iso$ acts as
$\sigma\cdot \upsilon_1 =-\upsilon_1$. This gives an additional
contribution of $\frac 12$ to the age of $\sigma$ finishing the
proof.
\end{proof}
\begin{proposition}
In situation $(\star)$ the elliptic ladder cases do not occur.
\end{proposition}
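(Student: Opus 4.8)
The plan is to argue by contradiction. Assuming the stable model $\stab=st(X)$ has a component $\comp$ of elliptic ladder type, I will show that $\mathrm{age}(\iso,\zeta_n)\geq 1$, contradicting the standing hypothesis $(\star)$. By Proposition~\ref{prop:compdet}~(3) the summand $H^1\bigl(\compnu,T_{\compnu}(-D_j)\bigr)\subset\lud{\upsilon}$ already contributes $w_j\in\{\frac12,\frac23,\frac34\}$ to the age, according as $\ord(\varphi_j)\in\{2,3,4\}$; note that order $6$ is impossible here, since an order $6$ automorphism of an elliptic curve has a single fixed point and so cannot stabilize both marked points of a ladder. Thus it is enough to produce an extra contribution of at least $1-w_j$, coming from a second special component of $\stab$ or from the deformation parameters $\tau_1,\tau_2$ attached to the two nodes $p_1,p_2$ of $\comp$.

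Let $\compi$ and $\compii$ be the components of $\stab$ meeting $\comp$ at $p_1$ and $p_2$. By the preceding propositions $\iso_\stab$ fixes all components and all nodes, and by Proposition~\ref{prop:compdet} — together with the fact that the hyperelliptic tail case has already been excluded under $(\star)$ — each of $\compi,\compii$ is an identity component, an elliptic tail, or an elliptic ladder. If one of them is an elliptic ladder, its own $H^1$ contributes $\geq\frac12$, which with $w_j$ forces $\mathrm{age}(\iso,\zeta_n)\geq1$; if both are elliptic tails, then $\stab=\compi\cup\comp\cup\compii$ is a chain of three genus $1$ curves, so $g(\stab)=3$, contradicting $g\geq4$. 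So, after possibly interchanging $p_1$ and $p_2$, I may assume $\compi$ is an identity component. I then analyze the node $p_1$: the eigenvalue of $\iso$ on $\tau_1$ is the product of the tangent character of $\varphi_j$ at $p_1^+\in\compnu$ — a primitive $\ord(\varphi_j)$-th root of unity — with the action of $\iso$ on the $\compi$-branch, which is trivial because $\compi$ is an identity component. The eulerian condition on the exceptional graph $N$ applied to the vertex $\comp$ forces $p_1\in N\Leftrightarrow p_2\in N$, and decides whether $\upsilon_1=\tau_1$ or $\upsilon_1=\tau_1^{2}$ (the latter exactly when $p_1$ is an elliptic tail node). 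Running through the finitely many possibilities — using the contribution of $\tau_1$ alone, of $\tau_1$ and $\tau_2$ together, or of the second neighbor $\compii$, according as $p_1$ is an ordinary node, an exceptional node, or an elliptic tail node — one checks in each case that the extra contribution is at least $1-w_j$, completing the contradiction.

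The delicate step, and the one I expect to be the main obstacle, is the order $2$ ladder, where $\varphi_j$ acts by $-1$ on both $T_{p_i^+}\compnu$ and $w_j$ is only $\frac12$. Here a node can fail to help: if $p_1$ is exceptional then $\iso$ acts on $\upsilon_1=\tau_1$ by a primitive fourth root of unity, contributing $\frac14$ (and likewise $\frac14$ from $\tau_2$, which suffices); but if $p_1$ is an elliptic tail node, so $\compi$ is itself a genus $1$ tail and $\upsilon_1=\tau_1^{2}$, the eigenvalue of $\iso$ on $\upsilon_1$ may equal $1$. In that situation one has to use that if $\compii$ is also a genus $1$ tail then $g(\stab)=3$ again, while if $\compii$ carries higher genus or further nodes then $p_2$ is non-exceptional and not an elliptic tail node, so $\iso$ acts on $\upsilon_2=\tau_2$ by $-1$ and supplies the missing $\frac12$. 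Bookkeeping these sub-cases, and the interplay of the eulerian structure of $N$, the distinction $\upsilon_i=\tau_i$ versus $\tau_i^{2}$, and the bound $g\geq4$, is the only real work; the orders $3$ and $4$ are easier, since $w_j\in\{\frac23,\frac34\}$ and then any nonzero node contribution already pushes the age to $1$.
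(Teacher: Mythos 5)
Your proposal is correct and follows essentially the same route as the paper: bound the age from below by $w_j$ plus node contributions, restrict the neighbours $\compi,\compii$ to identity components or elliptic tails, use $g\geq 4$ to exclude two tails, and exploit the eulerian structure of $N$ together with the $\upsilon_i=\tau_i$ versus $\upsilon_i=\tau_i^2$ dichotomy to extract the missing $1-w_j$. The paper streamlines the case analysis by taking $\compi$ to be a neighbour that is not topologically an elliptic tail (so $p_1$ is never an elliptic tail node), which removes the delicate order-$2$ subcase you single out, but the bookkeeping is otherwise identical.
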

\begin{proof}
Let $\comp$ be an elliptic ladder of $\stab$ of order
$n_j=\ord(\varphi_j)$ and denote by $\compi$ resp.\ $\compii$ the
second component through the node $p_1$ resp.\ $p_2$. Since every
elliptic ladder contributes at least $\frac 12$ to the age, $\compi$
and $\compii$ can only be elliptic tails or identity components. If
both are elliptic tails, then $g(\stab)=3$, hence we may assume that
$\compi$ is an identity component. If $xy=0$ is an equation for
$\stab$ at $p_1$, then $\iso_\stab$ acts as $x\mapsto x$,
$y\mapsto\alpha y$ and $t_1\mapsto\alpha t_1$, where $\alpha$ is a
primitive $n_j$-th root of $1$. If $p_1$ is non-exceptional then
$\upsilon_1=\tau_1=t_1$ and the space
$H^1(\compnu,T_{\compnu}(-D_j))\oplus \mathbb C\cdot \upsilon_1$
contributes to the age at least
\[
1=\begin{cases}
\frac 12+\frac 12 & \text{if }n_j=2\\
\frac 34+\frac 14 & \text{if }n_j=4\\
\frac 23+\frac 13 & \text{if }n_j=3
\end{cases}
\]
Therefore $p_1\in N$. Since $N\subset \Gamma(\stab)$ is an eulerian
subgraph, the node $p_2$ is also exceptional, both $p_1$ and $p_2$
are non-disconnecting and $\compii$ is an identity component as
well. Moreover $\sigma_C\cdot t_i =\alpha t_i$, $i=1,2$. Since
$\upsilon_i=\tau_i$ and $\tau_i^2=t_i$ for $i=1,2$, we find that
$\sigma\cdot \upsilon_i=\alpha_i\upsilon_i$, $i=1,2$, where
$\alpha_i$ is a square root of $\alpha$. Therefore, the contribution
to the age of $\sigma$ coming from
$H^1(\compnu,T_{\compnu}(-D_j))\oplus \mathbb C\cdot
\upsilon_1\oplus \mathbb C\cdot \upsilon_2$ is at least
\[
1=\begin{cases}
\frac 12+\frac 14+\frac 14 & \text{if }n_j=2\\
\frac 34+\frac 18+\frac 18 & \text{if }n_j=4\\
\frac 23+\frac 16+\frac 16 & \text{if }n_j=3
\end{cases}
\]
and the case of elliptic ladders is excluded.
\end{proof}
\begin{proposition}
Under hypothesis  $(\star)$,  the case of an elliptic tail of order
$4$ does not occur.
\end{proposition}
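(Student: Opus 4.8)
The plan is to derive a contradiction with $(\star)$ by showing that an order-$4$ elliptic tail already forces $\mathrm{age}(\sigma,\zeta_n)\geq 1$. The contribution of $\frac12$ coming from $H^1\bigl(\compnu,T_{\compnu}(-D_j)\bigr)$ is recorded in Proposition~\ref{prop:compdet}, so the point is to exhibit a \emph{second}, independent contribution of $\frac12$, which I expect to come from the smoothing parameter of the node at which the tail is attached.

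First I would fix an elliptic tail $\comp$ of $\stab$ with $\varphi_j=\iso^\nu_{|\compnu}$ of order $4$, so that by Proposition~\ref{prop:compdet} the normalization $\compnu$ is a smooth elliptic curve with $j$-invariant $1728$, the divisor of marked points is $D_j=p_1^+$ with $p_1^+$ fixed by $\varphi_j$, and the action of $\iso$ on $H^1\bigl(\compnu,T_{\compnu}(-D_j)\bigr)\subset\lud{\upsilon}$ contributes $w_j=\frac12$ to $\mathrm{age}(\iso,\zeta_n)$. Next I would examine the node $p_1=\comp\cap\overline{(X-\comp)}$: it is disconnecting, and since $N$ is an eulerian subgraph of $\Gamma(\stab)$ it contains no bridge, so $p_1\notin N$; hence $t_1=\tau_1$ in the local coordinates, while $p_1$ being an elliptic tail node gives $\upsilon_1=\tau_1^2$. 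I would then pin down the action of $\iso$ on $\tau_1$. Let $\compi$ be the component of $X$ through $p_1$ other than $\comp$. By Proposition~\ref{prop:compdet} together with the preceding propositions excluding the hyperelliptic tail and the elliptic ladder cases, and since an elliptic (or hyperelliptic) tail $\compi$ attached to $X$ only along $p_1$ would force $g(X)\leq 3$, contradicting $g\geq4$, the component $\compi$ must be an identity component, so $\iso$ acts trivially on the branch of $\compi$ at $p_1$. As every order-$4$ automorphism of an elliptic curve fixing a point acts on the tangent space there by a primitive fourth root of unity, $\iso$ scales $t_1=\tau_1$ by a primitive fourth root of unity, hence scales $\upsilon_1=\tau_1^2$ by $-1$; this eigenvalue contributes $\frac12$ to $\mathrm{age}(\iso,\zeta_n)$. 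Adding the two contributions yields $\mathrm{age}(\iso,\zeta_n)\geq w_j+\tfrac12=1$, contradicting $(\star)$, and the proposition follows.

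The main obstacle is entirely a matter of careful bookkeeping at two places: one must make sure the two halves are genuinely independent, i.e.\ that the line $\CC_{\upsilon_1}$ is a summand of $\lud{\upsilon}$ distinct from $H^1\bigl(\compnu,T_{\compnu}(-D_j)\bigr)$ — which is clear from the explicit decomposition of the versal deformation space given above — and one must rule out the possibility that $\iso$ acts nontrivially on the $\compi$-branch at $p_1$, which is exactly the step where the already-established exclusion of elliptic ladders and hyperelliptic tails, and the hypothesis $g\geq4$, are used.
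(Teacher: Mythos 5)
Your proof is correct and follows essentially the same route as the paper: the order-$4$ elliptic tail contributes $w_j=\frac12$ via $H^1\bigl(\compnu,T_{\compnu}(-D_j)\bigr)$, and since the attaching node is non-exceptional and an elliptic tail node, $\upsilon_1=\tau_1^2=t_1^2$ is scaled by $\zeta_4^2=-1$, giving a second independent contribution of $\frac12$ and hence $\mathrm{age}(\iso,\zeta_n)\geq 1$, contradicting $(\star)$. Your explicit justification that $\compi$ is an identity component (and that $p_1\notin N$ because an eulerian subgraph contains no bridge) only spells out what the paper leaves implicit.
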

\begin{proof}
Let $\comp$ be an elliptic tail of order $4$ and $\compi$ another
component of $C$ through $p_1$. Then $\sigma_{C |
C_j'}=\mbox{Id}_{C_j'}$ and $\iso_\stab$ acts as
$t_1=xy\mapsto\zeta_4xy=\zeta_4t_1$ for a suitable fourth root
$\zeta_4$ of $1$. Since $p_1$ is an elliptic tail node, we have
 $\upsilon_1=t_1^2$ and $\iso\cdot v_1=-v_1$. The
action of $\iso$ on $H^1(\compnu,T_{\compnu}(-D_j))\oplus \mathbb
C\cdot \upsilon_1$ contributes  $\geq \frac 12+\frac 12=1$ to
$\mbox{age}(\iso, \zeta_4)$ excluding this case.
\end{proof}
\begin{proposition}
In situation $(\star)$ there has to be at least one elliptic tail of
order $3$ or $6$.
\end{proposition}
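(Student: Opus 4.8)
The plan is to argue by contradiction: assuming that under $(\star)$ the curve $X$ carries no elliptic tail on which $\iso$ acts with order $3$ or $6$, I would show that $\mathrm{age}(\iso,\zeta_n)\geq 1$, contradicting $(\star)$.

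First, combining the preceding propositions -- which exclude elliptic ladders, hyperelliptic tails, and elliptic tails of order $4$ -- with Proposition~\ref{prop:compdet}, every component $\comp$ of $\stab$ is now either an identity component or an elliptic tail on which the induced automorphism $\varphi_j$ is an involution. In either case $\iso$ acts trivially on the deformation block $H^1(\compnu,T_{\compnu}(-D_j))\subset\lud{\upsilon}$: this is clear when $\varphi_j=\mathrm{Id}_{\compnu}$, and for an order $2$ elliptic tail it holds because the $(-1)$-involution is an automorphism of every pointed elliptic curve and hence acts trivially on the one-dimensional space $H^1(\compnu,T_{\compnu}(-D_j))$, i.e.\ on the $j$-line. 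Next I would analyse the action of $\iso$ on the node directions $\mathbb C_{\upsilon_i}$. Every elliptic tail node is non-exceptional -- otherwise the elliptic tail would be incident to exactly one edge of the subgraph $N\subset\Gamma(\stab)$, contradicting that $N$ is eulerian (cf.~\cite{BCF}) -- so there $\upsilon_i=\tau_i^2$ and the order $2$ elliptic tail involution gives $\iso\cdot\tau_i=-\tau_i$, whence $\iso\cdot\upsilon_i=\upsilon_i$; at a non-exceptional node which is not an elliptic tail node both branches of $\stab$ lie on identity components, so $\iso_\stab$ fixes the corresponding smoothing parameter and again $\iso\cdot\upsilon_i=\upsilon_i$. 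The only directions left are the $\upsilon_i=\tau_i$ attached to exceptional nodes $p_i\in N$, and since both branches of such a node lie on identity components, $\iso_\stab$ fixes $t_i=\tau_i^2$, so $\iso$ acts on $\tau_i$ by a scalar $\epsilon_i$ with $\epsilon_i^2=1$. Setting $S:=\{\,p_i\in N:\epsilon_i=-1\,\}$, the automorphism $\iso$ acts on $\lud{\upsilon}$ diagonally with eigenvalue $-1$ of multiplicity $|S|$ and eigenvalue $1$ otherwise, so $\mathrm{age}(\iso,\zeta_n)=\frac{1}{2}|S|$.

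It then suffices to show $|S|\geq 2$. The case $|S|=0$ cannot occur: then $\iso$ acts trivially on $\lud{\upsilon}$, hence lies in the kernel of the action of $\aut\prym$ on $\lud{\upsilon}$ and plays no part in the singularity of $\lud{\upsilon}/\aut\prym$, contrary to $\iso$ having been chosen to obstruct canonicity. To rule out $|S|=1$ I would use the local structure of the universal Prym curve along the boundary (cf.~\cite{BCF}, and \cite{lu2007} for the spin analogue): for an exceptional node $p_i\in N$, viewed as an edge of $\Gamma(\stab)$ between the components $C_a$ and $C_b$ of $\stab$, one has $\epsilon_i=\gamma_a\gamma_b$, where $\gamma_a,\gamma_b\in\{\pm 1\}$ are the signs that the tuple $(\gamma_c)_c\in\{\pm 1\}^{CC(\nonex)}/\pm 1$ -- recording the inessential part of $\iso$ acting on $\prymii_{|\nonex}$ -- assigns to the connected components of $\nonex$ adjacent to the exceptional component over $p_i$ (the order $2$ elliptic tail involutions contribute nothing here: $\iso$ differs from their product by such an inessential automorphism, and these involutions act as quasi-reflections on $\lud{\tau}$, hence trivially on $\lud{\upsilon}$). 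Since $CC(\nonex)$ is precisely the set of connected components of $\Gamma(\stab)$ obtained by deleting the edges of $N$, the function $\gamma$ is constant on those components, so $|S|$ is the number of edges of $N$ joining a vertex with $\gamma=+1$ to one with $\gamma=-1$. Finally, $N$ being eulerian, every vertex has even $N$-degree, so summing the $N$-degrees over all vertices with $\gamma=+1$ -- which counts each edge inside that vertex set twice and each cross edge once -- shows the number of cross edges is even. Hence $|S|\geq 2$ and $\mathrm{age}(\iso,\zeta_n)\geq 1$, contradicting $(\star)$; therefore $X$ must carry an elliptic tail of order $3$ or $6$.

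I expect the genuinely delicate step to be the identity $\epsilon_i=\gamma_a\gamma_b$, namely the bookkeeping of how the sign by which $\iso$ acts on the smoothing coordinate $\tau_i$ of an exceptional node is controlled by the inessential signs on the two adjacent pieces of $\nonex$; this is the point at which the $\mathbb Z_2$-torsion carried by the exceptional component (recall $\prymii_{E_i}=\OO_{E_i}(1)$) enters, and it must be extracted carefully from the local model of $\rer_g$ in \cite{BCF}. Everything else is a direct combination of Proposition~\ref{prop:compdet}, the coordinate decomposition of $\lud{\upsilon}$, and the elementary fact that in an eulerian graph every edge cut has even cardinality.
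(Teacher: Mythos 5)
Your proof is correct, and its first part coincides with the paper's: after the preceding exclusions every component of $\stab$ is an identity component or an order~$2$ elliptic tail, $\iso$ acts trivially on each block $H^1(\compnu,T_{\compnu}(-D_j))$ and on every node coordinate except possibly the coordinates $\upsilon_i=\tau_i$ at exceptional nodes, where the eigenvalue is $\pm 1$. The divergence is in how the contradiction is extracted. The paper notes that $\mathrm{age}(\iso,\zeta_n)<1$ permits at most one eigenvalue $-1$, so $\iso$ either acts trivially on $\lud{\upsilon}$ or acts there as a quasi-reflection, and the latter contradicts the standing assumption (set up before Theorem~\ref{thm:cansing}) that $\aut\prym$ acts on $\lud{\upsilon}$ without quasi-reflections. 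You instead show directly that the number $|S|$ of $-1$-eigenvalues is even, using the identity $\epsilon_i=\gamma_a\gamma_b$ for the inessential part of $\iso$ together with the fact that every edge cut of the eulerian subgraph $N\subset\Gamma(\stab)$ has even cardinality. This is a genuine alternative: it makes the step independent of the ``no quasi-reflections on $\lud{\upsilon}$'' assertion, at the price of the bookkeeping identity $\epsilon_i=\gamma_a\gamma_b$, which you rightly single out as the delicate point. That identity and the eulerian parity mechanism are exactly what the paper deploys in the proof of Proposition~\ref{prop:qr} (the circuit of $N$-edges through $p_1$ forcing a second exceptional component with non-trivial action), and your decomposition of $\iso$ as a product of lifted elliptic tail involutions and an inessential automorphism, combined with Remark~\ref{rem:auto}, legitimately supplies it; so your argument is, in effect, the quasi-reflection analysis re-run at the level of $\lud{\upsilon}$, yielding the slightly stronger conclusion that $|S|$ is always even.
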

\begin{proof}
Assume to the contrary that every component of $\stab$ is either an
identity component or an elliptic tail of order $2$. The action of
$\iso$ on every space $H^1(\compnu,T_{\compnu}(-D_j))$ is trivial.
If $p_1$ is the node of an elliptic tail of order $2$, then
$\sigma_C\cdot t_1=-t_1$ and we have $\upsilon_1=\tau_1^2=t_1^2$ and
$\iso\cdot \upsilon_1=\upsilon_1$. In case $p_1$ is non-exceptional
but not an elliptic tail node, $\iso_\stab\cdot t_1=t_1$. Since
$\upsilon_1=\tau_1=t_1$, we find that $\iso$ fixes $\upsilon_1$. If
$p_1\in N$, then $\iso_\stab\cdot t_1=t_1$ and
$\upsilon_1^2=\tau_1^2=t_1$ and $\iso$ acts as
$\upsilon_1\mapsto\pm\upsilon_1$. Since $\mbox{age}(\sigma,
\zeta_n)<1$, there is exactly one node $p_1$ such that $\sigma\cdot
\upsilon_1=-\upsilon_1$, that is, $\iso$ acts as quasi-reflection on
$\lud{\upsilon}$,  a contradiction.
\end{proof}
\begin{proof}[Proof of the only-if-part of Theorem~\ref{thm:cansing}]
We proved, that if $\bigl(\prym,\iso\bigr)$
 is a singularity reduced pair and  $\mathrm{age}(\sigma,
\zeta_n)<1$, where $n=\mbox{ord}(\sigma)$, there exists an elliptic
tail $C_j\subset C$ with $\mbox{Aut}(C_j)=\mathbb Z_6$ such that
$\mbox{ord}(\sigma_{C_j})\in \{3, 6\}$. Since
$\iso_{\comp}^*(\prymii_{\comp})\cong\prymii_{\comp}$, we find that
$\prymii_{\comp}=\OO_{\comp}$. Let $\bigl(\prym,\iso\bigr)$ be a
pair consisting of a Prym curve and an automorphism such that the
$\mbox{age}(\sigma, \zeta_n)<1$. By Proposition~\ref{prop:pair} we
may deform $(\prym,\iso)$ to a singularity reduced pair
$((\prymi',\prymii',\prymiii'),\iso')$ such that the actions of
$\iso$ on $\lud{\upsilon}$ and $\iso'$ on $\lud{\upsilon'}$ have the
same ages. Therefore $\prymi'$ has an elliptic tail $\comp'$ with
$\mbox{Aut}(\comp')=\mathbb Z_6$ such that $\prymii'_{\comp'}$ is
trivial and $\iso'$ acts on $\comp'$ of order $3$ or $6$. In the
deformation of $\prym$ to $(\prymi',\prymii',\prymiii')$ elliptic
tails are preserved  hence $\bigl((X, \eta, \beta), \sigma\bigr)$
enjoys the same properties.
\end{proof}
\begin{remark}
If $\sigma\in \mbox{Aut}(X, \eta, \beta)$ satisfies the inequality
$\mathrm{age}(\sigma, \zeta_n)<1$ (with respect to the action on
$\lud{\upsilon}$), then $\iso$ is an elliptic tail automorphism and
$\mbox{ord}(\sigma)\in \{3, 6\}$. Indeed, we already know that
$\iso_\stab\in \mbox{Aut}(C)$ acts with order $3$ or $6$ on an
elliptic tail $\comp$. The action of $\iso$ on
$H^1(\compnu,T_{\compnu}(-D_j))$ and the $\upsilon$-coordinate
corresponding to the elliptic tail node on $\comp$ contributes at
least $\frac 23$ to $\mathrm{age}(\sigma, \zeta_n)$. Thus there is
exactly one elliptic tail of order $3$ or $6$ and $\iso_\stab$ is an
elliptic tail automorphism of the same order. If $\iso$ is not an
 elliptic tail automorphism of $\prymi$, then there
 exists an exceptional component $E_1\subset \prymi$ on which $\iso$ acts non-trivially.
 Since $E_1$ connects two non-exceptional components of $\prymi$ on which $\iso$ acts
  trivially, $\sigma \cdot \upsilon_1=-\upsilon_1$, giving a
   contribution of $\frac 12$ and an age $\geq \frac 23+\frac 12\geq 1$.
\end{remark}

\noindent \emph{Proof of Theorem \ref{thm:canforms}.} We start with
a pluricanonical form $\omega$ on $\rr_g^{\mathrm{reg}}$ and show
that $\omega$ lifts to a desingularization of a neighbourhood of
every point $[X, \eta, \beta]\in \rr_g$. We may assume that $[X,
\eta, \beta]$ is a general non-canonical singularity of $\rr_g$,
hence $X=C_1\cup_p C_2$, where $[C_1, p]\in \cM_{g-1, 1}$ is general
and $[C_2, p]\in \cM_{1, 1}$ has $j$-invariant $0$. Furthermore
$\eta_{C_2}=\OO_{C_2}$ and $\eta_1:=\eta_{C_1}\in
\mbox{Pic}^0(C_1)[2]$. We consider the pencil $\phi:\mm_{1,
1}\longrightarrow\rr_g$ given by $\phi[C', p]=[C'\cup_p C_1,
\eta_{C'}=\OO_{C'}, \eta_{C_1}=\eta_1]$.  Since $\phi(\mm_{1, 1})
\cap \Delta_0^{\mathrm{ram}}=\emptyset$, we imitate \cite{HM} pg.
41-44 and construct an \emph{explicit} open neighbourhood
$\rr_g\supset S\supset \phi(\mm_{1, 1})$ such that the restriction
to $S$ of $\pi:\rr_g\rightarrow\mm_g$ is an isomorphism and every
form $\omega\in H^0(\rr_g^{\mathrm{reg}},
K_{\rr_g^{\mathrm{reg}}}^{\otimes l})$ extends to a resolution
$\widehat{S}$ of $S$. For an arbitrary non-canonical singularity we
show that $\omega$ extends locally to a desingularizaton  along the
lines of \cite{lu2007} Theorem 4.1. \hfill $\Box$

\end{document}